\documentclass[a4paper, 11pt]{article}

\usepackage[utf8]{inputenc}
\usepackage[T1]{fontenc}


\usepackage{amsthm}
\usepackage{amssymb}
\usepackage{mathtools}
\usepackage{stmaryrd}	\SetSymbolFont{stmry}{bold}{U}{stmry}{m}{n}	
\usepackage{bm}
\usepackage{mathrsfs}
\usepackage{euscript}
\usepackage{tikz-cd}
\usetikzlibrary{positioning, shapes.geometric, patterns, graphs, decorations.pathmorphing}

\usepackage{paralist}

\usepackage[colorlinks]{hyperref}
\hypersetup{
	linkcolor=blue,
	citecolor=red,
	urlcolor=teal,
	pdftitle = {},
	pdfauthor = {Wen-Wei Li}
}



\newcommand{\Z}{\ensuremath{\mathbb{Z}}}

\newcommand{\R}{\ensuremath{\mathbb{R}}}
\newcommand{\CC}{\ensuremath{\mathbb{C}}}


\newcommand{\Aut}{\operatorname{Aut}}


\newcommand{\Sym}{\operatorname{Sym}}
\newcommand{\Ext}{\operatorname{Ext}}
\newcommand{\Tor}{\operatorname{Tor}}



\newcommand{\dd}{\mathop{}\!\mathrm{d}}

\newcommand{\lrangle}[1]{\ensuremath{\langle #1 \rangle}}
\newcommand{\sgn}{\ensuremath{\mathrm{sgn}}}
\newcommand{\Stab}{\ensuremath{\mathrm{Stab}}}

\newcommand{\identity}{\ensuremath{\mathrm{id}}}

\newcommand{\Hom}{\operatorname{Hom}}
\newcommand{\RHom}{\operatorname{RHom}}

\newcommand{\End}{\operatorname{End}}
\newcommand{\rightiso}{\ensuremath{\stackrel{\sim}{\rightarrow}}}
\newcommand{\leftiso}{\ensuremath{\stackrel{\sim}{\leftarrow}}}
\newcommand{\Rder}{\operatorname{R}\!}	
\newcommand{\Lder}{\operatorname{L}\!}	
\newcommand{\Hm}{\operatorname{H}}	

\newcommand{\Ker}{\operatorname{ker}}

\newcommand{\Image}{\operatorname{im}}
\newcommand{\dotimes}[1]{\ensuremath{\underset{#1}{\otimes}}}

\newcommand{\otimesL}[1][]{\ensuremath{\underset{#1}{\overset{\mathrm{L}}{\otimes}}}}

\newcommand{\Lie}{\operatorname{Lie}}
\newcommand{\Ad}{\operatorname{Ad}}
\newcommand{\Spec}{\operatorname{Spec}}
\newcommand{\Gm}{\ensuremath{\mathbb{G}_\mathrm{m}}}

\newcommand{\utimes}[1]{\ensuremath{\overset{#1}{\times}}}


\newcommand{\GL}{\operatorname{GL}}
\newcommand{\SO}{\operatorname{SO}}

\newcommand{\SL}{\operatorname{SL}}



\theoremstyle{plain}
\newtheorem{proposition}{Proposition}
\newtheorem{lemma}[proposition]{Lemma}
\newtheorem{theorem}[proposition]{Theorem}
\newtheorem{corollary}[proposition]{Corollary}
\theoremstyle{definition}
\newtheorem{definition}[proposition]{Definition}
\newtheorem{definition-theorem}[proposition]{Definition--Theorem}
\newtheorem{definition-proposition}[proposition]{Definition--Proposition}
\newtheorem{remark}[proposition]{Remark}

\newtheorem{example}[proposition]{Example}

\theoremstyle{definition}

\theoremstyle{plain}

\theoremstyle{plain}
\newtheorem{Thm}{Theorem}

\numberwithin{equation}{subsection}
\numberwithin{proposition}{subsection}
\numberwithin{conj}{subsection}	

\newcommand{\cate}[1]{\ensuremath{\mathsf{#1}}}	
\newcommand{\dcate}[1]{\ensuremath{\text{-}\mathsf{#1}}}	



\usepackage{amsmath}
\usepackage[nottoc]{tocbibind}

\usepackage{geometry}
\geometry{
	paper=a4paper,
	top=3cm,
	inner=2.54cm,
	outer=2.54cm,
	bottom=3cm,
	headheight=5ex,
	headsep=5ex,
}

\usepackage{lmodern}

\title{Higher localization and higher branching laws}
\author{Wen-Wei Li}
\date{}


\makeatletter
\renewcommand{\l@section}{\@dottedtocline{1}{1.5em}{2.0em}}
\renewcommand{\l@subsection}{\@dottedtocline{2}{4.0em}{3.0em}}
\makeatother

\begin{document}

\maketitle

\begin{abstract}
	For a connected reductive group $G$ and an affine smooth $G$-variety $X$ over the complex numbers, the localization functor takes $\mathfrak{g}$-modules to $D_X$-modules. We extend this construction to an equivariant and derived setting using the formalism of h-complexes due to Beilinson--Ginzburg, and show that the localizations of Harish-Chandra $(\mathfrak{g}, K)$-modules onto $X = H \backslash G$ have regular holonomic cohomologies when $H, K \subset G$ are both spherical reductive subgroups. The relative Lie algebra homologies and $\Ext$-branching spaces for $(\mathfrak{g}, K)$-modules are interpreted geometrically in terms of equivariant derived localizations. As direct consequences, we show that they are finite-dimensional under the same assumptions, and relate Euler--Poincaré characteristics to local index theorem; this recovers parts of the recent results of M.\ Kitagawa. Examples and discussions on the relation to Schwartz homologies are also included.
\end{abstract}


\tableofcontents

\section{Introduction}\label{sec:intro}
\subsection{Backgrounds}
We work over the complex numbers. Let $G$ be a connected reductive group with Lie algebra $\mathfrak{g}$. The main theme of this work begins with the functor
\[ \mathscr{D}_X \dotimes{U(\mathfrak{g})} (\cdot): \mathfrak{g}\dcate{Mod} \to \mathscr{D}_X\dcate{Mod} \]
where $X$ is a smooth variety with right $G$-action (we say that $X$ is a $G$-variety), with sheaf of algebraic differential operators $\mathscr{D}_X$, and $\mathfrak{g}\dcate{Mod}$ (resp.\ $\mathscr{D}_X\dcate{Mod}$) is the abelian category of $\mathfrak{g}$-modules (resp.\ left $\mathscr{D}_X$-modules that are $\mathscr{O}_X$-quasi-coherent). The tensor product is taken using the homomorphism $j: U(\mathfrak{g}) \to D_X := \Gamma(X, \mathscr{D}_X)$ induced by $G$-action.

This functor is called \emph{localization}. Its significance can be partly explained by the fact that when $X$ is a homogeneous $G$-space, the fiber of $\mathscr{D}_X \dotimes{U(\mathfrak{g})} V$ at $x \in X$ is isomorphic to $V/\mathfrak{h} V$ where $H := \Stab_G(x)$; therefore, localization organizes \emph{various} spaces of co-invariants into a geometric object over $X$.

The celebrated Beilinson--Bernstein localization is the special case when $X$ is the flag variety of $G$, but allowing some twists on $\mathscr{D}_X$. It has tremendous consequences in the study of $\mathfrak{g}\dcate{Mod}$ and related structures. In this work we try to use localization in another way. Namely, we start with a homogeneous $G$-space $X$ and study aspects of representation theory of $G$ or its real form that are ``relative'' to $X$, by means of the localization functors.

Localization has been utilized in a similar flavor in the ``group case'' $G = H \times H$ and $X = H$ by D.\ Ben-Zvi and I.\ Ganev \cite{BZG19}, who applied it to study the asymptotics of matrix coefficients of admissible representations. They also related this construction to Beilinson--Bernstein localization by specialization at infinity. We remark that V.\ Ginzburg's work \cite{Gin89} played an important role there; specifically, it ensures that the localized $D_H$-modules are regular holonomic and puts some control on its characteristic variety --- in fact, their irreducible constituents are character $D_H$-modules.

A similar result of regularity is contained in \cite{Li22} for localizations of Harish-Chandra $(\mathfrak{g}, K)$-modules to $X := H \backslash G$, where $H, K \subset G$ are reductive spherical subgroups. By a \emph{Harish-Chandra $(\mathfrak{g}, K)$-module}, we mean a $(\mathfrak{g}, K)$-module that is finitely generated over $\mathfrak{g}$ and locally $\mathcal{Z}(\mathfrak{g})$-finite, where $\mathcal{Z}(\mathfrak{g})$ is the center of $U(\mathfrak{g})$. By sphericity of $H$ (ditto for $K$), we mean that the homogeneous $G$-space $H \backslash G$ has an open Borel orbit. The proof is based on a variant of Ginzburg's criterion of regularity in \cite{Gin89}. Note that reductivity implies $X$ is affine, so we may work with $D_X\dcate{Mod}$ instead of $\mathscr{D}_X\dcate{Mod}$.

Nonetheless, the usage of localization is made complicated by the following two facts, at least.
\begin{itemize}
	\item Given an equivariant morphism $f: Y_1 \to Y_2$ between smooth $G$-varieties, there is no obvious way to relate the corresponding localizations via inverse or direct images;
	\item Localization is a right exact functor, but it is non-exact in many cases of interest. This will be shown in Remark \ref{rem:non-exactness}.
\end{itemize}

\subsection{Higher localization}
In the first part of this work, we generalize these ideas to the equivariant and derived setting, i.e.\ to higher localizations.

First off, take a reductive subgroup $K \subset G$ and let $(\mathfrak{g}, K)\dcate{Mod}$ be the abelian category of $(\mathfrak{g}, K)$-modules. In representation theory, one is especially interested in its subcategory of Harish-Chandra $(\mathfrak{g}, K)$-modules.

Take an affine homogeneous space $X = H \backslash G$ and let $(D_X, K)\dcate{Mod}$ be the abelian category of $K$-equivariant $D_X$-modules. Since $j: U(\mathfrak{g}) \to D_X$ is equivariant, it is easily seen that the localization lifts to the equivariant level:
\[ D_X \dotimes{U(\mathfrak{g})} (\cdot): (\mathfrak{g}, K)\dcate{Mod} \to (D_X, K)\dcate{Mod}. \]
We wish to left-derive this right exact functor in order to obtain ``higher localizations'', and apply them to Harish-Chandra modules. Two issues arise immediately.
\begin{enumerate}
	\item Homological algebra for $(\mathfrak{g}, K)$-modules is done in the classical way in \cite{KV95}. On the other hand, the correct $K$-equivariant derived category of $D_X$-modules requires less naive techniques when $K \neq \{1\}$, such as that of Bernstein--Lunts \cite{BL94} which replaces $X$ by various resolutions $P \to X$, on which $K$ acts freely (see also \cite{Ac21}).
	
	This kind of ``type mismatch'' makes it non-trivial to derive equivariant localization. Nor is there any obvious way to define it as a compatible family on resolutions.
	
	\item Another option is to consider $(\mathfrak{g}, K)\dcate{Mod} \to D_X\dcate{Mod}$ instead, in order to avoid equivariant derived categories. This discards too many structures, thus impedes applications to higher branching laws. Moreover, since nonzero projectives in $(\mathfrak{g}, K)\dcate{Mod}$ are never Harish-Chandra, it seems difficult to prove regularity or holonomicity in cohomologies, unless one addresses the first issue simultaneously.
	
	For similar reasons, it seems unfeasible to define the derived localization through realization functor \cite[Theorem  A.7.16]{Ac21}.
\end{enumerate}

Our approach to these problems is based on the formalism of \emph{h-complexes}, reviewed in \S\ref{sec:dg}. This theory is presented in \cite{BL95}, attributed to Beilinson--Ginzburg and the earlier work of Duflo--Vergne \cite{DV87}; for subsequent developments, see \cite{Pan95, Pan05, Pan07, Ki12} and \cite[Chapter 7]{BD}. Let $A \in \{U(\mathfrak{g}), D_X\}$, so there is a natural homomorphism $j: \mathfrak{k} \to A$ of Lie algebras (the Lie brackets in $A$ are commutators). A weak $(A, K)$-module is a vector space $M$ with
\begin{itemize}
	\item a left $A$-module structure, say through $\alpha: A \to \End_{\CC}(M)$
	\item an algebraic $K$-action, say through $\rho: K \to \Aut_{\CC}(M)$, compatibly with $A$,
\end{itemize}
but the $\mathfrak{k}$-actions $\dd\rho$ and $\alpha j$ on $M$ are not required to agree; if they agree, we obtain an $(A, K)$-module. An h-complex over $(A, K)$ is a complex $(C, d)$ of weak $(A, K)$-modules together with a family of maps $i_\xi \in \End^{-1}(C) := \End^{-1}_{\CC}(C)$, linear in $\xi \in \mathfrak{k}$, such that
\begin{enumerate}[(i)]
	\item $k i_\xi k^{-1} = i_{\Ad(k)\xi}$ for all $k \in K$;
	\item $i_\xi$ is $A$-linear for all $\xi \in \mathfrak{k}$;
	\item $i_\xi i_\eta + i_\eta i_\xi = 0$ for all $\xi, \eta \in \mathfrak{k}$;
	\item $d i_\xi + i_\xi d = (\dd\rho - \alpha j)(\xi)$.
\end{enumerate}
The last homotopy condition implies that $\Hm^n(C)$ is an $(A, K)$-module for all $n$. There are also translation functors $C \mapsto C[1]$, mapping cones and $\Hom$-complexes for h-complexes. This leads to the h-derived category ${}^{\mathrm{h}} \cate{D}(A, K)$: it is triangulated and endowed with a $t$-structure whose heart is $(A, K)\dcate{Mod}$. For every subgroup $T \subset K$ there are functors of oblivion fitting into commutative diagrams up to isomorphism:
\[\begin{tikzcd}
	{}^{\mathrm{h}} \cate{D}(A, K) \arrow[r, "{\Hm^n}"] \arrow[d] & (A, K)\dcate{Mod} \arrow[d] \\
	{}^{\mathrm{h}} \cate{D}(A, T) \arrow[r, "{\Hm^n}"] & (A, T)\dcate{Mod}.
\end{tikzcd}\]

Therefore we obtain the bounded h-derived categories
\[ {}^{\mathrm{h}} \cate{D}^{\mathrm{b}}(\mathfrak{g}, K), \quad {}^{\mathrm{h}} \cate{D}^{\mathrm{b}}(D_X, K). \]
In \cite{BL95, Pan05}, these are shown to be equivalent to the usual bounded versions
\[ \cate{D}^{\mathrm{b}}(\mathfrak{g}, K), \quad \cate{D}^{\mathrm{b}}_K(X) \]
of the derived category of $(\mathfrak{g}, K)$-modules and the $K$-equivariant derived category of $D_X$-modules, respectively; in particular, the $\Ext$'s are the same. They proved the second case for non-affine $X$ and non-reductive $K$ as well.

The best way to understand the h-construction is to do homological algebra over \emph{Harish-Chandra dg-algebras}; this is just an equivariant version of the theory of dg-modules over dg-algebras (``dg'' = differential graded), which is nowadays standard; see \cite{BL94} or \cite{Yek20}. Consequently, left and right h-derived functors are defined, upon replacing projective (resp.\ injective) resolutions by K-projective (resp.\ K-injective) resolutions. As we assumed $X$ is affine and $K$ is reductive, there are enough K-projective bounded-above h-complexes over $(D_X, K)$.

We adopt this toolbox to deduce the following main theorem. Let
\[ \mathbf{Loc}_X = \mathbf{Loc}_{X, K}: {}^{\mathrm{h}} \cate{D}^{\mathrm{b}}(\mathfrak{g}, K) \to {}^{\mathrm{h}} \cate{D}^{\mathrm{b}}(D_X, K) \]
be the equivariant derived localization functor so obtained; see \S\ref{sec:Loc-functor}. An advantage of the h-construction is that for all reductive subgroups $T$ of $K$, one easily obtains commutative diagrams up to isomorphisms (Proposition \ref{prop:Loc-oblv}):
\[\begin{tikzcd}[column sep=large]
	{}^{\mathrm{h}} \cate{D}^{\mathrm{b}}(\mathfrak{g}, K) \arrow[r, "{\mathbf{Loc}_{X, K}}"] \arrow[d] & {}^{\mathrm{h}} \cate{D}^{\mathrm{b}}(D_X, K) \arrow[d] \\
	{}^{\mathrm{h}} \cate{D}^{\mathrm{b}}(\mathfrak{g}, T) \arrow[r, "{\mathbf{Loc}_{X, T}}"'] & {}^{\mathrm{h}} \cate{D}^{\mathrm{b}}(D_X, T).
\end{tikzcd}\]

A closely related approach to $\mathbf{Loc}_{X, K}$ can be found in \cite[\S 7.8]{BD}, as a part of the ``Hecke patterns''.

\begin{Thm}
	Let $H$ and $K$ be spherical reductive subgroups of $G$. Let $V$ be a Harish-Chandra $(\mathfrak{g}, K)$-module, then the cohomologies of $\mathbf{Loc}_X(V)$ are all regular holonomic as $D_X$-modules, and their characteristic varieties are included in the nilpotent locus.
\end{Thm}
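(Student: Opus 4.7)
My approach is to deduce the theorem from the non-derived regularity result of \cite{Li22} by analyzing characteristic varieties of the higher $\Tor$'s through good filtrations, and then upgrading to regularity via the finite $K$-orbit structure on $X$. First, by the oblivion compatibility of $\mathbf{Loc}$ (Proposition \ref{prop:Loc-oblv}) with $T = \{1\}$, the underlying $D_X$-module of $\Hm^{-n}\mathbf{Loc}_X(V)$ coincides with $\Tor^{U(\mathfrak{g})}_n(D_X, V)$. Since regular holonomicity and characteristic variety are invariants of the underlying $D_X$-module, it suffices to show that each $\Tor^{U(\mathfrak{g})}_n(D_X, V)$, $n \geq 0$, is regular holonomic with characteristic variety contained in $\mu^{-1}(\mathcal{N}) \subseteq T^*X$, where $\mu: T^*X \to \mathfrak{g}^*$ is the moment map and $\mathcal{N}$ is the nilpotent cone.

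For the characteristic-variety bound and holonomicity, I would equip $V$ with a good $U(\mathfrak{g})$-filtration; since $V$ is $\mathcal{Z}(\mathfrak{g})$-finite and finitely generated, $\gr V$ is annihilated by an ideal of finite codimension in $\Sym(\mathfrak{g})^G$, hence supported on $\mathcal{N}$ by Chevalley restriction. Next, pick a finitely generated free resolution $P^\bullet \to V$ carrying compatible good filtrations, so that $\gr P^\bullet \to \gr V$ is a resolution over $\Sym(\mathfrak{g})$. Applying $D_X \otimes_{U(\mathfrak{g})} (-)$ and the order filtration on $D_X$, the standard Rees-algebra spectral sequence produces
\[
\operatorname{SS}\bigl(\Tor^{U(\mathfrak{g})}_n(D_X, V)\bigr) \; \subseteq \; \Supp_{\mathcal{O}_{T^*X}} \Tor^{\Sym(\mathfrak{g})}_n\bigl(\mathcal{O}_{T^*X}, \gr V\bigr) \; \subseteq \; \mu^{-1}(\mathcal{N}),
\]
using that $\Tor$-supports are bounded by the intersection of supports and that $\mathcal{O}_{T^*X}$ is a $\Sym(\mathfrak{g})$-algebra via $\mu^*$. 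Holonomicity then follows from the theorem of Panyushev/Knop that $\mu^{-1}(\mathcal{N})$ is Lagrangian in $T^*X$ for the $G$-spherical space $X = H\backslash G$.

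Regularity is the crux. The $n = 0$ case is precisely the Ginzburg-type criterion established in \cite{Li22}. For $n \geq 1$, my plan is to combine two dévissages: the finite length of $V$ as a Harish-Chandra module gives long exact sequences of $\Tor^{U(\mathfrak{g})}_\bullet(D_X, -)$ that reduce the statement to irreducible $V$, while the finiteness of $K$-orbits on $X$ --- a consequence of the double sphericity of $H$ and $K$, via the Brion--Vinberg theorem --- allows one to restrict to each orbit $\mathcal{O}$ via Kashiwara's equivalence and transfer the question to $K$-equivariant $D$-modules on a smaller homogeneous space, where equivariance together with $\mathcal{Z}(\mathfrak{g})$-finiteness forces regularity. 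An alternative would be to extend Ginzburg's criterion directly to derived tensor products through a Rees-deformation argument on the good filtration. The main obstacle lies squarely in this step: the characteristic-variety estimate and holonomicity are soft consequences of the $n = 0$ situation, but upgrading to regularity in positive degrees requires genuine input from the double-spherical orbit geometry of $X$, and the orbit-by-orbit analysis is where the real work should lie.
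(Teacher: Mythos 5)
There is a genuine gap, and it stems from a misreading of where the difficulty sits. The criterion of \cite{Li22} (Theorem \ref{prop:reg-criterion} in the paper) is not a statement about the non-derived localization only: it applies to \emph{any} $D_X$-module that is (R1) finitely generated over $D_X$, (R2) $K$-equivariant, and (R3) locally $\mathcal{Z}(\mathfrak{g})$-finite via $j: U(\mathfrak{g}) \to D_X$, and it delivers regular holonomicity \emph{and} the characteristic-variety bound $\mathrm{Ch} \subset \bm{\mu}^{-1}(\mathcal{N} \cap \mathfrak{k}^\perp)$ in one stroke. The correct strategy — and the one the paper follows — is therefore to verify (R1)--(R3) for each $\Tor^{U(\mathfrak{g})}_n(D_X, V)$. (R1) follows from a finite free resolution of $V$ over the Noetherian ring $U(\mathfrak{g})$ and the Noetherianity of $D_X$; (R2) is automatic from the equivariant construction of $\mathbf{Loc}_{X,K}$. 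The entire difficulty is (R3): proving that the \emph{higher} $\Tor$'s remain locally $\mathcal{Z}(\mathfrak{g})$-finite. This is Proposition \ref{prop:local-Zg-finiteness}, and it requires genuine input: Knop's theorem that $D_X$ is free over $\mathcal{Z}(X) = D_X^G$ and that $\mathcal{Z}(X)$ is finite over $\mathcal{Z}(\mathfrak{g})$ (this is where sphericity of $H$ enters), Kostant's freeness of $U(\mathfrak{g})$ over $\mathcal{Z}(\mathfrak{g})$, a change-of-rings computation identifying $\Tor^{U(\mathfrak{g})}_n(D_X, M_\chi)$ with $D_X \otimes_{\mathcal{Z}(X)} \Tor^{\mathcal{Z}(\mathfrak{g})}_n(\mathcal{Z}(X), \mathcal{Z}(\mathfrak{g})/\mathfrak{m}_\chi)$, Kostant's tensor theorem, and dimension shifting. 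Your proposal never confronts this: your orbit-by-orbit dévissage explicitly invokes ``equivariance together with $\mathcal{Z}(\mathfrak{g})$-finiteness'' on each $K$-orbit, but the $\mathcal{Z}(\mathfrak{g})$-finiteness of the higher $\Tor$'s is precisely what has to be proved, and it does not follow from Kashiwara's equivalence or from the finiteness of $K$-orbits. Your reduction to irreducible $V$ by ``finite length'' is also unavailable in this generality (the paper's Harish-Chandra modules are only assumed finitely generated and locally $\mathcal{Z}(\mathfrak{g})$-finite; the paper instead filters by infinitesimal character).

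Two smaller remarks. Your good-filtration/Rees argument for the characteristic-variety bound is redundant once the criterion's hypotheses are verified, and as written it has a gap: choosing a filtered free resolution for which $\gr P^\bullet \to \gr V$ remains exact is not automatic (in general one only gets a spectral sequence), though this can be repaired. And the holonomicity you invoke via Lagrangian-ness of $\bm{\mu}^{-1}(\mathcal{N})$ only uses sphericity of $H$; the sphericity of $K$ is needed for the finer bound by $\bm{\mu}^{-1}(\mathcal{N} \cap \mathfrak{k}^\perp)$ and for regularity, which in the paper come packaged together in the criterion of \cite{Li22}.
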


We refer to Theorem \ref{prop:regularity} and the results in that section for complete statements. The proof is based on a criterion (Theorem \ref{prop:reg-criterion}) from \cite{Li22}. One must show that the cohomologies of $\mathbf{Loc}_X(V)$ are (R1) finitely generated over $D_X$, (R2) carry $K$-equivariant structures, and (R3) locally $\mathcal{Z}(\mathfrak{g})$-finite through
\[\mathcal{Z}(\mathfrak{g}) \subset U(\mathfrak{g}) \xrightarrow{j} D_X. \]

Note that (R2) is immediate in our setting. For the remaining conditions, one passes to the case $K = \{1\}$ by commuting $\mathbf{Loc}_X$ and $\Hm^n$ with oblivion. The hardest part is (R3): to prove it, we need deep results of F.\ Knop \cite{Kn94} on the structure of the algebra $D_X^G$ of invariant differential operators in the spherical case. This is the content of Proposition \ref{prop:local-Zg-finiteness}.

We remark that $D_X^G$ also acts on the right of the functor $D_X \dotimes{U(\mathfrak{g})} (\cdot)$, by letting $z \in D_X^G$ send $D \otimes w$ to $D z\otimes w$. This induces a right $D_X^G$-action on $\mathbf{Loc}_X$. As a by-product, Proposition \ref{prop:ZX-locally-finite} shows that $D_X^G$ acts locally finitely on the cohomologies of $\mathbf{Loc}_X(V)$, for all Harish-Chandra $(\mathfrak{g}, K)$-modules $V$.

\subsection{Higher branching}
We hope that the results on $\mathbf{Loc}_X$ will have some use in geometric representation theory. However, this work is originally motivated by \emph{branching laws} in harmonic analysis.

In the setting of $p$-adic groups $H \subset I$, the branching law studies the spaces $\Hom_H(V|_H, W)$ where $V$ (resp.\ $W$) is an admissible representation of $I$ (resp.\ $H$). In particular, one is interested in the dimension of the $\Hom$-space\footnote{We do not consider the case of $\Hom_H(V, W|_H)$ in this work.}. The $\Ext$-analogue or \emph{higher branching law}, proposed by D.\ Prasad \cite{Pra18} for $p$-adic groups, considers the corresponding problem for $\Ext^n_H(V|_H, W)$ for general $n \geq 0$. One is also interested in the Euler--Poincaré characteristic $\sum_n (-1)^n \dim \Ext^n_H(V|_H, W)$, well-defined as long as $\dim \Ext^n_H(V|_H, W)$ is finite and vanishes for $n \gg 0$. By replacing $I$ by $H \times I$ in which $H$ embeds diagonally, the problem can be reduced to the case $W = \CC$, the trivial representation.

This work is motivated by higher branching laws in the Archimedean case. We formulate it in the algebraic framework, namely by considering subgroups
\begin{equation*}\begin{tikzcd}
	H \arrow[phantom, r, "\subset" description] & G \\
	K^H \arrow[phantom, u, "\subset" description, sloped] \arrow[phantom, r, "\subset" description] & K \arrow[phantom, u, "\subset" description, sloped].
\end{tikzcd}\end{equation*}
where all groups are assumed to be complex reductive. We are led to study
\[ \Ext^n_{\mathfrak{h}, K^H}(V|_H, \CC), \quad n \in \Z_{\geq 0} \]
where $V$ is a $(\mathfrak{g}, K)$-module and $V|_H$ denotes its restriction to $(\mathfrak{h}, K^H)$. The $\Ext$ can be computed either in the h-derived category or in the classical one.

Let $X = H \backslash G$ and let $x$ be the point $H \cdot 1$. The inclusion map $i_x: \mathrm{pt} \to X$ is a morphism between $K^H$-varieties. By using the relation between localization and co-invariants, and working systematically in the h-derived categories, one obtains the following canonical isomorphisms. For all $(\mathfrak{g}, K)$-modules $V$ and all $n \in \Z$, we have
\begin{align}
	\label{eqn:Ext-interpretation}
	\Ext^n_{\mathfrak{h}, K^H}(V|_H, \CC) & \simeq \Ext^n_{D_{\mathrm{pt}}, K^H}\left( i_x^\bullet( \mathbf{Loc}_X(V) ), \CC \right), \\
	\label{eqn:Hm-interpretation}
	\Hm_n(\mathfrak{h}, K^H; V|_H) & \simeq \Hm^{-n}\Lder\left( \mathrm{coInv}^{\CC, K^H}_{\CC, \{1\}} \right) \left(i_x^\bullet \mathbf{Loc}_X(V)\right).
\end{align}
For the complete statements, see Propositions \ref{prop:RHom-RHom} and \ref{prop:H-coInv}. Here:
\begin{itemize}
	\item $\Hm_n(\mathfrak{h}, K^H; \cdot)$ are the relative Lie algebra homologies of $(\mathfrak{h}, K^H)$-modules (see \cite[p.157]{KV95}), and note that
	\[ \Ext^n_{\mathfrak{h}, K^H}(V|_H, \CC) \simeq \Hm_n(\mathfrak{h}, K^H; V|_H)^* ; \]
	\item $\mathbf{Loc}_X(V) := \mathbf{Loc}_{X, K^H}(V)$, but it is also isomorphic to the oblivion of $\mathbf{Loc}_{X, K}(V)$ via $K^H \subset K$;
	\item $i_x^\bullet: {}^{\mathrm{h}} \cate{D}^-(D_X, K^H) \to {}^{\mathrm{h}} \cate{D}^-(D_{\mathrm{pt}}, K^H)$ is the h-version of the inverse image functor for $D$-modules;
	\item $\Lder\left( \mathrm{coInv}^{\CC, K^H}_{\CC, \{1\}}\right): {}^{\mathrm{h}} \cate{D}^-(\CC, K^H) \to \cate{D}^-(\CC)$ is the left h-derived functor of taking co-invariants of h-complexes over $(\CC, K^H)$, see \S\ref{sec:inv-coinv}, and $\cate{D}(\CC)$ is the the derived category of $\CC = D_{\mathrm{pt}}$.
\end{itemize}

Note that $i_x^\bullet$ coincides with the inverse image functor under the equivalence ${}^{\mathrm{h}} \cate{D}^{\mathrm{b}}(D_X, K^H) \simeq \cate{D}^{\mathrm{b}}_{K^H}(X)$; see Proposition \ref{prop:inverse-image-compatibility}. The same can also be said for $\Lder\left( \mathrm{coInv}^{\CC, K^H}_{\CC, \{1\}}\right)$, but truncation is needed since this functor does not land in $\cate{D}^{\mathrm{b}}(\CC)$ in general.

Actually, the deductions of \eqref{eqn:Ext-interpretation} and \eqref{eqn:Hm-interpretation} are routine once the basic formalism of \S\S\ref{sec:review-dg}---\ref{sec:Loc-coinv} is set up. The h-formalism is needed only when $K^H \neq \{1\}$.

The result below is a direct consequence of the earlier theorem on regularity.

\begin{Thm}
	Assume that $H, K \subset G$ are both spherical reductive subgroups. For all Harish-Chandra $(\mathfrak{g}, K)$-modules $V$ and all $n \in \Z$, there are canonical isomorphisms
	\begin{align*}
		\Ext^n_{\mathfrak{h}, K^H}(V|_H, \CC) & \simeq \Ext^n_{D_{\mathrm{pt}}, K^H}\left( i_x^! \mathbf{Loc}_X(V)[\dim X], \CC \right), \\
		\Hm_n(\mathfrak{h}, K^H; V|_H) & \simeq \Hm^{- n + \dim X}\Lder\left( \mathrm{coInv}^{\CC, K^H}_{\CC, \{1\}} \right) \left(i_x^! \mathbf{Loc}_X(V)\right).
	\end{align*}
	When $K^H = \{1\}$ we also have
	\begin{align*}
		\Ext^n_{\mathfrak{h}}(V|_H, \CC) & \simeq \Ext^n_{D_X}\left( \mathbf{Loc}_X(V), i_{x, *}(\CC)[\dim X] \right), \\
		\Hm_n(\mathfrak{h}; V|_H) & \simeq \Hm^{-n - \dim X} \left(i_x^* \mathbf{Loc}_X(V)\right).
	\end{align*}
	Here $i_x^!$, $i_x^*$ and $i_{x, *}$ are defined to match the synonymous functors between the constructible equivariant derived categories via Riemann--Hilbert correspondence.
	
	All the complexes of $D$-modules above are bounded with regular holonomic cohomologies.	Consequently, all these vector spaces are finite-dimensional.
\end{Thm}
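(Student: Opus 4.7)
The plan is to reduce everything to the already-established isomorphisms \eqref{eqn:Ext-interpretation} and \eqref{eqn:Hm-interpretation}, then shift indices using standard Riemann--Hilbert conventions, and finally extract finite-dimensionality from the regularity theorem. By the regularity theorem, $\mathbf{Loc}_X(V)$ lies in ${}^{\mathrm{h}}\cate{D}^{\mathrm{b}}(D_X, K^H)$ with regular holonomic cohomologies in all degrees. I would then invoke the convention that $i_x^!$, $i_x^*$, $i_{x,*}$ correspond, via Riemann--Hilbert, to the sheaf-theoretic functors of the same name on the equivariant constructible derived category, so that on regular holonomic complexes one has canonical natural isomorphisms
\[ i_x^! \;\cong\; i_x^\bullet[-\dim X], \qquad i_x^* \;\cong\; i_x^\bullet[\dim X], \]
translating the $D$-module naive pullback $i_x^\bullet = \mathcal{O}_{\mathrm{pt}} \otimes^{\mathrm{L}}_{\mathcal{O}_X}(-)$ into the sheaf-side conventions (the shift reflecting that $\mathrm{DR}$ sends $\mathcal{O}_X$ to $\CC_X[\dim X]$). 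Substituting into \eqref{eqn:Ext-interpretation} and \eqref{eqn:Hm-interpretation} then yields the first two formulas of the theorem directly.

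For the case $K^H = \{1\}$, I would note that $\Lder \mathrm{coInv}^{\CC, \{1\}}_{\CC, \{1\}}$ reduces to the identity on $\cate{D}^-(\CC)$, so combining $i_x^* \cong i_x^\bullet[\dim X]$ with \eqref{eqn:Hm-interpretation} produces the $\Hm_n$ formula at once. For the $\Ext$ formula the plan is to invoke the $D$-module adjunction $(i_x^*, i_{x,*})$:
\[ \RHom_{D_X}\bigl(M,\, i_{x,*}\CC[\dim X]\bigr) \;\cong\; \RHom_{D_{\mathrm{pt}}}\bigl(i_x^* M,\, \CC[\dim X]\bigr) \;\cong\; \RHom_{D_{\mathrm{pt}}}\bigl(i_x^\bullet M,\, \CC\bigr), \]
and then close the argument using \eqref{eqn:Ext-interpretation}. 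Finite-dimensionality will then follow since $i_x^!$ and $i_x^*$ preserve boundedness and regular holonomicity, while at a point a regular holonomic $D$-module is simply a finite-dimensional vector space; hence $i_x^! \mathbf{Loc}_X(V)$ and $i_x^* \mathbf{Loc}_X(V)$ are bounded complexes of finite-dimensional $K^H$-representations, after which taking $\Ext$ or derived coinvariants against the trivial representation of the reductive group $K^H$ preserves both finite-dimensionality in each degree and the bounded cohomological range.

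The step I expect to be genuinely delicate is not the shift identifications themselves --- these are standard on the non-equivariant side --- but rather verifying that the adjunction $(i_x^*, i_{x,*})$ and the shifts $i_x^! \cong i_x^\bullet[-\dim X]$, $i_x^* \cong i_x^\bullet[\dim X]$ lift cleanly to the equivariant h-derived category. Concretely, one must track them through the equivalences ${}^{\mathrm{h}}\cate{D}^{\mathrm{b}}(D_X, K^H) \simeq \cate{D}^{\mathrm{b}}_{K^H}(X)$ and through the forgetful functors to the non-equivariant setting, in the spirit of Proposition \ref{prop:inverse-image-compatibility}. Once those compatibilities are in place, the rest of the argument is essentially bookkeeping, powered by the earlier regularity theorem.
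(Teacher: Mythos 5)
Your proposal follows essentially the same route as the paper: establish regular holonomicity of $\mathbf{Loc}_X(V)$ via the regularity theorem, feed this into the already-proved identifications \eqref{eqn:Ext-interpretation} and \eqref{eqn:Hm-interpretation} (Propositions \ref{prop:RHom-RHom} and \ref{prop:H-coInv}), convert $i_x^\bullet$ into $i_x^!$ and $i_x^*$ by the shift conventions, use the $(i_x^*, i_{x,*})$ adjunction for the $K^H=\{1\}$ $\Ext$-formula, and extract finiteness from the fact that a bounded regular holonomic complex on a point has finite-dimensional cohomologies; you also correctly isolate the compatibility of these operations with Beilinson's equivalence (Proposition \ref{prop:inverse-image-compatibility}) as the only genuinely delicate input. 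One small inaccuracy in your last paragraph: taking equivariant $\Ext(\cdot,\CC)$ or derived coinvariants over a positive-dimensional reductive $K^H$ at a point does \emph{not} preserve the bounded cohomological range (e.g.\ $\Ext^\bullet_{D_{\mathrm{pt}}, K^H}(\CC,\CC) \simeq \Hm^\bullet(BK^H)$ is unbounded, and the paper itself notes that $\Lder\bigl(\mathrm{coInv}^{\CC,K^H}_{\CC,\{1\}}\bigr)$ does not land in $\cate{D}^{\mathrm{b}}(\CC)$); it does preserve degreewise finite-dimensionality, which is all the stated theorem needs, and the eventual vanishing for $|n|\gg 0$ is obtained in the paper from the standard resolution on the $(\mathfrak{g},K)$-side, not from the $D$-module side.
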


We refer to Theorem \ref{prop:RHom-Loc} and Proposition \ref{prop:H-Loc} for the complete statements.

The finiteness of $\Hm_n(\mathfrak{h}, K^H; V|_H)$ is covered by a recent work of M.\ Kitagawa \cite{Ki21}, who considers a broader setting of branching laws and obtains uniform bounds. His approach is to interpret relative Lie algebra cohomologies in terms of Zuckerman functors, and then pass to homologies by Poincaré duality. The approach in this work relies on standard results about constructible or regular holonomic equivariant derived categories; although this is conceptually straightforward, the resulting bound is less effective and involves more machinery.

In the special case $K^H = \{1\}$, we can connect the Euler--Poincaré characteristic
\[ \mathrm{EP}_{\mathfrak{h}}(V|_H, \CC) := \sum_n (-1)^n \dim \Ext^n_{\mathfrak{h}}(V|_H, \CC) \]
to a celebrated topological counterpart, namely the local Euler--Poincaré characteristic at $x$ of the \emph{solution complex} of $\mathbf{Loc}_{X, \{1\}}(V)$. See Theorem \ref{prop:local-index} for the complete statement.

\begin{Thm}
	Retain the previous assumptions on $H, K \subset G$ and assume $K^H = \{1\}$. Let $V$ be a Harish-Chandra $(\mathfrak{g}, K)$-module, and set $\mathcal{L} := \mathbf{Loc}_{X, \{1\}}(V)$. Then $\mathrm{EP}_{\mathfrak{h}}(V|_H, \CC)$ equals the local Euler--Poincaré characteristic
	\[ \chi_x\left( \mathrm{Sol}_X(\mathcal{L}) \right) \]
	of the solution complex of $\mathcal{L}$ at $x$, which is expressible in terms of characteristic cycles of $\mathcal{L}$ and Euler obstructions by Kashiwara's local index theorem \cite[Theorem 4.6.7]{HTT08}.
\end{Thm}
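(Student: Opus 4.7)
My plan is to combine the preceding theorem with the Riemann--Hilbert correspondence and Kashiwara's local index theorem. By Theorem \ref{prop:RHom-Loc} applied with $K^H = \{1\}$, the complex $\mathcal{L}$ is bounded with regular holonomic cohomologies, so $i_x^* \mathcal{L}$ is a bounded complex of finite-dimensional $\CC$-vector spaces; in particular $\mathrm{EP}_{\mathfrak{h}}(V|_H, \CC)$ is well defined.

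The same theorem supplies $\Hm_n(\mathfrak{h}; V|_H) \simeq \Hm^{-n - \dim X}(i_x^* \mathcal{L})$, and we have $\dim \Ext^n_{\mathfrak{h}}(V|_H, \CC) = \dim \Hm_n(\mathfrak{h}; V|_H)$. A direct reindexing therefore yields $\mathrm{EP}_{\mathfrak{h}}(V|_H, \CC) = (-1)^{\dim X} \chi(i_x^* \mathcal{L})$. Because the paper's $i_x^*$ is set up to correspond under Riemann--Hilbert to the topological $i_x^{-1}$ applied to $\mathrm{DR}_X(\mathcal{L})$, this rewrites as $(-1)^{\dim X} \chi_x(\mathrm{DR}_X(\mathcal{L}))$. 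The standard relation $\mathrm{Sol}_X(\mathcal{L}) \simeq \VerD_X \mathrm{DR}_X(\mathcal{L})[-\dim X]$, together with the Euler-characteristic-invariance of Verdier duality at a point, produces a compensating factor $(-1)^{\dim X}$, so that $\chi_x(\mathrm{Sol}_X(\mathcal{L})) = (-1)^{\dim X} \chi_x(\mathrm{DR}_X(\mathcal{L}))$. Substituting yields the central identity $\mathrm{EP}_{\mathfrak{h}}(V|_H, \CC) = \chi_x(\mathrm{Sol}_X(\mathcal{L}))$. Finally, Kashiwara's local index theorem \cite[Theorem 4.6.7]{HTT08} expresses this local Euler--Poincaré characteristic as $\sum_\alpha m_\alpha \mathrm{Eu}_{X_\alpha}(x)$, where the characteristic cycle $CC(\mathcal{L}) = \sum_\alpha m_\alpha [T^*_{X_\alpha} X]$ is supported in the nilpotent locus by the preceding regularity theorem.

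The principal challenge is purely cosmetic: two factors of $(-1)^{\dim X}$ must cancel, one coming from the $[\dim X]$ shift in the $\Ext$-interpretation of Theorem \ref{prop:RHom-Loc} and one from the $[-\dim X]$ discrepancy between $\mathrm{Sol}_X$ and $\VerD_X \mathrm{DR}_X$. Beyond this bookkeeping every step is a routine appeal to the formalism set up earlier or to standard $D$-module/perverse-sheaf dictionary entries; since Euler characteristics ignore $K$-equivariant structures there is nothing extra to verify in the higher-localization setting, and the trivial case $\mathcal{L} = \mathscr{O}_X$ (where both sides evaluate to $1$) provides a transparent sanity check.
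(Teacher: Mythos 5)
Your argument is correct, but it follows a genuinely different route from the paper's, and one of your steps is heavier than you acknowledge. The paper does not pass through Euler characteristics until the very end: it first proves the complex-level isomorphism $\RHom_{\mathfrak{h}}(V|_H, \CC) \simeq i_x^* \mathrm{Sol}_X(\mathcal{L})$ by writing $i_x^* \mathrm{Sol}_X(\mathcal{L}) \simeq i_x^* \mathrm{DR}_X(\mathbb{D}_X \mathcal{L})[-\dim X] \simeq \mathbb{D}_{\mathrm{pt}}\left( i_x^! \mathcal{L}[\dim X] \right)$ and matching the right-hand side with the $\RHom_{D_{\mathrm{pt}}}\left( i_x^! \mathbf{Loc}_X(V)[\dim X], \CC\right)$ of Theorem \ref{prop:RHom-Loc}; the numerical identity is then immediate. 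You instead start from the homology interpretation $\Hm_n(\mathfrak{h}; V|_H) \simeq \Hm^{-n-\dim X}(i_x^*\mathcal{L})$, reduce to $\mathrm{EP} = (-1)^{\dim X}\chi_x(\mathrm{DR}_X(\mathcal{L}))$, and then convert $\mathrm{DR}_X$ into $\mathrm{Sol}_X$ via Verdier duality. That last conversion requires $\chi_x\left(\VerD_X G\right) = \chi_x(G)$ for constructible $G$, equivalently $\chi(i_x^* G) = \chi(i_x^! G)$; this is \emph{not} "purely cosmetic" sign bookkeeping but a genuine geometric fact special to complex varieties (it fails for general real stratified spaces, and is usually proved either from the vanishing of the Euler characteristic of the link of a point or from the duality-invariance of characteristic cycles combined with the very index theorem you invoke afterwards). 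It is standard enough to cite, but you should name it as an input rather than fold it into "bookkeeping"; the paper's route, by pairing $i_x^!$ with $\mathbb{D}_{\mathrm{pt}}$ from the outset, avoids it entirely and moreover yields the stronger isomorphism of complexes recorded in Theorem \ref{prop:local-index}. (Two minor points: the homology interpretation you quote is Proposition \ref{prop:H-Loc} rather than Theorem \ref{prop:RHom-Loc} itself, and your sanity check "$\mathcal{L} = \mathscr{O}_X$" does not actually arise from any Harish-Chandra module, since $\mathbf{Loc}_X(\CC)$ is $D_X \otimesL[U(\mathfrak{g})] \CC$, not $\mathscr{O}_X$.)
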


We remark that the reductivity of $H$ and $K$ makes homological algebra over $(D_X, K)$ and $(\mathfrak{g}, K)$ easier; it is also involved with the criterion of regularity and the results of Knop in \S\S\ref{sec:regularity-criterion}---\ref{sec:end-of-regularity}. It is still unclear whether our approach can extend to non-reductive subgroups.

It should be emphasized that the finiteness of $\Ext^n$ in the spherical case is just a first example of the usage of higher localization, and the computation of $\chi_x(\mathrm{Sol}_X(\mathcal{L}))$ requires knowledge about characteristic cycles. To gain more applications in higher branching laws, one needs a deeper understanding of $\mathbf{Loc}_X(V)$. Regularity is merely the first step.

\subsection{Twist by characters}
In arithmetic applications, one often has to study branching laws in which the trivial module $\CC$ of $H$ is replaced by some character $\chi$, say for $p$-adic groups. In the setting of $(\mathfrak{g}, K)$-modules, one should consider a $1$-dimensional $(\mathfrak{h}, K^H)$-module; by abusing notation, we identify it with the underlying character $\chi: \mathfrak{h} \to \CC$. Denote by $\chi^\vee$ its contragredient $(\mathfrak{h}, K^H)$-module\footnote{In the study of branching laws of real groups, many authors allow $\chi$ to be a finite-dimensional module. It seems hard to incorporate this case into our formalism.}.

Since $H$ is assumed to be reductive, $\chi$ factors through the Lie algebra of some torus $S = H/\underline{H}$ and can be identified with an element of $\mathfrak{s}^*$. The spaces
\[ \Ext^n_{\mathfrak{h}, K^H}(V|_H, \chi), \quad \Hm_n(\mathfrak{h}, K^H; V|_H \otimes \chi^\vee) \]
can still be interpreted via localization to $X = H \backslash G$, provided that we work with twisted differential operators (TDO's) on $X$.

Specifically, we have an $S$-torsor (on the left)
\[ \pi: \tilde{X} := \underline{H} \backslash G \to H \backslash G = X \]
that is also $G$-equivariant (on the right). This situation is already considered by \cite{BL95}, where it is called an $S$-monodromic $G$-variety.

Let $\mathfrak{m}_\chi \subset \Sym(\mathfrak{s})$ be the maximal ideal corresponding to $\chi$. The sheaf of $\chi$-twisted differential operators on $X$ is
\[ \mathscr{D}_{X, \chi} := (\pi_* \mathscr{D}_{\tilde{X}})^S \big/ \mathfrak{m}_\chi (\pi_* \mathscr{D}_{\tilde{X}})^S. \]

In the affine setting, $D_{X, \chi} = \Gamma(X, \mathscr{D}_{X, \chi})$ equals $D_{\tilde{X}}^S / \mathfrak{m}_\chi D_{\tilde{X}}^S$. There is again a $G$-equivariant homomorphism
\[ j: U(\mathfrak{g}) \to D_{X, \chi}, \]
by which we define the $\chi$-twisted localization
\[ \mathbf{Loc}_{X, \chi} = \mathbf{Loc}_{X, K, \chi}: {}^{\mathrm{h}} \cate{D}^{\mathrm{b}}(\mathfrak{g}, K) \to {}^{\mathrm{h}} \cate{D}^{\mathrm{b}}(D_{X, \chi}, K). \]

All the earlier results have monodromic counterparts in this context recorded in \S\ref{sec:monodromic}. For example, Proposition \ref{prop:coinv-Loc-monodromic} relates the fibers of $D_{X, \chi} \dotimes{U(\mathfrak{g})} V$ to $(\mathfrak{h}, \chi)$-co-invariants of $V$, i.e.\ the quotient of $V$ by the span of all $\eta v - \chi(\eta)v$ where $\eta \in \mathfrak{h}$ and $v \in V$. On the other hand, the $K^H$-equivariant sheaf $\CC$ on $\mathrm{pt}$ must be replaced by the monodromic variant $\CC_\chi$, and this gives rise to monodromic versions of \eqref{eqn:Ext-interpretation} and \eqref{eqn:Hm-interpretation}.

The key ingredient here is a monodromic version of the regularity Theorem \ref{prop:regularity-monodromic} in the spherical case. One has to extend Knop's results \cite{Kn94} to the twisted setting of $D_{X, \chi}$. This is done in Lemma \ref{prop:Knop-monodromic}, by reducing to the spherical homogeneous $S \times G$-space $\tilde{X}$.

We remark that the Riemann--Hilbert correspondence also holds in the twisted (i.e.\ monodromic) and equivariant setting; one has to consider twisted constructible sheaves, though. We refer to \cite{Ka08}.

\subsection{On the analytic picture}
The discussions thus far focus on the algebraic setting. In the representation theory of real groups, one often takes $G$ and $H$ to be real, and take $K$ (resp.\ $K^H$) corresponding to a maximal compact subgroup of $G$ (resp.\ $H$). In studying $\Hom_{\mathfrak{h}, K^H}(V|_H, \CC)$, one is more interested in those element which extend continuously to the Casselman--Wallach globalization $E$ of $V$. This is the same as the continuous linear functionals on the space of co-invariants $E_H := E \big/ \sum_h (h-1)E$ endowed with the quotient topology.

For higher branching laws in the analytic setting, it is thus natural to replace relative Lie algebra homologies by the \emph{Schwartz homologies}
\[ \Hm^{\mathcal{S}}_n(H; E|_H), \quad n \in \Z \]
defined by Y.\ Chen and B.\ Sun \cite{BC21}; they are locally convex topological spaces, not necessarily Hausdorff, and equal to $E_H$ when $n=0$. In fact, they agree with the \emph{smooth homologies} constructed earlier by P.\ Blanc and D.\ Wigner \cite{BW83}.

In \S\ref{sec:comparison} we will define the comparison maps
\begin{equation*}
	c_n(E): \Hm_n\left(\mathfrak{h}, K^H; E^{K\text{-fini}}\right) \to \Hm^{\mathcal{S}}_n(H; E|_H), \quad n \in \Z_{\geq 0}
\end{equation*}
from the algebraic to the analytic picture, for every Casselman--Wallach representation $E$ of $G$, where $E^{K\text{-fini}}$ is the $(\mathfrak{g}, K)$-module of $K$-finite vectors in $E$.

It is natural to ask if $c_n(E)$ is an isomorphism, at least when $H$ is a reductive spherical subgroup of $G$ after extension of scalars to $\CC$. If it is indeed the case, $\Hm^{\mathcal{S}}_n(H; E|_H)$ will be finite-dimensional since $\Hm_n(\mathfrak{h}, K^H; E^{K\text{-fini}})$ is; in turn, this will imply $\Hm^{\mathcal{S}}_n(H; E|_H)$ is Hausdorff.

It seems too early to pass verdict on the question of comparison. Its validity for $n=0$ is equivalent to the automatic continuity theorem, which is open except when $H$ is a symmetric subgroup \cite{BD88} or $H=G$; another (non-reductive) example is when $H$ is maximal unipotent \cite{HT98, LLY21}. Nonetheless, we will settle two cases in affirmative.
\begin{itemize}
	\item In Example \ref{eg:admissible-restriction}, we show that $c_n(E)$ is always an isomorphism when $E$ is $K^H$-admissible in the sense of T.\ Kobayashi. This includes the case when $H$ is symmetric, $H/K^H \to G/K$ is a holomorphic embedding of Hermitian symmetric domains and $E$ is a unitary highest weight module. We refer to \cite{Ko15} for an overview of admissible restrictions.
	
	\item In Examples \ref{eg:SL2}, \ref{eg:SL2-more}, we prove that $c_n(E)$ is always an isomorphism when $G = \SL(2)$ and $H$ is a reductive spherical subgroup. This is done by an explicit computation with principal series, which suffices by a general argument of Hecht--Taylor (Proposition \ref{prop:HT-reduction}). In fact, these computations show that $\Hm_1\left(\mathfrak{h}, K^H; E^{K\text{-fini}}\right)$ can indeed be nonzero in this case.
\end{itemize}

In general, neither the finiteness nor Hausdorffness of $\Hm^{\mathcal{S}}_n(H; E|_H)$ is known for $n > 0$, even when $H$ is reductive and spherical. See \cite{BC21} for related discussions.

\subsection{Structure of this article}
In \S\ref{sec:dg} we collect the necessary definitions and backgrounds on Harish-Chandra dg-algebras, explain the derived categories and derived functors in this generality, and then proceed to the case of h-complexes and h-derived categories. Our references on Harish-Chandra dg-algebras are \cite{BL95, Pan95, Pan05, Pan07}.

In \S\ref{sec:gK-mod}, the formalism of h-complexes is applied to a pair $(\mathfrak{g}, K)$, and a generalized notion of Harish-Chandra modules is presented in Definition \ref{def:HC-module}. The results are either taken from \cite{BL95, Pan05}, or are straightforward analogues from the classical picture, cf.\ \cite{KV95}.

The \S\ref{sec:D-basic} concerns the geometric counterpart. We review the definition of h-derived category of $D$-modules, focusing on the affine case. Most of the materials are from \cite{BL95}. In addition, we show that Beilinson's equivalence is compatible with various operations on equivariant derived categories and their h-analogues, especially for the case of inverse images.

In \S\ref{sec:Loc}, we define the localization functor $\mathbf{Loc}_X$ in the equivariant h-derived setting, relate its fibers to co-invariants, and prove the main Theorem \ref{prop:regularity} about regularity.

In \S\ref{sec:Ext-application}, we begin with the general algebraic framework of $\Ext$-branching laws, relate both the $\Ext$ spaces and relative Lie algebra homologies to localizations, and draw some consequences from regularity in the spherical case (Theorems \ref{prop:RHom-Loc}, \ref{prop:local-index} and Corollaries \ref{prop:Ext-consequence-1}, \ref{prop:Ext-consequence-2}). The monodromic setting discussed in \S\ref{sec:monodromic} is similar.

Finally, in \S\ref{sec:analytic} we review the Schwartz homologies and connect them to the algebraic version via a family of maps $c_n(E)$. After several sundry results, we present three cases in which the comparison maps $c_n(E)$ are isomorphisms: admissible restriction (Example \ref{eg:admissible-restriction}), the diagonal torus in $\SL(2)$ (Example \ref{eg:SL2}), and more general reductive spherical subgroups of $\SL(2)$ (Example \ref{eg:SL2-more}). This section is largely independent of the previous ones.

\subsection{Conventions}
Unless otherwise specified (such as in \S\ref{sec:analytic}), all vector spaces, varieties and algebraic groups are defined over $\CC$. Points of a variety are assumed to be closed, and we write $\mathrm{pt} := \Spec \CC$. By default, groups act on the right of varieties.

We use the abbreviation $\otimes = \otimes_{\CC}$. The dual of a vector space $V$ is denoted by $V^*$; the invariants under a group $\Gamma$ is denoted by $V^\Gamma$; the symmetric algebra is denoted by $\Sym(V)$.

Differential operators on a smooth variety $X$ are assumed to be algebraic. The structure sheaf of $X$ is denoted by $\mathscr{O}_X$. The algebra (resp.\ sheaf of algebras) of differential operators on $X$ is denoted by $D_X$ (resp.\ $\mathscr{D}_X$).

The identity connected component of a Lie group or an algebraic group $G$ is denoted by $G^\circ$; the opposite of $G$ is denoted by $G^{\mathrm{op}}$. Subgroups are always assumed to be closed.

An affine group $K$ is said to be \emph{reductive} if $K^\circ$ is a connected reductive group. Since we are in characteristic zero, this is equivalent to the linear reductivity of $K$.

Lie algebras of Lie groups or algebraic groups are denoted by Gothic letters, such as $\mathfrak{g} = \Lie G$. The universal enveloping algebra of $\mathfrak{g}$ is denoted as $U(\mathfrak{g})$, and the center of $U(\mathfrak{g})$ is denoted as $\mathcal{Z}(\mathfrak{g})$. The adjoint action of a group is denoted by $\Ad$.

Categories and functors are all $\CC$-linear. The opposite of a category $\mathcal{C}$ is denoted by $\mathcal{C}^{\mathrm{op}}$. We neglect all set-theoretic issues and employ only $1$-categories in this work, although dg-enrichment still plays a vital role.

Complexes are assumed to be cochain complexes. Given such a complex $(C, d)$ or simply $C$, we have the corresponding chain complex given by $C_n := C^{-n}$. We denote by $\cate{C}(\CC)$ (resp.\ $\cate{K}(\CC)$, $\cate{D}(\CC)$) the category of complexes of $\CC$-vector spaces (resp.\ its homotopy category, its derived category), and put the superscripts $+, -, \mathrm{b}$ to denote the full subcategories with boundedness conditions. More generally, for an abelian category $\mathcal{A}$, we have the categories $\cate{C}(\mathcal{A})$, $\cate{K}(\mathcal{A})$, $\cate{D}(\mathcal{A})$ and so forth.

For an algebra $A$ (resp.\ Lie algebra $\mathfrak{g}$), we denote by $A\dcate{Mod}$ (resp.\ $\mathfrak{g}\dcate{Mod}$) the abelian category of left $A$-modules (resp.\ $\mathfrak{g}$-modules); the opposite algebra of $A$ is denoted by $A^{\mathrm{op}}$. These notations will be generalized to dg-modules over Harish-Chandra dg-algebras in \S\ref{sec:HC-dga}.

\subsection*{Acknowledgement}
The author would like to thank Yangyang Chen, Jan Frahm and Masatoshi Kitagawa for useful conversations and their expertise. This work is supported by NSFC-11922101.

\section{Homological algebra over Harish-Chandra pairs}\label{sec:dg}
\subsection{Review of dg-algebras and dg-modules}\label{sec:review-dg}
Below is a short review about dg-modules over dg-algebras. These materials are standard, and detailed expositions can be found in \cite[Part II]{BL94} or \cite{Yek20}.

First of all, we make $\cate{C}(\CC)$ into a symmetric monoidal category by using Koszul's sign rule, namely: the tensor product of complexes $M$ and $N$ are given by $(M \otimes N)^n = \bigoplus_{i+j=n} M^i \otimes N^j$, with $d(m \otimes n) = dm \otimes n + (-1)^i m \otimes dn$, and the braiding $M \otimes N \rightiso N \otimes M$ is $m \otimes n \mapsto (-1)^{ij} n \otimes m$ on $M^i \otimes N^j$.

\begin{definition}
	If a complex $A$ carries the structure of an algebra\footnote{Also known as a \emph{monoid} in some textbooks.} in the symmetric monoidal category $\cate{C}(\CC)$, then $A$ is said to be a \emph{dg-algebra}.
\end{definition}

Multiplication in a dg-algebra is given by a morphism $A \otimes A \to A$ in $\cate{C}(\CC)$. In concrete terms, this means that there are linear maps $A^i \otimes A^j \to A^{i+j}$ for all $i, j \in \Z$, satisfying
\[ d(xy) = dx \cdot y + (-1)^i x \cdot dy \]
and are associative and unital with respect to some element in $A^0$.

It is customary to identify the complex $A$ with $\bigoplus_n A^n$ and view $d$ as an endomorphism of that vector space. Note that dg-algebras in degree zero are simply algebras.

\begin{definition}
	By the abstract formalism of algebras in symmetric monoidal categories, it makes sense to define left (resp.\ right) $A$-modules $M$, with scalar multiplication given by morphisms $A \otimes M \to M$ (resp.\ $M \otimes A \to M$), which we call left (resp. right) \emph{dg-modules} over $A$. Homomorphisms between dg-modules are defined as morphisms between complexes that respect scalar multiplications.
\end{definition}

In concrete terms, a left dg-module over $A$ is a complex $M$ together with linear maps $A^i \otimes M^j \to M^{i+j}$ for all $i, j \in \Z$, satisfying
\[ d(am) = da \cdot m + (-1)^i a \cdot dm \]
and are associative and unital. Again, it is customary to identify $M$ with the vector space $\bigoplus_n M^n$, so that $M$ is also a left $A$-module in the non-dg sense. Right dg-modules over $A$ are described similarly, and one can also define bimodules.

When $A = \CC$ (in degree zero), left and right dg-modules are nothing but complexes.

We now review the tensor product of dg-modules; see \cite[Definition 3.3.23]{Yek20} for details.

\begin{definition}\label{def:tensor-dg}
	Let $M$ be a right dg-module and $N$ be a left dg-module over a dg-algebra $A$. Let us form the vector space $M \dotimes{A} N$ in the non-dg sense; the kernel of the natural linear map
	\[ \bigoplus_n (M \otimes N)^n \simeq (\bigoplus_i M^i) \otimes (\bigoplus_j M^j) \to M \dotimes{A} N \]
	is readily seen to be a subcomplex of $M \otimes N$. In this way, we see that $M \dotimes{A} N$ comes from a complex, which will also be denoted as $M \dotimes{A} N$.
	
	If $M$ (resp.\ $N$) is a dg-bimodule over $(R, A)$ (resp.\ over $(A, S)$), where $R$ and $S$ are dg-algebras, then $M \dotimes{A} N$ inherits a natural $(R, S)$-bimodule structure.
\end{definition}

The \emph{opposite dg-algebra} $A^{\text{op}}$ of $A$ is the same complex with the multiplication $x \overset{\mathrm{op}}{\cdot} y := (-1)^{ij} yx$ for all $x \in A^i$ and $y \in A^j$.
One passes between left dg-modules over $A$ and right dg-modules over $A^{\mathrm{op}}$ by the rule $ma := (-1)^{ij} am$, for $a \in A^i$ and $m \in M^j$. Henceforth, we will mainly consider left dg-modules.

\begin{definition}
	A \emph{dg-category} is a category $\mathcal{C}$ enriched over the symmetric monoidal category $\cate{C}(\CC)$. In other words, to each pair $(X, Y)$ of objects is assigned the $\Hom$-complex $\Hom^\bullet_{\mathcal{C}}(X, Y) \in \cate{C}(\CC)$, and there are morphisms of composition in $\cate{C}(\CC)$ of the form
	\[ \Hom^\bullet_{\mathcal{C}}(Y, Z) \otimes \Hom^\bullet_{\mathcal{C}}(X, Y) \to \Hom^\bullet_{\mathcal{C}}(X, Z), \]
	subject to strict associativty and unit laws. The non-dg version $\Hom_{\mathcal{C}}(X, Y)$ can be recovered by taking $0$-cocycles in $\Hom^\bullet_{\mathcal{C}}(X, Y)$. We also write $\End^\bullet_{\mathcal{C}}(X) := \Hom^\bullet_{\mathcal{C}}(X, X)$.
	
	A \emph{dg-functor} $\mathcal{C} \to \mathcal{D}$ between dg-categories is a functor whose action on $\Hom$ sets upgrades to $\Hom$-complexes.
\end{definition}

For example, for any abelian category $\mathcal{A}$, the category $\cate{C}(\mathcal{A})$ is naturally a dg-category. In the case of $\cate{C}(\CC)$, the $\Hom$-complexes will be denoted as $\Hom^\bullet$. Let $M$ and $N$ be complexes, typical elements in $\Hom^n(M, N)$ take the form $(f^k)_{k \in \Z}$ with $f^k: M^k \to N^{k+n}$.

\begin{definition}
	Let $\mathcal{C}$ be a dg-category. Its opposite $\mathcal{C}^{\mathrm{op}}$ is the dg-category with the same objects, and $\Hom_{\mathcal{C}^{\mathrm{op}}}^\bullet(X, Y) := \Hom_{\mathcal{C}}^\bullet(Y, X)$, with morphisms of composition
	\[ \Hom_{\mathcal{C}^{\mathrm{op}}}^p(Y, Z) \otimes \Hom_{\mathcal{C}^{\mathrm{op}}}^q(X, Y) \to \Hom_{\mathcal{C}^{\mathrm{op}}}^{p+q}(X, Z) \]
	defined as $(-1)^{pq}$ times the reversed composition in $\mathcal{C}$.
\end{definition}

We now extend the definition of $\Hom$-complexes to dg-modules.

\begin{definition}\label{def:Hom-dg}
	Let $M$ and $N$ be left dg-modules over a dg-algebra $A$. Define $\Hom^\bullet_A(M, N)$ as the subcomplex of $\Hom^\bullet(M, N)$ given by
	\[ \Hom^n_A(M, N) := \left\{\begin{array}{r|l}
		(f^k)_{k \in \Z} \in \Hom^n(M, N)& \forall i, j, \; \forall a \in A^i, \; m \in M^j \\
		& f^{i+j} (am) = (-1)^{ni} a f^j(m)
	\end{array}\right\}. \]

	If $M, N, L$ are left dg-modules over $A$, we have the morphism of composition
	\[ \Hom^\bullet_A(N, L) \otimes \Hom^\bullet_A(M, N) \to \Hom^\bullet_A(M, L) \]
	in $\cate{C}(\CC)$. We write $\End^\bullet_A(M) := \Hom^\bullet_A(M, M)$.
\end{definition}

\begin{remark}
	For every complex $M$, composition makes $\End^\bullet(M)$ into a dg-algebra. It is routine to check (see eg.\ \cite[Proposition 3.3.17]{Yek20}) that given a dg-algebra $A$, to promote $M$ into a left dg-module over $A$ amounts to prescribing a homomorphism of dg-algebras
	\[ A \to \End^\bullet(M). \]
\end{remark}

\begin{definition}
	For every dg-algebra $A$, we denote the abelian category of left dg-modules over $A$ by $A\dcate{dgMod}$ in order to emphasize that its objects are complexes.
\end{definition}

\begin{proposition}
	The construction of $\Hom^\bullet_A$ upgrades $A\dcate{dgMod}$ to a dg-category.
\end{proposition}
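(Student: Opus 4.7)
The plan is to verify the three pieces of structure in turn: that $\Hom^\bullet_A(M,N)$ is a subcomplex of $\Hom^\bullet(M,N)$, that the composition in $\cate{C}(\CC)$ restricts to a morphism of complexes at the $A$-linear level, and that the strict associativity and unit laws follow from those of the ambient dg-category $\cate{C}(\CC)$. Throughout, the whole difficulty is bookkeeping of Koszul signs; conceptually nothing is at stake.

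First I would show that the differential $\partial$ of $\Hom^\bullet(M,N)$, defined by $(\partial f)^k := d_N f^k - (-1)^n f^{k+1} d_M$ for $f = (f^k) \in \Hom^n(M,N)$, preserves the $A$-linearity condition spelled out in Definition \ref{def:Hom-dg}. Given $a \in A^i$ and $m \in M^j$, I would expand $(\partial f)^{i+j}(am)$ using $d_N(af^j(m)) = (da)f^j(m) + (-1)^i a \, d_N f^j(m)$ and $f^{i+j+1}(d(am)) = f^{i+j+1}(da \cdot m) + (-1)^i f^{i+j+1}(a \cdot dm)$, invoking the $A$-linearity of $f$ itself on the terms $f^{\cdots}((da)m)$ and $f^{\cdots}(a\cdot dm)$. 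A short sign reshuffle yields $(\partial f)^{i+j}(am) = (-1)^{(n+1)i} a (\partial f)^j(m)$, which is precisely the $A$-linearity condition in degree $n+1$. Hence $\Hom^\bullet_A(M,N) \subset \Hom^\bullet(M,N)$ is stable under $\partial$.

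Next I would verify that composition
\[ \Hom^\bullet_A(N,L) \otimes \Hom^\bullet_A(M,N) \to \Hom^\bullet_A(M,L), \qquad f \otimes g \longmapsto f \circ g, \]
lands in the $A$-linear subcomplex and is a morphism of complexes. Landing in the subcomplex is an immediate check: if $f$ has degree $p$ and $g$ has degree $q$, and both are $A$-linear in the sense of Definition \ref{def:Hom-dg}, then for $a \in A^i$, $m \in M^j$,
\[ (f\circ g)^{i+j}(am) = f^{i+j+q}\bigl((-1)^{qi} a g^j(m)\bigr) = (-1)^{qi}(-1)^{pi} a (f\circ g)^j(m) = (-1)^{(p+q)i} a (f\circ g)^j(m). \]
That composition is compatible with $\partial$, i.e.\ $\partial(f\circ g) = (\partial f)\circ g + (-1)^p f\circ (\partial g)$, already holds in $\Hom^\bullet(M,L)$ by the general Leibniz rule for the dg-category $\cate{C}(\CC)$, so it holds after restriction.

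Finally, strict associativity of composition and the unit laws for $\identity_M \in \Hom^0_A(M,M)$ (noting that the identity is manifestly $A$-linear of degree zero) are inherited verbatim from the corresponding properties in $\cate{C}(\CC)$, since all three $\Hom^\bullet_A$ complexes sit as subcomplexes of the ambient $\Hom^\bullet$ and the restricted composition law agrees with the ambient one on the nose. The main --- indeed only --- obstacle is getting the signs right in the first two steps; once this is checked, promotion of $A\dcate{dgMod}$ to a dg-category is automatic from the definition of enrichment.
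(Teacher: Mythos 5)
Your proof is correct, and the sign computations (stability of the $A$-linearity condition under $\partial$, and multiplicativity of the sign $(-1)^{(p+q)i}$ under composition) all check out. The paper treats this as routine and its "proof" consists only of the observation that the $0$-cocycles of $\Hom^\bullet_A(M,N)$ recover the dg-module morphisms — a compatibility you might add as a final sentence — so your write-up simply supplies the verification the paper omits.
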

\begin{proof}
	The $0$-cocycles in $\Hom^\bullet_A(M, N)$ are nothing but morphisms between dg-modules.
\end{proof}

For any complex $M$, its translate $M[1]$ is given by $M[1]^n = M^{n+1}$ and $d_{M[1]}^n = -d_M^{n+1}$. The translation functor on $A\dcate{dgMod}$ is given as follows (see \cite[Definition 4.1.9]{Yek20}).

\begin{definition}\label{def:translation-dg}
	Let $M$ be a left dg-module over a dg-algebra $A$. We turn $M[1]$ into a left dg-module over $A$ by setting the scalar multiplication $\cdot$ of $A$ on $M[1]$ to be
	\[ a \cdot m = (-1)^i am, \quad a \in A^i, \quad m \in M[1]^j = M^{j+1}. \]
\end{definition}

\subsection{Harish-Chandra dg-algebras}\label{sec:HC-dga}
To begin with, let $K$ be an affine algebraic group. Set $O_K := \Gamma(K, \mathscr{O}_K)$. An abstract representation $V$ of $K$, possibly infinite-dimensional, is said to be \emph{algebraic} if it arises from a $O_K$-comodule structure on $V$; in other words, $V$ is the union of finite-dimensional subrepresentations $V_i$ on which $K$ acts through a homomorphism $\sigma_i: K \to \GL(V_i)$ of algebraic groups. In general, we say $v \in V$ is $K$-algebraic if it belongs to some algebraic subrepresentation, and define
\[ V^{K\text{-alg}} := \{v \in V: \text{algebraic} \}. \]
This is the maximal $K$-algebraic subrepresentation of $V$. The assignment $V \mapsto V^{K\text{-alg}}$ is functorial, and one readily checks that
\[ V^{K\text{-alg}} = V^{K^\circ \text{-alg}}. \]

The class of algebraic representations is closed under subquotients, direct sums and tensor products. If $\sigma: K \to \Aut_{\CC}(V)$ gives rise to an algebraic representation, then its derivative at identity
\[ \dd\sigma: \mathfrak{k} \to \End_{\CC}(V) \]
makes sense: all reduces to the finite-dimensional case.

\begin{definition}
	We say that $K$ acts algebraically on a complex $M$, if $M^n$ is an algebraic representation of $K$ and $d^n: M^n \to M^{n+1}$ is $K$-equivariant, for each $n$. In particular, this induces a homomorphism $K \to \Aut_{\cate{C}(\CC)}(M)$ of abstract groups.
\end{definition}

\begin{definition}[{\cite[1.9.1]{BL95}}]\label{def:HC-dga}
	By a \emph{Harish-Chandra dg-algebra} we mean a quadruplet
	\[ (A, K, \sigma, j), \]
	or more succinctly a pair $(A, K)$, where:
	\begin{itemize}
		\item $A$ is a dg-algebra and $K$ is as above,
		\item $K$ acts algebraically on $A$, and induces a homomorphism $\sigma: K \to \Aut_{\cate{dga}}(A)$, where $\cate{dga}$ is the category of dg-algebras;
		\item $j: \mathfrak{k} \to A^0 \subset A$ is a $K$-equivariant linear map satisfying
		\begin{gather*}
			d \circ j = 0, \\
			j([\xi_1, \xi_2]) = [j(\xi_1), j(\xi_2)] := j(\xi_1) j(\xi_2) - j(\xi_2) j(\xi_1)
		\end{gather*}
		for all $\xi_1, \xi_2 \in \mathfrak{k}$;
		\item furthermore, we impose the condition that for all $\xi \in \mathfrak{k}$,
		\begin{equation*}
			(\dd\sigma)(\xi) = [j(\xi), \cdot] \;\in \End_{\CC}(A).
		\end{equation*}
	\end{itemize}
\end{definition}

A morphism $(A, K) \to (A', K')$ between Harish-Chandra dg-algebras consists of a homomorphism $\varphi: K \to K'$ between algebraic groups together with a $\varphi$-equivariant homomorphism $\psi: A \to A'$ between dg-algebras, such that
\[\begin{tikzcd}
	\mathfrak{k} \arrow[r, "{j}"] \arrow[d, "{\Lie \varphi}"'] & A \arrow[d, "\psi"] \\
	\mathfrak{k}' \arrow[r, "{j'}"'] & A'
\end{tikzcd}\]
commutes.

\begin{definition}\label{def:HC-dg-module}
	Let $(A, K)$ be a Harish-Chandra dg-algebra. A (left) dg-module over $(A, K)$ is a left dg-module $M$ over $A$, say given by a homomorphism $\alpha: A \to \End^\bullet(M)$ between dg-algebras, together with a group homomorphism $\rho: K \to \Aut_{\cate{C}(\CC)}(M)$ making $K$ act algebraically on $M$, subject to the following compatibilities:
	\begin{itemize}
		\item $\alpha(\sigma(k)a) = \rho(k) \alpha(a) \rho(k)^{-1}$ in $\End^\bullet(M)$, for all $k \in K$ and $a \in A$;
		\item $\alpha \circ j = \dd\rho$ as linear maps $\mathfrak{k} \to \End_{\cate{C}(\CC)}(M)$.
	\end{itemize}

	Homomorphisms $M \to N$ between dg-modules over $(A, K)$ are $K$-equivariant homomorphisms in $\Hom_A(M, N)$. The abelian category so obtained is denoted by $(A, K)\dcate{dgMod}$.
\end{definition}

When $K = \{1\}$, this reverts to the dg-category $A\dcate{dgMod}$ reviewed in \S\ref{sec:review-dg}.

Every morphism $(\varphi, \psi): (A, K) \to (A', K')$ between Harish-Chandra dg-algebras induces a pullback functor
\begin{equation}\label{eqn:HC-pullback}
	(\varphi, \psi)^*: (A', K')\dcate{dgMod} \to (A, K)\dcate{dgMod}.
\end{equation}
This can be naturally upgraded to a dg-functor between dg-categories.

\begin{remark}\label{rem:AK-module}
	When $A$ is simply a $\CC$-algebra, we can define $(A, K)$-modules as dg-modules over $(A, K)$ in degree zero, and the resulting abelian category will be denoted by $(A, K)\dcate{Mod}$.
\end{remark}

\subsection{Derived categories and functors}\label{sec:derived-categories}
The basic reference for what follows is \cite[1.9]{BL95}. Let $(A, K)$ be a Harish-Chandra dg-algebra. Many operations on $A\dcate{dgMod}$ carry over to $(A, K)\dcate{dgMod}$ by imposing $K$-equivariance. Below are some key examples.

\begin{description}
	\item[Translation functor] Let $M$ be a dg-module over $(A, K)$. We make $M[1]$ into a dg-module over $(A, K)$ using the dg-module structure over $A$ from Definition \ref{def:translation-dg}, and leaving the $K$-action intact.
	\item[The $\Hom$-complex] Let $M$, $N$ be dg-modules over $(A, K)$. Define $\Hom^\bullet_{A, K}(M, N)$ to be the subcomplex of $\Hom^\bullet_A(M, N)$ (see Definition \ref{def:Hom-dg}) given by
	\[ \Hom^n_{A, K}(M, N) = \left\{ (f^k)_{k \in \Z} \in \Hom^n_A(M, N): \forall k, \; f^k \;\text{is $K$-equivariant} \right\}. \]
	Morphisms between dg-modules over $(A, K)$ are nothing but $0$-cocycles in $\Hom^\bullet_{A, K}$. This turns $(A, K)\dcate{dgMod}$ into a dg-category.
	\item[Homotopy category] Two morphisms $f, g \in \Hom_{A, K}(M, N)$ are said to be homotopic if $f-g$ is a coboundary in $\Hom^0_{A, K}(M, N)$. The homotopy category $\cate{K}(A, K)$ of $(A, K)\dcate{dgMod}$ has the same objects as $(A, K)\dcate{dgMod}$, whilst
	\[ \Hom_{\cate{K}(A, K)}(M, N) := \Hm^0 \Hom^\bullet_{A, K}(M, N) \]
	for all objects $M$ and $N$. The translation functor $M \mapsto M[1]$ passes to $\cate{K}(A, K)$, and we have $\Hom_{\cate{K}(A, K)}(M, N[n]) \simeq \Hm^n \Hom^\bullet_{A, K}(M, N)$.
	
	\item[Acyclic objects] We say $M$ is acyclic if it is acyclic as an object of $\cate{C}(\CC)$.
	
	\item[Quasi-isomorphisms] If $f \in \Hom_{A, K}(M, N)$ induces isomorphisms $\Hm^n(M) \to \Hm^n(N)$ for every $n$, we say $f$ is a quasi-isomorphism. This property depends only on the image of $f$ in the homotopy category.
	
	\item[Mapping cones] Let $f \in \Hom_{A, K}(M, N)$. The mapping cone $\mathrm{Cone}(f)$ taken in $\cate{C}(\CC)$ is made into a dg-module over $(A, K)$ by recalling that $\mathrm{Cone}(f)$ is $M[1] \oplus N$ as graded vector spaces, and we let $A$ and $K$ act on $M[1]$ and $N$ in the way prescribed before. The natural morphisms
	\[ N \to \mathrm{Cone}(f) \to M[1] \]
	are morphisms in $(A, K)\dcate{dgMod}$. Note that this is a $K$-equivariant version of \cite[Definition 4.2.1]{Yek20}.
\end{description}

The constructions above make $\cate{K}(A, K)$ into a triangulated category: the distinguished triangles are the ones isomorphic to
\[ M \xrightarrow{f} N \to \mathrm{Cone}(f) \xrightarrow{+1} \]
given before, where $f: M \to N$ is any morphism in $(A, K)\dcate{dgMod}$.

\begin{definition}
	Let $\cate{D}(A, K)$ be the Verdier quotient of $\cate{K}(A, K)$ by acyclic objects; equivalently, it is obtained from $\cate{K}(A, K)$ by inverting quasi-isomorphisms. We call $\cate{D}(A, K)$ the \emph{derived category} of $(A, K)$. It inherits the triangulated structure from $\cate{K}(A, K)$.
\end{definition}

When $K=\{1\}$, the above reduces to the well-known derived category of dg-modules: see \cite[\S 10]{BL94} or \cite[Chapter 7]{Yek20}.

In order to better understand $\cate{D}(A, K)$, one has to adapt the notion of K-projective and K-injective complexes into our context. Cf.\ \cite[\S\S 10.1 --- 10.2]{Yek20}.

\begin{definition}
	Let $M$ be a dg-module over $(A, K)$.
	\begin{itemize}
		\item We say $M$ is \emph{K-injective} if
		\[ Q\;\text{is acyclic} \implies \Hom^\bullet_{A, K}(Q, M)\;\text{is acyclic}; \]
		equivalently,
		\[ Q\;\text{is acyclic} \implies \Hom_{\cate{K}(A, K)}(Q, M) = 0. \]
		\item We say $M$ is \emph{K-projective} if
		\[ Q\;\text{is acyclic} \implies \Hom^\bullet_{A, K}(M, Q) \;\text{is acyclic}; \]
		equivalently,
		\[ Q\;\text{is acyclic} \implies \Hom_{\cate{K}(A, K)}(M, Q) = 0. \]
	\end{itemize}
\end{definition}

Being K-injective (resp.\ K-projective) is thus a property within $\cate{K}(A, K)$. If $M$ is K-injective (resp.\ K-projective), then
\begin{gather*}
	\Hom_{\cate{K}(A, K)}(X, M) \simeq \Hom_{\cate{D}(A, K)}(X, M) \\
	(\text{resp.}\; \Hom_{\cate{K}(A, K)}(M, X) \simeq \Hom_{\cate{D}(A, K)}(M, X))
\end{gather*}
for all dg-module $X$ over $(A, K)$. Cf.\ \cite[Theorems 10.1.13 and 10.2.9]{Yek20}.

We now move to $t$-structure. We say $A$ is \emph{non-positively graded} if $A^n = 0$ for all $n > 0$.

\begin{proposition}
	Suppose that $A$ is non-positively graded. For every dg-module $M$ over $(A, K)$, the ``smart truncation'' $\tau^{< 0} M$ in $\cate{C}(\CC)$ is actually a dg-submodule of $M$ over $(A, K)$, thus so is $\tilde{\tau}^{\geq 0} M := M/\tau^{< 0} M$. We have the canonical distinguished triangle
	\[ \tau^{< 0} M \to M \to \tilde{\tau}^{\geq 0} M \xrightarrow{+1} \]
	in $\cate{K}(A, K)$ as well as in $\cate{D}(A, K)$.
\end{proposition}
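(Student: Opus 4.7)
The plan is to verify three things in turn: that $\tau^{<0} M$ is stable under the action of $A$, that it is stable under the action of $K$, and finally that the resulting short exact sequence of dg-modules yields a distinguished triangle in $\cate{K}(A,K)$ (which then automatically descends to $\cate{D}(A,K)$). Recall that, by definition of the smart truncation, $(\tau^{<0} M)^n$ equals $M^n$ for $n \leq -2$, equals $\ker(d^{-1})$ for $n = -1$, and vanishes for $n \geq 0$; it is automatically a subcomplex of $M$ in $\cate{C}(\CC)$.

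For the $K$-action, since $K$ acts algebraically on $M$ and each $d^n$ is $K$-equivariant, $K$ preserves $\ker(d^{-1})$ and hence every graded piece of $\tau^{<0} M$. For the $A$-action, non-positive grading gives $A^i \cdot M^j \subset M^{i+j}$ with $i \leq 0$, so $i + j \leq j$; hence $A^i \cdot (\tau^{<0}M)^j \subset (\tau^{<0}M)^{i+j}$ except possibly in the boundary case $j = -1$, $i = 0$. In that case I would apply the Leibniz rule: for $a \in A^0$ and $m \in \ker(d^{-1})$,
\[ d(am) \;=\; (da)\cdot m + a \cdot dm \;=\; 0, \]
because $dm = 0$ and $da \in A^1 = 0$. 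Thus $am \in \ker(d^{-1})$, and $\tau^{<0}M$ is closed under $A$. Combined with $K$-stability, this shows $\tau^{<0}M$ is a dg-submodule over $(A,K)$, and consequently $\tilde{\tau}^{\geq 0} M := M / \tau^{<0} M$ is naturally a dg-module over $(A,K)$.

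For the distinguished triangle, consider the mapping cone of the inclusion $\iota : \tau^{<0} M \hookrightarrow M$, formed in $(A,K)\dcate{dgMod}$ as described in \S\ref{sec:derived-categories}. The natural projection
\[ \mathrm{Cone}(\iota) \;=\; \tau^{<0}M[1] \oplus M \;\longrightarrow\; \tilde{\tau}^{\geq 0} M, \qquad (n,m) \longmapsto \bar{m} \]
is a morphism in $(A,K)\dcate{dgMod}$; its kernel, consisting of pairs $(n, \iota(n))$, is isomorphic to $\mathrm{Cone}(\mathrm{id}_{\tau^{<0}M})$. This latter complex admits the standard contracting homotopy, which is manifestly $A$-linear and $K$-equivariant, hence it is null-homotopic in $(A,K)\dcate{dgMod}$. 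Therefore the projection is a homotopy equivalence, so $\tau^{<0}M \to M \to \tilde{\tau}^{\geq 0}M \xrightarrow{+1}$ is isomorphic in $\cate{K}(A,K)$ to the cone triangle of $\iota$, and is thus distinguished in $\cate{K}(A,K)$ and a fortiori in $\cate{D}(A,K)$.

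The only place that is not pure bookkeeping is the verification $A^0 \cdot \ker(d^{-1}) \subset \ker(d^{-1})$, and this is where the non-positive grading hypothesis is used in an essential way (it forces $dA^0 = 0$). Everything else is formal.
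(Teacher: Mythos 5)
Your verification that $\tau^{<0}M$ is a dg-submodule is correct, and it is surely the content behind the paper's one-line proof (``Routine, see \cite[1.9.4 Lemma]{BL95}''): the only non-formal point is $A^0\cdot\ker(d^{-1})\subset\ker(d^{-1})$, and your Leibniz computation, using $dA^0\subset A^1=0$, handles exactly that; the bookkeeping for the remaining bidegrees is also right.

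The second half has a genuine gap. A small slip first: the kernel of $\mathrm{Cone}(\iota)\to\tilde{\tau}^{\geq 0}M$ is not the ``diagonal'' $\{(n,\iota(n))\}$ (which is not even a graded subspace) but all of $\tau^{<0}M[1]\oplus\tau^{<0}M$; the identification with $\mathrm{Cone}(\identity_{\tau^{<0}M})$ is nevertheless correct. The real problem is the inference ``the kernel is null-homotopic, therefore the projection is a homotopy equivalence.'' A surjection of dg-modules with contractible kernel is a quasi-isomorphism, but to upgrade this to a homotopy equivalence in $(A,K)\dcate{dgMod}$ one needs a homotopy inverse that is $A$-linear and $K$-equivariant; the standard construction requires the surjection to split as graded $(A,K)$-modules, which in degree $-1$ would demand an $A^0$-linear, $K$-equivariant splitting of $\ker(d^{-1})\subset M^{-1}$ compatible with the negative-degree part of $A$. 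This is not available in general, and in fact the homotopy-category half of the claim cannot be repaired by any argument: since $\tilde{\tau}^{\geq 0}M$ is supported in degrees $\geq -1$ and $\tau^{<0}M[1]$ in degrees $\leq -2$, one has $\Hom_{\cate{K}(A,K)}(\tilde{\tau}^{\geq 0}M,\tau^{<0}M[1])=0$, so if the truncation triangle were distinguished in $\cate{K}(A,K)$ its connecting map would be zero and the triangle would split, i.e.\ $M\simeq \tau^{<0}M\oplus\tilde{\tau}^{\geq 0}M$ in $\cate{K}(A,K)$. This already fails for $A=\CC[x,y]$ in degree $0$ with $K=\{1\}$ and $M=[A^{\oplus 2}\xrightarrow{(x,y)}A]$: a homotopy section of $M\to\tilde{\tau}^{\geq 0}M$ would force the class of the non-split extension $0\to\ker(d^{-1})\to A^{\oplus 2}\to (x,y)\to 0$ to die under multiplication by an element of $1+(x,y)$, which acts as the identity on $\Ext^1_A((x,y),A)\simeq A/(x,y)$.

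What your argument does establish is that the projection is an isomorphism in $\cate{D}(A,K)$ (acyclicity of the kernel already suffices), hence the triangle is distinguished there; this is all that the paper subsequently uses to define the $t$-structure. You should either restrict the conclusion to $\cate{D}(A,K)$ or add a hypothesis guaranteeing a graded $(A,K)$-linear splitting of $M^{-1}\twoheadrightarrow M^{-1}/\ker(d^{-1})$.
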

\begin{proof}
	Routine, see \cite[1.9.4 Lemma]{BL95}.
\end{proof}


Using the fact above, we equip $\cate{D}(A, K)$ with a $t$-structure and define its full triangulated subcategories $\cate{D}^{\star}(A, K)$ where $\star \in \{+, -, \mathrm{b}\}$. They are characterized by $\Hm^n = 0$ for $n \ll 0$, $n \gg 0$ and $|n| \gg 0$ respectively.

We also denote by $\cate{K}^{\star}(A, K)$ (where $\star \in \{+, -, \mathrm{b}\}$) the full triangulated subcategories of $\cate{K}(A, K)$ consisting of bounded below, bounded above, and bounded complexes respectively, so that $\cate{D}^{\star}(A, K)$ is equivalent to the corresponding Verdier quotients.

Now comes the definition of derived functors in an abstract context. More general formulations exist and can be found in \cite[Chapter 8]{Yek20}, for example.

\begin{definition}\label{def:derived-functor}
	Let $(A, K)$ (resp.\  $(A', K')$) be Harish-Chandra dg-algebras, and let $\cate{K}$ (resp.\ $\cate{K}'$) be a full triangulated subcategory of $(A, K)\dcate{dgMod}$ (resp.\ $(A', K')\dcate{dgMod}$). Denote the functor to Verdier quotients modulo acyclic objects as $Q: \cate{K} \to \cate{D}$ and $Q': \cate{K}' \to \cate{D}'$ respectively.
	
	Consider a triangulated functor $F: \cate{K} \to \cate{K}'$. A left (resp.\ right) \emph{derived functor} of $F$ is defined to be a triangulated functor $\Lder F$ (resp.\ $\Rder F$) from $\cate{D}$ to $\cate{D}'$ which is the right (resp.\ left) Kan extension of $Q' F$ along $Q: \cate{K} \to \cate{D}$.
\end{definition}

Whenever they exist, $\Lder F$ and $\Rder F$ fit into 2-cells:
\begin{equation}\label{eqn:2-cells-der}\begin{tikzcd}
	\cate{K} \arrow[r, "F"] \arrow[d, "Q"'] & \cate{K}' \arrow[d, "{Q'}"] \\
	\cate{D} \arrow[r, "{\Lder F}"'] \arrow[Rightarrow, ru] & \cate{D}'
\end{tikzcd}\quad\begin{tikzcd}	
	\cate{K} \arrow[r, "F"] \arrow[d, "Q"'] & \cate{K}' \arrow[d, "{Q'}"] \arrow[Rightarrow, ld] \\
	\cate{D} \arrow[r, "{\Rder F}"'] & \mathcal{D}'
\end{tikzcd}\end{equation}
The morphisms $\Rightarrow$ between functors are part of the data of derived functors. The universal property of Kan extension amounts to asserting that every 2-cell of the left (resp.\ right) form as \eqref{eqn:2-cells-der} uniquely ``retracts to'' (resp.\ is uniquely ``inflated from'') the 2-cell of $\Lder F$ (resp.\ $\Rder F$).

\begin{example}
	The easiest example is the case when $F$ is \emph{exact}, i.e.\ when $F$ preserves acyclicity (equivalently, preserves quasi-isomorphisms). For such functors, $\Lder F = \Rder F$ exists and is simply induced from the universal property of Verdier quotients; furthermore, the $\Rightarrow$ in \eqref{eqn:2-cells-der} are isomorphisms.
\end{example}

The pull-back functors $(\varphi, \psi)^*$ from \eqref{eqn:HC-pullback} are exact, since they preserve the underlying complexes.

For general $F$, the left (resp.\ right) derived functor can be accessed from K-injective (resp.\ K-projective) resolutions as in the usual setting, if they exist.

\begin{definition}
	Let $M$ be a dg-module over $(A, K)$. A K-injective (resp.\ K-projective) \emph{resolution} of $M$ is a quasi-isomorphism $M \to I$ (resp.\ $P \to M$) in $(A, K)\dcate{dgMod}$ where $I$ is K-injective (resp.\ $P$ is K-projective).
	
	Let $\cate{K}$ be a full triangulated subcategory of $\cate{K}(A, K)$. If every object $M$ of $\cate{K}$ admits a K-injective (resp.\ K-projective) resolution within $\cate{K}$, then $\cate{K}$ is said to have enough K-injectives (resp.\ K-projectives).
\end{definition}

\begin{theorem}\label{prop:enough-K-general}
	Assume that $(A, K)$ be non-positively graded. Then $\cate{K}^+(A, K)$ has enough K-injectives; if we assume moreover that $K$ is reductive, then $\cate{K}^-(A, K)$ has enough K-projectives.
\end{theorem}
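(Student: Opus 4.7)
The overall strategy is a Cartan--Eilenberg style construction: build componentwise injective (resp.\ projective) resolutions in the underlying graded abelian category and then totalize, using the boundedness hypotheses to guarantee convergence of the relevant filtration arguments.

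For the K-injective case in $\cate{K}^+(A, K)$, I would first verify that the abelian category $(A, K)\dcate{dgMod}$ (equivalently, its underlying graded analogue, where $A$ has zero differential) has enough injectives. The key input is that algebraic representations of an affine algebraic group $K$ form a Grothendieck category with enough injectives via coinduction $V \mapsto \Hom_{\CC}(O_K, V)$. Lifting through the adjunction associated with $j \colon \mathfrak{k} \to A^0$, one obtains injective $(A, K)$-dg-modules of coinduced type $\Hom^\bullet_\CC(A, V \otimes O_K)$ with diagonal $K$-action and the matching condition built in. Given a bounded-below $M$, I would build a Cartan--Eilenberg double complex of such injectives resolving $M$ degree-by-degree and take the total complex $I$. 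Since $M$ is bounded below, so is $I$; K-injectivity of $I$ then follows from the standard filtration/spectral-sequence argument: for an acyclic $Q$, the complex $\Hom^\bullet_{A, K}(Q, I)$ admits an exhaustive filtration with acyclic associated graded (by the componentwise injectivity), and the bounded-below hypothesis ensures convergence to zero.

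For the K-projective case in $\cate{K}^-(A, K)$ with $K$ reductive, I would dualize, but here the reductivity of $K$ plays an essential role. The plan is to introduce a family of "free" generators $\mathbf{P}(V) := A \otimes_{U(\mathfrak{k})} (O_K \otimes V)$ indexed by algebraic $K$-representations $V$, with diagonal structures arranged so that the Harish-Chandra matching condition $\alpha \circ j = \dd\rho$ holds automatically. Reductivity guarantees that $\Hom_{A, K}^\bullet(\mathbf{P}(V), \cdot)$ factors through exact $K$-invariants, so each $\mathbf{P}(V)$ is K-projective, and that every $(A, K)$-dg-module admits a surjection from a direct sum of $\mathbf{P}(V)$'s via its $K$-finite generating set. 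Iterating this surjection and totalizing yields a K-projective resolution $P \to M$; the bounded-above hypothesis ensures $P \in \cate{K}^-(A, K)$, and K-projectivity of the totalization follows from the corresponding filtration argument, which converges thanks to bounded-aboveness.

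The main obstacle will be isolating the correct notion of "equivariant free dg-module" that simultaneously respects (a) freeness over $A$, (b) $K$-algebraicity, and (c) the matching condition $\alpha \circ j = \dd\rho$. A naive tensor $A \otimes V$ fails condition (c), which forces the twisted construction $A \otimes_{U(\mathfrak{k})} (O_K \otimes V)$ or an equivalent variant; checking that this family generates enough K-projectives relies crucially on the exactness of $K$-invariants, hence on reductivity. This asymmetry explains why the K-injective half requires no reductivity assumption---coinduction from algebraic representations is always available---whereas the K-projective half breaks down without it.
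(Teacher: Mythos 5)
Your strategy is correct and is essentially the one behind the references the paper cites (the paper itself gives no argument beyond pointing to \cite[1.15.3]{BL95} for K-injectives and \cite[\S 5.6]{Pan95} for K-projectives): coinduction to produce enough injectives and an inductive/totalization argument for bounded-below complexes, and induced \textquotedblleft free\textquotedblright{} modules plus semifree filtrations for bounded-above complexes when $K$ is reductive. Two technical corrections. First, the right adjoint to the forgetful functor on algebraic representations of $K$ is the cofree comodule $V \mapsto O_K \otimes V$, not $\Hom_{\CC}(O_K, V)$ (the latter is in general not an algebraic representation); correspondingly the $(A,K)$-coinduction must be taken as $\Hom^\bullet_{U(\mathfrak{k})}(A, O_K \otimes V)^{K\text{-alg}}$, cutting down to the $K$-algebraic part exactly as in Proposition \ref{prop:oblv-adjunction} of the paper --- coinducing from $U(\mathfrak{k})$ rather than from $\CC$ is also what builds in the matching condition $\alpha\circ j=\dd\rho$ on the injective side, mirroring your discussion on the projective side. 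Second, the $O_K$ factor in $\mathbf{P}(V)=A\otimes_{U(\mathfrak{k})}(O_K\otimes V)$ is superfluous: reductivity already makes every algebraic $V$ projective as a $K$-representation, so the essential generators are just $A\otimes_{U(\mathfrak{k})}V$, and the surjection $A\otimes_{U(\mathfrak{k})}M\twoheadrightarrow M$ exists for any $M$ (reductivity is used only for projectivity of the source, i.e.\ exactness of $\Hom_K(V,\cdot)$, as you say). Finally, since $A$ may have nonzero negative components, a dg-module is not literally a complex in an abelian category of modules over a degree-zero algebra, so \textquotedblleft Cartan--Eilenberg double complex\textquotedblright{} should be read as a graded-injective (resp.\ semifree) resolution in the underlying graded module category with the standard convergence lemmas for bounded-below (resp.\ bounded-above) filtrations; your parenthetical about the underlying graded category with zero differential is the right way to set this up, and the non-positive grading of $A$ is what keeps the resolutions on the correct side of boundedness.
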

\begin{proof}
	The first part follows from \cite[1.15.3]{BL95}. The second part follows from \cite[\S 5.6]{Pan95}.
\end{proof}

The following is a special instance of \cite[Theorems 10.1.20 and 10.2.15]{Yek20}.

\begin{proposition}\label{prop:derived-functor-resolution}
	Consider the situation of Definition \ref{def:derived-functor}.
	\begin{itemize}
		\item If $\cate{K}$ has enough K-injectives, then $\Rder F$ exists, and $(\Rder F)(M) \simeq Q' F(I)$ if $M \to I$ is a K-injective resolution within $\cate{K}$.
		\item If $\cate{K}$ has enough K-projectives, then $\Lder F$ exists, and $(\Lder F)(M) \simeq Q' F(P)$ if $P \to M$ is a K-projective resolution within $\cate{K}$.
	\end{itemize}
\end{proposition}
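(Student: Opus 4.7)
The plan is to adapt the standard argument via K-injective and K-projective resolutions, in the form found in \cite[\S 10]{Yek20}, to the equivariant dg-setting of Harish-Chandra pairs. I will sketch the case of $\Rder F$; the case of $\Lder F$ is entirely dual, using K-projectives.

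The essential input is already recorded in the excerpt: if $J$ is K-injective then $\Hom_{\cate{K}(A, K)}(X, J) \simeq \Hom_{\cate{D}(A, K)}(X, J)$ for arbitrary $X$. Since $\cate{K}$ has enough K-injectives, I choose for each $M \in \cate{K}$ a K-injective resolution $\iota_M: M \to I_M$ (with $I_{I_M} := I_M$ when $M$ is already K-injective). Applying the long exact sequence of $\Hom_{\cate{K}(A, K)}(-, I_N)$ to the distinguished triangle $M \xrightarrow{\iota_M} I_M \to \mathrm{Cone}(\iota_M) \xrightarrow{+1}$, whose third term is acyclic, pre-composition with $\iota_M$ gives a bijection $\Hom_{\cate{K}(A, K)}(I_M, I_N) \rightiso \Hom_{\cate{K}(A, K)}(M, I_N)$; combining with the K-injectivity of $I_N$ and with the isomorphism $I_N \simeq N$ in $\cate{D}$, we obtain
\[ \Hom_{\cate{K}(A, K)}(I_M, I_N) \simeq \Hom_{\cate{D}(A, K)}(M, N). \]
In particular, every morphism $f: M \to N$ in $\cate{D}$ admits a unique lift $\tilde{f}: I_M \to I_N$ in $\cate{K}$, functorial in $f$.

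I would then define $\Rder F: \cate{D} \to \cate{D}'$ by $M \mapsto Q'F(I_M)$ on objects and $f \mapsto Q'F(\tilde{f})$ on morphisms, with the 2-cell $\eta: Q'F \Rightarrow \Rder F \circ Q$ supplied at $M$ by $Q'F(\iota_M)$. For the universal property of the left Kan extension, given any triangulated $G: \cate{D} \to \cate{D}'$ equipped with a 2-cell $\alpha: Q'F \Rightarrow GQ$, the required factorization $\Rder F \Rightarrow G$ is defined at $M$ by
\[ \Rder F(M) = Q'F(I_M) \xrightarrow{\alpha_{I_M}} GQ(I_M) \xleftarrow{G Q(\iota_M)} GQ(M), \]
the last arrow being invertible because $\iota_M$ is a quasi-isomorphism. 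Uniqueness of the factorization is forced by the same prescription applied at the object $I_M$, together with the K-injective property.

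The step that will require the most care is verifying that $\Rder F$ (and dually $\Lder F$) is a triangulated functor, not merely additive. For this one wants every distinguished triangle in $\cate{K}$ to extend to a distinguished triangle of K-injective resolutions, which can be arranged by the standard cylinder-and-cone construction carried out inside $(A, K)\dcate{dgMod}$, together with the fact that the mapping cone of a morphism between K-injectives is again K-injective. Once this bookkeeping is in place, the existence of $\Rder F$ and $\Lder F$ together with the asserted formulas $(\Rder F)(M) \simeq Q' F(I)$ and $(\Lder F)(M) \simeq Q' F(P)$ fall out of the universal properties of Kan extensions.
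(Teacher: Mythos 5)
Your argument is correct and is precisely the standard resolution-based construction that the paper delegates to its one-line citation of \cite[Theorems 10.1.20 and 10.2.15]{Yek20}: the bijection $\Hom_{\cate{K}}(I_M, I_N) \simeq \Hom_{\cate{D}}(M, N)$, the definition of $\Rder F$ on resolutions, the verification of the Kan-extension universal property, and the cone argument for triangulatedness are all as in that reference. No gaps; the dual K-projective case goes through verbatim.
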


As a result, one can define $\RHom_{A, K}(X, Y) \in \cate{D}(\CC)$ for $X$ bounded above and $Y$ bounded below by the familiar recipe:
\begin{itemize}
	\item either as the derived functor of $\Hom^\bullet_{A, K}(X, \cdot)$ from $(A, K)\dcate{dgMod}^+$ to $\cate{C}(\CC)$,
	\item or as the derived functor of $\Hom^\bullet_{A, K}(\cdot, Y)$ from $(A, K)\dcate{dgMod}^-$ to $\cate{C}(\CC)^{\mathrm{op}}$.
\end{itemize}
The boundedness conditions are removable if the relevant K-injective or K-projective resolutions exist. In any case, for all $n \in \Z$ we have
\begin{equation}
	\Hm^n \RHom_{A, K}(X, Y) \simeq \Hom_{\cate{D}(A, K)}(X, Y[n]) =: \Ext^n_{A, K}(X, Y). 
\end{equation}

Given functors $\cate{K} \xrightarrow{F} \cate{K}' \xrightarrow{G} \cate{K}''$, the universal property furnishes canonical morphisms
\[ \Rder (GF) \to (\Rder G)(\Rder F), \quad (\Lder G)(\Lder F) \to \Lder (GF), \]
provided that all these derived functors exist. We record the following standard result (see eg.\ \cite[Remark 8.4.31]{Yek20}) which is also immediate from Proposition \ref{prop:derived-functor-resolution}. Recall that a functor is said to be exact if it preserves acyclicity.

\begin{corollary}\label{prop:derived-functor-composite}
	Keep the notations of Definition \ref{def:derived-functor} and consider functors $\cate{K} \xrightarrow{F} \cate{K}' \xrightarrow{G} \cate{K}''$.
	\begin{itemize}
		\item Suppose $\cate{K}$ and $\cate{K}'$ have enough K-injectives. If either $F$ preserves K-injectives or $G$ is exact, then $\Rder (GF) \rightiso (\Rder G)(\Rder F)$.
		\item Suppose $\cate{K}$ and $\cate{K}'$ have enough K-projectives. If either $F$ preserves K-projectives or $G$ is exact, then $(\Lder G)(\Lder F) \rightiso \Lder (GF)$.
	\end{itemize}
\end{corollary}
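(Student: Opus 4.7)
The plan is to reduce to Proposition \ref{prop:derived-functor-resolution} by computing both sides on resolutions, and then check that the resulting comparison is the canonical one coming from universal properties of Kan extension.

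First I would construct the canonical comparison morphisms $\Rder(GF) \to (\Rder G)(\Rder F)$ and $(\Lder G)(\Lder F) \to \Lder(GF)$ at the level of 2-cells in \eqref{eqn:2-cells-der}. Concretely, pasting the 2-cell for $\Rder F$ with $G$ on the left produces a 2-cell $Q''GF \Rightarrow (\Rder G)(\Rder F) \circ Q$, and by the universal property this factors uniquely through the 2-cell for $\Rder(GF)$, giving the desired comparison; the dual construction handles $\Lder$. These morphisms exist regardless of the hypotheses of the corollary.

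Next I would show these comparison morphisms are isomorphisms under either hypothesis. Fix $M \in \cate{K}$ and, in the right-derived case, choose a K-injective resolution $M \to I$ within $\cate{K}$; by Proposition \ref{prop:derived-functor-resolution}, $(\Rder F)(M) \simeq Q'F(I)$ and $\Rder(GF)(M) \simeq Q''GF(I)$. If $F$ preserves K-injectives, then $F(I)$ is K-injective in $\cate{K}'$, so it serves as its own K-injective resolution and Proposition \ref{prop:derived-functor-resolution} gives $(\Rder G)(Q'F(I)) \simeq Q''GF(I)$, matching $\Rder(GF)(M)$. If instead $G$ is exact, then $\Rder G = G$ (up to the Verdier localization), so again $(\Rder G)(Q'F(I)) \simeq Q''GF(I)$. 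In either subcase the resulting isomorphism agrees with the canonical comparison, because both are characterized by the same universal 2-cell on resolutions.

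The left-derived case is strictly dual: replace K-injective by K-projective, and replace $\cate{K}^+$-type resolutions $M \to I$ by $P \to M$. The only subtle point, and the one I would be most careful about, is verifying that the isomorphism obtained from resolutions really is the canonical comparison morphism rather than some other isomorphism; this is a 2-categorical bookkeeping exercise that reduces to the uniqueness clause in the definition of Kan extension, applied to the 2-cells evaluated on (K-injective or K-projective) resolutions.
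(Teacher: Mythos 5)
Your proposal is correct and follows the same route as the paper, which simply notes that the statement is standard (citing Yekutieli) and ``immediate from Proposition~\ref{prop:derived-functor-resolution}'': one builds the canonical comparison from the universal property of the Kan extension and then checks it is an isomorphism by evaluating both sides on a K-injective (resp.\ K-projective) resolution, using that $F$ preserves such resolutions or that $G$ is exact. Your extra care about identifying the resolution-level isomorphism with the canonical 2-cell is exactly the bookkeeping the paper leaves implicit.
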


The standard tool to ensure the preservation of K-injectives or K-projectives is adjunction.

\begin{proposition}\label{prop:K-injectives-adjunction}
	Let $\mathcal{C}$ and $\mathcal{C}'$ be dg-categories, each is of the form $(A, K)\dcate{dgMod}$ for some $(A, K)$. Suppose that
	\[\begin{tikzcd}
		F: \mathcal{C} \arrow[shift left, r] & \mathcal{D}: G \arrow[shift left, l]
	\end{tikzcd}\]
	is a pair of adjoint functors, and both $F$ and $G$ upgrade to dg-functors (see the discussions after \eqref{eqn:HC-pullback}). Then
	\begin{itemize}
		\item $F$ preserves K-projectives if $G$ is exact;
		\item $G$ preserves K-injectives if $F$ is exact.
	\end{itemize}
\end{proposition}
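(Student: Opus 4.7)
The plan is to reduce both assertions to a single principle: under the hypotheses, the adjunction $F \dashv G$ upgrades to a \emph{dg-adjunction}, meaning there is a natural isomorphism of $\Hom$-complexes
\[ \Hom^\bullet_{\mathcal{D}}(F(X), Y) \;\simeq\; \Hom^\bullet_{\mathcal{C}}(X, G(Y)) \]
for all $X \in \mathcal{C}$ and $Y \in \mathcal{D}$. Since $F$ and $G$ are already dg-functors, the usual bijection on $0$-cocycles extends in each degree $n$ by applying $F$ (resp.\ $G$) to degree-$n$ morphisms and composing with unit/counit; compatibility with the differential is the statement that unit and counit are closed degree-zero natural transformations, which is automatic once $F$ and $G$ respect the dg-enrichment. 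I would verify this upgrade as a preliminary lemma; it is the only delicate point, but in every construction of interest in the rest of the paper it is visible from explicit formulas for the unit/counit, so no genuine obstacle arises.

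Granted the dg-adjunction, the first claim is immediate. Let $P$ be K-projective in $\mathcal{C}$ and let $Q \in \mathcal{D}$ be acyclic. Exactness of $G$ gives that $G(Q)$ is acyclic in $\mathcal{C}$, hence
\[ \Hom^\bullet_{\mathcal{D}}(F(P), Q) \;\simeq\; \Hom^\bullet_{\mathcal{C}}(P, G(Q)) \]
is acyclic by K-projectivity of $P$. Therefore $F(P)$ is K-projective.

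The second claim is strictly dual. If $I$ is K-injective in $\mathcal{D}$ and $Q \in \mathcal{C}$ is acyclic, then $F(Q)$ is acyclic by exactness of $F$, so
\[ \Hom^\bullet_{\mathcal{C}}(Q, G(I)) \;\simeq\; \Hom^\bullet_{\mathcal{D}}(F(Q), I) \]
is acyclic by K-injectivity of $I$, proving that $G(I)$ is K-injective. Thus both statements reduce to the single observation that the dg-adjunction transports acyclicity of the test object across $F$ or $G$, and this is the only nontrivial ingredient in the argument.
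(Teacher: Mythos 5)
Your proposal is correct and follows essentially the same route as the paper: upgrade the adjunction to a dg-adjunction (an isomorphism of $\Hom$-complexes induced by the unit and counit, using that $F$ and $G$ are dg-functors), then transport acyclicity of the test object across the exact adjoint. The paper's proof is exactly this, merely stated more tersely.
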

\begin{proof}
	Adjunction is described by morphisms $\eta: \identity_{\mathcal{C}} \to GF$ and $\epsilon: FG \to \identity_{\mathcal{C}'}$ satisfying the triangular equalities. As $F$ and $G$ upgrade to dg-functors, $(\eta, \epsilon)$ induces not only functorial bijections
	\[ \Hom_{\mathcal{C}'}(FX, Y) \simeq \Hom_{\mathcal{C}}(X, GY), \]
	but also functorial isomorphisms of complexes
	\[ \Hom^\bullet_{\mathcal{C}'}(FX, Y) \simeq \Hom^\bullet_{\mathcal{C}}(X, GY), \]
	i.e.\ a dg-adjunction. The assertions follow at once.
\end{proof}
 
\subsection{h-construction}\label{sec:h-cplx}
Throughout this subsection, we consider a Harish-Chandra dg-algebra $(A, K)$ with $A$ concentrated at degree zero, i.e.\ $A$ is an algebra coming with compatible homomorphisms
\[ j: \mathfrak{k} \to A, \quad \sigma: K \to \Aut_{\cate{alg}}(A). \]

Recall from Remark \ref{rem:AK-module} that the $(A, K)$-modules are simply left $A$-modules equipped with compatible $K$-actions. The dg-category of complexes of $(A, K)$-modules is denoted by $\cate{C}(A, K)$.

\begin{definition}\label{def:weak-module}
	With the conventions above, a \emph{weak $(A, K)$-module} is a $\CC$-vector space $M$ equipped with homomorphisms
	\[ \alpha: A \to \End_{\CC}(M), \quad \rho: K \to \Aut_{\CC}(M), \]
	making $M$ into a left $A$-module and an algebraic representation of $K$ respectively, such that
	\[ \alpha(\sigma(k) a) = \rho(k) \alpha(a) \rho(k)^{-1} \]
	holds in $\End_{\CC}(M)$ for all $k \in K$ and $a \in A$, i.e.\ $\alpha$ is $K$-equivariant.
\end{definition}

Weak $(A, K)$-modules form an abelian category. Given a weak $(A, K)$-module $M$, define
\begin{equation}\label{eqn:weak-AK-w}
	w := \dd\rho - \alpha \circ j: \mathfrak{k} \to \End_{\CC}(M).
\end{equation}
This map is clearly $K$-equivariant. It is also $A$-linear: indeed,
\begin{align*}
	[\alpha (j(\xi)), \alpha(a)] & = \alpha\left( [j(\xi), a] \right) \\
	& = \alpha\left( \dd\sigma(\xi) \cdot a \right) = [\dd\rho(\xi), \alpha(a)]
\end{align*}
for all $\xi \in \mathfrak{k}$ and $a \in A$; the last equality follows from the $K$-equivariance of $\alpha$. This entails $[w(\xi), \alpha(a)] = 0$.

Therefore, a weak $(A, K)$-module is an $(A, K)$-module if and only if $w = 0$.

\begin{definition}
	Denote by ${}^{\mathrm{w}} \cate{C}(A, K)$ the category of complexes of weak $(A, K)$-modules. The $\Hom$-complex in this dg-category is denoted by ${}^{\mathrm{w}} \Hom^\bullet$.
\end{definition}

Our main reference for the h-construction below is \cite{BL95}; the idea is attributed to Duflo--Vergne \cite{DV87} and Beilinson--Ginzburg in \textit{loc.\ cit.}

\begin{definition}[{\cite[1.5]{BL95}}]\label{def:h-cplx}
	An \emph{h-complex} over $(A, K)$ is a complex $C = (C, d)$ of weak $(A, K)$-modules together with a linear map
	\[ i: \mathfrak{k} \to \End^{-1}(C) := \End^{-1}_{\CC}(C), \quad \xi \mapsto i_\xi, \]
	such that
	\begin{enumerate}[(i)]
		\item $k i_\xi k^{-1} = i_{\Ad(k)\xi}$ for all $k \in K$;
		\item $i_\xi$ is $A$-linear;
		\item $i_\xi i_\eta + i_\eta i_\xi = 0$ for all $\xi, \eta \in \mathfrak{k}$;
		\item $d i_\xi + i_\xi d = w(\xi)$ (recall \eqref{eqn:weak-AK-w}).
	\end{enumerate}
	The morphisms between h-complexes are morphisms between complexes of weak $(A, K)$-modules that commute with all $i_\xi$.
\end{definition}

The homotopy condition (iv) implies that the cohomologies of an h-complex are actually $(A, K)$-modules.

We remark that h-complexes are called \emph{equivariant complexes} in \cite{Pan95, Pan05, Pan07, Ki12}.

\begin{definition}
	Denote by ${}^{\mathrm{h}} \cate{C}(A, K)$ the category of h-complexes over $(A, K)$.
\end{definition}

Every h-complex is a complex of weak $(A, K)$-modules, whence the functor ${}^{\mathrm{h}} \cate{C}(A, K) \to {}^{\mathrm{w}} \cate{C}(A, K)$. We upgrade ${}^{\mathrm{h}} \cate{C}(A, K)$ into a dg-category by defining ${}^{\mathrm{h}} \Hom^\bullet(C_1, C_2)$ as the subcomplex of ${}^{\mathrm{w}} \Hom^\bullet(C_1, C_2)$:
\[ {}^{\mathrm{h}}\Hom^n(C_1, C_2) := \left\{ (f^l)_{l \in \Z} \in {}^{\mathrm{w}} \Hom^n(C_1, C_2) : \forall l, \xi, \; f^{l-1} i_\xi = (-1)^n i_\xi f^l \right\}. \]

As usual, we say a morphism (resp.\ an object) of ${}^{\mathrm{h}} \cate{C}(A, K)$ is a quasi-isomorphism (resp.\ acyclic) if it is so as a complex. The following definition thus makes sense.

\begin{definition}
	Using ${}^{\mathrm{h}} \Hom^\bullet$, we define the homotopy category ${}^{\mathrm{h}} \cate{K}(A, K)$ of ${}^{\mathrm{h}} \cate{C}(A, K)$, and the \emph{h-derived category} ${}^{\mathrm{h}} \cate{D}(A, K)$ is defined as its Verdier quotient by acyclic complexes, or equivalently by inverting quasi-isomorphisms.
	
	The K-injective and K-projective h-complexes are also defined in this way. The derived functors in this setting are also called \emph{h-derived functors}.
\end{definition}

For an h-complex $C$, we make $C[1]$ into an h-complex by setting
\[ i': \mathfrak{k} \to \End^{-1}(C[1]), \quad \xi \mapsto i'_\xi := -i_\xi. \]
Using this, mapping cones can be defined for any morphism $f: C_1 \to C_2$ in ${}^{\mathrm{h}} \cate{C}(A, K)$. In this way, ${}^{\mathrm{h}} \cate{K}(A, K)$ and ${}^{\mathrm{h}} \cate{D}(A, K)$ become triangulated categories.
	
For every h-complex $C$, the ``smart truncation'' $\tau^{< 0} C$ as a subcomplex of weak $(A, K)$-modules is an h-complex, and so is $\tilde{\tau}^{\geq 0} C := C/\tau^{< 0} C$. Consequently, ${}^{\mathrm{h}}\cate{D}(A, K)$ is endowed with a $t$-structure, whose heart is exactly the category $(A, K)\dcate{Mod}$ of $(A, K)$-modules.

We wish to compare the h-derived category with the naive one. There is an evident dg-functor
\begin{equation}\label{eqn:h-comparison-C}
	\cate{C}((A, K)\dcate{Mod}) \to {}^{\mathrm{h}} \cate{C}(A, K)
\end{equation}
by turning complexes over $(A, K)\dcate{Mod}$ into h-complexes with $i_\xi = 0$. It induces a functor between homotopy categories and preserves acyclicity.

Recall that the derived category $\cate{D}((A, K)\dcate{Mod})$ of $(A, K)\dcate{Mod}$ is a triangulated category with $t$-structure whose heart is $(A, K)\dcate{Mod}$. For $\star \in \{\; , +, -, \mathrm{b}\}$, from \eqref{eqn:h-comparison-C} we obtain a functor
\begin{equation}\label{eqn:h-comparison}
	\alpha: \cate{D}^{\star}((A, K)\dcate{Mod}) \to {}^{\mathrm{h}} \cate{D}^{\star}(A, K).
\end{equation}
It is obviously $t$-exact.

All the assertion above can be checked by hand. Nonetheless, the formalism of Harish-Chandra dg-algebras provides a conceptually more satisfactory approach to the h-construction. We present a summary below after some preparations.

First, suppose that $A_i$ are dg-algebras for $i=1,2$. By the general theory of algebras in a symmetric monoidal category, the tensor product $A_1 \otimes A_2$ of complexes underlies a dg-algebra.

Secondly, consider the dg Lie-algebra
\[ \overline{\mathfrak{k}} := \left[ \mathfrak{k} \xrightarrow{\identity} \mathfrak{k} \right], \quad \text{degrees:}\; -1, 0, \]
whose Lie bracket on $\overline{\mathfrak{k}}^0 \otimes \overline{\mathfrak{k}}^0$ and $\overline{\mathfrak{k}}^0 \otimes \overline{\mathfrak{k}}^{-1}$ equals the $[\cdot, \cdot]$ for $\mathfrak{k}$, and is zero otherwise. Its universal enveloping dg-algebra is
\[ U(\overline{\mathfrak{k}}) = \underbracket{\bigwedge^\bullet \mathfrak{k}}_{\deg \leq 0} \otimes \underbracket{U(\mathfrak{k})}_{\deg = 0}. \]
The differential $d$ is induced from the complex $\overline{\mathfrak{k}}$; explicit formulas will be given in \S\ref{sec:std-resolution}. We let $K$ act on $U(\overline{\mathfrak{k}})$ by adjoint actions on both $\otimes$-slots.

\begin{theorem}\label{prop:h-vs-dg}
	We have the following equivalences of dg-categories.
	\begin{enumerate}[(i)]
		\item Take $B = A$. Then
		\[ (B, K)\dcate{dgMod} \simeq \cate{C}((A, K)\dcate{Mod}). \]
		\item Take $B = U(\mathfrak{k}) \otimes A$ with diagonal $K$-action, where $U(\mathfrak{k})$ is viewed as a dg-algebra concentrated at degree zero. Define $j_B: \mathfrak{k} \to B$ by
		\[ j_B(\xi) = \xi \otimes 1 + 1 \otimes j(\xi). \]
		This makes $(B, K)$ into a Harish-Chandra dg-algebra, and
		\[ (B, K)\dcate{dgMod} \simeq {}^{\mathrm{w}} \cate{C}(A, K). \]
		Specifically, the action of $\xi \in \mathfrak{k} \subset U(\mathfrak{k})$ corresponds to $w(\xi)$; see \eqref{eqn:weak-AK-w}.
		\item Take $B = U(\overline{\mathfrak{k}}) \otimes A$ with diagonal $K$-action. Define $j_B: \mathfrak{k} \to B$ by
		\[ j_B(\xi) = j^\natural(\xi) \otimes 1 + 1 \otimes j(\xi) \]
		where $j^\natural$ is the embedding $\mathfrak{k} \hookrightarrow 1 \otimes U(\mathfrak{k}) \subset U(\overline{\mathfrak{k}})$. This makes $(B, K)$ into a Harish-Chandra dg-algebra, and
		\[ (B, K)\dcate{dgMod} \simeq {}^{\mathrm{h}} \cate{C}(A, K). \]
		Specifically, the action of $\xi \in \mathfrak{k} \subset \bigwedge^\bullet \mathfrak{k}$ corresponds to $i_\xi$.
	\end{enumerate}
	All these equivalences are identity on the underlying complexes; they preserve acyclicity, quasi-isomorphisms and homotopies.
\end{theorem}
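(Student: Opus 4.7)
The plan is to prove each equivalence by directly unpacking the data of a dg-module over the specified Harish-Chandra dg-algebra $(B, K)$, and then matching it term-by-term with the data of the target dg-category. In every case the underlying complex and the $K$-action on it are preserved, and the $\Hom$-complexes on the two sides are literally the same subspace of $\Hom^\bullet_{\CC}$ cut out by the same equivariance conditions; hence an equivalence on the level of dg-categories, together with the preservation of acyclicity, quasi-isomorphisms, and homotopies claimed at the end, comes for free once the equivalence on objects is established.

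Part (i) is essentially a tautology: for $A$ concentrated in degree zero, a left dg-module over $A$ amounts to a complex $(M, d)$ with each $M^n$ a left $A$-module and $d$ being $A$-linear; adding the $K$-action and the compatibility $\alpha \circ j = \mathrm{d}\rho$ from Definition \ref{def:HC-dg-module} promotes each $M^n$ to an $(A, K)$-module with $A$-linear, $K$-equivariant differential. For part (ii), I would first check that $(B, K) = (U(\mathfrak{k}) \otimes A, K)$ is a Harish-Chandra dg-algebra: $j_B$ is a Lie algebra map because $\xi \mapsto \xi \otimes 1$ and $\xi \mapsto 1 \otimes j(\xi)$ are, and the two images commute; and $[j_B(\xi), \cdot] = \mathrm{d}\sigma_B(\xi)$ follows by splitting across the two tensor factors. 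Then a $(B, K)$-dg-module is a complex with commuting $U(\mathfrak{k})$- and $A$-actions, both $K$-equivariant, and $\alpha \circ j_B = \mathrm{d}\rho$ forces the $\xi \otimes 1$-action to equal $\mathrm{d}\rho(\xi) - \alpha(j(\xi)) = w(\xi)$; conversely, a weak $(A, K)$-module yields a $(B, K)$-dg-module by letting $U(\mathfrak{k})$ act via $w$.

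For part (iii), I would begin by realizing $U(\overline{\mathfrak{k}}) = \bigwedge^\bullet \mathfrak{k} \otimes U(\mathfrak{k})$ together with the Chevalley--Koszul differential sending $\xi \in \mathfrak{k} \subset \bigwedge^1 \mathfrak{k}$ (in degree $-1$) to $\xi \in U(\mathfrak{k})$ (in degree $0$), and then verify the Harish-Chandra axioms for $(B, K)$ as in (ii). Unpacking a $(B, K)$-dg-module $M$ now gives simultaneously a weak $(A, K)$-structure (from the restriction of the $B$-action to $U(\mathfrak{k}) \otimes A$, exactly as in (ii)) and a family of maps $i_\xi \in \End^{-1}(M)$ defined by the action of $\xi \in \bigwedge^1 \mathfrak{k}$. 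The four axioms of Definition \ref{def:h-cplx} then drop out: (i) from $K$-equivariance of the $B$-action, (ii) from the commutation of the two tensor factors (which carries no Koszul sign since $A$ sits in degree zero), (iii) from the anticommutation $\xi \eta = -\eta \xi$ in $\bigwedge^\bullet \mathfrak{k}$ combined with strict associativity of the module action, and (iv) from the Leibniz rule $d(\xi \cdot m) = d\xi \cdot m + (-1)^{-1} \xi \cdot dm$, which rewrites as $d\, i_\xi + i_\xi d = w(\xi)$.

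The only real subtlety I anticipate is in (iii), where I have to keep the Koszul signs straight on two levels at once: the differential on $U(\overline{\mathfrak{k}})$ itself, and the sign $(-1)^{|b|}$ that appears when $d$ is pushed past an element $b \in B$ acting on $M$. In particular, producing the sign-free relation $i_\xi i_\eta + i_\eta i_\xi = 0$ requires noting that ``module associativity'' has no sign, whereas $\xi \eta + \eta \xi = 0$ inside $B$ does, so the signs match only after this reconciliation. Checking that the $\Hom$-complexes agree, namely that the condition $f^{l-1} i_\xi = (-1)^n i_\xi f^l$ defining ${}^{\mathrm{h}}\Hom^n$ is exactly the $B$-linearity condition of Definition \ref{def:Hom-dg} restricted to elements of $\bigwedge^1 \mathfrak{k}$, is a further sign check of the same flavour, but presents no additional conceptual difficulty.
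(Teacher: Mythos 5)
Your proposal is correct and is essentially the paper's argument: the paper's own proof simply declares (i) trivial and cites \cite[1.11.1]{BL95} for (iii) and \cite[p.2201]{Pan07} for (ii), and the direct unpacking you carry out (identifying the $\xi\otimes 1$-action with $w(\xi)$, the $\bigwedge^1\mathfrak{k}$-action with $i_\xi$, and reconciling the Koszul signs in the Leibniz rule and in the $\Hom$-complex conditions) is exactly the content of those references. The only detail worth making explicit in (ii) is that, for the converse direction, $w=\dd\rho-\alpha\circ j$ is itself a Lie algebra homomorphism $\mathfrak{k}\to\End_A(M)$ (a short computation using the $K$-equivariance of $\alpha$ and $j$), so that the $U(\mathfrak{k})$- and $A$-actions genuinely assemble into a $B$-action.
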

\begin{proof}
	The case (i) is trivial. Case (iii) is \cite[1.11.1]{BL95}, and (ii) is explained in \cite[p.2201]{Pan07}.
\end{proof}

\begin{remark}
	The dg-algebra $B$ above is always non-positively graded. Hence Theorem \ref{prop:enough-K-general} provides enough K-injectives (resp.\ K-projectives, assuming $K$ reductive) for the bounded below (resp.\ bounded above) homotopy categories.
\end{remark}

\begin{remark}\label{rem:h-Ext}
	The h-version of $\Ext$ functors, denoted by ${}^{\mathrm{h}} \Ext_{A, K}$, can be defined through the equivalence in Theorem \ref{prop:h-vs-dg} (iii). As the familiar $\Ext$ functor, it can be computed in terms of ${}^{\mathrm{h}} \Hom^\bullet$ and K-injective or K-projective resolutions of h-complexes.
\end{remark}

In view of these identifications, the dg-functor \eqref{eqn:h-comparison-C} corresponds to the pullback induced by the $K$-equivariant homomorphism
\[ \epsilon \otimes \identity: U(\overline{\mathfrak{k}}) \otimes A \to \CC \otimes A \simeq A \]
of dg-algebras, where $\epsilon: U(\overline{\mathfrak{k}}) \to \CC$ is the augmentation homomorphism. On the other hand, pull-back along the inclusion $U(\mathfrak{k}) \otimes A \to U(\overline{\mathfrak{k}}) \otimes A$ amounts to forgetting the datum $i$ in h-complexes.

\subsection{Adjoint functors of oblivion}\label{sec:adjoint-oblv}
Consider Harish-Chandra dg-algebras $(A, K, \sigma, j)$, $(A', K', \sigma', j')$ and a morphism $(A, K) \to (A', K')$. In most of the scenarios, the maps $A \to A'$ and $K \to K'$ will be inclusions. For this reason, the corresponding exact functor from \eqref{eqn:HC-pullback} will be named as \emph{oblivion} instead of pullback. Denote it by
\[ \mathrm{oblv}: (A', K')\dcate{dgMod} \to (A, K)\dcate{dgMod}. \]

We begin with the change of dg-algebras, i.e.\ the case $K' = K$. Consider a $K$-equivariant homomorphism $\varphi: A \to A'$ between dg-algebras such that $\varphi j = j'$. Two constructions on a dg-module $M$ over $(A, K)$ will be needed. 

\begin{itemize}
	\item Definition \ref{def:tensor-dg} affords the dg-module $A' \dotimes{A} M$ over $A'$. Let $K$ acts diagonally on it; this action is algebraic.
	\item Since $A'$ is a dg-bimodule over $(A, A')$, the $\Hom$-complex $\Hom^\bullet_A(A', M)$ is actually a left dg-module over $A'$. It carries the standard $K$-action
	\begin{equation}\label{eqn:K-adjoint-action}
		f \xmapsto{k \in K} \underbracket{k}_{M} \circ f \circ \underbracket{k^{-1}}_{A'},
	\end{equation}
	which respects differentials and $A'$-action. Now take $\Hom^\bullet_A(A', M)^{K\text{-alg}}$.
\end{itemize}

We contend that they yield dg-modules over $(A', K)$.

\begin{lemma}\label{prop:oblv-adjunction-prepr}
	Both constructions above yield dg-functors
	\[ (A, K)\dcate{dgMod} \to (A', K)\dcate{dgMod}. \]
\end{lemma}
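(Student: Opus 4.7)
The plan is to verify, for each construction, that the output satisfies the axioms of Definition \ref{def:HC-dg-module} with respect to the group $K$ and the algebra $A'$; dg-functoriality in $M$ then follows from the standard dg-enrichment of $\otimes_A$ and $\Hom^\bullet_A$. Throughout I write $\alpha, \rho$ for the structural data of $M$ and $\alpha', \rho'$ for those on the output. The two identities to exploit are $\varphi \circ j = j'$ and the Harish--Chandra condition $\dd\sigma'(\xi) = [j'(\xi), \cdot]$ of Definition \ref{def:HC-dga}.

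For $A' \dotimes{A} M$: Definition \ref{def:tensor-dg} supplies the dg-module structure over $A'$. Equip it with the diagonal $K$-action; this is well-defined on the quotient because $\varphi$ is $K$-equivariant and $\alpha$ is $K$-equivariant, and it is algebraic because both tensor products and quotients of algebraic $K$-representations are algebraic. The equivariance $\alpha'(\sigma'(k) a') = \rho'(k) \alpha'(a') \rho'(k)^{-1}$ is immediate from $\sigma'(k)$ being an algebra automorphism. For $\alpha'(j'(\xi)) = \dd\rho'(\xi)$ on a pure tensor, Leibniz together with $\dd\sigma'(\xi) = [j'(\xi), \cdot]$ and $\dd\rho(\xi) = \alpha(j(\xi))$ gives
\[
\dd\rho'(\xi)(a' \otimes m) = [j'(\xi), a'] \otimes m + a' \otimes j(\xi) m,
\]
and moving $\varphi(j(\xi)) = j'(\xi)$ across $\dotimes{A}$ cancels the terms $-a' j'(\xi) \otimes m$ and $a' \otimes j(\xi) m$, leaving $j'(\xi) a' \otimes m$ as required.

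For $\Hom^\bullet_A(A', M)^{K\text{-alg}}$: viewing $A'$ as a dg-bimodule over $(A, A')$ through $\varphi$, the $\Hom$-complex carries a canonical left dg-module structure over $A'$ via $(a' \cdot f)(x) := f(x a')$ (with the standard Koszul sign). A direct substitution shows that the conjugation action \eqref{eqn:K-adjoint-action} is compatible with this $A'$-action, namely $k \cdot (a' \cdot f) = \sigma'(k) a' \cdot (k \cdot f)$. Passage to the $K$-algebraic part preserves $A'$-stability because $A'$ is itself an algebraic $K$-representation: if $a'$ and $f$ lie in finite-dimensional $K$-stable subspaces $V \subset A'$ and $F \subset \Hom^\bullet_A(A', M)$, then $a' \cdot f$ belongs to the finite-dimensional $K$-subrepresentation $V \cdot F$. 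The identity $\alpha'(j'(\xi)) = \dd\rho'(\xi)$ is checked pointwise: for $x \in A'$,
\[
(\dd\rho'(\xi) f)(x) = \dd\rho(\xi)(f(x)) - f(\dd\sigma'(\xi)(x)) = j(\xi) f(x) - f(j'(\xi) x) + f(x j'(\xi)),
\]
and the $A$-linearity of $f$ together with $\varphi \circ j = j'$ forces $f(j'(\xi) x) = j(\xi) f(x)$, leaving precisely $f(x j'(\xi)) = (j'(\xi) \cdot f)(x)$.

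Finally, both constructions are manifestly functorial in $M$, and the dg-enrichment is automatic once one notes that the $\Hom$-complex of morphisms in $(A, K)\dcate{dgMod}$ maps to the corresponding $\Hom$-complex after applying $A' \dotimes{A} (\cdot)$ or $\Hom^\bullet_A(A', \cdot)^{K\text{-alg}}$. The main obstacle I anticipate is bookkeeping Koszul signs in the $\Hom$-construction and, more substantively, confirming that the $K$-algebraic truncation commutes with the differential and the $A'$-action; both reduce to the observation that $(\cdot)^{K\text{-alg}}$ is a right adjoint that commutes with any $K$-equivariant algebraic operation.
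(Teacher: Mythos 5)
Your proposal is correct and follows essentially the same route as the paper: a direct verification of the two conditions of Definition \ref{def:HC-dg-module}, with the Leibniz computation $\dd\rho'(\xi)(a'\otimes m) = [j'(\xi),a']\otimes m + a'\otimes j(\xi)m = j'(\xi)a'\otimes m$ being exactly the paper's central calculation, and dg-functoriality deferred to standard dg-module theory. The only difference is that you spell out the $\Hom^\bullet_A(A',\cdot)^{K\text{-alg}}$ case (including the preservation of the algebraic part under the $A'$-action), which the paper dismisses with ``similarly, one readily checks''.
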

\begin{proof}
	We first check that $A' \dotimes{A} M$ is a dg-module over $(A', K)$. Writing the actions by $K$, $A$ and $A'$ as left multiplication, we have
	\begin{align*}
		(k a'_1) \cdot (a'_2 \otimes m) & = (k a'_1)a'_2 \otimes m \\
		& = k \cdot \left( (a'_1 \cdot k^{-1} a'_2) \otimes k^{-1} m \right) \\
		& = k \cdot a'_1 \cdot k^{-1} \cdot (a'_2 \otimes m)
	\end{align*}
	for all $a'_1, a'_2 \in A'$, $k \in K$ and $m \in M$. This is the first condition in Definition \ref{def:HC-dg-module}.
	
	As for the second condition, taking derivative of the diagonal $K$-action yields
	\begin{align*}
		\dd \rho(\xi) (a' \otimes m) & = (\dd\sigma'(\xi) a') \otimes m + a' \otimes (\dd\rho(\xi) m) \\
		& = [j'(\xi), a'] \otimes m + a' \otimes (j(\xi) m) \\
		& = \left( [j'(\xi), a'] + a' j'(\xi) \right) \otimes m \\
		& = (j'(\xi)a') \otimes m = j'(\xi) \cdot (a' \otimes m)
	\end{align*}
	for all $\xi \in \mathfrak{k}$ since $j' = \varphi j$, and we are done.
	
	Similarly, one readily checks that $\Hom^\bullet_A(A', M)^{K\text{-alg}}$ satisfies these two conditions.
	
	Standard facts from the theory of dg-modules (see eg.\ \cite[\S 9.1]{Yek20}) show that they are dg-functors if we forget $K$-actions. Our case follows by imposing $K$-equivariance.
\end{proof}

\begin{proposition}\label{prop:oblv-adjunction}
	The dg-functors from Lemma \ref{prop:oblv-adjunction-prepr} fit into adjunctions
	\begin{equation*}\begin{gathered}
		\begin{tikzcd}
			A' \dotimes{A} (\cdot): (A, K)\dcate{dgMod} \arrow[shift left, r] & (A', K)\dcate{dgMod} : \mathrm{oblv}, \arrow[shift left, l]
		\end{tikzcd} \\
		\begin{tikzcd}
			\mathrm{oblv}: (A', K)\dcate{dgMod} \arrow[shift left, r] & (A, K)\dcate{dgMod}: \Hom^\bullet_A(A', \cdot)^{K\text{-alg}}. \arrow[shift left, l]
		\end{tikzcd}
	\end{gathered}\end{equation*}
\end{proposition}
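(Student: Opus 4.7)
The plan is to reduce both claims to the standard tensor–Hom adjunctions for dg-modules over dg-algebras (see e.g.\ \cite[Chapter 9]{Yek20}), and then overlay the $K$-equivariance by taking $K$-invariants in the appropriate $\Hom$-complexes where $K$ acts by the conjugation rule \eqref{eqn:K-adjoint-action}. Recall that $\Hom^\bullet_{A,K}(M,N)$ is exactly the subcomplex of $K$-invariants in $\Hom^\bullet_A(M,N)$ under this action, and similarly for the primed version; this is the only extra ingredient beyond the non-equivariant adjunctions.

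For the first adjunction, let $M \in (A,K)\dcate{dgMod}$ and $N \in (A',K)\dcate{dgMod}$. The standard dg-adjunction for the change of dg-algebras along $\varphi\colon A \to A'$ yields a natural isomorphism of complexes
\[
    \Hom^\bullet_{A'}\bigl(A' \dotimes{A} M,\, N\bigr) \;\simeq\; \Hom^\bullet_{A}\bigl(M,\, \mathrm{oblv}(N)\bigr),
\]
given on the one side by $f \mapsto (m \mapsto f(1 \otimes m))$ and on the other by $g \mapsto (a' \otimes m \mapsto a' g(m))$. I would verify that this isomorphism intertwines the conjugation $K$-actions on both sides; this is straightforward given that the $K$-actions on $A' \dotimes{A} M$ and $A'$ are diagonal, and that $\varphi$, the $A'$-action on $N$, and the $A$-action on $M$ are all $K$-equivariant. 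Taking $K$-invariants (which is exact) then yields the desired isomorphism $\Hom^\bullet_{A',K}(A' \dotimes{A} M, N) \simeq \Hom^\bullet_{A,K}(M, \mathrm{oblv}(N))$, and functoriality of the classical adjunction gives the naturality of the unit $m \mapsto 1 \otimes m$ and counit $a' \otimes n \mapsto a'n$.

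For the second adjunction, let $N \in (A',K)\dcate{dgMod}$ and $M \in (A,K)\dcate{dgMod}$. The standard dg-adjunction for restriction of scalars gives a natural isomorphism
\[
    \Hom^\bullet_{A}\bigl(\mathrm{oblv}(N),\, M\bigr) \;\simeq\; \Hom^\bullet_{A'}\bigl(N,\, \Hom^\bullet_A(A', M)\bigr),
\]
defined by $f \mapsto \bigl(n \mapsto (a' \mapsto f(a' n))\bigr)$. Again I would check that this is compatible with the conjugation $K$-actions; the only point requiring care is the $K$-action on $\Hom^\bullet_A(A', M)$ from \eqref{eqn:K-adjoint-action}, for which the verification is a direct unwinding. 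The subtle step is the algebraicity: a priori the right-hand side is too large, since $\Hom^\bullet_A(A', M)$ need not be a $K$-algebraic representation. However, any morphism $N \to \Hom^\bullet_A(A', M)$ that is $K$-invariant under the conjugation action is, upon restriction to elements, a $K$-equivariant linear map from $N$; since $N$ is $K$-algebraic by Definition \ref{def:HC-dg-module}, its image lies automatically in $\Hom^\bullet_A(A', M)^{K\text{-alg}}$. Conversely, any $A'$-linear and $K$-equivariant map $N \to \Hom^\bullet_A(A', M)^{K\text{-alg}}$ extends to the ambient Hom by inclusion. Taking $K$-invariants therefore produces exactly $\Hom^\bullet_{A',K}(N, \Hom^\bullet_A(A', M)^{K\text{-alg}})$, yielding the adjunction.

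I expect no substantial obstacle; the only delicate point is the passage to the $K$-algebraic part in the second adjunction, which is handled by the observation that any $K$-equivariant image of an algebraic representation is algebraic. All the verifications of $K$-equivariance are mechanical once the conventions \eqref{eqn:K-adjoint-action} and the $K$-equivariance of $\varphi$, $j$, and $j'$ are unpacked; Lemma \ref{prop:oblv-adjunction-prepr} has already taken care of showing that the two functors in question actually land in the correct categories.
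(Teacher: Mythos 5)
Your proposal is correct and follows essentially the same route as the paper: invoke the standard tensor--Hom adjunctions for dg-modules and then impose $K$-equivariance by passing to invariants under the conjugation action. Your explicit justification that a $K$-equivariant map out of the algebraic representation $N$ automatically lands in $\Hom^\bullet_A(A', M)^{K\text{-alg}}$ is the one point the paper leaves implicit, and you handle it correctly.
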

\begin{proof}
	Consider the first adjunction. It is a standard fact that
	\[ \Hom_{A'}(A' \dotimes{A} M, M') \simeq \Hom_A(M, \mathrm{oblv}(M')) \;\;\text{canonically.} \]
	Adding $K$-equivariance yields the desired adjunction for $\Hom_{A', K}$. The second adjunction can be deduced in a similar way, by using standard adjunctions for dg-modules.
\end{proof}

\begin{corollary}\label{prop:oblv-adjoint-K}
	The functor $A' \dotimes{A} (\cdot)$ preserves K-projectives and $\Hom^\bullet_A(A', \cdot)^{K\text{-alg}}$ preserves K-injectives.
\end{corollary}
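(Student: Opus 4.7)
The plan is to deduce this directly from the adjunctions in Proposition \ref{prop:oblv-adjunction} by invoking Proposition \ref{prop:K-injectives-adjunction}. The whole point of having recorded these two adjunctions as a pair is precisely to get both preservation statements by the same mechanism.

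First I would verify the one hypothesis needed for Proposition \ref{prop:K-injectives-adjunction}, namely that the oblivion functor $\mathrm{oblv}: (A', K)\dcate{dgMod} \to (A, K)\dcate{dgMod}$ is exact. This is immediate: by construction, $\mathrm{oblv}$ is the identity on underlying complexes of vector spaces, so it certainly sends acyclic dg-modules to acyclic dg-modules. Moreover, $\mathrm{oblv}$ upgrades to a dg-functor (it is just a restriction on $\Hom$-complexes), and the two functors $A' \dotimes{A}(\cdot)$ and $\Hom^\bullet_A(A', \cdot)^{K\text{-alg}}$ are dg-functors by Lemma \ref{prop:oblv-adjunction-prepr}. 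Hence the adjunctions of Proposition \ref{prop:oblv-adjunction} are dg-adjunctions in the sense required by Proposition \ref{prop:K-injectives-adjunction}.

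Now I would conclude in two lines. In the first adjunction of Proposition \ref{prop:oblv-adjunction}, the right adjoint is $\mathrm{oblv}$, which is exact; Proposition \ref{prop:K-injectives-adjunction} then forces the left adjoint $A' \dotimes{A}(\cdot)$ to preserve K-projectives. In the second adjunction, $\mathrm{oblv}$ is the left adjoint and is still exact, so the same proposition forces the right adjoint $\Hom^\bullet_A(A', \cdot)^{K\text{-alg}}$ to preserve K-injectives.

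There is no real obstacle here; the only thing that could go wrong is a mismatch in the ``dg-adjunction'' condition, and that is handled by the explicit constructions in the previous lemma (the isomorphisms $\Hom_{A'}(A' \dotimes{A} M, N) \simeq \Hom_A(M, \mathrm{oblv}(N))$ and the $\Hom^\bullet_A(A', -)$ variant both come from the standard dg-adjunctions for ordinary dg-modules, to which one merely adds $K$-equivariance on both sides). So the corollary is essentially formal, being a direct application of Proposition \ref{prop:K-injectives-adjunction} to the two adjoint pairs already at hand.
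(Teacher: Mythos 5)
Your argument is correct and coincides with the paper's proof: the paper likewise observes that oblivion is exact and then applies Proposition \ref{prop:K-injectives-adjunction} to the two adjunctions of Proposition \ref{prop:oblv-adjunction}. Your additional remarks on the dg-adjunction hypothesis are just an expanded version of what the paper leaves implicit.
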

\begin{proof}
	Oblivion is exact, so the assertions follow from Proposition \ref{prop:K-injectives-adjunction}.
\end{proof}

Next, we describe the adjoint functors of oblivion in h-construction (Definition \ref{def:h-cplx}).

\begin{proposition}\label{prop:adjoint-h-oblv}
	Suppose that $A$ and $A'$ are both in degree zero in the circumstance of Proposition \ref{prop:oblv-adjunction}. In what follows, $M$ stands for an arbitrary h-complex over $(A, K)$, say with $i^M_\xi \in \End^{-1}(M)$ in its data ($\xi \in \mathfrak{k}$).
	\begin{enumerate}[(i)]
		\item The left adjoint of
		\[ \mathrm{oblv}: {}^{\mathrm{h}}\cate{C}(A', K) \to {}^{\mathrm{h}}\cate{C}(A, K) \]
		is the dg-functor
		\[ A' \dotimes{A} (\cdot): {}^{\mathrm{h}}\cate{C}(A, K) \to {}^{\mathrm{h}}\cate{C}(A', K), \]
		where $K$ acts diagonally on $A' \dotimes{A} M$, and the degree $-1$ endomorphisms $i^{\otimes}_\xi$ of $A' \dotimes{A} M$ are given by
		\[ i^{\otimes}_\xi(a' \otimes m) = a' \otimes i^M_\xi(m). \]

		\item The right adjoint of $\mathrm{oblv}$ is the dg-functor
		\[ \Hom^\bullet_A(A', \cdot)^{K\text{-alg}}: {}^{\mathrm{h}}\cate{C}(A, K) \to {}^{\mathrm{h}}\cate{C}(A', K), \]
		where $K$ acts à la \eqref{eqn:K-adjoint-action} on $\Hom^\bullet_A(A', M)$ and the degree $-1$ endomorphisms $i^{\Hom}_\xi$ of $\Hom^\bullet_A(A', M)$ are given by
		\[ (i_\xi^{\Hom} f)(a') = i^M_\xi f(a'), \quad f \in \Hom^n_A(A', M). \]
	\end{enumerate}
\end{proposition}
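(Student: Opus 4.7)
The plan is to reduce the statement to Proposition \ref{prop:oblv-adjunction} via the equivalence of Theorem \ref{prop:h-vs-dg}(iii). Write $B := U(\overline{\mathfrak{k}}) \otimes A$ and $B' := U(\overline{\mathfrak{k}}) \otimes A'$. The morphism $\varphi: A \to A'$ extends to a $K$-equivariant homomorphism of dg-algebras $\tilde{\varphi} := \identity \otimes \varphi: B \to B'$; since $\varphi \circ j = j'$, we get $\tilde{\varphi} \circ j_B = j_{B'}$, so $(\tilde{\varphi}, \identity_K)$ is a morphism of Harish-Chandra dg-algebras. Under the equivalences ${}^{\mathrm{h}}\cate{C}(A, K) \simeq (B, K)\dcate{dgMod}$ and ${}^{\mathrm{h}}\cate{C}(A', K) \simeq (B', K)\dcate{dgMod}$ from Theorem \ref{prop:h-vs-dg}(iii), the h-oblivion functor between the two h-categories is identified with the pullback functor $(\tilde{\varphi}, \identity_K)^*$. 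By Proposition \ref{prop:oblv-adjunction}, the latter admits left adjoint $B' \dotimes{B} (\cdot)$ and right adjoint $\Hom^\bullet_B(B', \cdot)^{K\text{-alg}}$, both upgraded to dg-functors.

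It remains to identify these abstract adjoints with the formulas in (i) and (ii). The key inputs are two bimodule isomorphisms
\[ B' \;\rightiso\; A' \dotimes{A} B \quad \text{of $(B', B)$-bimodules}, \]
\[ B' \;\rightiso\; B \dotimes{A} A' \quad \text{of $(B, B')$-bimodules}, \]
given by $u \otimes a' \mapsto a' \dotimes{A} (u \otimes 1)$ and by $u \otimes a' \mapsto (u \otimes 1) \dotimes{A} a'$ respectively. Both are easily checked, and no Koszul signs arise because $A$ and $A'$ sit in degree zero. Plugging into the adjoints, the associativity of the tensor product and the tensor--hom adjunction yield
\[ B' \dotimes{B} M \simeq A' \dotimes{A} M, \quad \Hom^\bullet_B(B', M) \simeq \Hom^\bullet_A(A', M) \]
as complexes, compatibly with $K$-actions, and taking $K$-algebraic parts on the right-hand side of the second isomorphism is compatible with this identification.

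The final step is to trace the residual structures through the isomorphisms. The left $A'$-action and diagonal $K$-action on $A' \dotimes{A} M$ are immediate from the bimodule structure of $A' \dotimes{A} B$. For the $i^{\otimes}_\xi$ in (i), the element $\xi \otimes 1 \in B'$ of degree $-1$ acts on $(u \otimes a') \otimes m$ by $(\xi u \otimes a') \otimes m$; under the identification this becomes $a' \otimes i_\xi^M(m)$, because the action of $\xi \in \mathfrak{k} \subset U(\overline{\mathfrak{k}})^{-1}$ on $M$ as a $B$-module is exactly $i_\xi^M$ by Theorem \ref{prop:h-vs-dg}(iii). The argument for (ii) is entirely parallel: the left $B'$-action on $\Hom^\bullet_B(B', M)$ induced from right multiplication on $B'$, when transported through the tensor--hom identification, becomes post-composition with the $U(\overline{\mathfrak{k}})$-action on $M$, yielding $(i_\xi^{\Hom} f)(a') = i_\xi^M f(a')$.

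The principal obstacle is bookkeeping rather than any single deep step: one must confirm that the $K$-equivariance, the $A'$-action, the differential and the collection $\{i_\xi\}$ assembled in (i) and (ii) are precisely what emerges from the abstract adjoints via the bimodule identifications. Two features simplify this: first, because $A$ and $A'$ live in degree zero, all Koszul signs arising from commuting scalars past odd elements of $U(\overline{\mathfrak{k}})$ collapse; and second, the passage to the $K$-algebraic part in (ii) is compatible with the other operations, since every structure map is $K$-equivariant and $(\cdot)^{K\text{-alg}}$ is a right adjoint.
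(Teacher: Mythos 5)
Your proposal is correct and follows essentially the same route as the paper: the paper's proof likewise combines Theorem \ref{prop:h-vs-dg}(iii) with Proposition \ref{prop:oblv-adjunction} via the identifications $(U(\overline{\mathfrak{k}}) \otimes A') \dotimes{U(\overline{\mathfrak{k}}) \otimes A} M \simeq A' \dotimes{A} M$ and $\Hom^\bullet_{U(\overline{\mathfrak{k}}) \otimes A}(U(\overline{\mathfrak{k}}) \otimes A', M) \simeq \Hom^\bullet_A(A', M)$, then identifies $i^{\otimes}_\xi$ and $i^{\Hom}_\xi$. You merely make the intermediate bimodule isomorphisms and the tracing of the $i_\xi$-data more explicit than the paper, which dismisses that step as routine.
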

\begin{proof}
	This is essentially a combination of Theorem \ref{prop:h-vs-dg} and Proposition \ref{prop:oblv-adjunction}, since
	\begin{align*}
		(U(\overline{\mathfrak{k}}) \otimes A') \dotimes{U(\overline{\mathfrak{k}}) \otimes A} M & \leftiso A' \dotimes{A} M, \\
		\Hom^\bullet_{U(\overline{\mathfrak{k}}) \otimes A}(U(\overline{\mathfrak{k}}) \otimes A', M) & \rightiso \Hom^\bullet_A(A', M)
	\end{align*}
	as dg-modules over $A'$, and these isomorphisms are $K$-equivariant. It remains to identify $i_\xi^{\Hom}$ and $i_\xi^{\otimes}$, and this is routine.
\end{proof}

\begin{remark}
	The functor $\Hom^\bullet_A(A', \cdot)^{K\text{-alg}}$ specializes to the co-induction in \cite[1.12]{BL95} in the special case $A = \CC$.
\end{remark}

Finally, we consider change of group, i.e.\ the case $A = A'$. For the sake of simplicity, we only treat the right adjoint of oblivion in the case of h-construction.

\begin{theorem}[P.\ Pandžić]\label{prop:equivariant-Zuckerman}
	Suppose that $A$ is concentrated at degree zero. Let $T \to K$ be a homomorphism of affine groups and assume $T$ is reductive. Then there is a pair of adjoint functors
	\[\begin{tikzcd}
		\mathrm{oblv}: {}^{\mathrm{h}} \cate{C}(A, K) \arrow[shift left, r] & {}^{\mathrm{h}} \cate{C}(A, T): \Gamma^{\mathrm{eq}}_{K, T}. \arrow[shift left, l]
	\end{tikzcd}\]

	Moreover, $\Gamma^{\mathrm{eq}}_{K, T}$ upgrades to a dg-functor and induces a $t$-exact functor between homotopy categories. The functor $\Gamma^{\mathrm{eq}}_{K, T}$ is exact (i.e.\ preserves acyclicity).
\end{theorem}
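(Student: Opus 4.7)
My plan is to translate the statement into the language of Harish-Chandra dg-algebras using Theorem \ref{prop:h-vs-dg}, under which
\[ {}^{\mathrm{h}}\cate{C}(A, K) \simeq (B_K, K)\dcate{dgMod}, \qquad {}^{\mathrm{h}}\cate{C}(A, T) \simeq (B_T, T)\dcate{dgMod}, \]
where $B_K := U(\overline{\mathfrak{k}}) \otimes A$ and $B_T := U(\overline{\mathfrak{t}}) \otimes A$, each with its Koszul differential and the diagonal group action. The homomorphisms $T \hookrightarrow K$ and $\overline{\mathfrak{t}} \hookrightarrow \overline{\mathfrak{k}}$ determine a morphism $(B_T, T) \to (B_K, K)$ of Harish-Chandra dg-algebras whose associated pullback \eqref{eqn:HC-pullback} is precisely the oblivion in the statement.

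I will factor this morphism through an intermediate object as
\[ (B_T, T) \xrightarrow{\alpha} (B_K, T) \xrightarrow{\beta} (B_K, K), \]
so that $\mathrm{oblv} = \alpha^* \beta^*$ and any right adjoint factors as $\beta_* \alpha_*$. The step $\alpha$ is a pure change of dg-algebra, and Proposition \ref{prop:oblv-adjunction} directly supplies the dg right adjoint
\[ \alpha_*(M) := \Hom^\bullet_{B_T}(B_K, M)^{T\text{-alg}}. \]
The step $\beta$ is a pure change of group, and for this I would set
\[ \beta_*(N) := (R(K) \otimes N)^T, \]
where $R(K) := \Gamma(K, \mathscr{O}_K)$ carries the right-translation $T$-action (rendering $(-)^T$ meaningful) and the left-translation $K$-action (providing the ambient $K$-equivariance), $B_K$ acts through $N$, and the degree $-1$ operators $i_\xi$ for $\xi \in \mathfrak{k}$ — defined on $N$ only for $\xi \in \mathfrak{t}$ — are extended to all of $\mathfrak{k}$ via the translation action on $R(K)$, following the Koszul-type recipe of \cite{Pan95, Pan05, Pan07}. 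Setting $\Gamma^{\mathrm{eq}}_{K, T} := \beta_* \alpha_*$ yields the desired right adjoint; the dg-enhancement and dg-adjunction assemble from each step as in Proposition \ref{prop:oblv-adjunction}.

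For exactness, the reductivity of $T$ makes $(-)^T$ exact on the category of algebraic $T$-representations, and tensoring over $\CC$ with $R(K)$ is obviously exact, so $\beta_*$ preserves acyclicity. For $\alpha_*$ the task is to verify that $B_K$ is semi-free as a left dg-module over $B_T$, so that $\Hom^\bullet_{B_T}(B_K, \cdot)$ preserves acyclic complexes. Using reductivity of $T$ once more, pick a $T$-stable complement $\mathfrak{m}$ of $\mathfrak{t}$ in $\mathfrak{k}$; then the PBW-type identification
\[ U(\overline{\mathfrak{k}}) \simeq U(\overline{\mathfrak{t}}) \otimes \left(\bigwedge^\bullet \mathfrak{m} \otimes \Sym(\mathfrak{m})\right) \]
as $T$-equivariant graded vector spaces upgrades to an exhibit of $B_K$ as a $T$-equivariant semi-free extension of $B_T$. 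Combining the two steps, $\Gamma^{\mathrm{eq}}_{K, T}$ is exact, and since exactness entails preservation of cohomological degree this also delivers the $t$-exactness between homotopy categories.

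The main obstacle is the explicit construction of $\beta_*$ in the h-complex formalism — specifically, writing down the operators $i_\xi$ for $\xi \in \mathfrak{k} \setminus \mathfrak{t}$ on $(R(K) \otimes N)^T$ and verifying the four Duflo--Vergne conditions of Definition \ref{def:h-cplx}. This is essentially Pandžić's equivariant Zuckerman construction, which my argument would recover. Once $\beta_*$ is in hand, the remaining assertions — dg-adjunction, exactness of the composite — are formal, granted the semi-freeness input afforded by reductivity of $T$.
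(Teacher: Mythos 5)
The paper offers no argument for this statement: it is proved by citation to Pand\v{z}i\'c \cite[Theorems 3.1.2 --- 3.1.5]{Pan07}, and your outline is in substance a reconstruction of that cited construction (factor the oblivion through $(B_T,T)\to(B_K,T)\to(B_K,K)$, treat the first step by Proposition \ref{prop:oblv-adjunction} and the second by algebraic induction $(R(K)\otimes -)^T$). The skeleton is right, but as a proof it has a genuine gap that you yourself flag and then set aside: the definition of the full $(B_K,K)$-structure on $(R(K)\otimes N)^T$ --- not only the operators $i_\xi$ for $\xi\in\mathfrak{k}\smallsetminus\mathfrak{t}$, but also the action of the degree-zero part $\mathfrak{k}\subset U(\mathfrak{k})\subset U(\overline{\mathfrak{k}})$, which under Theorem \ref{prop:h-vs-dg}(iii) must realize $w(\xi)=\dd\rho(\xi)-\alpha j(\xi)$ and therefore necessarily mixes the left-translation action on $R(K)$ with the action on $N$ (so $B_K$ cannot simply ``act through $N$''). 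Verifying conditions (i)--(iv) of Definition \ref{def:h-cplx} for these operators is the actual content of Pand\v{z}i\'c's theorems; deferring it proves the theorem only modulo the theorem.

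Two supporting claims also need repair. First, for the exactness of $\alpha_*=\Hom^\bullet_{B_T}(B_K,\cdot)^{T\text{-alg}}$: semi-freeness of $B_K$ over $B_T$ gives acyclicity of $\Hom^\bullet_{B_T}(B_K,M)$ for acyclic $M$, but $(-)^{T\text{-alg}}$ is only a right adjoint (left exact) and does not preserve acyclicity of arbitrary complexes of non-algebraic $T$-representations. The paper's Lemma \ref{prop:restriction-lemma} avoids this by restricting along $1\otimes\Sym(\mathfrak{q})$, which is a chain isomorphism there because $U(\mathfrak{g})$ carries no differential; here the Koszul differential on $U(\overline{\mathfrak{k}})$ destroys that, so you need an extra device, e.g.\ the finite $T$-stable filtration by exterior degree in $\mathfrak{m}$, whose graded pieces reduce to $\Hom_{\CC}(W_i,M)^{T\text{-alg}}$ with $W_i$ finite-dimensional algebraic, together with complete reducibility of $T$. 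Second, ``exactness entails preservation of cohomological degree, hence $t$-exactness'' is a non sequitur (translation is exact but not $t$-exact); $t$-exactness requires a separate check, for instance that $\beta_*$ preserves degree supports term by term while $B_T\hookrightarrow B_K$ is a quasi-isomorphism of dg-algebras (both augment quasi-isomorphically onto $A$), so that $\alpha_*$ does not spread cohomology.
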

\begin{proof}
	This is \cite[Theorems 3.1.2 --- 3.1.5]{Pan07}. The assertion that $\Gamma^{\mathrm{eq}}_{K, T}$ upgrades to a dg-functor is contained in the cited proofs.
\end{proof}

In \cite{Pan07}, the functors $\Gamma^{\mathrm{eq}}_{K, T}$ are called \emph{equivariant Zuckerman functors}. A comparison with the classical Zuckerman functors is given in \cite[Theorem 3.3.2]{Pan07}.

\begin{corollary}\label{prop:change-group-K-proj}
	If $T$ is reductive, then $\mathrm{oblv}: {}^{\mathrm{h}} \cate{C}(A, K) \to {}^{\mathrm{h}} \cate{C}(A, T)$ preserves K-projectives.
\end{corollary}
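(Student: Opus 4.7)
The plan is to reduce the statement to the general adjunction criterion of Proposition \ref{prop:K-injectives-adjunction}. Theorem \ref{prop:equivariant-Zuckerman} already provides the essential input: the oblivion functor $\mathrm{oblv}: {}^{\mathrm{h}}\cate{C}(A, K) \to {}^{\mathrm{h}}\cate{C}(A, T)$ is the left adjoint of the equivariant Zuckerman functor $\Gamma^{\mathrm{eq}}_{K, T}$, and the latter is both a dg-functor and exact (i.e.\ preserves acyclic h-complexes).

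First, I would transport the setup along the equivalences of dg-categories in Theorem \ref{prop:h-vs-dg}(iii): the source and target of $\mathrm{oblv}$ identify, respectively, with $(B_K, K)\dcate{dgMod}$ and $(B_T, T)\dcate{dgMod}$, where $B_K = U(\overline{\mathfrak{k}}) \otimes A$ and $B_T = U(\overline{\mathfrak{t}}) \otimes A$ are Harish-Chandra dg-algebras of the expected form. In particular, both sides are of the type $(\text{dg-algebra}, \text{group})\dcate{dgMod}$, which is exactly the format required by Proposition \ref{prop:K-injectives-adjunction}.

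Next, I would invoke Proposition \ref{prop:K-injectives-adjunction} with $F = \mathrm{oblv}$ and $G = \Gamma^{\mathrm{eq}}_{K, T}$. All the hypotheses are at hand: Theorem \ref{prop:equivariant-Zuckerman} supplies the adjunction and tells us that both $\mathrm{oblv}$ (which is visibly a dg-functor, since it leaves the underlying complexes and all structure maps intact) and $\Gamma^{\mathrm{eq}}_{K, T}$ upgrade to dg-functors, so the adjunction is in fact a dg-adjunction. Moreover, the same theorem gives the exactness of $\Gamma^{\mathrm{eq}}_{K, T}$. Proposition \ref{prop:K-injectives-adjunction} then yields that the left adjoint $\mathrm{oblv}$ preserves K-projectives, which is the desired conclusion.

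There is no real obstacle here: the corollary is a formal consequence once one has Pandžić's theorem on the existence and exactness of $\Gamma^{\mathrm{eq}}_{K, T}$. The only point that deserves a line of verification is that the abstract adjunction criterion of Proposition \ref{prop:K-injectives-adjunction} applies verbatim to the h-categories, which is ensured by the dg-equivalence of Theorem \ref{prop:h-vs-dg}(iii) together with the fact that acyclicity, quasi-isomorphisms, and hence K-projectivity, are preserved and reflected by this equivalence.
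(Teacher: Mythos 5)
Your proof is correct and follows essentially the same route as the paper: the paper's proof of this corollary is precisely to apply Proposition \ref{prop:K-injectives-adjunction} to the adjunction $(\mathrm{oblv}, \Gamma^{\mathrm{eq}}_{K, T})$ supplied by Theorem \ref{prop:equivariant-Zuckerman}, using the exactness and dg-upgrade of $\Gamma^{\mathrm{eq}}_{K, T}$. Your extra remark about transporting the h-categories into the $(B, K)\dcate{dgMod}$ format via Theorem \ref{prop:h-vs-dg}(iii) is a reasonable explicit justification of a point the paper leaves implicit.
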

\begin{proof}
	Apply Proposition \ref{prop:K-injectives-adjunction}.
\end{proof}

Finally, we record the easy observation that $\mathrm{oblv}$ induces functors between h-derived categories for all $T \to K$ (by exactness), and the diagrams
\begin{equation}\label{eqn:oblv-comm}
	\begin{tikzcd}
		{}^{\mathrm{h}} \cate{C}(A, K) \arrow[r, "{\Hm^n}"] \arrow[d, "{\mathrm{oblv}}"'] & (A, K)\dcate{Mod} \arrow[d] \\
		{}^{\mathrm{h}} \cate{C}(A, T) \arrow[r, "{\Hm^n}"] & (A, T)\dcate{Mod}
	\end{tikzcd}
	\quad
	\begin{tikzcd}
		{}^{\mathrm{h}} \cate{D}(A, K) \arrow[r, "{\Hm^n}"] \arrow[d, "{\mathrm{oblv}}"'] & (A, K)\dcate{Mod} \arrow[d] \\
		{}^{\mathrm{h}} \cate{D}(A, T) \arrow[r, "{\Hm^n}"] & (A, T)\dcate{Mod}
	\end{tikzcd}
\end{equation}
commute for all $n \in \Z$.

\subsection{Invariants and co-invariants}\label{sec:inv-coinv}
Consider a Harish-Chandra dg-algebra $(A, K)$ together with
\begin{itemize}
	\item $\mathfrak{a} \subsetneq A$: a $K$-invariant dg-ideal;
	\item $N \lhd K$: a subgroup acting trivially on $A/\mathfrak{a}$.
\end{itemize}
From this we obtain a morphism $(A, K) \to (A/\mathfrak{a}, K/N)$ between Harish-Chandra dg-algebras. The pull-back dg-functor in this case is called \emph{inflation}
\[ \mathrm{Infl}^{A, K}_{A/\mathfrak{a}, K/N}: (A/\mathfrak{a}, K/N)\dcate{dgMod} \to (A, K)\dcate{dgMod}. \]

\begin{proposition}\label{prop:Inv-coInv}
	There exists a left (resp.\ right) adjoint dg-functor $\mathrm{coInv}^{A, K}_{A/\mathfrak{a}, K/N}$ (resp.\ $\mathrm{Inv}^{A, K}_{A/\mathfrak{a}, K/N}$) of $\mathrm{Infl}^{A, K}_{A/\mathfrak{a}, K/N}$.
\end{proposition}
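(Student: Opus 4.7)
The plan is to write down the two functors by hand and then verify the adjunctions and dg-enrichment directly. For an h-complex $M$ over $(A, K)$, set
\[
\mathrm{coInv}^{A, K}_{A/\mathfrak{a}, K/N}(M) := \bigl(M/\mathfrak{a}M\bigr)_N,
\qquad
\mathrm{Inv}^{A, K}_{A/\mathfrak{a}, K/N}(M) := \bigl(M^{\mathfrak{a}}\bigr)^N,
\]
where $M^{\mathfrak{a}} := \{m \in M : am = 0 \text{ for all } a \in \mathfrak{a}\}$, and where the subscript (resp.\ superscript) $N$ denotes $N$-coinvariants (resp.\ $N$-invariants) of the algebraic $K$-action, restricted to $N$.

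The first step is to check that $\mathfrak{a}M$ is a dg-submodule: since $\mathfrak{a}$ is a graded two-sided ideal closed under $d$, the Leibniz rule $d(am) = (da)m + (-1)^{|a|} a (dm)$ shows $d(\mathfrak{a}M) \subset \mathfrak{a}M$; it is $K$-stable by the $K$-invariance of $\mathfrak{a}$. Dually, $M^{\mathfrak{a}}$ is a $K$-stable subcomplex. Next, I verify that the two constructions land in $(A/\mathfrak{a}, K/N)\dcate{dgMod}$. The $A$-action on $M/\mathfrak{a}M$ factors through $A/\mathfrak{a}$, and for $a \in A$ and $n \in N$ we have $\sigma(n^{-1})a \equiv a \pmod{\mathfrak{a}}$ by hypothesis, so in $M/\mathfrak{a}M$
\[
a \cdot (n-1)m = n(\sigma(n^{-1})a)m - am = (n-1)(am),
\]
showing that the $N$-coinvariant subspace is $A$-stable; normality of $N$ in $K$ makes this quotient a $K/N$-module, and the $\End^{-1}$-data $i_\xi$ ($\xi \in \mathfrak{k}$) descend because they are $A$-linear and $K$-equivariant. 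The verification for $\mathrm{Inv}$ is entirely parallel.

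The next step is the adjunction. An $(A, K)$-morphism $f: M \to \mathrm{Infl}(M')$ must kill $\mathfrak{a}M$ because $\mathfrak{a}$ acts by zero on $\mathrm{Infl}(M')$, and it must factor through $N$-coinvariants because $N$ acts trivially on $\mathrm{Infl}(M')$; conversely, every such factorization assembles into an $(A, K)$-morphism by composing with the projection $M \twoheadrightarrow \mathrm{coInv}(M)$. This yields a natural bijection
\[
\Hom_{(A/\mathfrak{a}, K/N)}\bigl(\mathrm{coInv}(M),\, M'\bigr) \;\simeq\; \Hom_{(A, K)}\bigl(M,\, \mathrm{Infl}(M')\bigr),
\]
and the dual argument, together with the universal property of invariants, gives the adjunction for $\mathrm{Inv}$. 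To promote this to a dg-adjunction, one runs the same reasoning on arbitrary degrees inside ${}^{\mathrm{h}}\Hom^\bullet$ — every element of ${}^{\mathrm{h}}\Hom^n(M, \mathrm{Infl}(M'))$ automatically kills $\mathfrak{a}M$ and is invariant under the $N$-action coming from \eqref{eqn:K-adjoint-action} — and then takes $0$-cocycles as needed.

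The only mildly delicate point, which I would flag as the main thing to check carefully, is that the $N$-coinvariant construction on an \emph{algebraic} $K$-representation is compatible with the $A$-action and with the h-data $i_\xi$; once one knows the submodule $\mathrm{span}\{(n-1)m : n \in N,\ m \in M/\mathfrak{a}M\}$ is stable under $A$ and under the $i_\xi$ (both shown above, using that $N$ acts trivially on $A/\mathfrak{a}$ and that each $i_\xi$ commutes with $K$), everything else is a formal manipulation. Exactness of inflation on the nose already shows these adjoints exist abstractly; the construction above just makes them explicit, which is what one needs for the computations in subsequent sections.
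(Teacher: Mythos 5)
Your construction is exactly the paper's (its proof is a single sentence: take the maximal quotient, resp.\ dg-submodule, on which $\mathfrak{a}$ and $N$ act trivially), and your verifications of well-definedness and of the dg-level adjunction are correct. Two small remarks: the proposition is stated for arbitrary dg-modules over the Harish-Chandra dg-algebra $(A, K)$, so the discussion of the $i_\xi$ is only relevant to the h-instance (cf.\ Example \ref{eg:h-inflation}) and the general case is your argument with that part deleted; and your closing claim that exactness of inflation ``already shows these adjoints exist abstractly'' is not right as stated --- exactness (preservation of acyclicity) does not yield adjoints, though preservation of all limits and colimits together with an adjoint functor theorem would.
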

\begin{proof}
	The construction is standard: $\mathrm{Inv}^{A, K}_{A/\mathfrak{a}, K/N}$ (resp.\ $\mathrm{coInv}^{A, K}_{A/\mathfrak{a}, K/N}$) takes the maximal quotient dg-module (resp.\ dg-submodule) on which $\mathfrak{a}$ and $N$ act trivially.
\end{proof}

The transitivity $\mathrm{Infl}^{A, K}_{A/\mathfrak{a}, K/N} = \mathrm{Infl}^{A, K}_{A/\mathfrak{a}, K} \mathrm{Infl}^{A/\mathfrak{a}, K}_{A/\mathfrak{a}, K/N}$ implies
\begin{equation}\label{eqn:Inv-coInv-transitive}\begin{aligned}
	\mathrm{Inv}^{A, K}_{A/\mathfrak{a}, K/N} & \simeq \mathrm{Inv}^{A/\mathfrak{a}, K}_{A/\mathfrak{a}, K/N} \mathrm{Inv}^{A, K}_{A/\mathfrak{a}, K}, \\
	\mathrm{coInv}^{A, K}_{A/\mathfrak{a}, K/N} & \simeq \mathrm{coInv}^{A/\mathfrak{a}, K}_{A/\mathfrak{a}, K/N} \mathrm{coInv}^{A, K}_{A/\mathfrak{a}, K}.
\end{aligned}\end{equation}

Note that the left and right adjoint of $\mathrm{Infl}^{A, K}_{A/\mathfrak{a}, K}$ have been given in \S\ref{sec:adjoint-oblv}. The right adjoint of $\mathrm{Infl}^{A, K}_{A, K/N}$ is given in Theorem \ref{prop:equivariant-Zuckerman} when $K$ is reductive.

\begin{proposition}\label{prop:Inv-coInv-K}
	The functors $\mathrm{coInv}^{A, K}_{A/\mathfrak{a}, K/N}$ (resp.\ $\mathrm{Inv}^{A, K}_{A/\mathfrak{a}, K/N}$) preserves K-projectives (resp.\ K-injectives).
\end{proposition}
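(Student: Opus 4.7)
The plan is to derive this as a direct application of Proposition \ref{prop:K-injectives-adjunction} to the two adjunctions established in Proposition \ref{prop:Inv-coInv}. Recall that the assertion of Proposition \ref{prop:K-injectives-adjunction} is that in a dg-adjunction $F \dashv G$, the left adjoint $F$ preserves K-projectives as soon as $G$ is exact, and dually $G$ preserves K-injectives as soon as $F$ is exact. So everything reduces to verifying that the common middle functor $\mathrm{Infl}^{A, K}_{A/\mathfrak{a}, K/N}$ is exact and that both adjunctions in Proposition \ref{prop:Inv-coInv} are genuine dg-adjunctions, not merely adjunctions of underlying $1$-categories.

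First I would note that $\mathrm{Infl}^{A, K}_{A/\mathfrak{a}, K/N}$ is exactly the pullback dg-functor \eqref{eqn:HC-pullback} along the morphism $(A, K) \to (A/\mathfrak{a}, K/N)$ of Harish-Chandra dg-algebras. As remarked right after \eqref{eqn:2-cells-der}, such pullbacks leave the underlying complex intact, hence they preserve acyclicity and quasi-isomorphisms. In particular, $\mathrm{Infl}^{A, K}_{A/\mathfrak{a}, K/N}$ is exact.

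Second I would record that the two adjunctions in Proposition \ref{prop:Inv-coInv} are dg-adjunctions. This is because $\mathrm{Inv}$ and $\mathrm{coInv}$ are, by construction, the largest sub-dg-module (resp.\ quotient dg-module) on which $\mathfrak{a}$ and $N$ act trivially; the universal properties that define them upgrade the natural bijections between $\Hom$-sets to isomorphisms between $\Hom$-complexes, as required in the hypothesis of Proposition \ref{prop:K-injectives-adjunction}. With the dg-adjunctions
\[ \mathrm{coInv}^{A, K}_{A/\mathfrak{a}, K/N} \dashv \mathrm{Infl}^{A, K}_{A/\mathfrak{a}, K/N} \dashv \mathrm{Inv}^{A, K}_{A/\mathfrak{a}, K/N} \]
in hand and the exactness of $\mathrm{Infl}^{A, K}_{A/\mathfrak{a}, K/N}$ established, Proposition \ref{prop:K-injectives-adjunction} immediately yields that $\mathrm{coInv}^{A, K}_{A/\mathfrak{a}, K/N}$ (left adjoint to an exact functor) preserves K-projectives and that $\mathrm{Inv}^{A, K}_{A/\mathfrak{a}, K/N}$ (right adjoint to an exact functor) preserves K-injectives.

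There is no real obstacle here; the only thing to watch is the distinction between $1$-categorical and dg-categorical adjunctions, since Proposition \ref{prop:K-injectives-adjunction} requires the latter. In light of the explicit construction of $\mathrm{Inv}$ and $\mathrm{coInv}$ sketched in the proof of Proposition \ref{prop:Inv-coInv}, this is routine but worth mentioning explicitly in the write-up.
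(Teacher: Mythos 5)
Your proof is correct and is essentially the paper's own argument: the paper also deduces the statement directly from the exactness of inflation together with Proposition \ref{prop:K-injectives-adjunction} applied to the adjunctions of Proposition \ref{prop:Inv-coInv}. Your extra remarks on why the adjunctions are dg-adjunctions are sound but just make explicit what the paper leaves implicit.
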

\begin{proof}
	This follows from the exactness of inflation and Proposition \ref{prop:K-injectives-adjunction}.
\end{proof}

In general, $\mathrm{Inv}$ (resp.\ $\mathrm{coInv}$) is not exact, and one has to consider its right (resp.\ left) h-derived functor, provided that $(A, K)\dcate{dgMod}$ or some suitable subcategory has enough K-injectives (resp.\ K-projectives). For these h-derived functors, the adjunction to $\mathrm{Infl}^{A, K}_{A/\mathfrak{a}, K/N}$ still holds on the level of derived categories.

\begin{example}\label{eg:h-inflation}
	Let us specialize to the h-construction. Assume $A$ is in degree zero, realized as the quotient of $B := U(\overline{\mathfrak{k}}) \otimes A$ modulo the augmentation ideal (Theorem \ref{prop:h-vs-dg}). The inflation
	\[ \cate{C}(A, K) = (A, K)\dcate{dgMod} \to (B, K)\dcate{dgMod} \simeq {}^{\mathrm{h}} \cate{C}(A, K) \]
	is the obvious inclusion. The functor of invariants (resp.\ co-invariants) extracts $\bigcap_\xi (\Ker(i_\xi) \cap \Ker(w(\xi)))$ (resp.\ takes the quotient modulo $\sum_\xi (\Image(i_\xi) + \Image(w(\xi))$).
	
	The left h-derived functor $\Lder(\mathrm{coInv})$ in this setting played a major role in \cite{Pan05}.
\end{example}

\section{\texorpdfstring{$(\mathfrak{g}, K)$}{(g, K)}-modules}\label{sec:gK-mod}
\subsection{Basic definitions}\label{sec:gK-basic}
Following \cite{BL95}, we consider the following data
\begin{itemize}
	\item $\mathfrak{g}$: finite-dimensional Lie algebra,
	\item $K$: affine algebraic group,
	\item $\Ad: K \to \Aut_{\text{Lie alg.}}(\mathfrak{g})$: a homomorphism of algebraic groups,
	\item $\iota: \mathfrak{k} \to \mathfrak{g}$: inclusion of Lie algebras,
\end{itemize}
subject to the conditions below
\begin{itemize}
	\item $\iota$ is $K$-equivariant,
	\item $(\dd\Ad(\xi))(x) = [\iota(\xi), x]$ for all $\xi \in \mathfrak{k}$ and $x \in \mathfrak{g}$.
\end{itemize}

Note that no grading is put on $U(\mathfrak{g})$. As $\iota$ extends to an inclusion $U(\mathfrak{k}) \to U(\mathfrak{g})$ and $\Ad$ extends to an algebraic action on $U(\mathfrak{g})$, the following is obvious.

\begin{proposition}
	The quadruplet $(U(\mathfrak{g}), K, \Ad, \iota)$ is a Harsh-Chandra dg-algebra in the sense of Definition \ref{def:HC-dga}, concentrated at degree zero.
\end{proposition}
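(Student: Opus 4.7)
The plan is to verify each clause of Definition~\ref{def:HC-dga} in turn, taking $A = U(\mathfrak{g})$ placed in degree zero with $d=0$, setting $\sigma$ to be the extension of $\Ad$ to $U(\mathfrak{g})$, and taking $j := \iota$ viewed as a map $\mathfrak{k} \to U(\mathfrak{g})^0$. Since $A$ lives entirely in degree zero and the differential is zero, the dg-algebra structure reduces to the ordinary associative algebra structure on $U(\mathfrak{g})$, so $d \circ j = 0$ is automatic, and there is nothing to check beyond the non-dg content.

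First I would observe that $\Ad: K \to \Aut_{\text{Lie alg.}}(\mathfrak{g})$ extends uniquely to a homomorphism $\sigma: K \to \Aut_{\cate{alg}}(U(\mathfrak{g}))$ by the universal property of the enveloping algebra. The resulting action is algebraic: the PBW filtration exhibits $U(\mathfrak{g}) = \bigcup_n U(\mathfrak{g})_{\leq n}$ as an ascending union of finite-dimensional $K$-stable subspaces (since each $U(\mathfrak{g})_{\leq n}$ is a subquotient of $\mathfrak{g}^{\otimes n}$, on which $K$ acts algebraically), so every element is $K$-algebraic. Next, $j = \iota$ is $K$-equivariant by hypothesis and lands in degree zero. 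The bracket condition $j([\xi_1,\xi_2]) = [j(\xi_1), j(\xi_2)]$ is precisely the statement that $\iota: \mathfrak{k} \to U(\mathfrak{g})$ is a Lie algebra homomorphism with respect to the commutator bracket, which holds since $\iota$ is the restriction to $\mathfrak{k}$ of the canonical Lie algebra homomorphism $\mathfrak{g} \hookrightarrow U(\mathfrak{g})$.

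The only clause requiring a short argument is the final one: $(\dd\sigma)(\xi) = [j(\xi), \cdot]$ as endomorphisms of $U(\mathfrak{g})$. By hypothesis we have $(\dd\Ad(\xi))(x) = [\iota(\xi), x]$ for all $\xi \in \mathfrak{k}$ and $x \in \mathfrak{g}$. Both $\dd\sigma(\xi)$ and $[\iota(\xi), \cdot]$ are derivations of the associative algebra $U(\mathfrak{g})$: the former because it is the derivative of an action by algebra automorphisms, the latter because inner commutators are always derivations. Two derivations of $U(\mathfrak{g})$ agreeing on the generating set $\mathfrak{g}$ must coincide, so the identity propagates from $\mathfrak{g}$ to all of $U(\mathfrak{g})$.

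Assembling these verifications yields the proposition. There is no real obstacle here; the proof is essentially a bookkeeping exercise confirming that the classical notion of a Harish-Chandra pair $(\mathfrak{g}, K)$ slots into the general framework of Harish-Chandra dg-algebras via $U(\mathfrak{g})$ in degree zero.
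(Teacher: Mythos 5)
Your verification is correct and matches the paper's treatment, which simply declares the statement obvious after noting that $\iota$ extends to $U(\mathfrak{k}) \to U(\mathfrak{g})$ and $\Ad$ extends to an algebraic action on $U(\mathfrak{g})$. You have merely spelled out the routine details (PBW filtration for algebraicity, derivations agreeing on generators for the last axiom), all of which are sound.
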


We shall write $(\mathfrak{g}, K)$ instead of $(U(\mathfrak{g}), K)$. Thus we obtain the notions of $(\mathfrak{g}, K)$-modules, weak $(\mathfrak{g}, K)$-modules, and h-complexes over $(\mathfrak{g}, K)$ by the general formalism in \S\ref{sec:h-cplx}.

The constructions below for tensor products and internal $\Hom$'s are taken from \cite[p.80]{Pan05}. Let $M$ and $N$ be objects of ${}^{\mathrm{h}} \cate{C}(\mathfrak{g}, K)$, say with data $i^M_\xi$ and $i^N_\xi$ for all $\xi \in \mathfrak{k}$. We make $M \otimes N$ into an h-complex by letting
\begin{align*}
	\eta (x \otimes y) & = \eta x \otimes y + x \otimes \eta y, \quad \eta \in \mathfrak{g}, \\
	k (x \otimes y) & = kx \otimes ky, \quad k \in K, \\
	i^{M \otimes N}_\xi(x \otimes y) & = i^M_\xi(x) \otimes y + (-1)^p x \otimes i^N_\xi(y), \quad \xi \in \mathfrak{k},
\end{align*}
where $x \in M^p$ and $y \in N^q$, for all $(p, q) \in \Z^2$.

Let $\mathfrak{g}$ and $K$ act on the $\Hom$-complex $\Hom^\bullet_{\CC}(M, N)$ via
\begin{gather*}
	(\eta f)(m) = \eta (f(m)) - f(\eta m), \quad \eta \in \mathfrak{g}, \\
	(kf)(m) = k(f(k^{-1} m)), \quad k \in K,
\end{gather*}
where $f \in \Hom^n_{\CC}(M, N)$, $n \in \Z$. We take $\Hom^\bullet_{\CC}(M, N)^{K\text{-alg}}$ and define
\[ (i^{\Hom}_\xi f)(m) = i^N_\xi f(m) - (-1)^n f(i^M_\xi m), \quad f \in \Hom^n_{\CC}(M, N)^{K\text{-alg}}, \quad \xi \in \mathfrak{k}. \]

\begin{definition}\label{def:internal-Hom}
	The h-complex constructed above is called the \emph{internal $\Hom$} of $M$ and $N$. By taking $N = \CC$, one arrives at the notion of \emph{contragredient} h-complexes $M^\vee$ over $(\mathfrak{g}, K)$.
\end{definition}

The next notion applies only to $(\mathfrak{g}, K)$-modules. Recall that a $\mathcal{Z}(\mathfrak{g})$-module is called \emph{locally $\mathcal{Z}(\mathfrak{g})$-finite} if it is a union of finite-dimensional submodules.

\begin{definition}\label{def:HC-module}
	A $(\mathfrak{g}, K)$-module $M$ is said to be a \emph{Harish-Chandra module} if the following two conditions hold.
	\begin{enumerate}[(i)]
		\item $M$ is finitely generated as a $\mathfrak{g}$-module;
		\item $M$ is locally $\mathcal{Z}(\mathfrak{g})$-finite.
	\end{enumerate}
\end{definition}

Harish-Chandra modules form a Serre subcategory of $(\mathfrak{g}, K)\dcate{Mod}$. Cf.\ Lemma \ref{prop:Zg-finite-ses}.

\begin{remark}
	In the applications to Lie groups, $K$ is often taken to be a compact Lie group instead, and the algebraicity of the $K$-action on $M$ translates into local $K$-finiteness; see eg.\ \cite[p.45 and (1.64)]{KV95}.
\end{remark}

From \S\ref{sec:Loc} onward, we will specialize to the case where $\mathfrak{g} = \Lie G$ for some connected reductive group $G$ and $K$ is a reductive subgroup of $G$; the maps $\iota$ and $\Ad$ will then be the obvious ones. If $K$ is a symmetric subgroup, then a $(\mathfrak{g}, K)$-module $M$ is Harish-Chandra if and only if it is finitely generated over $\mathfrak{g}$ and admissible; see \cite[Corollary 7.223]{KV95} and \cite[3.4.1 Theorem]{Wa88} after taking suitable real forms.

\subsection{Standard resolutions}\label{sec:std-resolution}
Let $\mathfrak{g}$ be a finite-dimensional Lie algebra. The standard complex $N\mathfrak{g}$ is given by
\[ \cdots \to U(\mathfrak{g}) \otimes \bigwedge^2 \mathfrak{g} \to U(\mathfrak{g}) \otimes \bigwedge^1 \mathfrak{g} \to  U(\mathfrak{g}) \otimes \bigwedge^0 \mathfrak{g} \]
in degrees $\ldots, -2, -1, 0$ and zero elsewhere. The differentials
\[ \partial_n: U(\mathfrak{g}) \otimes \bigwedge^n \mathfrak{g} \to U(\mathfrak{g}) \otimes \bigwedge^{n-1} \mathfrak{g} \]
for $n \geq 1$ are
\begin{align*}
	u \otimes (\xi_1 \otimes \cdots \otimes \xi_n) \mapsto \sum_{i=1}^n (-1)^{i+1} u\xi_i \otimes (\xi_1 \wedge \cdots \widehat{\xi_i} \cdots \wedge \xi_n ) \\
	+ \sum_{p < q} (-1)^{p+q} u \otimes ( [\xi_p, \xi_q] \wedge \cdots \widehat{\xi_p} \cdots \widehat{\xi_q} \cdots )
\end{align*}
where $\widehat{\cdots}$ means terms to be neglected.

Let $U(\mathfrak{g})$ act on each $U(\mathfrak{g}) \otimes \bigwedge^n \mathfrak{g}$ by left multiplication, and take the augmentation homomorphism
\[ \epsilon: U(\mathfrak{g}) = U(\mathfrak{g}) \otimes \bigwedge^0 \mathfrak{g}\to \CC. \]

It is well known \cite[Theorem 2.122]{KV95} that $\epsilon: N\mathfrak{g} \to \CC$ furnishes a K-projective resolution of the trivial $\mathfrak{g}$-module $\CC$.

We now consider the data $(\mathfrak{g}, K)$ of \S\ref{sec:gK-basic}. View $\CC$ as the trivial $(\mathfrak{g}, K)$-module.

\begin{theorem}[P.\ Pandžić {\cite[Theorem 3.2.6]{Pan07}}]\label{prop:N-resolution}
	Let $K$ act diagonally on each term of $N\mathfrak{g}$, and define
	\begin{align*}
		i_\xi: U(\mathfrak{g}) \otimes \bigwedge^n \mathfrak{g} & \to U(\mathfrak{g}) \otimes \bigwedge^{n+1} \mathfrak{g} \\
		u \otimes \lambda & \mapsto -u \otimes (\xi \wedge \lambda)
	\end{align*}
	for all $\xi \in \mathfrak{k}$, $u \in U(\mathfrak{g})$, $\lambda \in \bigwedge^n \mathfrak{g}$ and $n \geq 0$. Then $N\mathfrak{g}$ is an h-complex over $(\mathfrak{g}, K)$.
	
	If $K$ is reductive, then $\epsilon: N\mathfrak{g} \to \CC$ is a K-projective resolution in the sense of h-complexes.
\end{theorem}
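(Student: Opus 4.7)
The theorem has two ingredients: that $(N\mathfrak{g}, \partial, i)$ is an h-complex, and that $\epsilon:N\mathfrak{g}\to\CC$ is a K-projective resolution when $K$ is reductive.

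For the h-complex structure, I would verify the four axioms of Definition \ref{def:h-cplx} directly from the formula $i_\xi(u\otimes\lambda) = -u\otimes(\xi\wedge\lambda)$. Conditions (i), (ii), (iii) are formal: they record $\Ad$-equivariance of left wedging on $\bigwedge^\bullet\mathfrak{g}$, the commutation of left multiplication by $U(\mathfrak{g})$ with wedging on the $\bigwedge^\bullet\mathfrak{g}$-factor, and Koszul antisymmetry of the wedge. The substantive check is (iv). The diagonal $\Ad$-action of $K$ on $U(\mathfrak{g}) \otimes \bigwedge^n\mathfrak{g}$ differentiates to $\xi\mapsto \operatorname{ad}(\xi)\otimes 1 + 1\otimes\operatorname{ad}(\xi)$, while the $U(\mathfrak{g})$-action is left multiplication on the first factor, so
\[ w(\xi)(u\otimes\lambda) = -u\xi\otimes\lambda + u\otimes\operatorname{ad}(\xi)\lambda. \]
Expanding $\partial i_\xi$ and $i_\xi\partial$ on $u\otimes\xi_1\wedge\cdots\wedge\xi_n$ via the Chevalley--Eilenberg formula for $\partial$, the cross terms of shape $u\xi_j\otimes\xi\wedge\xi_1\wedge\cdots\widehat{\xi_j}\cdots$ and $u\otimes\xi\wedge[\xi_p,\xi_q]\wedge\cdots$ cancel between the two contributions; cancellation of the second family relies on the sign-swap $[\xi_i,\xi_j]\wedge\xi = -\xi\wedge[\xi_i,\xi_j]$ that appears when expanding $\partial(u\otimes\xi\wedge\lambda)$. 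What remains combines via the Leibniz rule $\operatorname{ad}(\xi)(\xi_1\wedge\cdots\wedge\xi_n) = \sum_j \xi_1\wedge\cdots\wedge[\xi,\xi_j]\wedge\cdots\wedge\xi_n$ into exactly $w(\xi)(u\otimes\lambda)$.

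That $\epsilon$ is a quasi-isomorphism is classical: $N\mathfrak{g}$ is the standard free Koszul--Chevalley--Eilenberg resolution of the trivial $U(\mathfrak{g})$-module $\CC$, with an explicit contracting homotopy built from a PBW basis witnessing acyclicity of the augmented complex.

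The genuinely substantial step is K-projectivity, and this is where reductivity of $K$ enters. My plan is to pass through the equivalence ${}^{\mathrm{h}}\cate{C}(\mathfrak{g},K)\simeq(B,K)\dcate{dgMod}$ of Theorem \ref{prop:h-vs-dg}(iii), with $B = U(\overline{\mathfrak{k}})\otimes U(\mathfrak{g})$, and then exhibit $N\mathfrak{g}$ as a semifree $(B,K)$-dg-module. Concretely, pick a $K$-equivariant linear splitting $\mathfrak{g} = \mathfrak{k}\oplus\mathfrak{m}$ (available by Weyl complete reducibility, since $\mathfrak{k}\subset\mathfrak{g}$ is a $K$-submodule and $K$ is reductive), giving $\bigwedge^\bullet\mathfrak{g}\simeq\bigwedge^\bullet\mathfrak{k}\otimes\bigwedge^\bullet\mathfrak{m}$ as $K$-modules. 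The wedge direction along $\mathfrak{k}$ is designed to be absorbed by the $U(\overline{\mathfrak{k}})$-factor of $B$ through the $i_\xi$ action, leaving $\bigwedge^\bullet\mathfrak{m}$ as the generators; this suggests an isomorphism of $(B,K)$-dg-modules of the form $N\mathfrak{g}\simeq B\dotimes{A}\bigwedge^\bullet\mathfrak{m}$ for an appropriate sub-Harish-Chandra dg-algebra $A\subset B$ built out of the $U(\mathfrak{k})$-part. Granting such an identification, K-projectivity follows from Corollary \ref{prop:oblv-adjoint-K} applied to the left adjoint of oblivion in Proposition \ref{prop:oblv-adjunction}, together with the observation that $\bigwedge^\bullet\mathfrak{m}$ is K-projective over $(A, K)$: it is a bounded-above complex of algebraic $K$-modules, and algebraic $K$-modules are projective objects in their category when $K$ is reductive.

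The main obstacle is making the semifree identification precise: both sides must match as dg-modules compatibly with the $K$-action \emph{and} the $i_\xi$ data, not merely as graded vector spaces. Tracking the interplay between the internal Koszul differential on $U(\overline{\mathfrak{k}})$ inside $B$ and the Chevalley--Eilenberg differential on $N\mathfrak{g}$, together with the Lie-bracket contributions from $[\mathfrak{m},\mathfrak{m}]\subset\mathfrak{g}$, is the bookkeeping-heavy heart of the argument — similar in flavor to the sign-tracking already needed for (iv).
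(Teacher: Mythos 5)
The paper offers no proof of this statement: it is imported verbatim from Pand\v{z}i\'c \cite[Theorem 3.2.6]{Pan07}, so there is no internal argument to compare yours against line by line. On its own terms, the first two parts of your plan are fine: the formula $w(\xi)(u\otimes\lambda) = -u\xi\otimes\lambda + u\otimes\operatorname{ad}(\xi)\lambda$ is correct, the Cartan-type identity $\partial i_\xi + i_\xi\partial = w(\xi)$ does hold (e.g.\ in degree one, $\partial i_\xi(u\otimes\eta) + i_\xi\partial(u\otimes\eta) = -u\xi\otimes\eta + u\otimes[\xi,\eta]$), and the quasi-isomorphism $\epsilon: N\mathfrak{g}\to\CC$ is classical.

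The gap is in the K-projectivity step, which you yourself flag as unestablished. The identification $N\mathfrak{g}\simeq B\dotimes{A}\bigwedge^\bullet\mathfrak{m}$ cannot hold as an isomorphism of dg-modules with the tensor-product differential for any reasonable $A$: applying $\partial$ to a generator $1\otimes\mu_1\wedge\cdots\wedge\mu_p$ with $\mu_i\in\mathfrak{m}$ produces terms $\mu_i\otimes(\cdots)$ and $1\otimes[\mu_p,\mu_q]_{\mathfrak{k}}\wedge(\cdots)$, so the generators neither span a subcomplex nor carry an $A$-dg-module structure ($\bigwedge^p\mathfrak{m}$ is not a $U(\mathfrak{g})$-module, and if $U(\mathfrak{g})\not\subset A$ the terms $\mu_i\otimes(\cdots)$ cannot come from an internal differential on the generators). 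What is true, and what you actually need, is the weaker \emph{semifree} statement: filtering by the wedge degree in $\mathfrak{m}$, one has $\partial(\text{generator})\in B\cdot(\text{generators of strictly lower filtration})$, so $N\mathfrak{g}$ carries a finite, $K$-stable filtration whose associated graded pieces are genuinely induced (Proposition \ref{prop:oblv-adjunction}) from bounded complexes of algebraic $K$-modules; these are K-projective by semisimplicity of algebraic representations of reductive $K$, and a finite d\'evissage along the filtration concludes. Equivalently, and closer to Pand\v{z}i\'c's actual argument, one computes ${}^{\mathrm{h}}\Hom^\bullet(N\mathfrak{g},C)\simeq\Hom_K^\bullet(\bigwedge^\bullet\mathfrak{m},C)$ up to a filtered perturbation of the differential and uses exactness of $\Hom_K$ on acyclic $C$. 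Your strategy is the right one and is repairable, but as written its decisive step is asserted in a form that is literally false, so the proof is incomplete there.
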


The following result is essentially a formal consequence of Theorem \ref{prop:N-resolution}.

\begin{theorem}[P.\ Pandžić {\cite[Theorem 3.5]{Pan05}}]\label{prop:N-resolution-gen}
	Assume that $K$ is reductive. For every h-complex $M$ over $(\mathfrak{g}, K)$, the morphism $\identity \otimes \epsilon: M \otimes N\mathfrak{g} \to M$ is a K-projective resolution in the h-sense.
\end{theorem}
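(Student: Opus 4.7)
The statement combines two claims: (a) $\identity_M \otimes \epsilon : M \otimes N\mathfrak{g} \to M$ is a quasi-isomorphism in ${}^{\mathrm{h}}\cate{C}(\mathfrak{g}, K)$, and (b) $M \otimes N\mathfrak{g}$ is K-projective as an h-complex over $(\mathfrak{g}, K)$. I will treat them in turn.

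For (a), quasi-isomorphisms in ${}^{\mathrm{h}}\cate{C}(\mathfrak{g}, K)$ are by definition detected on the underlying complexes of vector spaces (see \S\ref{sec:h-cplx}). Since $\epsilon : N\mathfrak{g} \to \CC$ is a quasi-isomorphism in $\cate{C}(\CC)$ (classically, the Koszul resolution of the trivial $\mathfrak{g}$-module; reaffirmed as part of Theorem~\ref{prop:N-resolution}) and $M \otimes_{\CC} -$ is exact on $\cate{C}(\CC)$, $\identity_M \otimes \epsilon$ is automatically a quasi-isomorphism of h-complexes.

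For (b), a purely formal tensor--Hom adjunction argument does not suffice, because the internal-Hom $\iiHom^\bullet(M, -)$ need not preserve acyclicity for an arbitrary (possibly unbounded) $M$, so the K-projectivity of $N\mathfrak{g}$ supplied by Theorem~\ref{prop:N-resolution} is by itself insufficient. Instead, I plan to exploit the explicit semi-free structure of $N\mathfrak{g}$: each term $U(\mathfrak{g}) \otimes \bigwedge^n \mathfrak{g}$ is freely induced, with $\mathfrak{g}$ acting only on the $U(\mathfrak{g})$-factor. The Hopf-algebra antipode twist
\[ \Psi_M : M \otimes U(\mathfrak{g}) \rightiso U(\mathfrak{g}) \otimes M^{\flat}, \qquad m \otimes u \longmapsto \sum S(u_{(1)})\, m \otimes u_{(2)}, \]
(where $M^{\flat}$ denotes the underlying $K$-complex of $M$ endowed with the trivial $\mathfrak{g}$-action) is $K$-equivariant and intertwines the diagonal $\mathfrak{g}$-action on the source with the action on the left $U(\mathfrak{g})$-factor of the target. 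Consequently each term $M \otimes U(\mathfrak{g}) \otimes \bigwedge^n \mathfrak{g}$ of $M \otimes N\mathfrak{g}$ becomes freely induced from a $K$-complex. Passing through the equivalence of Theorem~\ref{prop:h-vs-dg}(iii) and assembling over $n$, this identifies $M \otimes N\mathfrak{g}$ with a semi-free dg-module over $B := U(\overline{\mathfrak{k}}) \otimes U(\mathfrak{g})$, which is K-projective by the standard theory of semi-free dg-modules.

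The principal obstacle will be verifying that $\Psi_M$ is compatible with the $i_\xi$-operators on the tensor-product h-complex $M \otimes N\mathfrak{g}$, whose formulas combine $i^M_\xi$ and $i^{N\mathfrak{g}}_\xi$ with Koszul signs as recalled at the beginning of \S\ref{sec:gK-basic}. The $\mathfrak{g}$- and $K$-equivariance of $\Psi_M$ is a routine Hopf-algebra computation, but the sign bookkeeping for $i_\xi$, which must interact correctly with the $\bigwedge^n \mathfrak{g}$-grading on $N\mathfrak{g}$, is the delicate point. Once this compatibility is in place, the semi-freeness of $M \otimes N\mathfrak{g}$ as a dg-$B$-module, and thus its K-projectivity, follows formally, completing the proof.
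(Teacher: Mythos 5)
Part (a) of your argument is fine. The trouble is in part (b), and it begins with the premise you use to reject the adjunction argument. Over $\CC$ with $K$ reductive, the internal $\Hom$ functor $\Hom^\bullet_{\CC}(M, \cdot)^{K\text{-alg}}$ of Definition \ref{def:internal-Hom} \emph{does} preserve acyclicity for arbitrary, possibly unbounded, $M$: an acyclic h-complex $Q$ has as underlying datum an acyclic complex of algebraic $K$-representations, and since that category is semisimple, $Q$ admits a $K$-equivariant contracting homotopy $h$; post-composition with $h$ then contracts $\Hom^\bullet_{\CC}(M, Q)^{K\text{-alg}}$. (This is the same mechanism as in the proof of Lemma \ref{prop:restriction-lemma}, and it is invoked again in \S\ref{sec:general-branching}.) Consequently the dg-functor $M \otimes (\cdot)$ has an exact right adjoint dg-functor, hence preserves K-projectives by Proposition \ref{prop:K-injectives-adjunction}; applying it to the K-projective $N\mathfrak{g}$ furnished by Theorem \ref{prop:N-resolution} finishes the proof. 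This is precisely the intended ``formal consequence'' of Theorem \ref{prop:N-resolution}, and it is Pand\v{z}i\'c's argument.

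Your replacement argument, as written, does not close the gap you were worried about. The antipode untwisting $\Psi_M$ is a legitimate tensor identity and does exhibit each term of $M \otimes N\mathfrak{g}$ as induced from a $K$-complex; the sign bookkeeping for the $i_\xi$'s is, as you say, manageable. But K-projectivity of a dg-module over $B = U(\overline{\mathfrak{k}}) \otimes U(\mathfrak{g})$ does not follow from freeness of the underlying graded module: one needs genuine semi-freeness, i.e.\ an exhaustive filtration by dg-submodules with free subquotients. For bounded-above $M$ such a filtration is automatic, but for unbounded $M$ --- exactly the case you invoke to discard the adjunction route --- it is not, and your proposal never constructs it. The standard cautionary example is an unbounded acyclic complex of free modules over $\CC[x]/(x^2)$ with differential given by multiplication by $x$, which is graded-free but not contractible, hence not K-projective; since $B$ contains the exterior algebra $\bigwedge^\bullet \mathfrak{k}$, no finite-global-dimension argument rescues you. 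So the decisive step of your part (b) is unproved, while the ``purely formal'' argument you set aside is in fact valid here.
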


Therefore, the homotopy category ${}^{\mathrm{h}} \cate{K}^{\star}(\mathfrak{g}, K)$ of h-complexes have enough K-projectives for $\star \in \{\; , +, -, \mathrm{b}\}$ as long as $K$ is reductive.

The upshot is that there are canonical K-projective resolutions $M \otimes N\mathfrak{g} \to M$ for all $M$ in ${}^{\mathrm{h}} \cate{C}(\mathfrak{g}, K)$ when $K$ is reductive. Moreover, these are also K-projective resolutions in $\cate{C}(\mathfrak{g}\dcate{Mod})$; this agrees with the assertion of Corollary \ref{prop:change-group-K-proj} (take $T = \{1\}$).

\subsection{Bernstein--Lunts equivalence}
Let $(\mathfrak{g}, K)$ be as in \S\ref{sec:gK-basic}. Let
\[ \cate{C}(\mathfrak{g}, K), \quad \cate{K}(\mathfrak{g}, K), \quad \cate{D}(\mathfrak{g}, K) \]
be the category of complexes of $(\mathfrak{g}, K)$-modules, its homotopy category and derived category, respectively. Recall that they have the h-avatars
\[ {}^{\mathrm{h}} \cate{C}(\mathfrak{g}, K), \quad {}^{\mathrm{h}} \cate{K}(\mathfrak{g}, K), \quad {}^{\mathrm{h}} \cate{D}(\mathfrak{g}, K) \]
and so on for the variants with superscripts $+, -, \mathrm{b}$ to denote full subcategories with boundedness conditions. The comparison functor \eqref{eqn:h-comparison} becomes
\begin{equation}\label{eqn:h-comparison-gK}
	\alpha: \cate{D}^{\star}(\mathfrak{g}, K) \to {}^{\mathrm{h}} \cate{D}^{\star}(\mathfrak{g}, K), \quad \star \in \{\; , +, -, \mathrm{b}\}.
\end{equation}

The following equivalences are due to Bernstein--Lunts \cite{BL95} and P.\ Pandžić \cite{Pan05}.

\begin{theorem}\label{prop:BL-equiv}
	The functor $\alpha: \cate{D}^{\mathrm{b}}(\mathfrak{g}, K) \to {}^{\mathrm{h}} \cate{D}^{\mathrm{b}}(\mathfrak{g}, K)$ is an equivalence. If $K$ is reductive, then so is $\alpha: \cate{D}(\mathfrak{g}, K) \to {}^{\mathrm{h}} \cate{D}(\mathfrak{g}, K)$. In both cases, the functors are triangulated and preserve $t$-structures.
\end{theorem}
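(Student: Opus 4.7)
The plan is to verify three properties of the comparison functor: (i) it is triangulated and $t$-exact, restricting to the identity on the hearts (both equal to $(\mathfrak{g}, K)\dcate{Mod}$); (ii) it induces isomorphisms on $\Ext^n$-groups between objects of the heart; and (iii) for the unbounded version under $K$ reductive, it admits a genuine quasi-inverse. With (i) and (ii) in place, Beilinson's realization lemma delivers the bounded equivalence; (iii) follows by an explicit construction using K-projective h-resolutions.

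Property (i) is immediate from the construction of $\alpha$ in \eqref{eqn:h-comparison-C}: on underlying complexes it is the identity, and it places $i_\xi = 0$, so it preserves cohomologies and matches the two $t$-structures. Conceptually, by Theorem \ref{prop:h-vs-dg}(i, iii), $\alpha$ is pullback along the $K$-equivariant dg-algebra homomorphism
\[ \epsilon \otimes \identity: U(\overline{\mathfrak{k}}) \otimes U(\mathfrak{g}) \longrightarrow U(\mathfrak{g}), \]
which is a quasi-isomorphism because the dg-Lie algebra $\overline{\mathfrak{k}} = [\mathfrak{k} \xrightarrow{\identity} \mathfrak{k}]$ is contractible, making $U(\overline{\mathfrak{k}}) \to \CC$ a Koszul-type quasi-isomorphism. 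This observation gives the moral reason why $\alpha$ is an equivalence; steps (ii) and (iii) will make it effective.

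For (ii), fix $M, N \in (\mathfrak{g}, K)\dcate{Mod}$ and first assume $K$ is reductive. Theorem \ref{prop:N-resolution-gen} supplies the K-projective h-resolution $M \otimes N\mathfrak{g} \to M$, whence
\[ {}^{\mathrm{h}}\Ext^n_{\mathfrak{g}, K}(M, N) = \Hm^n\, {}^{\mathrm{h}}\Hom^\bullet_{\mathfrak{g}, K}(M \otimes N\mathfrak{g}, N) \]
by Remark \ref{rem:h-Ext}. Since $N$ carries the trivial $i_\xi = 0$, the h-constraint $f^{l-1} i_\xi = (-1)^n i_\xi f^l$ on morphisms into $N$ collapses, and using the explicit formula for $i_\xi$ on $N\mathfrak{g}$ from Theorem \ref{prop:N-resolution}, one identifies the right-hand side canonically with $\Ext^n_{\mathfrak{g}, K}(M, N)$ as computed from the classical standard projective resolution underlying $N\mathfrak{g}$. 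Without the reductivity hypothesis, one runs the symmetric argument with K-injective resolutions of $N$, provided on the h-side by Theorem \ref{prop:enough-K-general}, thus obtaining (ii) in full generality.

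With (i) and (ii), Beilinson's realization lemma for $t$-exact triangulated functors yields the bounded equivalence. For the full unbounded statement under $K$ reductive, the functorial K-projective resolution $(\cdot) \otimes N\mathfrak{g}$ of Theorem \ref{prop:N-resolution-gen} promotes the local Ext comparison to a genuine quasi-inverse: given an h-complex $C$, one forms $C \otimes N\mathfrak{g}$ and contracts out the h-structure, i.e.\ applies the co-invariants along the augmentation $U(\overline{\mathfrak{k}}) \otimes U(\mathfrak{g}) \to U(\mathfrak{g})$ from Theorem \ref{prop:h-vs-dg}(iii); functoriality and K-projectivity ensure this is well defined on the derived category and inverts $\alpha$ up to isomorphism. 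The main obstacle I anticipate is the chain-level identification in (ii): matching the $i_\xi$ of Theorem \ref{prop:N-resolution} against the signs in ${}^{\mathrm{h}}\Hom^\bullet$ so that one and the same Koszul-type complex computes both $\Ext^n_{\mathfrak{g}, K}(M, N)$ and its h-avatar. That is the explicit form of the abstract quasi-isomorphism $U(\overline{\mathfrak{k}}) \to \CC$, and while routine, it requires vigilant bookkeeping.
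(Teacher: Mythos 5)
The paper does not prove this theorem from scratch: it cites \cite[Theorem 1.3]{BL95} for the bounded case and \cite[Theorem 1.1]{Pan05} for the unbounded reductive case, observing only that triangulatedness and $t$-exactness are built into the construction of $\alpha$. Your proposal therefore amounts to reconstructing those proofs, and your outline --- (i) $t$-exactness and identity on hearts, (ii) comparison of $\Ext$ between heart objects followed by d\'evissage for the bounded case, (iii) an explicit quasi-inverse built from $(\cdot)\otimes N\mathfrak{g}$ and co-invariants for the unbounded case --- is indeed the standard route and matches the structure of the cited arguments (the paper itself notes in Example \ref{eg:h-inflation} that $\Lder(\mathrm{coInv})$ played a major role in \cite{Pan05}). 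Step (i) and the d\'evissage are fine.

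The two steps carrying the real weight are asserted rather than carried out. First, in (ii) the identification of ${}^{\mathrm{h}}\Hom^\bullet_{\mathfrak{g},K}(M\otimes N\mathfrak{g}, N)$ with the classical complex computing $\Ext^n_{\mathfrak{g},K}(M,N)$ is misdescribed: there is no ``classical standard projective resolution underlying $N\mathfrak{g}$'', because the terms $U(\mathfrak{g})\otimes\bigwedge^n\mathfrak{g}$ are only \emph{weak} $(\mathfrak{g},K)$-modules (the diagonal $K$-action does not differentiate to left multiplication by $\mathfrak{k}$), so $N\mathfrak{g}$ is not in the image of $\alpha$. What must be shown is that imposing $U(\mathfrak{g})$-linearity, $K$-equivariance and the constraint $f^{l-1} i_\xi = \pm i_\xi f^l = 0$ collapses the h-$\Hom$ complex onto the \emph{relative} standard complex $\Hom_K\bigl(\bigwedge^\bullet(\mathfrak{g}/\mathfrak{k}), \Hom_{\CC}(M,N)\bigr)$; that computation is the actual content of the comparison and is left open. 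Moreover, the ``symmetric argument with K-injective resolutions'' for non-reductive $K$ is not symmetric: Theorem \ref{prop:enough-K-general} gives existence of K-injectives but no explicit model, so the bounded case for general $K$ requires a genuinely different computation (via the co-induced resolutions of \cite{BL95}), not a mirror image of the K-projective one. Second, in (iii) the claim that the unit and counit of the adjunction $\Lder(\mathrm{coInv}) \dashv \alpha$ are isomorphisms on unbounded complexes is precisely \cite[Theorem 1.1]{Pan05}; it follows neither from d\'evissage (unavailable for unbounded complexes) nor from ``functoriality and K-projectivity'', and must be verified, e.g.\ by showing that $C \to \alpha\,\mathrm{coInv}(C\otimes N\mathfrak{g})$ is a quasi-isomorphism for every h-complex $C$. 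As written, your proposal is a correct plan whose decisive verifications are still missing.
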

\begin{proof}
	The first part is \cite[Theorem 1.3]{BL95} and the second is \cite[Theorem 1.1]{Pan05}. Moreover, we have noted after \eqref{eqn:h-comparison} that $\alpha$ is triangulated and preserves $t$-structures.
\end{proof}

Denote the $\Ext$ functors for $\cate{C}(\mathfrak{g}, K)$ and ${}^{\mathrm{h}} \cate{C}(\mathfrak{g}, K)$ by $\Ext^\bullet_{\mathfrak{g}, K}$ and ${}^{\mathrm{h}} \Ext^\bullet_{\mathfrak{g}, K}$, respectively. See Remark \ref{rem:h-Ext}.

\begin{corollary}\label{prop:BL-equiv-Ext}
	There are canonical isomorphisms
	\[ \Ext^n_{\mathfrak{g}, K}(M, N) \rightiso {}^{\mathrm{h}} \Ext^n_{\mathfrak{g}, K}(\alpha M, \alpha N) \]
	for all objects $M, N$ of $\cate{D}^{\mathrm{b}}(\mathfrak{g}, K)$ and all $n \in \Z$, compatibly with long exact sequences for $\Ext$. If $K$ is reductive, the same holds for all objects of $\cate{D}(\mathfrak{g}, K)$.
\end{corollary}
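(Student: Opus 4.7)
The statement is a direct formal consequence of Theorem \ref{prop:BL-equiv}, so the plan is simply to unwind definitions and invoke the equivalence.

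First, I would recall that by the standard definition of $\Ext$ in a triangulated category with $t$-structure,
\[ \Ext^n_{\mathfrak{g}, K}(M, N) = \Hom_{\cate{D}(\mathfrak{g}, K)}(M, N[n]), \qquad {}^{\mathrm{h}} \Ext^n_{\mathfrak{g}, K}(M', N') = \Hom_{{}^{\mathrm{h}}\cate{D}(\mathfrak{g}, K)}(M', N'[n]), \]
for all integers $n$, and similarly with superscript $\mathrm{b}$ on the left-hand side (which makes no difference to the $\Hom$-groups by the fully faithful embedding of $\cate{D}^{\mathrm{b}}$ into $\cate{D}$). Since $\alpha$ is a triangulated functor, it commutes with the translation $[1]$ up to natural isomorphism, and hence with every $[n]$.

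Next, by Theorem \ref{prop:BL-equiv}, the functor $\alpha: \cate{D}^{\mathrm{b}}(\mathfrak{g}, K) \to {}^{\mathrm{h}}\cate{D}^{\mathrm{b}}(\mathfrak{g}, K)$ is an equivalence of categories, so it induces bijections on all $\Hom$-sets. Combining this with the previous paragraph yields, for $M, N$ in $\cate{D}^{\mathrm{b}}(\mathfrak{g}, K)$ and any $n \in \Z$, the asserted canonical isomorphism
\[ \Ext^n_{\mathfrak{g}, K}(M, N) \;=\; \Hom_{\cate{D}^{\mathrm{b}}(\mathfrak{g}, K)}(M, N[n]) \;\rightiso\; \Hom_{{}^{\mathrm{h}}\cate{D}^{\mathrm{b}}(\mathfrak{g}, K)}(\alpha M, \alpha N[n]) \;\simeq\; {}^{\mathrm{h}}\Ext^n_{\mathfrak{g}, K}(\alpha M, \alpha N). \]
When $K$ is reductive, the same argument applies with the unbounded equivalence supplied by the second half of Theorem \ref{prop:BL-equiv}, removing the boundedness hypothesis on $M, N$.

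For compatibility with long exact sequences, I would use that the long exact $\Ext$-sequence attached to a distinguished triangle $N_1 \to N_2 \to N_3 \xrightarrow{+1}$ in $\cate{D}^{\mathrm{b}}(\mathfrak{g}, K)$ (and symmetrically in the first variable) is obtained by applying the cohomological functor $\Hom(M, \cdot)$ and rotating. Since $\alpha$ is triangulated it sends such a distinguished triangle to a distinguished triangle in ${}^{\mathrm{h}}\cate{D}^{\mathrm{b}}(\mathfrak{g}, K)$, and the natural isomorphisms constructed above are functorial in each variable; thus the two long exact sequences are identified termwise by these isomorphisms, and the connecting maps match by naturality of the $\Hom \leftrightarrow \Hom$ bijection under the triangulated equivalence $\alpha$. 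There is no real obstacle here: once Theorem \ref{prop:BL-equiv} is in hand, everything is formal from the calculus of triangulated categories.
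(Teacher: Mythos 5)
Your proposal is correct and follows essentially the same route as the paper, which simply identifies $\Ext^n_{\mathfrak{g},K}(M,N)$ with $\Hom_{\cate{D}(\mathfrak{g},K)}(M,N[n])$ (and likewise on the h-side) and invokes the equivalence $\alpha$ of Theorem \ref{prop:BL-equiv}. Your extra remarks on shifts, distinguished triangles, and naturality are just the standard formal details the paper leaves implicit.
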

\begin{proof}
	For the left-hand side we have $\Ext^n_{\mathfrak{g}, K}(M, N) \simeq \Hom_{\cate{D}(\mathfrak{g}, K)}(M, N[n])$, and similarly for the right-hand side.
\end{proof}

In view of these results, we will often omit $\alpha$ from the formulas, and denote the $\Ext$ functor for both $\cate{D}^{\mathrm{b}}(\mathfrak{g}, K)$ and ${}^{\mathrm{h}} \cate{D}^{\mathrm{b}}(\mathfrak{g}, K)$ by $\Ext^\bullet_{\mathfrak{g}, K}$.

\subsection{A lemma on restriction}\label{sec:restriction-lemma}
Here we consider two pairs $(\mathfrak{g}, K)$ and $(\mathfrak{h}, T)$ as in \S\ref{sec:gK-basic}, assuming:
\begin{itemize}
	\item $\mathfrak{h}$ is a Lie subalgebra of $\mathfrak{g}$;
	\item $T$ is a subgroup of $K$;
	\item the maps $K \to \Aut(\mathfrak{g})$, $T \to \Aut(\mathfrak{h})$, $\mathfrak{k} \to \mathfrak{g}$ and $\mathfrak{t} \to \mathfrak{h}$ are compatible with the inclusions above.
\end{itemize}
 
Therefore we obtain a morphism of pairs $(U(\mathfrak{h}), T) \to (U(\mathfrak{g}), K)$ in the sense of \S\ref{sec:HC-dga}, and similarly for the Harish-Chandra dg-algebras obtained by h-construction.

Below is the h-counterpart of a standard result \cite[Proposition 2.57 (c)]{KV95} about restriction (or oblivion); the proof is also similar to the one in \textit{loc.\ cit.}

\begin{lemma}\label{prop:restriction-lemma}
	Suppose that $T$ is reductive. Then the functor ${}^{\mathrm{h}}\cate{C}(\mathfrak{g}, K) \to {}^{\mathrm{h}}\cate{C}(\mathfrak{h}, T)$ is exact and preserves K-projectives.
\end{lemma}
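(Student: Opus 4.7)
The functor leaves the underlying complex untouched, so exactness is immediate; only the preservation of K-projectives calls for work. I would factor the oblivion as
\[ {}^{\mathrm{h}}\cate{C}(\mathfrak{g}, K) \xrightarrow{\mathrm{oblv}_1} {}^{\mathrm{h}}\cate{C}(\mathfrak{g}, T) \xrightarrow{\mathrm{oblv}_2} {}^{\mathrm{h}}\cate{C}(\mathfrak{h}, T), \]
the first arrow being change of group and the second change of algebra. For $\mathrm{oblv}_1$, Corollary \ref{prop:change-group-K-proj} already supplies preservation of K-projectives, using the reductivity of $T$.

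For $\mathrm{oblv}_2$ my plan is to invoke Proposition \ref{prop:K-injectives-adjunction}: since its right adjoint is $R(M) := \Hom^\bullet_{U(\mathfrak{h})}(U(\mathfrak{g}), M)^{T\text{-alg}}$ with the inherited h-structure by Proposition \ref{prop:adjoint-h-oblv}(ii), it suffices to verify that $R$ is exact. Reductivity of $T$ and algebraicity of its adjoint action on $\mathfrak{g}$ let me pick a $T$-stable vector-space complement $\mathfrak{p}$ of $\mathfrak{h}$ in $\mathfrak{g}$, and the $T$-equivariant symmetrization map yields an isomorphism of left $U(\mathfrak{h})$-modules $U(\mathfrak{h}) \otimes \Sym(\mathfrak{p}) \rightiso U(\mathfrak{g})$. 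Consequently there is a natural isomorphism
\[ \Hom^\bullet_{U(\mathfrak{h})}(U(\mathfrak{g}), M) \;\simeq\; \prod_{n \geq 0} (\Sym^n \mathfrak{p})^* \otimes M, \]
which is already exact in $M$ before imposing $T$-algebraicity.

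The main obstacle is that $(-)^{T\text{-alg}}$ is not exact on arbitrary $T$-representations, so reductivity of $T$ must be used a second time. I plan to establish the identity
\[ \biggl( \prod_{n \geq 0} V_n \otimes M \biggr)^{T\text{-alg}} \;=\; \bigoplus_{\mu} \prod_{n \geq 0} (V_n \otimes M)_\mu, \]
valid for any family $(V_n)_n$ of finite-dimensional algebraic $T$-modules and any algebraic $T$-module $M$, where $\mu$ runs over the isomorphism classes of irreducible algebraic $T$-modules and $(-)_\mu$ extracts the isotypic component. The inclusion $\supseteq$ is clear because each $\mu$-isotypic product is itself algebraic of type $\mu$; the converse $\subseteq$ follows by decomposing a $T$-algebraic element into its finitely many isotypic constituents, projecting onto each coordinate, and observing that each projection lands in $(V_n \otimes M)_\mu$. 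Granted this identity, exactness of $R$ on an acyclic $M^\bullet$ follows because each $V_n \otimes M^\bullet$ is acyclic, isotypic projection on algebraic $T$-modules splits off a direct summand and hence preserves acyclicity, and products together with direct sums of acyclic complexes of vector spaces remain acyclic. Combined with Proposition \ref{prop:K-injectives-adjunction}, this yields preservation of K-projectives by $\mathrm{oblv}_2$ and completes the proof.
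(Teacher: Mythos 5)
Your proof is correct and follows essentially the same route as the paper: the same factorization through ${}^{\mathrm{h}}\cate{C}(\mathfrak{g}, T)$, the same appeal to Corollary \ref{prop:change-group-K-proj} and Proposition \ref{prop:K-injectives-adjunction}, and the same reduction via a $T$-stable complement and symmetrization to showing that $\Hom^\bullet_{U(\mathfrak{h})}(U(\mathfrak{g}), \cdot)^{T\text{-alg}}$ is exact. The only (harmless) divergence is in the last step: the paper deduces exactness from the splitting of short exact sequences of algebraic representations of the reductive group $T^\circ$ (via \cite[Proposition 1.18b]{KV95}), whereas you decompose $\bigl(\prod_n (\Sym^n\mathfrak{p})^*\otimes M\bigr)^{T\text{-alg}}$ explicitly into isotypic components; both are valid uses of the reductivity of $T$.
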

\begin{proof}
	Exactness is known. As for the preservation of K-projectives, we break the restriction into
	\[ {}^{\mathrm{h}}\cate{C}(\mathfrak{g}, K) \to {}^{\mathrm{h}}\cate{C}(\mathfrak{g}, T) \to {}^{\mathrm{h}}\cate{C}(\mathfrak{h}, T). \]
	
	The first step preserves K-projectives by Corollary \ref{prop:change-group-K-proj}. In view of Proposition \ref{prop:K-injectives-adjunction}, it suffices to show that ${}^{\mathrm{h}}\cate{C}(\mathfrak{g}, T) \to {}^{\mathrm{h}}\cate{C}(\mathfrak{h}, T)$ has a right adjoint which is exact and upgrades to a dg-functor.
	
	The desired right adjoint dg-functor is given by Proposition \ref{prop:adjoint-h-oblv} (ii), namely
	\[ \Hom^\bullet_{U(\mathfrak{h})}(U(\mathfrak{g}), \cdot)^{T\text{-alg}}: {}^{\mathrm{h}}\cate{C}(\mathfrak{h}, T) \to {}^{\mathrm{h}}\cate{C}(\mathfrak{g}, T). \]
	
	It remains to show that $\Hom^\bullet_{U(\mathfrak{h})}(U(\mathfrak{g}), M)^{T\text{-alg}}$ is acyclic for every acyclic h-complex $M$ over $(\mathfrak{h}, T)$.
	
	By reductivity, we may choose an $T^\circ$-invariant subspace $\mathfrak{q} \subset \mathfrak{g}$ under $\Ad$, such that $\mathfrak{g} = \mathfrak{h} \oplus \mathfrak{q}$. Next, apply \cite[Lemma 2.56]{KV95} (with $L = \{1\}$ and swapping left and right) to obtain an isomorphism
	\[ U(\mathfrak{h}) \otimes \Sym(\mathfrak{q}) \rightiso U(\mathfrak{g}) \quad \text{as $\mathfrak{h}$-modules.} \]
	In \textit{loc.\ cit.} the map is $u \otimes v \mapsto u \sigma(v)$ where $\sigma: \Sym(\mathfrak{q}) \to U(\mathfrak{g})$ is symmetrization. Hence it is also $T^\circ$-equivariant.

	Hence there is a $T^\circ$-equivariant (relative to \eqref{eqn:K-adjoint-action}) isomorphism in $\cate{C}(\CC)$:
	\begin{align*}
		\Hom^\bullet_{U(\mathfrak{h})}(U(\mathfrak{g}), M)^{T^\circ\text{-alg}} & \rightiso \Hom^\bullet_{\CC}(\Sym(\mathfrak{q}), M)^{T^\circ\text{-alg}} \\
		\varphi & \mapsto \varphi|_{1 \otimes \Sym(\mathfrak{q})}.
	\end{align*}
	
	Consider any algebraic representation $W$ of $T^\circ$ and a short exact sequence $0 \to V' \to V \to V'' \to 0$ of algebraic representations of $T^\circ$. Taking a compact real form of $T^\circ$ and passing from algebraic to locally $T^\circ(\R)$-finite modules, by \cite[Proposition 1.18b]{KV95} the short exact sequence splits. As a consequence,
	\[ 0 \to \Hom_{\CC}(W, V')^{T^\circ\text{-alg}} \to \Hom_{\CC}(W, V)^{T^\circ\text{-alg}} \to \Hom_{\CC}(W, V'')^{T^\circ\text{-alg}} \to 0 \]
	is acyclic in $\cate{C}(\CC)$.
	
	From this fact, $\Hom^\bullet_{\CC}(\Sym(\mathfrak{q}), M)^{T^\circ\text{-alg}}$ and $\Hom^\bullet_{U(\mathfrak{h})}(U(\mathfrak{g}), M)^{T^\circ\text{-alg}}$ are seen to be acyclic. It remains to recall that $(\cdots)^{T\text{-alg}} \simeq (\cdots)^{T^\circ\text{-alg}}$.
\end{proof}

\section{\texorpdfstring{$D$}{D}-modules}\label{sec:Dmod}
\subsection{Basic definitions}\label{sec:D-basic}
Let $K$ be an affine algebraic group. A smooth variety $X$ is said to be a \emph{$K$-variety} if $K$ acts algebraically on the right of $X$. Morphisms of $K$-varieties are defined to be $K$-equivariant morphisms of varieties.

Let $X$ be a smooth variety. Unless otherwise specified, $D_X$-modules (resp.\ $\mathscr{D}_X$-modules) will mean left modules (resp.\ $\mathscr{O}_X$-quasi-coherent left modules). We denote by
\[ \cate{D}^{\star}(X), \quad \star \in \{\;, +, -, \mathrm{b}\} \]
the derived category of $\mathscr{D}_X$-modules with boundedness condition $\star$; for affine $X$, they are just $\cate{D}^{\star}(D_X\dcate{Mod})$.

If $X$ is a smooth affine $K$-variety, then we obtain the homomorphism
\[ j: U(\mathfrak{k}) \to D_X \]
of algebras, mapping $\xi \in \mathfrak{k}$ to the vector field on $X$ generated by $\xi$. On the other hand, $K$ acts on $D_X$ in the following way. The right $K$-action on $X$ induces a left $K$-action on regular functions; let $P \in D_X$ and $k \in K$, then ${}^k P \in D_X$ is the operator
\[ \varphi \mapsto k (P (k^{-1} \varphi)) \]
for all regular function $\varphi$. It belongs to $D_X$: in fact ${}^k P$ is just the transport of structure by $k$ acted on $P$. Denote by $\sigma: K \to \Aut(D_X)$ the resulting homomorphism.

\begin{proposition}
	Give a smooth affine $K$-variety $X$, the quadruplet $(D_X, K, \sigma, j)$ is a Harsh-Chandra dg-algebra in the sense of Definition \ref{def:HC-dga}, concentrated at degree zero.
\end{proposition}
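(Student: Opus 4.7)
\bigskip

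\noindent\textbf{Proof plan.} Since $D_X$ sits in degree zero, all conditions in Definition~\ref{def:HC-dga} involving the differential are vacuous, and the tensor product $A \otimes A \to A$ is just ordinary multiplication. What I would verify, in order, are: (a) $K$ acts algebraically on $D_X$ through $\sigma$; (b) $j$ is $K$-equivariant; (c) $j$ is a Lie-algebra homomorphism into $(D_X, [\cdot, \cdot])$; (d) the bracket identity $(\dd\sigma)(\xi) = [j(\xi), \cdot]$ holds in $\End_{\CC}(D_X)$. It is automatic that $\sigma$ lands in $\Aut_{\cate{alg}}(D_X)$ (not just $\Aut_{\CC}(D_X)$) because conjugation by $k \in K$ preserves the multiplication of operators acting on $O_X$.

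The one step with a little content is (a), the algebraicity of $\sigma$. I would filter $D_X$ by the order filtration $D_X^{\leq n}$, each piece being $K$-stable because $\sigma(k)$ sends an order $\leq n$ operator to an order $\leq n$ operator. Since $X$ is a $K$-variety, $O_X$ is an algebraic $K$-representation by standard representability of group actions on affine varieties. The associated graded $\gr_n D_X \simeq \Sym^n_{\mathscr{O}_X}(T_X)$ is a coherent $\mathscr{O}_X$-module (quasi-coherence plus finite generation, as $T_X$ is locally free of finite rank), hence each $\gr_n D_X$ is an algebraic $K$-module by a standard argument: a coherent $K$-equivariant $\mathscr{O}_X$-module on an affine $K$-variety has algebraic global sections. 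By induction and compatibility of extensions of algebraic modules, each $D_X^{\leq n}$ is algebraic, hence so is $D_X = \bigcup_n D_X^{\leq n}$.

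For (b)--(d), the computations are infinitesimal unwindings of how $K$ acts on $X$. For (b), given $\xi \in \mathfrak{k}$ and $k \in K$, the vector field $j(\xi)$ acts on $\varphi \in O_X$ by $(j(\xi)\varphi)(x) = \tfrac{d}{dt}\big|_{t=0} \varphi(x \cdot \exp(t\xi))$; transporting by $k$ gives $(\sigma(k) j(\xi))\varphi = j(\Ad(k)\xi)\varphi$, using the identity $k \exp(t\xi) k^{-1} = \exp(t\Ad(k)\xi)$ and the fact that $K$ acts on the right. For (c), since the action is on the right, the assignment $\xi \mapsto j(\xi)$ is a genuine Lie-algebra homomorphism into vector fields with their commutator bracket; this is classical. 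For (d), I would differentiate (b) in $k$: writing $k = \exp(t\eta)$ with $\eta \in \mathfrak{k}$ and taking $\tfrac{d}{dt}\big|_{t=0}$ on both sides of $\sigma(k) P \sigma(k)^{-1} = {}^k P$ with $P \in D_X$ arbitrary, the left hand side produces $(\dd\sigma)(\eta)(P)$, while direct computation of conjugation on operators on $O_X$ using (c) yields $[j(\eta), P]$; the two must coincide in $\End_{\CC}(O_X)$, hence in $D_X$ since $D_X \hookrightarrow \End_{\CC}(O_X)$.

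The main obstacle, in so far as there is one, is (a): everything else is a direct infinitesimal calculation that I would dispatch by evaluating on functions in $O_X$, which is a faithful $D_X$-module. I would therefore spend the bulk of the proof setting up the order filtration and invoking the standard fact that coherent equivariant $\mathscr{O}_X$-modules on an affine $K$-variety have algebraic global sections, and then dispose of (b)--(d) in a few lines of bookkeeping.
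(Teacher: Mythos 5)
Your verification is correct in outline, but it is genuinely different from what the paper does: the paper offers no argument at all and simply cites \cite[2.1]{BL95} (and \cite{Li22}) for this fact. Your direct check --- (a) algebraicity of $\sigma$ via the order filtration, (b)--(d) by evaluating on $O_X$, which is a faithful $D_X$-module --- is exactly the content hiding behind that citation, and the computations in (b)--(d) are standard and correctly organized (in particular you are right that the right action is what makes $j$ a genuine, rather than anti-, homomorphism of Lie algebras, and that (d) is just the derivative of (b) applied to conjugation of operators).

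One step in (a) needs tightening. You invoke ``compatibility of extensions of algebraic modules'' to pass from the algebraicity of each $\gr_n D_X$ to that of $D_X^{\leq n}$. In the abstract category of representations this closure property is \emph{false}: for $K = \Ga$ one can build a two-dimensional abstract representation $k \mapsto \bigl(\begin{smallmatrix} 1 & f(k) \\ 0 & 1 \end{smallmatrix}\bigr)$ with $f$ an arbitrary non-regular additive map, whose sub and quotient are trivial (hence algebraic) while the extension is not. Knowing that $D_X^{\leq n-1}$ and $\gr_n D_X$ are algebraic does not by itself make $D_X^{\leq n}$ algebraic. The fix is immediate and stays within your framework: apply your coherence fact directly to $D_X^{\leq n}$ itself, which is a coherent $K$-equivariant $\mathscr{O}_X$-module (equivalently, check that the coaction $a^{\#}: O_X \to O_X \otimes O_K$ induces an $O_K$-comodule structure $D_X^{\leq n} \to D_X^{\leq n} \otimes O_K$ refining the conjugation action), bypassing the associated graded and the extension step entirely.
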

\begin{proof}
	This is stated in \cite[2.1]{BL95} (see the references therein). See also \cite{Li22}.
\end{proof}

Fix a smooth affine $K$-variety $X$ hereafter. We obtain the notions of $(D_X, K)$-modules, weak $(D_X, K)$-modules, and h-complexes over $(D_X, K)$ by the general formalism in \S\ref{sec:h-cplx}. In particular, we obtain the categories
\[ {}^{\mathrm{h}}\cate{C}(D_X, K), \quad {}^{\mathrm{h}}\cate{K}(D_X, K), \quad {}^{\mathrm{h}}\cate{D}(D_X, K) \]
as well as the versions with boundedness conditions.

In comparison with the standard terminologies, we have:
\begin{itemize}
	\item $(D_X, K)$-modules are exactly the (strongly) \emph{$K$-equivariant $D_X$-modules};
	\item weak $(D_X, K)$-modules are exactly the  \emph{weakly $K$-equivariant $D_X$-modules}.
\end{itemize}
See also the explanations after \cite[Definition 2.2]{Li22}.

For all subgroup $T$ of $K$ and $n \in \Z$, it is clear that
\[\begin{tikzcd}
	{}^{\mathrm{h}} \cate{D}(D_X, K) \arrow[r] \arrow[d, "{\Hm^n}"'] & {}^{\mathrm{h}}\cate{D}(D_X, T) \arrow[d, "{\Hm^n}"] \\
	(D_X, K)\dcate{Mod} \arrow[r] & (D_X, T)\dcate{Mod}
\end{tikzcd}\]
commutes, where the horizontal arrows are the evident ones, and the upper one is $t$-exact triangulated. Taking $T = \{1\}$, the second column becomes $\cate{D}(X) \xrightarrow{\Hm^n} D_X\dcate{Mod}$.

\begin{remark}
	If $x \in X$ is fixed by $K$, we can also talk about $(D_{X, x}, K)$-modules, etc., where $D_{X, x}$ is the stalk of $D_X$ at $x$. Taking stalks yield the dg-functors
	\[ {}^{\mathrm{h}}\cate{C}(D_X, K) \to {}^{\mathrm{h}}\cate{C}(D_{X, x}, K) \]
	and so forth. In fact it is $D_{X, x} \dotimes{D_X} (\cdot)$; it preserves K-projectives by Corollary \ref{prop:oblv-adjoint-K}.
\end{remark}

\begin{remark}
	The assumption that $X$ is affine is artificial here. It is clear that the whole formalism can be sheafified. For any smooth $K$-variety $X$, there is a theory of $(\mathscr{D}_X, K)$-modules, weak $(\mathscr{D}_X, K)$-modules, h-complexes over $(\mathscr{D}_X, K)$ and the corresponding derived categories. See \cite[2.16.1]{BL95}, or \cite{Ki12} for partial flag varieties. Nevertheless, this level of generality is needed in this work.

	Several constructions from the theory of $D$-modules generalize to the h-setting. For example, the same ideas from \S\ref{sec:std-resolution} upgrades the \emph{Spencer complex} \cite[(1.5.6)]{HTT08}
	\[ \cdots \to \mathscr{D}_X \dotimes{\mathscr{O}_X} \bigwedge^2 \mathscr{T}_X \to \mathscr{D}_X \dotimes{\mathscr{O}_X} \bigwedge^1 \mathscr{T}_X \to \mathscr{D}_X \dotimes{\mathscr{O}_X} \bigwedge^0 \mathscr{T}_X \]
	into an h-complex over $(\mathscr{D}_X, K)$, where $\mathscr{T}_X$ stands for the tangent sheaf of $X$. Denote this h-complex as $NX$, then the morphism $\mathscr{D}_X \to \mathscr{O}_X$ given by $D \mapsto D(1)$ yields a quasi-isomorphism $NX \to \mathscr{O}_X$ in ${}^{\mathrm{h}} \cate{C}(\mathscr{D}_X, K)$.
\end{remark}

\subsection{Inverse images}\label{sec:inverse-image}
There are two basic operations on derived categories of $D$-modules: inverse image (suitably shifted) and de Rham push-forward. The goal here is to define inverse images for h-complexes on smooth affine $K$-varieties. We do not consider de Rham push-forward in this work.

For every morphism $f: X \to Y$ between smooth varieties, we set
\[ \mathrm{rd}(f) := \dim X - \dim Y. \]

Assuming that there are enough flat $\mathscr{D}_Y$-modules (eg.\ when $Y$ is quasi-projective), the basic theory of $D$-modules provides a functor
\[ f^\bullet = f^![-\mathrm{rd}(f)] : \cate{D}^-(Y) \to \cate{D}^-(X). \]
\begin{itemize}
	\item The $f^\bullet$ above is the inverse image functor denoted by $\Lder f^*$ in \cite[p.33]{HTT08}, the left derived functor of $\mathscr{D}_{X \to Y} \dotimes{f^{-1} \mathscr{D}_Y} f^{-1}(\cdot)$; 
	\item The $f^!$ is denoted by $f^\dagger$ in \textit{loc.\ cit.}, and $f^!$ corresponds to the $!$-pullback under Riemann--Hilbert correspondence when applied to regular holonomic complexes.
\end{itemize}

The effect of $\mathscr{D}_{X \to Y} \dotimes{f^{-1} \mathscr{D}_Y} f^{-1}(\cdot)$ is to take the pullback of a $\mathscr{D}_Y$-module as a quasi-coherent sheaf, and then equip it with a natural $\mathscr{D}_X$-module structure.

If $f$ is a morphism between $K$-varieties, then $\mathscr{D}_{X \to Y}$ is $K$-equivariant, and this functor can be lifted to the level of h-complexes, cf.\ Proposition \ref{prop:adjoint-h-oblv} (i). The point is to take its left h-derived functor. This splits into two cases.

\begin{description}
	\item[Smoooth inverse image] Suppose $f$ is smooth, then $\mathscr{D}_{X \to Y} \dotimes{f^{-1} \mathscr{D}_Y} f^{-1}(\cdot)$ preserves acyclic objects, hence it induces a $t$-exact functor
	\[ f^\bullet: {}^{\mathrm{h}} \cate{D}(D_Y, K) \to {}^{\mathrm{h}} \cate{D}(D_X, K) \]
	as well as $f^! := f^\bullet[\mathrm{rd}(f)]$.
	
	\item[General inverse image] For general $f$, we shall assume $K$ is reductive. Theorem \ref{prop:enough-K-general} ensures that ${}^{\mathrm{h}} \cate{C}(D_Y, K)$ has enough K-projectives. Hence we may define the left h-derived functor
	\[ f^\bullet: {}^{\mathrm{h}} \cate{D}^-(D_Y, K) \to {}^{\mathrm{h}} \cate{D}^-(D_X, K) \]
	and $f^! := f^\bullet[\mathrm{rd}(f)]$. Note that $f^\bullet$ can be computed from K-flat resolutions, and K-projective implies K-flat (cf.\ \cite[Proposition 10.3.4]{Yek20}).
\end{description}

Clearly, these two definitions agree in overlapping cases. The following results are trivial for smooth inverse images, thus we state them only for the general ones.

\begin{lemma}\label{prop:pullback-composite}
	Suppose $K$ is reductive. Consider morphisms $X \xrightarrow{f} Y \xrightarrow{g} Z$ of smooth affine $K$-varieties. There is a canonical isomorphism $(gf)^\bullet \simeq f^\bullet g^\bullet$.
\end{lemma}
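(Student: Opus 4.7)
The plan is to establish the isomorphism in two stages: first an underived bimodule identity, then passage to the h-derived level via K-projective resolutions. The main technical point is preservation of K-flatness under the underived pull-back.

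At the underived level, the transfer-bimodule identity
\[ \mathscr{D}_{X \to Z} \;\simeq\; \mathscr{D}_{X \to Y} \dotimes{f^{-1}\mathscr{D}_Y} f^{-1}\mathscr{D}_{Y \to Z}, \]
a standard consequence of $\mathscr{D}_{W \to W'} = \mathscr{O}_W \dotimes{h^{-1}\mathscr{O}_{W'}} h^{-1}\mathscr{D}_{W'}$ and associativity of tensor products, is $K$-equivariant because every construction involved is $K$-equivariant. Tensoring with $(gf)^{-1}(\cdot)$ then yields a natural isomorphism of underived pull-back functors $\mathsf{F}_{gf} \simeq \mathsf{F}_f \circ \mathsf{F}_g$ on ${}^{\mathrm{h}}\cate{C}(D_Z, K)$, where $\mathsf{F}_h(\cdot) := \mathscr{D}_{\cdot \to \cdot} \dotimes{h^{-1}\mathscr{D}_\cdot} h^{-1}(\cdot)$ denotes the underived version of the inverse image for a morphism $h$; all h-structure is inherited since every step is dg-functorial.

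To derive, pick by Theorem \ref{prop:enough-K-general} a K-projective resolution $P \to M$ in ${}^{\mathrm{h}}\cate{C}^-(D_Z, K)$ for any $M \in {}^{\mathrm{h}}\cate{D}^-(D_Z, K)$. Since K-projective implies K-flat, Proposition \ref{prop:derived-functor-resolution} gives
\[ (gf)^\bullet M \simeq \mathsf{F}_{gf}(P) \simeq \mathsf{F}_f(\mathsf{F}_g(P)), \qquad g^\bullet M \simeq \mathsf{F}_g(P), \]
the middle isomorphism coming from the underived identity. Identifying $\mathsf{F}_f(\mathsf{F}_g(P))$ with $f^\bullet(g^\bullet M) = \Lder\mathsf{F}_f(\mathsf{F}_g(P))$ is equivalent to showing that $\mathsf{F}_g(P)$ is K-flat as an h-complex over $(D_Y, K)$.

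The main obstacle is exactly this K-flatness claim. I would handle it by exploiting the freedom in choosing $P$: the K-projective resolutions from \cite[\S 1.15]{BL95} (cf.\ the proof of Theorem \ref{prop:enough-K-general}) can be taken to have terms that, under the equivalence of Theorem \ref{prop:h-vs-dg}, are direct summands of free $(U(\overline{\mathfrak{k}}) \otimes D_Z)$-modules tensored with finite-dimensional $K$-algebraic representations. The image under $\mathsf{F}_g$ is then termwise built from $U(\overline{\mathfrak{k}}) \otimes \mathscr{D}_{Y \to Z}$; K-flatness over $(D_Y, K)$ reduces, via the classical theory of transfer modules (the right $g^{-1}\mathscr{D}_Z$-module $\mathscr{D}_{Y \to Z} = \mathscr{O}_Y \otimes_{g^{-1}\mathscr{O}_Z} g^{-1}\mathscr{D}_Z$ and an adapted Spencer-type resolution when $g$ is not flat), plus standard bookkeeping through the Harish-Chandra dg-algebra $U(\overline{\mathfrak{k}}) \otimes D_Y$. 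Once K-flatness of $\mathsf{F}_g(P)$ is secured, $\mathsf{F}_f(\mathsf{F}_g(P)) \simeq f^\bullet(g^\bullet M)$, and the composition isomorphism is complete; functoriality in $M$ is automatic from naturality at each step.
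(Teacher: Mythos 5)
Your overall architecture — underived transfer-bimodule identity $\mathscr{D}_{X\to Z}\simeq \mathscr{D}_{X\to Y}\dotimes{f^{-1}\mathscr{D}_Y}f^{-1}\mathscr{D}_{Y\to Z}$, then a K-projective resolution $P\to M$ over $(D_Z,K)$ and the chain $(gf)^\bullet M\simeq \mathsf{F}_{gf}(P)\simeq\mathsf{F}_f(\mathsf{F}_g(P))$ — is exactly the route the paper takes (it simply defers to the non-equivariant argument of \cite[Proposition 1.5.11]{HTT08} plus the dg-counterparts of the tensor-product formalism). But your key step is wrong as stated. You claim that identifying $\mathsf{F}_f(\mathsf{F}_g(P))$ with $f^\bullet(g^\bullet M)$ is \emph{equivalent} to $\mathsf{F}_g(P)$ being K-flat over $(D_Y,K)$, and you then set out to prove that K-flatness. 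This condition is only sufficient, not necessary — and it is in general \emph{false}. Take $Z=\mathrm{pt}$, $Y=\mathbb{A}^1$, $P=D_Z=\CC$ in degree $0$: then $\mathsf{F}_g(P)=\mathscr{D}_{Y\to Z}=\mathscr{O}_Y=D_Y/D_Y\partial$, which is holonomic and not flat over $D_Y$ (e.g.\ $\Tor_1^{D_Y}(\Omega_Y, \mathscr{O}_Y)\neq 0$). So no choice of K-projective $P$ will make $\mathsf{F}_g(P)$ K-flat over $D_Y$, and the final paragraph of your proposal — ``classical theory of transfer modules \ldots adapted Spencer-type resolution \ldots standard bookkeeping'' — cannot deliver what it promises.

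The correct condition, and the one the standard proof actually uses, is K-flatness of $\mathsf{F}_g(P)$ over $\mathscr{O}_Y$ (equivalently $O_Y$ in the affine setting), not over $D_Y$. The functor $\mathsf{F}_f$ factors through oblivion to $\mathscr{O}$-modules, since $\mathscr{D}_{X\to Y}\dotimes{f^{-1}\mathscr{D}_Y}f^{-1}N\simeq \mathscr{O}_X\dotimes{f^{-1}\mathscr{O}_Y}f^{-1}N$ as sheaves; hence complexes that are K-flat over $\mathscr{O}_Y$ are adapted to it, and $\Lder f^*$ computed from $D_Y$-K-projective resolutions agrees with the $\mathscr{O}$-module derived pullback because free $D_Y$-modules are locally free over $\mathscr{O}_Y$. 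Since $\mathscr{D}_{Y\to Z}$ is locally free over $\mathscr{O}_Y$ and $P$ is termwise (a summand of) a free $D_Z$-module, $\mathsf{F}_g(P)$ is a bounded-above complex of $\mathscr{O}_Y$-locally-free modules, hence K-flat over $\mathscr{O}_Y$, and the composition isomorphism follows. This is precisely the device the paper itself uses in the proof of Proposition \ref{prop:inverse-image-compatibility}, where the resolution $\overline{F}$ is required to be K-flat over $O_{\overline{Q}}$ rather than over $D_{\overline{Q}}$. With that substitution your argument goes through; without it, the proof has a genuine gap.
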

\begin{proof}
	Analogous to the non-equivariant version; see the proof of \cite[Propositionp 1.5.11]{HTT08}. It boils down to standard properties of derived tensor products, but for our setting of h-complexes, the dg-counterparts are needed (cf.\ \cite[\S 12.3]{Yek20}).
\end{proof}

\begin{lemma}\label{prop:pullback-amplitude}
	Suppose $K$ is reductive. Let $f: X \to Y$ be a morphism between smooth affine $K$-varieties. Then $f^\bullet$ restricts to ${}^{\mathrm{h}} \cate{D}^{\mathrm{b}}(D_Y, K) \to {}^{\mathrm{h}} \cate{D}^{\mathrm{b}}(D_X, K)$.
\end{lemma}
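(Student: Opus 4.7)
The plan is to reduce to the non-equivariant case by commuting $f^\bullet$ with the oblivion functor $\mathrm{oblv}: {}^{\mathrm{h}} \cate{C}(D_\bullet, K) \to \cate{C}(D_\bullet\dcate{Mod})$, noting that the target is identified with ${}^{\mathrm{h}} \cate{C}(D_\bullet, \{1\})$ since $\mathfrak{k}$ is trivial there (so both the weakness condition and the $i_\xi$ datum become vacuous). Oblivion acts as identity on underlying complexes, so it is exact and preserves cohomology; by Corollary \ref{prop:change-group-K-proj} applied to $T = \{1\}$ (reductive), it also preserves K-projectives.

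The first step is to verify that the square
\[
\begin{tikzcd}
{}^{\mathrm{h}} \cate{D}^-(D_Y, K) \arrow[r, "f^\bullet"] \arrow[d, "\mathrm{oblv}"'] & {}^{\mathrm{h}} \cate{D}^-(D_X, K) \arrow[d, "\mathrm{oblv}"] \\
\cate{D}^-(D_Y\dcate{Mod}) \arrow[r, "f^\bullet"'] & \cate{D}^-(D_X\dcate{Mod})
\end{tikzcd}
\]
commutes up to canonical isomorphism. Given $M$ in ${}^{\mathrm{h}} \cate{C}^-(D_Y, K)$, pick a K-projective h-resolution $P \to M$; then $\mathrm{oblv}(P) \to \mathrm{oblv}(M)$ is K-projective in $\cate{C}^-(D_Y\dcate{Mod})$, hence K-flat (cf.\ \cite[Proposition 10.3.4]{Yek20}). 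Both $f^\bullet M$ and $f^\bullet \mathrm{oblv}(M)$ are represented by the same underlying complex $\mathscr{D}_{X \to Y} \dotimes{f^{-1}\mathscr{D}_Y} f^{-1} P$, and oblivion commutes with $\mathscr{D}_{X \to Y} \dotimes{f^{-1}\mathscr{D}_Y} f^{-1}(\cdot)$ tautologically, giving the desired isomorphism.

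Since oblivion preserves cohomology, $M \in {}^{\mathrm{h}} \cate{D}^{\mathrm{b}}(D_Y, K)$ forces $\mathrm{oblv}(M) \in \cate{D}^{\mathrm{b}}(D_Y\dcate{Mod})$, and the commutativity above reduces the desired boundedness of $f^\bullet M$ to that of $f^\bullet \mathrm{oblv}(M)$. The latter is the standard fact that the classical inverse image has finite cohomological amplitude; one invokes \cite[Propositions 1.5.11 and 1.5.13]{HTT08}, or factors $f$ as $X \xrightarrow{\Gamma_f} X \times Y \xrightarrow{\mathrm{pr}_Y} Y$, where $\mathrm{pr}_Y^\bullet$ is $t$-exact (smooth projection) and $\Gamma_f^\bullet$ has amplitude bounded by $\dim Y$ via the Koszul-type resolution of $\mathscr{D}_{X \to X \times Y}$ as a right $\Gamma_f^{-1}\mathscr{D}_{X \times Y}$-module.

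The only substantive step is the verification of the commutative square, which rests on the preservation of K-projectives under oblivion. After that, the boundedness is a formal consequence of the well-known non-equivariant statement, and no further difficulty is expected.
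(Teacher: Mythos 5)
Your proposal is correct and follows essentially the same route as the paper: both reduce to the non-equivariant case by commuting $f^\bullet$ with oblivion (using that oblivion is exact and preserves K-projectives via Corollary \ref{prop:change-group-K-proj} with $T=\{1\}$), and then invoke the standard finite-amplitude statement for the classical inverse image. The paper packages the commutation of the square as an application of Corollary \ref{prop:derived-functor-composite} and cites \cite[Corollary 1.4.20]{HTT08} for the amplitude bound, but the substance is identical to your resolution-level verification.
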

\begin{proof}
	It suffices to show that $f^\bullet$ has bounded amplitude. The functor of forgetting $K$-equivariance
	\[ {}^{\mathrm{h}} \cate{C}(D_Y, K) \to {}^{\mathrm{h}} \cate{C}(D_Y, \{1\}) = \cate{C}(D_Y\dcate{Mod}) \]
	is exact and preserves K-projectives by Corollary \ref{prop:change-group-K-proj}. Ditto for $Y$ replaced by $X$. Therefore the left square of the diagram
	\[\begin{tikzcd}
		{}^{\mathrm{h}} \cate{D}^-(D_Y, K) \arrow[d] \arrow[r, "{f^\bullet}"] & {}^{\mathrm{h}} \cate{D}^-(D_X, K) \arrow[r, "{\Hm^n}"] \arrow[d] & (D_X, K)\dcate{Mod} \arrow[d] \\
		\cate{D}^-(Y) \arrow[r, "{f^\bullet}"'] & \cate{D}^-(X) \arrow[r, "{\Hm^n}"'] & D_X\dcate{Mod}
	\end{tikzcd}\]
	commutes up to a canonical isomorphism --- to see this, compute left h-derived functors via Corollary \ref{prop:derived-functor-composite}. So does the right square by \eqref{eqn:oblv-comm}. The amplitude of the upper $f^\bullet$ can thus be bounded by that of the lower one, which is finite by \cite[Corollary 1.4.20]{HTT08}.
\end{proof}

The non-affine case is not needed in this work, and is left to the interested reader.

\subsection{Comparison with the equivariant derived category}
To begin with, suppose $X$ is a smooth $K$-variety.

\begin{definition}
	Let $\cate{D}^{\mathrm{b}}_K(X)$ be the \emph{bounded equivariant derived category} of $\mathscr{D}_X$-modules under $K$-action defined by Bernstein and Lunts \cite[4.2]{BL94}. It is a triangulated category endowed with a $t$-structure, whose heart is the category of $K$-equivariant $\mathscr{D}_X$-modules.
\end{definition}

For details about the definition of $\cate{D}^{\mathrm{b}}_K(X)$, we refer to \cite{BL94}, \cite[2.11]{BL95}, or to the more systematic \cite[Chapter 6]{Ac21} in the setting of constructible sheaves. Below is only a sketch.

\begin{definition}
	A \emph{resolution} of a $K$-variety $X$ is a pair $(P, p)$ where $P$ is a $K$-torsor and $p: P \to X$ is a smooth affine $K$-equivariant morphism. We also set $\overline{P} := P/K$. Morphisms $f: (Q, q) \to (P, p)$ are defined as commutative diagrams
	\[\begin{tikzcd}
		Q \arrow[r, "f"] \arrow[rd, "q"'] & P \arrow[d, "p"] \\
		& X
	\end{tikzcd}\]
	where $f$ is a smooth $K$-equivariant morphism. From $f$ we obtain a smooth morphism $\overline{f}: \overline{Q} \to \overline{P}$.
\end{definition}

\begin{example}
	The \emph{trivial resolution} of $X$ is $P := X \times K$ and $p := \mathrm{pr}_1$, where $K$ acts on $P$ by $(x, h)k = (xk, k^{-1}h)$. Then $X \rightiso \overline{P}$ by mapping $x \in X$ to the orbit of $(x, 1)$.
\end{example}

For the construction of sufficiently many resolutions of $X$ (namely, $n$-acyclic ones \cite[Definition 6.1.18]{Ac21} for every $n \in \Z_{\geq 0}$), we refer to \cite[\S 6.1]{Ac21}.

\begin{itemize}
	\item The objects in $\cate{D}^{\mathrm{b}}_K(X)$ are collections $M$ of objects $M_P$ in $\cate{D}^{\mathrm{b}}(\overline{P})$ for every resolution $(P, p)$, together with isomorphisms
	\[ \alpha_f: \overline{f}^\bullet M_P \rightiso M_Q \]
	for each morphism $f: (Q, q) \to (P, p)$, compatibly with compositions in the sense that
	\[ \alpha_g \circ \overline{g}^\bullet(\alpha_f) = \alpha_{fg}. \]
	\item The morphisms $M \to N$ in $\cate{D}^{\mathrm{b}}_K(X)$ are collections of morphisms $\varphi_P: M_P \to N_P$ compatibly with various $\alpha_f$.
\end{itemize}

For the description of the following structures of $\cate{D}^{\mathrm{b}}_K(X)$, see also see \cite[pp.286--287]{Ac21}; note that one works with constructible sheaves in \textit{loc.\ cit.} and the smooth inverse images differ by a shift by $\mathrm{rd}(f)$.

\begin{description}
	\item[Forgetting equivariance] The functor $\mathbf{oblv}: \cate{D}^{\mathrm{b}}_K(X) \to \cate{D}^{\mathrm{b}}(X)$ sends an object $M$ to $M_{X \times K}$, by taking the trivial resolution.
	\item[Translation functor] Define $M[1]$ to be the collection $(M_P[1])_{(P, p)}$.
	\item[Distinguished triangles] A triangle $M_1 \to M_2 \to M_3 \xrightarrow{+1}$ is said to be distinguished if $M_{1, P} \to M_{2, P} \to M_{3, P} \xrightarrow{+1}$ is, for every resolution $(P, p)$.
	\item[Bounded $t$-structure] For any bounded interval $I$, an object $M$ lies in $\cate{D}^I_K(X)$ if and only if $\mathbf{oblv}(M)$ lies in $\cate{D}^I(X)$.
\end{description}

More generally, let $T \subset K$ be a subgroup. There is a $t$-exact functor
\[ \mathbf{oblv}^K_T: \cate{D}^{\mathrm{b}}_K(X) \to \cate{D}^{\mathrm{b}}_T(X) \]
whose definition à la \cite[Definition 6.5.2]{Ac21} is sketched below.

Any resolution $(P, p)$ can be viewed as a resolution of $X$ as a $T$-variety. Let $\cate{D}^{\mathrm{b}}_{T \subset K}(X)$ be the category defined in the same way as $\cate{D}^{\mathrm{b}}_T(X)$, but allowing only the resolutions so obtained. This gives rise to a functor
\[ \mathbf{oblv}^K_{T \subset K}: \cate{D}^{\mathrm{b}}_K(X) \to \cate{D}^{\mathrm{b}}_{T \subset K}(X). \]
On the other hand, \cite[Lemmas 6.4.7, 6.4.8]{Ac21} realizes $\cate{D}^{\mathrm{b}}_T(X)$ as a full subcategory of $\cate{D}^{\mathrm{b}}_{T \subset K}(X)$. All these constructions being triangulated and $t$-exact, one can verify that $\mathbf{oblv}^K_{T \subset K}$ lands in $\cate{D}^{\mathrm{b}}_T(X)$ by checking on hearts, as in \cite[p.292]{Ac21}. This yields $\mathbf{oblv}^K_T$.

Moreover, it is proved in \textit{loc.\ cit.} that $\cate{D}^{\mathrm{b}}_{\{1\}}(X) \simeq \cate{D}^{\mathrm{b}}(X)$ and $\mathbf{oblv}^K_{\{1\}} \simeq \mathbf{oblv}$.

\begin{theorem}[A.\ Beilinson, see {\cite[Theorem 2.13]{BL95}}]\label{prop:Beilinson-equiv}
	Let $X$ be a smooth affine $K$-variety. There is an equivalence between triangulated categories
	\[ \varepsilon: {}^{\mathrm{h}}\cate{D}^{\mathrm{b}}(D_X, K) \to \cate{D}^{\mathrm{b}}_K(X) \]
	with the following properties.
	\begin{enumerate}[(i)]
		\item For every subgroup $T \subset K$, the diagram
		\[\begin{tikzcd}
			{}^{\mathrm{h}}\cate{D}^{\mathrm{b}}(D_X, K) \arrow[d] \arrow[r, "\varepsilon"] & \cate{D}^{\mathrm{b}}_K(X) \arrow[d, "{\mathbf{oblv}^K_T}"] \\
			{}^{\mathrm{h}}\cate{D}^{\mathrm{b}}(D_X, T) \arrow[r, "\varepsilon"'] & \cate{D}^{\mathrm{b}}_T(X)
		\end{tikzcd}\]
		commutes up to isomorphism; in particular, $\varepsilon$ commutes with ${}^{\mathrm{h}}\cate{D}^{\mathrm{b}}(D_X, K) \to \cate{D}^{\mathrm{b}}(X)$ and $\mathbf{oblv}$.
		\item It is $t$-exact.
		\item It induces identity on hearts, which are the category of equivariant $D_X$-modules on both sides.
	\end{enumerate}
\end{theorem}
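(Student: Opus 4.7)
My plan is to take the existence of $\varepsilon$ as established in \cite[Theorem 2.13]{BL95} and then recall Beilinson's construction just carefully enough to read off properties (i)--(iii). The crux of Beilinson's theorem is the following descent equivalence: for any smooth variety $P$ on which $K$ acts freely with smooth quotient $\bar{P} = P/K$, there is a canonical $t$-exact equivalence
\[ \mathrm{desc}_P: {}^{\mathrm{h}}\cate{D}^{\mathrm{b}}(D_P, K) \xrightarrow{\sim} \cate{D}^{\mathrm{b}}(\bar{P}). \]
Given an h-complex $M$ over $(D_X, K)$, I would assign to each resolution $(P, p)$ of $X$ the object $M_P := \mathrm{desc}_P(p^\bullet M)$, where $p^\bullet$ is the h-equivariant inverse image of \S\ref{sec:inverse-image} (well-defined by Lemma \ref{prop:pullback-amplitude}, and $t$-exact since $p$ is smooth). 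For a morphism $f: (Q, q) \to (P, p)$ of resolutions, the isomorphism $q^\bullet \simeq f^\bullet p^\bullet$ supplied by Lemma \ref{prop:pullback-composite}, combined with the naturality $\mathrm{desc}_Q \circ f^\bullet \simeq \bar{f}^\bullet \circ \mathrm{desc}_P$ of descent along the smooth morphism $\bar{f}: \bar{Q} \to \bar{P}$, yields the connecting isomorphism $\alpha_f$; the cocycle condition is built in.

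The verification that $\varepsilon$ is an equivalence of triangulated categories goes in the familiar way. For fully faithfulness, I would compute $\Hom_{\cate{D}^{\mathrm{b}}_K(X)}(M, N) \simeq \Hom_{\cate{D}^{\mathrm{b}}(\bar{P})}(M_P, N_P)$ on a sufficiently $n$-acyclic resolution (cf.\ \cite[Lemma 6.4.8]{Ac21}), transport through $\mathrm{desc}_P$ to a $\Hom$ in ${}^{\mathrm{h}}\cate{D}^{\mathrm{b}}(D_P, K)$, and match the result with ${}^{\mathrm{h}}\Hom_{D_X, K}(M, N)$ via the h-adjunction for the smooth pullback $p^\bullet$. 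Essential surjectivity is then a standard gluing argument: any object of $\cate{D}^{\mathrm{b}}_K(X)$ is determined by its values on resolutions, and those values can be descended back to an h-complex on $X$.

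With $\varepsilon$ in hand, the three extra properties are essentially formal. For (i), every resolution $(P, p)$ of $X$ as a $K$-variety is also a resolution of $X$ as a $T$-variety (since $K$ acting freely forces $T$ to act freely), and $\mathrm{desc}_P$ makes reference only to the free group action, not to a containing group; in addition, $p^\bullet$ commutes with the oblivion $\mathrm{oblv}^K_T$ via \eqref{eqn:oblv-comm}. Hence the family $(M_P)_{(P,p)}$ attached to $\varepsilon(M)$ restricts verbatim to the one attached to $\varepsilon(\mathrm{oblv}\, M)$, which by \cite[\S 6.5]{Ac21} computes $\mathbf{oblv}^K_T \varepsilon(M)$. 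Property (ii) is then immediate from (i) applied to $T = \{1\}$, since the $t$-structures on both ${}^{\mathrm{h}}\cate{D}^{\mathrm{b}}(D_X, K)$ and $\cate{D}^{\mathrm{b}}_K(X)$ are by definition detected after $\mathbf{oblv}$. Property (iii) follows because when $M$ is concentrated in degree zero, $\mathrm{desc}_P$ specializes to classical descent of an equivariant $D$-module along the principal bundle $P \to \bar P$, and reassembling these data across resolutions recovers $M$ as a $(D_X, K)$-module. The only nontrivial ingredient is the descent equivalence $\mathrm{desc}_P$ itself, which is the technical heart of \cite[Theorem 2.13]{BL95}; once that is granted, every remaining step is natural bookkeeping with h-inverse images and change of equivariance group.
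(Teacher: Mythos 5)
Your proposal is correct and follows essentially the same route as the paper: both take the equivalence itself from \cite[Theorem 2.13]{BL95}, define $\varepsilon$ on each resolution $(P,p)$ by descending the h-equivariant pullback $p^\bullet M$ to $\overline{P}$ (using Lemmas \ref{prop:pullback-composite} and \ref{prop:pullback-amplitude} for the compatibility isomorphisms), and then read off (i)--(iii) by unwinding this construction, with $t$-exactness deduced from commutation with oblivion to $\cate{D}^{\mathrm{b}}(X)$. The one imprecision is your claim in (i) that the family $(M_P)$ "restricts verbatim" to the $T$-equivariant one: the $T$-version lives over $P/T$ rather than $P/K$, so one must interpose the pullback along $P/T \to P/K$, which the paper packages via the intermediate category $\cate{D}^{\mathrm{b}}_{T \subset K}(X)$ and the functor $\mathbf{oblv}^K_{T \subset K}$ of \cite[\S 6.5]{Ac21}; this is bookkeeping, not a gap.
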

\begin{proof}
	The equivalence is stated in \textit{loc.\ cit.}; the case here is simpler since $X$ is affine and we do not consider monodromic structures yet. Let us sketch the definition of $\varepsilon$ now. Given a resolution $(P, p)$, consider the functors
	\begin{equation}\label{eqn:epsilon-construction}
		\begin{tikzcd}
			{}^{\mathrm{h}} \cate{D}^{\mathrm{b}}(D_X, K) \arrow[r, "{p^\bullet}"] & {}^{\mathrm{h}} \cate{D}^{\mathrm{b}}(D_P, K) & \cate{D}^{\mathrm{b}}((D_P, K)\dcate{Mod}) \arrow[l, "\sim"'] & \cate{D}^{\mathrm{b}}(\overline{P}) \arrow[l, "\sim"'] .
		\end{tikzcd}
	\end{equation}

	The middle equivalence is the functor $\alpha$ in the step 2 of proof of \cite[Theorem 2.13]{BL95}. The rightmost equivalence comes from $D_{\overline{P}}\dcate{Mod} \simeq (D_P, K)\dcate{Mod}$, as $P \to \overline{P}$ is a $K$-torsor; it is also the functor in the step 1 of the cited proof.
	
	Suppose that an object $M$ of ${}^{\mathrm{h}} \cate{D}^{\mathrm{b}}(D_X, K)$ is given. When $(P, p)$ varies, we obtain a compatible collection of objects of $\cate{D}^{\mathrm{b}}(\overline{P})$. Hence \eqref{eqn:epsilon-construction} yields the functor $\varepsilon$. It is triangulated since all the functors in \eqref{eqn:epsilon-construction} are.
	
	Consider the property (i). Given $T \subset K$, define $\varepsilon_{T \subset K}: {}^{\mathrm{h}}\cate{D}^{\mathrm{b}}(D_X, T) \to \cate{D}^{\mathrm{b}}_{T \subset K}(X)$ by the same recipe \eqref{eqn:epsilon-construction}, but allowing only resolutions restricted from $K$. By the earlier review on $\mathbf{oblv}^K_T$, it suffices to commute
	\[\begin{tikzcd}
		{}^{\mathrm{h}}\cate{D}^{\mathrm{b}}(D_X, K) \arrow[d] \arrow[r, "\varepsilon"] & \cate{D}^{\mathrm{b}}_K(X) \arrow[d, "{\mathbf{oblv}^K_{T \subset K}}"] \\
		{}^{\mathrm{h}}\cate{D}^{\mathrm{b}}(D_X, T) \arrow[r, "{\varepsilon_{T \subset K}}"'] & \cate{D}^{\mathrm{b}}_{T \subset K}(X)
	\end{tikzcd}\]
	up to isomorphism. But this is immediate by construction.
	
	As $\varepsilon$ is seen to commute with ${}^{\mathrm{h}}\cate{D}^{\mathrm{b}}(D_X, K) \to \cate{D}^{\mathrm{b}}(X)$ and $\mathbf{oblv}$, it is also $t$-exact, whence (ii).
	
	Consider (iii). If $M$ is an equivariant $D_X$-module, then its image under \eqref{eqn:epsilon-construction} is the descent to $\overline{P}$ of its inverse image in $(D_P, K)\dcate{Mod}$. When $(P, p)$ varies, such a collection also gives rise to an object in the heart of $\cate{D}^{\mathrm{b}}_K(X)$ that corresponds to $M$; for details, see \cite[Theorem 6.4.10]{Ac21} and its proof. Beware of the shifts in the constructible setting considered in \cite{Ac21}.
\end{proof}

\begin{corollary}\label{prop:Beilinson-equiv-Ext}
	Let $X$ be a smooth affine $K$-variety. There are canonical isomorphisms
	\[ {}^{\mathrm{h}} \Ext^n_{D_X, K}(M, N) \rightiso \Ext^n_{\cate{D}^{\mathrm{b}}_K(X)}(\varepsilon M, \varepsilon N) \]
	for all objects $M, N$ of ${}^{\mathrm{h}}\cate{D}^{\mathrm{b}}(D_X, K)$ and all $n \in \Z$, compatibly with long exact sequences for $\Ext$.
\end{corollary}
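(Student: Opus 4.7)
The plan is to deduce this directly from Theorem \ref{prop:Beilinson-equiv}, mimicking the deduction of Corollary \ref{prop:BL-equiv-Ext} from Theorem \ref{prop:BL-equiv}. The key observation is that $\Ext^n$ in both sides is by definition a $\Hom$-set in the respective triangulated category:
\[
{}^{\mathrm{h}} \Ext^n_{D_X, K}(M, N) = \Hom_{{}^{\mathrm{h}}\cate{D}^{\mathrm{b}}(D_X, K)}(M, N[n]),
\qquad
\Ext^n_{\cate{D}^{\mathrm{b}}_K(X)}(\varepsilon M, \varepsilon N) = \Hom_{\cate{D}^{\mathrm{b}}_K(X)}(\varepsilon M, \varepsilon N[n]).
\]
(The first identity follows from Remark \ref{rem:h-Ext} together with the fact that ${}^{\mathrm{h}} \cate{D}^{\mathrm{b}}(D_X, K)$ embeds as a full triangulated subcategory of the unbounded h-derived category, on which the general formalism of \S\ref{sec:derived-categories} defines $\Ext^n$ as $\Hom$ into a shift.)

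First I would observe that, by Theorem \ref{prop:Beilinson-equiv}, the functor $\varepsilon$ is an equivalence of triangulated categories; in particular it is fully faithful and commutes with the translation functor up to canonical isomorphism. Applying $\varepsilon$ to morphisms $M \to N[n]$ thus produces the desired canonical bijection $\varepsilon_{M, N[n]}$ between the two $\Hom$-sets, which is the claimed isomorphism.

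Next I would address the compatibility with long exact sequences. Any short exact sequence $0 \to N' \to N \to N'' \to 0$ in $(D_X, K)\dcate{Mod}$, or dually in the first variable, gives rise to a distinguished triangle in ${}^{\mathrm{h}}\cate{D}^{\mathrm{b}}(D_X, K)$ by the general formalism of \S\ref{sec:derived-categories}; applying the triangulated functor $\varepsilon$ produces a distinguished triangle in $\cate{D}^{\mathrm{b}}_K(X)$, and by $t$-exactness of $\varepsilon$ (Theorem \ref{prop:Beilinson-equiv}(ii)--(iii)) this is precisely the distinguished triangle associated with the image short exact sequence. The long exact sequences for $\Ext^\bullet$ in either setting arise by applying $\Hom(-, -[n])$ to such triangles, and the naturality of $\varepsilon_{M, N[n]}$ in both variables entails the compatibility.

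There is essentially no obstacle here: the statement is a formal consequence of the existence of the triangulated equivalence $\varepsilon$ established in Theorem \ref{prop:Beilinson-equiv}. The only thing worth being careful about is the boundedness hypothesis, which is automatic since $M, N \in {}^{\mathrm{h}}\cate{D}^{\mathrm{b}}(D_X, K)$ and $\varepsilon$ takes values in $\cate{D}^{\mathrm{b}}_K(X)$; the shift $N[n]$ remains bounded, so all the $\Hom$-sets involved are the ones to which the equivalence applies.
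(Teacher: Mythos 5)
Your proposal is correct and matches the paper's argument: the paper deduces the corollary exactly as you do, by identifying both $\Ext^n$'s with $\Hom$-sets into shifts in the respective triangulated categories and invoking the fully faithful triangulated equivalence $\varepsilon$ of Theorem \ref{prop:Beilinson-equiv} (the paper's proof is literally ``analogous to Corollary \ref{prop:BL-equiv-Ext}'', whose proof is the same $\Hom$-set identification). Your extra remarks on long exact sequences and boundedness are consistent with, and merely elaborate on, what the paper leaves implicit.
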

\begin{proof}
	Analogous to Corollary \ref{prop:BL-equiv-Ext}.
\end{proof}

The inverse image functor is also defined on equivariant derived categories, denoted by the same symbol $f^\bullet$. They match the h-counterpart under $\varepsilon$ by the next result.

\begin{proposition}\label{prop:inverse-image-compatibility}
	Assume that $K$ is reductive. Let $f: X \to Y$ be a morphism between smooth affine $K$-varieties. The following diagram commutes up to a canonical isomorphism:
	\[\begin{tikzcd}
		{}^{\mathrm{h}} \cate{D}^{\mathrm{b}}(D_Y, K) \arrow[r, "\varepsilon"] \arrow[d, "{f^\bullet}"'] & \cate{D}^{\mathrm{b}}_K(Y) \arrow[d, "{f^\bullet}"] \\
		{}^{\mathrm{h}} \cate{D}^{\mathrm{b}}(D_X, K) \arrow[r, "\varepsilon"'] & \cate{D}^{\mathrm{b}}_K(X) .
	\end{tikzcd}\]
\end{proposition}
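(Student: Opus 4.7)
The plan is to verify the compatibility resolution-by-resolution, using the construction of $\varepsilon$ recalled in \eqref{eqn:epsilon-construction}. Fix $M \in {}^{\mathrm{h}}\cate{D}^{\mathrm{b}}(D_Y, K)$ and a resolution $(P,p)$ of $X$; I would exhibit a natural isomorphism $\varepsilon(f^\bullet M)_P \simeq (f^\bullet \varepsilon M)_P$ in $\cate{D}^{\mathrm{b}}(\overline{P})$ that assembles into an isomorphism in $\cate{D}^{\mathrm{b}}_K(X)$. Possibly after refining $(P,p)$, choose a resolution $(Q,q)$ of $Y$ and a $K$-equivariant morphism $\tilde f: P \to Q$ with $q \tilde f = fp$: starting from any resolution $(Q_0,q_0)$ of $Y$ and replacing $P$ by $P \times_Y Q_0$ produces such data, since $Q_0 \to Y$ is smooth affine and $K$-equivariant. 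The morphism $\tilde f$ descends to $\overline{\tilde f}: \overline{P} \to \overline{Q}$, and the vertical maps $\bar p, \bar q$ are $K$-torsors.

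By the definition of the inverse image on the equivariant derived category (cf.\ the recipe via compatible resolutions in \cite[Chap.~6]{Ac21}), one has $(f^\bullet \varepsilon M)_P \simeq \overline{\tilde f}{}^\bullet (\varepsilon M)_Q$. On the h-side, two applications of Lemma~\ref{prop:pullback-composite} give
\[
    p^\bullet f^\bullet M \;\simeq\; (fp)^\bullet M \;\simeq\; \tilde f^\bullet q^\bullet M
\]
in ${}^{\mathrm{h}}\cate{D}^{\mathrm{b}}(D_P, K)$, with boundedness ensured by Lemma~\ref{prop:pullback-amplitude}. Descending along the $K$-torsor $\bar p$ via the equivalence $(D_P, K)\dcate{Mod} \simeq D_{\overline{P}}\dcate{Mod}$ appearing in \eqref{eqn:epsilon-construction}, together with its counterpart over $Q$, identifies the right-hand side with $\overline{\tilde f}{}^\bullet (\varepsilon M)_Q$; for this one uses the elementary fact that the descent equivalence intertwines smooth inverse image along a $K$-equivariant $\tilde f$ with smooth inverse image along its quotient $\overline{\tilde f}$, which is itself just another instance of Lemma~\ref{prop:pullback-composite} applied to $\bar p \circ \tilde f = \overline{\tilde f} \circ \bar q \circ (\text{section})$ after fixing local trivializations, or---more invariantly---from the fact that pullback along the smooth map $\bar q$ is fully faithful onto its essential image of $K$-equivariant modules. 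Putting the two chains together yields the pointwise isomorphism.

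The main obstacle will be the coherence check, namely that the above isomorphism is natural in $M$ and compatible with the transition data $\alpha_g$ for morphisms $g: (P',p') \to (P,p)$ of resolutions, so that it defines a morphism in $\cate{D}^{\mathrm{b}}_K(X)$. This amounts to a somewhat lengthy diagram chase comparing several instances of Lemma~\ref{prop:pullback-composite} applied to the tower $P' \to P \to X \to Y$ and to a correspondingly refined $Q' \to Q \to Y$; it is simplified by noting that both sides of the asserted isomorphism are triangulated and $t$-exact in $M$, so it suffices to check naturality on the heart, i.e.\ on $K$-equivariant $\mathscr{D}_Y$-modules, where the constructions become entirely sheaf-theoretic and the required squares are strictly commutative. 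A minor but genuine technical point is that for non-smooth $f$ the h-version of $f^\bullet$ is a left-derived functor, so one must verify that the intermediate step $p^\bullet f^\bullet M \simeq (fp)^\bullet M$ survives into the bounded h-derived category; this is exactly what Lemma~\ref{prop:pullback-amplitude} and Corollary~\ref{prop:derived-functor-composite} (with $p^\bullet$ smooth and hence exact) guarantee.
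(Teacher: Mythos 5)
Your overall architecture matches the paper's: form the fiber product $P = X \times_Y Q$ over a resolution $(Q,q)$ of $Y$, use Lemma~\ref{prop:pullback-composite} to identify $p^\bullet f^\bullet \simeq \tilde f^\bullet q^\bullet$ on the h-side, and then descend along the $K$-torsors to match the Bernstein--Lunts recipe for $f^\bullet$ on $\cate{D}^{\mathrm{b}}_K$. However, there is a genuine gap in the descent step. You justify the compatibility of the descent equivalences $(D_Q,K)\dcate{Mod} \simeq D_{\overline Q}\dcate{Mod}$ and $(D_P,K)\dcate{Mod} \simeq D_{\overline P}\dcate{Mod}$ with the inverse images by appealing to ``the elementary fact that the descent equivalence intertwines \emph{smooth} inverse image along $\tilde f$ with \emph{smooth} inverse image along $\overline{\tilde f}$.'' But $\tilde f$ is the base change of $f$ along $q$, hence is smooth only when $f$ is; in the paper's main application $f = i_x$ is a closed point inclusion, so $\tilde f$ and $\overline{\tilde f}$ are closed embeddings and both $\tilde f^\bullet$ and $\overline{\tilde f}{}^\bullet$ are genuinely left-derived functors. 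Neither ``another instance of Lemma~\ref{prop:pullback-composite}'' (which compares compositions of pullbacks between $K$-varieties, not a derived functor with its image under an equivalence of abelian categories) nor full faithfulness of $\bar q^\bullet$ supplies the needed comparison of derived functors. What is required --- and what the paper provides --- is a resolution argument: choose $\overline F \to \overline M$ with $\overline F$ K-flat over $O_{\overline Q}$, observe that its pullback $F \to M$ to $(D_Q,K)\dcate{Mod}$ is again a quasi-isomorphism with $F$ K-flat over $O_Q$ (because $F \dotimes{O_Q}(\cdot) \simeq \overline F \dotimes{O_{\overline Q}}(\cdot)$), and then compute $\tilde f^\bullet$ with $F$ and $\overline{\tilde f}{}^\bullet$ with $\overline F$; the two computations agree by the commutativity of the square relating $P, Q, \overline P, \overline Q$. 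Without some such argument the right-hand rectangle of the comparison is unproved.

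A secondary weak point: your proposed reduction of the coherence check to the heart rests on the claim that both composites are $t$-exact in $M$, which is false for non-smooth $f$ ($f^\bullet$ is only right $t$-exact up to shift, and has nontrivial amplitude). The coherence is better handled as in the paper: the family $Q \mapsto P = X\times_Y Q$ is a sufficient system of resolutions of $X$ (by \cite[Lemma 6.4.8]{Ac21}), and the assignment $M_Q \mapsto \overline{\tilde f}{}^\bullet M_Q$ \emph{is} the definition of $f^\bullet$ on $\cate{D}^{\mathrm{b}}_K(Y)$, so once the pointwise isomorphisms are constructed functorially from Lemma~\ref{prop:pullback-composite} and the flat-resolution comparison, no separate devissage to the heart is needed.
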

\begin{proof}
	Given a resolution $q: Q \to Y$, we form the Cartesian square
	\[\begin{tikzcd}
		P \arrow[r, "{\tilde{f}}"] \arrow[d, "p"'] & Q \arrow[d, "q"] \\
		X \arrow[r, "f"'] & Y
	\end{tikzcd}\]
	then $p$ is also a resolution. Let $\overline{f}: \overline{P} \to \overline{Q}$ denote the induced morphism. Then
	\begin{equation*}
		\mathrm{rd}(p) = \mathrm{rd}(q), \quad \mathrm{rd}(\overline{f}) = \mathrm{rd}(f) = \mathrm{rd}(\tilde{f}).
	\end{equation*}
	
	Recall the construction \eqref{eqn:epsilon-construction} of $\varepsilon$, and consider the following diagram
	\[\begin{tikzcd}
		{}^{\mathrm{h}} \cate{D}^{\mathrm{b}}(D_Y, K) \arrow[d, "{f^\bullet}"'] \arrow[r, "{q^\bullet}"] & {}^{\mathrm{h}} \cate{D}^{\mathrm{b}}(D_Q, K) \arrow[d, "{\tilde{f}^\bullet}"] & \cate{D}^{\mathrm{b}}((D_Q, K)\dcate{Mod}) \arrow[l, "\sim"'] & \cate{D}^{\mathrm{b}}(\overline{Q}) \arrow[l, "\sim"'] \arrow[d, "{\overline{f}^\bullet}"] \\
		{}^{\mathrm{h}} \cate{D}^{\mathrm{b}}(D_X, K) \arrow[r, "{p^\bullet}"'] & {}^{\mathrm{h}} \cate{D}^{\mathrm{b}}(D_P, K) & \cate{D}^{\mathrm{b}}((D_P, K)\dcate{Mod}) \arrow[l, "\sim"] & \cate{D}^{\mathrm{b}}(\overline{P}) \arrow[l, "\sim"] .
	\end{tikzcd}\]
	
	The left square commutes by Lemma \ref{prop:pullback-composite}. As for the right rectangle, consider a bounded complex $\overline{M}$ of $D_{\overline{Q}}$-modules. We may and do take a quasi-isomorphism $\overline{F} \to \overline{M}$ such that $\overline{F}$ is K-flat when viewed over $O_{\overline{Q}} := \Gamma(\overline{Q}, \mathscr{O}_{\overline{Q}})$. Then their inverse images as complexes of $(D_Q, K)$-modules $F \to M$ is still a quasi-isomorphism, and $F$ is K-flat over $O_Q$ since
	\[ F \dotimes{O_Q} (\cdot) \simeq \overline{F} \dotimes{O_{\overline{Q}}} O_Q \dotimes{O_Q} (\cdot) \simeq \overline{F} \dotimes{O_{\overline{Q}}} (\cdot). \]
	
	The image of $F$ in ${}^{\mathrm{h}} \cate{C}(D_Q, K)$ can be used to compute $\tilde{f}^\bullet$, whilst $\overline{F}$ can be used to compute $\overline{f}^\bullet$. This concludes the commutativity since
	\[\begin{tikzcd}
		Q \arrow[r] & \overline{Q} \\
		P \arrow[r] \arrow[u, "f"] & \overline{P} \arrow[u, "{\overline{f}}"']
	\end{tikzcd} \quad \text{commutes.} \]
	
	When $(Q, q)$ varies, the resulting $(P, p)$ determine objects of $\cate{D}^{\mathrm{b}}_K(X)$ by \cite[Lemma 6.4.8]{Ac21}. The assignment
	\[ \left( M_Q, \ldots \right)_{(Q, q)} \mapsto \left( \overline{f}^\bullet M_Q, \ldots\right)_{(P, p)}, \]
	where $M_Q$ are objects of $\cate{D}^{\mathrm{b}}(\overline{Q})$, is exactly the recipe of Bernstein--Lunts \cite{BL94} for defining $f^{\bullet}$ or its shift $f^!$ on equivariant derived categories; see \cite[\S 6.5]{Ac21}. This completes the proof.
\end{proof}

Finally, suppose that $N \lhd K$ and $X$ is a smooth affine $K/N$-variety. There is an inflation functor
\[ \mathrm{Infl}^K_{K/N}: \cate{D}^{\mathrm{b}}_{K/N}(X) \to \cate{D}^{\mathrm{b}}_K(X). \]
The construction is based on the fact that if $p: P \to X$ is a resolution as $K$-varieties, then so is its quotient $p': P' := P/N \to X$ as $K/N$-varieties, and $\overline{P} \rightiso \overline{P'}$. It is $t$-exact.

\begin{proposition}\label{prop:Infl-epsilon}
	The diagram
	\[\begin{tikzcd}
		{}^{\mathrm{h}}\cate{D}^{\mathrm{b}}(D_X, K/N) \arrow[d, "{\mathrm{Infl}^K_{K/N}}"'] \arrow[r, "\varepsilon"] & \cate{D}^{\mathrm{b}}_{K/N}(X) \arrow[d, "{\mathrm{Infl}^K_{K/N}}"] \\
		{}^{\mathrm{h}}\cate{D}^{\mathrm{b}}(D_X, K) \arrow[r, "\varepsilon"'] & \cate{D}^{\mathrm{b}}_K(X)
	\end{tikzcd}\]
	commutes up to isomorphism, where the $\mathrm{Infl}^K_{K/N} := \mathrm{Infl}^{D_X, K}_{D_X, K/N}$ on the left is defined in \S\ref{sec:inv-coinv}.
\end{proposition}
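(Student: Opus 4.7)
The plan is to verify commutativity resolution-by-resolution, in the spirit of Proposition \ref{prop:inverse-image-compatibility}. Fix a resolution $p:P\to X$ of $X$ as a $K$-variety; set $P':=P/N$, let $p':P'\to X$ be the induced resolution of $X$ as a $K/N$-variety, and let $\pi:P\to P'$ be the quotient by $N$, which is a smooth $N$-torsor. Then $p=\pi\circ p'$, the identification $\overline{P}=\overline{P'}$ is exactly the one built into the definition of $\mathrm{Infl}^K_{K/N}$ on the equivariant side, and $N$ acts trivially on $P'$, so the h-inflation $\mathrm{Infl}^{D_{P'},K}_{D_{P'},K/N}$ makes sense.

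Take $M\in{}^{\mathrm{h}}\cate{D}^{\mathrm{b}}(D_X,K/N)$. By construction, the component of $\mathrm{Infl}^K_{K/N}(\varepsilon M)$ at $(P,p)$ is the component of $\varepsilon M$ at $(P',p')$ under $\overline{P}=\overline{P'}$; via \eqref{eqn:epsilon-construction} this is $p'^{\bullet}M\in{}^{\mathrm{h}}\cate{D}^{\mathrm{b}}(D_{P'},K/N)$ descended to $\cate{D}^{\mathrm{b}}(\overline{P})$ along the $K/N$-torsor $P'\to\overline{P'}$. The component of $\varepsilon(\mathrm{Infl}^{D_X,K}_{D_X,K/N}M)$ at $(P,p)$ is $p^{\bullet}\circ\mathrm{Infl}^{D_X,K}_{D_X,K/N}$ applied to $M$, descended along $P\to\overline{P}$. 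Since every inflation functor is exact (it preserves underlying complexes, cf.\ the discussion preceding Example \ref{eg:h-inflation}), derived pullback commutes with inflation, and together with Lemma \ref{prop:pullback-composite} we obtain a canonical isomorphism
\[ p^{\bullet}\circ\mathrm{Infl}^{D_X,K}_{D_X,K/N}\;\simeq\;\pi^{\bullet}\circ\mathrm{Infl}^{D_{P'},K}_{D_{P'},K/N}\circ p'^{\bullet} \]
of functors ${}^{\mathrm{h}}\cate{D}^{\mathrm{b}}(D_X,K/N)\to{}^{\mathrm{h}}\cate{D}^{\mathrm{b}}(D_P,K)$. It then remains to note that, under the descent equivalences $(D_P,K)\dcate{Mod}\simeq D_{\overline{P}}\dcate{Mod}$ and $(D_{P'},K)\dcate{Mod}\simeq(D_{P'},K/N)\dcate{Mod}\simeq D_{\overline{P'}}\dcate{Mod}$, the composite $\pi^{\bullet}\circ\mathrm{Infl}^{D_{P'},K}_{D_{P'},K/N}$ corresponds to the identity modulo $\overline{P}=\overline{P'}$: a $D_{\overline{P}}$-module pulled back along the two torsor projections $P'\to\overline{P'}$ and $P\to\overline{P}$ gives a pair of modules related precisely by $\pi^{\bullet}$. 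Consequently, the two components at $(P,p)$ are canonically isomorphic in $\cate{D}^{\mathrm{b}}(\overline{P})$.

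The main obstacle will be checking that these resolution-wise isomorphisms are natural in $M$ and compatible with the transition data when the resolution $(P,p)$ varies, so that they assemble into a natural isomorphism of functors landing in $\cate{D}^{\mathrm{b}}_K(X)$. This reduces to a diagram chase using the functoriality already encoded in \eqref{eqn:epsilon-construction}, the proof of Theorem \ref{prop:Beilinson-equiv}, and standard base-change compatibilities between smooth pullback and descent along torsors. No substantially new input beyond the machinery developed in \S\ref{sec:adjoint-oblv} and \S\ref{sec:inverse-image} is required.
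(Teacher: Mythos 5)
Your proof is correct and follows essentially the same route as the paper's: both verify commutativity resolution-by-resolution via the construction \eqref{eqn:epsilon-construction} of $\varepsilon$, using that $p^\bullet = \pi^\bullet p'^\bullet$ commutes with the (exact) inflation functors and that $\pi^\bullet\circ\mathrm{Infl}^{D_{P'},K}_{D_{P'},K/N}$ becomes the identity under the descent equivalences along the torsors $P\to\overline{P}$ and $P'\to\overline{P'}$. (Only a cosmetic slip: the factorization should read $p = p'\circ\pi$, not $\pi\circ p'$.)
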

\begin{proof}
	Given a resolution $(P, p)$ as $K$-varieties, define $(P', p')$ as above. By the construction \eqref{eqn:epsilon-construction} of $\varepsilon$, it suffices to check the commutativity of
	\begin{equation*}
		\begin{tikzcd}
			{}^{\mathrm{h}} \cate{D}^{\mathrm{b}}(D_X, K/N) \arrow[r, "{(p')^\bullet}"] \arrow[d, "{\mathrm{Infl}^K_{K/N}}"'] & {}^{\mathrm{h}} \cate{D}^{\mathrm{b}}(D_{P'}, K/N) \arrow[d] & \cate{D}^{\mathrm{b}}((D_{P'}, K/N)\dcate{Mod}) \arrow[l, "\sim"'] \arrow[d] & \cate{D}^{\mathrm{b}}(\overline{P'}) \arrow[l, "\sim"'] \arrow[d] \\
			{}^{\mathrm{h}} \cate{D}^{\mathrm{b}}(D_X, K) \arrow[r, "{p^\bullet}"] & {}^{\mathrm{h}} \cate{D}^{\mathrm{b}}(D_P, K) & \cate{D}^{\mathrm{b}}((D_P, K)\dcate{Mod}) \arrow[l, "\sim"'] & \cate{D}^{\mathrm{b}}(\overline{P}) \arrow[l, "\sim"']
		\end{tikzcd}
	\end{equation*}
	up to isomorphism, where the vertical arrows except the first one are inverse images via $P \to P'$ and $\overline{P} \rightiso \overline{P'}$. This is clear.
\end{proof}


\section{Localization and higher regularity}\label{sec:Loc}
\subsection{Derived localization functor}\label{sec:Loc-functor}
Let $G$ be a connected reductive group, and let $K \subset G$ be a reductive subgroup. Let $X$ be an affine smooth $G$-variety.

From this we obtain a pair $(\mathfrak{g}, K)$ as in \S\ref{sec:gK-basic}: the homomorphism $\Ad: K \to \Aut(\mathfrak{g})$ is just the adjoint action, and $\iota: \mathfrak{k} \to \mathfrak{g}$ is the inclusion. Recall that we write $(\mathfrak{g}, K)$ instead of $(U(\mathfrak{g}), K)$ for the pairs.

On the other hand, we also obtain $(D_X, G)$ and its subpair $(D_X, K)$. The constructions in \S\ref{sec:D-basic} are applicable.

\begin{proposition}\label{prop:j-pair}
	The map $j: U(\mathfrak{g}) \to D_X$ induced from the $G$-action on $X$ induces a morphism $(\mathfrak{g}, K) \to (D_X, K)$ of pairs in the sense of \S\ref{sec:HC-dga}.
\end{proposition}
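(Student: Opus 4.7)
The plan is to unwind Definition~\ref{def:HC-dga} for morphisms of Harish-Chandra dg-algebras in this concrete setting: the data consist of (a) a group homomorphism $\varphi$ between the $K$'s, which here will be the identity $K \to K$, and (b) a $\varphi$-equivariant homomorphism $\psi$ of dg-algebras commuting with the structural maps from the Lie algebras of the groups. We must therefore verify three things: that $j$ is an algebra homomorphism (both sides are concentrated in degree zero, so the dg-compatibility is vacuous), that $j$ is $K$-equivariant where the source carries $\Ad$ and the target carries the transport-of-structure action $\sigma$ defined in \S\ref{sec:D-basic}, and finally that the triangle
\[
\begin{tikzcd}
\mathfrak{k} \arrow[r, "{\iota}"] \arrow[rd, "{j_{D_X}}"'] & \mathfrak{g} \arrow[d, "{j}"] \\
& D_X
\end{tikzcd}
\]
commutes, where $j_{D_X}: \mathfrak{k} \to D_X$ is the map from the $K$-action on $X$ in the Harish-Chandra dg-algebra $(D_X, K)$.

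First I would recall that $j: U(\mathfrak{g}) \to D_X$ is the unique algebra homomorphism extending the infinitesimal action $\mathfrak{g} \to D_X$ attached to the $G$-action on $X$; since $G$ acts algebraically, this infinitesimal action lands in vector fields, hence in $D_X$, and extends by the universal property of $U(\mathfrak{g})$. Next, $G$-equivariance of $j$ (and in particular $K$-equivariance) is a standard fact: for $g \in G$ and $\xi \in \mathfrak{g}$, one has ${}^g j(\xi) = j(\Ad(g)\xi)$ because conjugating a one-parameter subgroup by $g$ produces the $\Ad(g)$-translate, so taking derivatives at the identity yields the equivariance of vector fields. This extends multiplicatively to all of $U(\mathfrak{g})$. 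Restricting to $K \subset G$ gives the required $K$-equivariance.

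The commutativity $j \circ \iota = j_{D_X}$ is tautological: both sides send $\xi \in \mathfrak{k}$ to the vector field on $X$ generated by $\xi$ under the $K$-action, which is by definition the restriction of the $G$-action along $K \hookrightarrow G$. In other words, the infinitesimal action map $\mathfrak{g} \to D_X$ restricted along $\iota: \mathfrak{k} \to \mathfrak{g}$ is precisely the infinitesimal action map for the subgroup $K$.

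There is no real obstacle here; the statement is a compatibility check ensuring that the setup of \S\ref{sec:gK-basic} and \S\ref{sec:D-basic} meshes correctly, so the general formalism of \S\ref{sec:HC-dga}, in particular the pullback functor \eqref{eqn:HC-pullback}, becomes available for $j$. The only point deserving a line of justification is the $K$-equivariance of $j$ via $\Ad$ versus $\sigma$, which is why I would spell out the identity ${}^g j(\xi) = j(\Ad(g)\xi)$ rather than leaving it implicit.
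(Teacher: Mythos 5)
Your proposal is correct and follows the same route as the paper's own (one-line) proof: $K$-equivariance of $j$ is inherited from $G$-equivariance since $K \subset G$, and the compatibility $j \circ \iota = j_{D_X}$ is tautological because both sides produce the vector field generated by $\xi \in \mathfrak{k}$. You simply spell out the transport-of-structure identity ${}^g j(\xi) = j(\Ad(g)\xi)$ that the paper leaves implicit.
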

\begin{proof}
	The map $j$ is $K$-equivariant since $K \subset G$. The requirement that $\mathfrak{k} \to U(\mathfrak{g}) \xrightarrow{j} D_X$ composes to $j|_{\mathfrak{k}}: \mathfrak{k} \to D_X$ is trivial.
\end{proof}

Therefore we have the change-of-algebra dg-functor in Proposition \ref{prop:adjoint-h-oblv} (i):
\[ D_X \dotimes{U(\mathfrak{g})} (\cdot): {}^{\mathrm{h}} \cate{C}(\mathfrak{g}, K) \to {}^{\mathrm{h}} \cate{C}(D_X, K). \]
Being left adjoint to oblivion, it preserves K-projectives.

\begin{definition}
	Denote by $\mathbf{Loc}_X = \mathbf{Loc}_{X, K}: {}^{\mathrm{h}} \cate{D}(\mathfrak{g}, K) \to {}^{\mathrm{h}} \cate{D}(D_X, K)$ the h-derived functor of $D_X \dotimes{U(\mathfrak{g})} (\cdot)$, called the \emph{h-derived localization functor}.
\end{definition}

Since $K$ is reductive, the definition makes sense: ${}^{\mathrm{h}} \cate{K}(\mathfrak{g}, K)$ has enough K-projectives, namely the ones from standard resolutions. The functor $\mathbf{Loc}_X$ is right $t$-exact.

\begin{remark}
	For general smooth $G$-variety, one can also define $\mathbf{Loc}_X$ with the sheafified pair $(\mathscr{D}_X, K)$. The Beilinson--Bernstein localization corresponds to the case when $X = \mathcal{B}$ is the flag variety, allowing twisted differential operators, and considering only $\Hm^0 \mathbf{Loc}_{\mathcal{B}}$.
\end{remark}

\begin{proposition}
	The triangulated functor $\mathbf{Loc}_X$ has amplitude in $[-\dim G, 0]$. In particular, it restricts to
	\[ \mathbf{Loc}_X: {}^{\mathrm{h}} \cate{D}^{\mathrm{b}}(\mathfrak{g}, K) \to {}^{\mathrm{h}} \cate{D}^{\mathrm{b}}(D_X, K). \]
\end{proposition}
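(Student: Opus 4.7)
The strategy is to compute $\mathbf{Loc}_X$ on bounded h-complexes via Pandžić's canonical K-projective resolution, and to read off the amplitude directly from the shape of the standard complex $N\mathfrak{g}$. Concretely, for every h-complex $M$ over $(\mathfrak{g}, K)$, Theorem~\ref{prop:N-resolution-gen} supplies a functorial K-projective resolution $M \otimes N\mathfrak{g} \to M$; by Proposition~\ref{prop:derived-functor-resolution}, we may therefore represent $\mathbf{Loc}_X M$ by the complex $D_X \dotimes{U(\mathfrak{g})} (M \otimes N\mathfrak{g})$ in ${}^{\mathrm{h}} \cate{D}(D_X, K)$.

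For the degree count: $N\mathfrak{g}$ is concentrated in degrees $[-\dim \mathfrak{g}, 0] = [-\dim G, 0]$. Hence, if $M$ lives in degrees $[a, b]$, then the total complex $M \otimes N\mathfrak{g}$ lives in degrees $[a - \dim G, b]$. According to Definition~\ref{def:tensor-dg}, the dg-tensor product $D_X \dotimes{U(\mathfrak{g})} (M \otimes N\mathfrak{g})$ is a quotient of the naive tensor of complexes $D_X \otimes (M \otimes N\mathfrak{g})$ by a subcomplex, and since $D_X$ is concentrated in degree zero, this quotient is again a complex concentrated in degrees $[a - \dim G, b]$. Taking cohomology yields $\Hm^n \mathbf{Loc}_X M = 0$ for $n \notin [a - \dim G, b]$, which is precisely the amplitude $[-\dim G, 0]$ statement. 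Specializing to bounded $M$ then gives the restriction to ${}^{\mathrm{h}} \cate{D}^{\mathrm{b}}$.

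I expect essentially no obstacle. The only delicate point is that we must use a resolution that is K-projective in the \emph{h-sense} (not merely in the naive sense of complexes of $(\mathfrak{g}, K)$-modules), because $\mathbf{Loc}_X$ is defined as the left h-derived functor; this is exactly what Pandžić's theorem guarantees via the explicit h-operators $i_\xi$ on $N\mathfrak{g}$ from Theorem~\ref{prop:N-resolution}. Once the correct resolution is in hand, the degree-count argument above is immediate.
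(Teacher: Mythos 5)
Your proof is correct and follows exactly the route the paper takes: the paper's entire proof is ``Use standard resolutions (Theorem \ref{prop:N-resolution-gen})'', and your degree count on $D_X \dotimes{U(\mathfrak{g})} (M \otimes N\mathfrak{g})$ is precisely the intended justification. The remark about needing K-projectivity in the h-sense (supplied by Pand\v{z}i\'c's theorem) is also the right delicate point to flag.
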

\begin{proof}
	Use standard resolutions (Theorem \ref{prop:N-resolution-gen}).
\end{proof}

\begin{proposition}\label{prop:Loc-oblv}
	Let $T$ be a reductive subgroup of $K$. The diagram below commutes up to canonical isomorphism:
	\[\begin{tikzcd}[column sep=large]
		{}^{\mathrm{h}} \cate{D}(\mathfrak{g}, K) \arrow[r, "{\mathbf{Loc}_{X, K}}"] \arrow[d] & {}^{\mathrm{h}} \cate{D}(D_X, K) \arrow[d] \\
		{}^{\mathrm{h}} \cate{D}(\mathfrak{g}, T) \arrow[r, "{\mathbf{Loc}_{X, T}}"'] & {}^{\mathrm{h}} \cate{D}(D_X, T).
	\end{tikzcd}\]
	In particular, the same holds for
	\[\begin{tikzcd}[column sep=large]
		{}^{\mathrm{h}} \cate{D}(\mathfrak{g}, K) \arrow[r, "{\mathbf{Loc}_{X, K}}"] \arrow[d] & {}^{\mathrm{h}} \cate{D}(D_X, K) \arrow[d] \\
		\cate{D}(\mathfrak{g}\dcate{Mod}) \arrow[r, "{D_X \otimesL[U(\mathfrak{g})] (\cdot)}"'] & \cate{D}(X) .
	\end{tikzcd}\]
\end{proposition}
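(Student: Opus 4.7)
The plan is to reduce the assertion to Corollary~\ref{prop:derived-functor-composite} on the composition of derived functors, applied in two different ways. First I would observe that at the level of h-complexes (before deriving), the diagram commutes \emph{strictly}: namely, writing $\otimes := D_X \dotimes{U(\mathfrak{g})} (\cdot)$ and $\mathrm{oblv} := \mathrm{oblv}^K_T$, one has a canonical equality of dg-functors
\[ \mathrm{oblv} \circ \otimes \;=\; \otimes \circ \mathrm{oblv} : {}^{\mathrm{h}} \cate{C}(\mathfrak{g}, K) \longrightarrow {}^{\mathrm{h}} \cate{C}(D_X, T), \]
since both assignments produce the same underlying complex $D_X \dotimes{U(\mathfrak{g})} M$ with the same $D_X$-module structure, the same $T$-action (restricted from $K$), and the same data $i^{\otimes}_\xi$ ($\xi \in \mathfrak{t}$) as described by Proposition~\ref{prop:adjoint-h-oblv}(i).

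Next I would invoke the exactness of oblivion throughout (an immediate consequence of the definition of a pullback along a morphism of Harish-Chandra dg-algebras, cf.\ the discussion after Definition~\ref{def:derived-functor}), so $\Lder\,\mathrm{oblv} = \mathrm{oblv}$ in both rows. Now apply Corollary~\ref{prop:derived-functor-composite} in two ways, using that ${}^{\mathrm{h}} \cate{K}^-(\mathfrak{g}, K)$, ${}^{\mathrm{h}} \cate{K}^-(\mathfrak{g}, T)$, and ${}^{\mathrm{h}} \cate{K}^-(D_X, K)$ all have enough K-projectives (Theorems~\ref{prop:enough-K-general}, \ref{prop:N-resolution-gen}):
\begin{itemize}
\item With $F = \otimes$ and $G = \mathrm{oblv}$, since $G$ is exact, one has
\[ \mathrm{oblv} \circ \mathbf{Loc}_{X, K} \;=\; (\Lder G)(\Lder F) \;\rightiso\; \Lder(GF). \]
\item With $F' = \mathrm{oblv}$ and $G' = \otimes$, since $F'$ preserves K-projectives by Corollary~\ref{prop:change-group-K-proj} (here the reductivity of $T$ is used), one has
\[ \mathbf{Loc}_{X, T} \circ \mathrm{oblv} \;=\; (\Lder G')(\Lder F') \;\rightiso\; \Lder(G'F'). \]
\end{itemize}
The strict equality $GF = G'F'$ from the first paragraph then yields a canonical isomorphism $\mathrm{oblv} \circ \mathbf{Loc}_{X, K} \simeq \mathbf{Loc}_{X, T} \circ \mathrm{oblv}$, proving the first diagram.

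For the second diagram, take $T = \{1\}$ and combine with the fact that the forgetful functor ${}^{\mathrm{h}} \cate{D}(D_X, \{1\}) \to \cate{D}(X)$ (the h-vs-classical comparison \eqref{eqn:h-comparison} for $K = \{1\}$) intertwines $\mathbf{Loc}_{X, \{1\}}$ with $D_X \otimesL[U(\mathfrak{g})] (\cdot)$; this is immediate since for $K = \{1\}$ the h-construction reduces to the usual dg-module picture, and K-projective resolutions on one side give K-flat resolutions on the other. No serious obstacle is expected; the only point to take care of is that the strict commutativity $GF = G'F'$ on the nose (not merely up to isomorphism) is needed in order to identify the two derived composites via Corollary~\ref{prop:derived-functor-composite}, and this is ensured by the explicit formulas in Proposition~\ref{prop:adjoint-h-oblv}(i).
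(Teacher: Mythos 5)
Your proposal is correct and follows essentially the same route as the paper: the paper's proof likewise starts from the strict commutativity of the underived square of dg-functors, notes that oblivion is exact and preserves K-projectives (Corollary \ref{prop:change-group-K-proj}), and concludes by computing the left h-derived functor of the composite in two ways via Corollary \ref{prop:derived-functor-composite}. Your version merely spells out the two applications of that corollary more explicitly.
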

\begin{proof}
	The starting point is the commutativity of
	\[\begin{tikzcd}
		{}^{\mathrm{h}} \cate{C}(\mathfrak{g}, K) \arrow[r] \arrow[d] & {}^{\mathrm{h}} \cate{C}(D_X, K) \arrow[d] \\
		{}^{\mathrm{h}} \cate{C}(\mathfrak{g}, T) \arrow[r] & {}^{\mathrm{h}} \cate{C}(D_X, T).
	\end{tikzcd}\]
	The vertical arrows are exact and preserve K-projectives by Corollary \ref{prop:change-group-K-proj}. Taking left h-derived functors via Corollary \ref{prop:derived-functor-composite} gives the desired result.
\end{proof}

\begin{proposition}
	Let $V$ be a $(\mathfrak{g}, K)$-module. There is a canonical isomorphism of $K$-equivariant $D_X$-modules
	\[ \Hm^0 \mathbf{Loc}_X(V) \simeq D_X \dotimes{U(\mathfrak{g})} V; \]
	on the right-hand side, $D_X$ acts by left multiplication and $K$ acts diagonally.
\end{proposition}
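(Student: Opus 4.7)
The plan is to deduce the identification from the general principle that the zeroth cohomology of a left derived functor of a right exact functor recovers the original functor on the heart of the $t$-structure. First, I invoke Theorem \ref{prop:N-resolution-gen}, whose hypothesis on reductivity of $K$ is satisfied here, to obtain the standard K-projective resolution $P := V \otimes N\mathfrak{g} \to V$ in ${}^{\mathrm{h}} \cate{C}(\mathfrak{g}, K)$. The complex $P$ is concentrated in non-positive degrees, and the cokernel of its last differential $P^{-1} \to P^0$ identifies canonically with $V$ as a $(\mathfrak{g}, K)$-module; here the diagonal $(\mathfrak{g}, K)$-action on $V \otimes U(\mathfrak{g})$ (in the h-tensor sense of \S\ref{sec:gK-basic}) descends to the original action on $V$ via the augmentation $\identity_V \otimes \epsilon$. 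By Proposition \ref{prop:derived-functor-resolution}, $\mathbf{Loc}_X(V)$ is then represented in ${}^{\mathrm{h}} \cate{D}(D_X, K)$ by the h-complex $D_X \dotimes{U(\mathfrak{g})} P$.

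Next, I observe that the underlying functor $F := D_X \dotimes{U(\mathfrak{g})} (\cdot): (\mathfrak{g}, K)\dcate{Mod} \to (D_X, K)\dcate{Mod}$ (applied to objects in degree zero) is right exact, because it is a left adjoint to the oblivion functor by the degree-zero case of Proposition \ref{prop:oblv-adjunction}. In particular, $F$ preserves cokernels, and since all scalar multiplications and group actions are preserved, it preserves cokernels in the category of $(D_X, K)$-modules (with diagonal $K$-action on the source).

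Finally, since $D_X \dotimes{U(\mathfrak{g})} P$ is concentrated in non-positive degrees, its zeroth cohomology equals the cokernel of the last differential. Commuting $F$ past the cokernel yields the chain of canonical isomorphisms
\[
\Hm^0 \mathbf{Loc}_X(V) \;\simeq\; \mathrm{coker}\bigl( F(P^{-1}) \to F(P^0) \bigr) \;\simeq\; F\bigl(\mathrm{coker}(P^{-1} \to P^0)\bigr) \;\simeq\; F(V) = D_X \dotimes{U(\mathfrak{g})} V,
\]
as claimed. The $K$-equivariant structure on the right is the diagonal one because every step — the resolution, the differentials, the cokernel identification with $V$, and the application of $F$ — is functorial in the $(D_X, K)$-module category, with $K$ always acting diagonally on tensor products.

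I do not anticipate any substantive obstacle: given Theorem \ref{prop:N-resolution-gen} and the right-exactness of $F$, the argument is entirely formal. The only point that might deserve emphasis is the compatibility of the diagonal $K$-action under the identification of cokernels, but this is automatic because the augmentation $V \otimes N\mathfrak{g} \to V$ is already a morphism of h-complexes of $(\mathfrak{g}, K)$-modules (and remains so after applying $F$).
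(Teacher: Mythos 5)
Your proof is correct and follows essentially the same route as the paper: take the standard K-projective resolution $V \otimes N\mathfrak{g} \to V$ (Theorem \ref{prop:N-resolution-gen}), apply $D_X \dotimes{U(\mathfrak{g})} (\cdot)$, and read off $\Hm^0$. The paper states this in three sentences; you have merely filled in the (accurate) justification via right-exactness of the left adjoint and commutation with cokernels.
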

\begin{proof}
	As $K$ is reductive, one can take the standard resolution $P := V \otimes N\mathfrak{g} \to V$. Now take the h-complex $D_X \dotimes{U(\mathfrak{g})} P$. Its $\Hm^0$ yields $D_X \dotimes{U(\mathfrak{g})} V$.
\end{proof}

In view of this result, $\mathbf{Loc}_X$ or its cohomologies can be seen as ``higher localization''.

The localization functor has the following symmetries. Let $D_X^G$ be the algebra of $G$-invariant differential operators on $X$. For every $z \in D_X^G$ and every h-complex $P$ over $(D_X, K)$, we obtain an endomorphism $\mathcal{R}_z$ of the h-complex $D_X \dotimes{U(\mathfrak{g})} P$ given by
\begin{align*}
	\mathcal{R}_z^n: D_X \dotimes{U(\mathfrak{g})} P^n & \to D_X \dotimes{U(\mathfrak{g})} P^n \\
	D \otimes p & \mapsto Dz \otimes p.
\end{align*}
This is functorial in $P$ and passes to the h-derived category, giving rise to a homomorphism of algebras
\begin{equation}\label{eqn:Z-action}
	\mathcal{R}: (D_X^G)^{\mathrm{op}} \to \End_{\mathrm{functors}}(\mathbf{Loc}_X).
\end{equation}
In fact, the construction works over any $G$-variety $X$ in the sheafified context. It is also compatible with oblivion, relative to the diagram in Proposition \ref{prop:Loc-oblv}.

\subsection{Localization and co-invariants}\label{sec:Loc-coinv}
Consider a homogeneous $G$-space $G = H \backslash G$, and let $x$ be the point $H \cdot 1$ of $X$.

The arguments in \S\ref{sec:Ext-application} will rely crucially on the following fact relating localizations and co-invariants. We do not assume $G$ is reductive or $X$ is affine here.

\begin{proposition}\label{prop:Loc-coinv}
	For every $\mathfrak{g}$-module $V$, we have the canonical isomorphism
	\begin{align*}
		V/\mathfrak{h}V & \rightiso \CC \dotimes{\mathscr{O}_x} \left( \mathscr{D}_X \dotimes{U(\mathfrak{g})} V \right)_x \\
		v + \mathfrak{h}V & \mapsto 1 \otimes (1 \otimes v),
	\end{align*}
	where $\mathscr{O}_x$ is the local ring at $x$ and the $\otimes$ is relative to the evaluation map at $x$.
\end{proposition}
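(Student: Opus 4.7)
The plan is to decompose the problem into a local computation of the fiber of $\mathscr{D}_X$ at $x$ as a right $U(\mathfrak{g})$-module, then tensor with $V$. Since stalk and fiber commute with the right tensor product over $U(\mathfrak{g})$, we have
\[ \CC \dotimes{\mathscr{O}_x} \left( \mathscr{D}_X \dotimes{U(\mathfrak{g})} V \right)_x \simeq \left( \CC \dotimes{\mathscr{O}_x} \mathscr{D}_{X, x} \right) \dotimes{U(\mathfrak{g})} V, \]
so it will suffice to exhibit a canonical right $U(\mathfrak{g})$-linear isomorphism
\[ \bar{\phi}: U(\mathfrak{g})/\mathfrak{h}U(\mathfrak{g}) \rightiso \CC \dotimes{\mathscr{O}_x} \mathscr{D}_{X, x}, \qquad 1 \mapsto 1 \otimes 1, \]
and then tensor with $V$ over $U(\mathfrak{g})$, using the identification $U(\mathfrak{g})/\mathfrak{h}U(\mathfrak{g}) \dotimes{U(\mathfrak{g})} V \simeq V/\mathfrak{h}V$. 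Tracing $v \in V$ through this composition recovers the prescribed rule $v + \mathfrak{h}V \mapsto 1 \otimes (1 \otimes v)$.

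For well-definedness of $\bar{\phi}$, the map $u \mapsto 1 \otimes j(u)$ is manifestly right $U(\mathfrak{g})$-linear since both right actions come from multiplication in $\mathscr{D}_X$ via $j$. It kills $\mathfrak{h}U(\mathfrak{g})$ because $H = \Stab_G(x)$ forces each vector field $j(\xi)$ with $\xi \in \mathfrak{h}$ to vanish at $x$; hence $j(\xi) \in \mathfrak{m}_x \cdot \mathscr{D}_{X, x}$ (left $\mathscr{O}_x$-action), so $1 \otimes j(\xi u) = 1 \otimes j(\xi)j(u) = 0$ in the fiber for every $u \in U(\mathfrak{g})$.

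To prove $\bar{\phi}$ is an isomorphism, I would run a filtered-graded argument. Equip $U(\mathfrak{g})/\mathfrak{h}U(\mathfrak{g})$ with the PBW filtration and $\CC \dotimes{\mathscr{O}_x} \mathscr{D}_{X, x}$ with the quotient of the order filtration on $\mathscr{D}_X$. At the smooth point $x$, each $F_k\mathscr{D}_X$ is locally free of finite rank over $\mathscr{O}_X$ with $F_k/F_{k-1} \simeq \Sym^k_{\mathscr{O}_X}(\mathscr{T}_X)$; the local filtration on $F_\bullet \mathscr{D}_{X, x}$ therefore splits, whence $\mathrm{gr}(\CC \dotimes{\mathscr{O}_x} \mathscr{D}_{X, x}) \simeq \Sym(T_x X)$. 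Using the orbit-map identification $T_x X \simeq \mathfrak{g}/\mathfrak{h}$ together with the standard PBW identity $\mathrm{gr}(\mathfrak{h}U(\mathfrak{g})) = \mathfrak{h}\Sym(\mathfrak{g})$, the associated graded of $\bar{\phi}$ becomes the tautological isomorphism $\Sym(\mathfrak{g})/\mathfrak{h}\Sym(\mathfrak{g}) \rightiso \Sym(\mathfrak{g}/\mathfrak{h})$. Both filtrations being exhaustive and bounded below, $\bar{\phi}$ is itself an isomorphism.

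The main obstacle will be this filtered-graded step: one has to verify carefully that the order filtration descends cleanly under the non-flat fiber functor $\CC \dotimes{\mathscr{O}_x} (-)$ (which relies on local freeness of $F_k\mathscr{D}_X$ at smooth points, so that the short exact sequences $0 \to F_{k-1} \to F_k \to \Sym^k \mathscr{T}_X \to 0$ split locally), and that $\mathrm{gr}(\mathfrak{h}U(\mathfrak{g}))$ genuinely equals $\mathfrak{h}\Sym(\mathfrak{g})$. Both are standard PBW-type facts but deserve an explicit verification; everything else in the argument is formal.
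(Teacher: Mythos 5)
Your proposal is correct. Note that the paper itself gives no argument here: it simply cites \cite[Lemma 2.2]{BZG19}, so what you have written is a self-contained substitute for that citation. The two routes differ in how they identify the fiber $\CC \dotimes{\mathscr{O}_x} \mathscr{D}_{X,x}$ with $U(\mathfrak{g})/\mathfrak{h}U(\mathfrak{g})$: the argument behind the cited lemma (which also surfaces elsewhere in this paper, e.g.\ in the proofs of the propositions following Lemma \ref{prop:Loc-coInv-prep} and of Proposition \ref{prop:coinv-Loc-monodromic}) exploits that for a homogeneous space the map $\mathscr{O}_X \otimes U(\mathfrak{g}) \to \mathscr{D}_X$, $f \otimes u \mapsto f\, j(u)$, is surjective, and then identifies the kernel after evaluation at $x$; you instead run a filtered--graded comparison between the order filtration on $\mathscr{D}_X$ and the PBW filtration. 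Both reductions to the bimodule statement are standard, and your version has the merit of making the two nontrivial inputs explicit: the local splitting of $0 \to F_{k-1}\mathscr{D}_X \to F_k\mathscr{D}_X \to \Sym^k \mathscr{T}_X \to 0$ (so that the non-exact fiber functor behaves), and the identity $\gr(\mathfrak{h}U(\mathfrak{g})) = \mathfrak{h}\Sym(\mathfrak{g})$, which follows from the decomposition $U(\mathfrak{g}) \simeq U(\mathfrak{h}) \otimes \Sym(\mathfrak{q})$ for a linear complement $\mathfrak{q}$ of $\mathfrak{h}$ (the same fact invoked in the proof of Lemma \ref{prop:restriction-lemma}). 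All the intermediate steps --- the commutation of fiber and $\dotimes{U(\mathfrak{g})} V$ by bimodule associativity, the vanishing of $j(\xi)$ at $x$ for $\xi \in \mathfrak{h}$ giving $j(\xi) \in \mathfrak{m}_x \mathscr{D}_{X,x}$, and the identification $T_x X \simeq \mathfrak{g}/\mathfrak{h}$ via the orbit map --- check out.
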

\begin{proof}
	See \cite[Lemma 2.2]{BZG19}.
\end{proof}

Let $K^H$ be a subgroup of $H$, acting trivially on $\CC$. Several observations are in order.
\begin{itemize}
	\item If $V$ is a weak $(\mathfrak{g}, K^H)$-module, the isomorphism in Proposition \ref{prop:Loc-coinv} is $K^H$-equivariant. Thus we obtain an isomorphism between two dg-functors from ${}^{\mathrm{w}} \cate{C}(\mathfrak{g}, K^H)$ to ${}^{\mathrm{w}} \cate{C}(\CC, K^H)$.
	\item Furthermore, the functors $V \mapsto V/\mathfrak{h}V$ and $V \mapsto \CC \dotimes{\mathscr{O}_x} \left( \mathscr{D}_X \dotimes{U(\mathfrak{g})} V \right)_x$ both lift to the level of h-complexes
	\[ {}^{\mathrm{h}} \cate{C}(\mathfrak{g}, K^H) \to {}^{\mathrm{h}} \cate{C}(\CC, K^H). \]
	In fact, the dg-functor induced by $V \mapsto V/\mathfrak{h}V$ equals the $\mathrm{coInv}^{U(\mathfrak{h}), K^H}_{\CC, K^H}$ from Proposition \ref{prop:Inv-coInv} (in the h-setting), where $\CC$ is viewed as the quotient of $U(\mathfrak{h})$ by augmentation ideal.
	
	\item Since the data $i_\xi$ in an h-complex over $(\mathfrak{g}, K^H)$ are $\mathfrak{g}$-linear, and the isomorphism in Proposition \ref{prop:Loc-coinv} is natural in the $\mathfrak{g}$-module $V$, it also yields an isomorphism between these functors on the level of h-complexes.
\end{itemize}

Observe that $\CC = D_{\mathrm{pt}}$, and $\CC \dotimes{\mathscr{O}_x} (\cdot)_x$ equals the non-derived inverse image of $\mathscr{D}_X$-modules via the inclusion $i_x: \mathrm{pt} \to X$ of $X$. The next result is immediate.

\begin{lemma}\label{prop:Loc-coInv-prep}
	Assume that $X$ is affine. Let $K^H$ be a subgroup of $H$. The following diagram commutes up to a canonical isomorphism.
	\[\begin{tikzcd}[column sep=large]
		{}^{\mathrm{h}} \cate{C}(\mathfrak{g}, K^H) \arrow[d] \arrow[r, "{D_X \dotimes{U(\mathfrak{g})} (\cdot)}"] & {}^{\mathrm{h}} \cate{C}(D_X, K^H) \arrow[d, "{\CC \dotimes{\mathscr{O}_x} (\cdot)_x}"] \\
		{}^{\mathrm{h}} \cate{C}(\mathfrak{h}, K^H) \arrow[r, "{\mathrm{coInv}^{U(\mathfrak{h}), K^H}_{\CC, K^H}}"' inner sep=0.6em] & {}^{\mathrm{h}} \cate{C}(D_{\mathrm{pt}}, K^H)
	\end{tikzcd}\]
\end{lemma}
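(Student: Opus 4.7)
The plan is to upgrade the pointwise natural isomorphism of Proposition \ref{prop:Loc-coinv} through the three layers of structure that define an h-complex --- the underlying complex, the weak $(\mathfrak{g}, K^H)$-module structure, and the $i_\xi$ data. The three observations recorded immediately before the lemma already identify what has to be checked; what remains is formal verification, so I do not expect any real obstacle.

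First I would note that, since $X$ is affine, $\CC \dotimes{\mathscr{O}_x}(\cdot)_x$ coincides with the non-derived inverse image functor along $i_x\colon \mathrm{pt} \to X$ on the category of $D_X$-modules, and the base point $x = H \cdot 1$ is fixed by $K^H \subset H$ (which acts on the right of $X = H\backslash G$). Hence $i_x$ is a morphism of $K^H$-varieties and the inverse image is a dg-functor ${}^{\mathrm{w}} \cate{C}(D_X, K^H) \to {}^{\mathrm{w}} \cate{C}(\CC, K^H)$. Given an h-complex $V$ over $(\mathfrak{g}, K^H)$, applying Proposition \ref{prop:Loc-coinv} in each degree yields isomorphisms $V^n / \mathfrak{h} V^n \rightiso \CC \dotimes{\mathscr{O}_x}(D_X \dotimes{U(\mathfrak{g})} V^n)_x$. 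The formula $v + \mathfrak{h}V \mapsto 1 \otimes 1 \otimes v$ is natural in $V$ regarded as a $\mathfrak{g}$-module, so it intertwines the differentials, and it is $K^H$-equivariant because the $K^H$-actions on both sides come from the action on $V$ alone (recall $K^H$ acts trivially on $\CC$). This establishes commutativity up to isomorphism at the level of ${}^{\mathrm{w}}\cate{C}$, and simultaneously identifies the left vertical functor with $\mathrm{coInv}^{U(\mathfrak{h}), K^H}_{\CC, K^H}$ after restriction to $(\mathfrak{h}, K^H)$, which is the second bullet preceding the lemma.

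The remaining step is to verify compatibility with the $i_\xi$'s. On the left, axiom (ii) of Definition \ref{def:h-cplx} makes $i_\xi^V$ a $U(\mathfrak{h})$-linear endomorphism of $V$, so it descends to $V/\mathfrak{h}V$ by $v + \mathfrak{h}V \mapsto i_\xi^V(v) + \mathfrak{h}V$; this is precisely the $i_\xi$ inherited by $\mathrm{coInv}^{U(\mathfrak{h}), K^H}_{\CC, K^H}$. On the right, Proposition \ref{prop:adjoint-h-oblv} (i) prescribes the $i_\xi$ on $D_X \dotimes{U(\mathfrak{g})} V$ by $D \otimes v \mapsto D \otimes i_\xi^V(v)$, and this formula is preserved by the change-of-algebra functor $\CC \dotimes{\mathscr{O}_x}(\cdot)_x$ (again by the same proposition, applied to the composite $U(\mathfrak{g}) \to D_X \to \mathscr{O}_x \to \CC$). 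Under the isomorphism $v + \mathfrak{h}V \leftrightarrow 1 \otimes 1 \otimes v$, both $i_\xi$'s therefore send $v + \mathfrak{h}V$ to $1 \otimes 1 \otimes i_\xi^V(v)$, agreeing on the nose. Since everything takes place in degree zero over $D_{\mathrm{pt}} = \CC$, no sign or shift can intervene, and the lemma follows.
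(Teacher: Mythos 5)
Your proposal is correct and follows essentially the same route as the paper: the paper's proof consists precisely of the three observations preceding the lemma ($K^H$-equivariance of the isomorphism of Proposition \ref{prop:Loc-coinv}, identification of the two lifts to h-complexes, and compatibility with the $i_\xi$ via their $\mathfrak{g}$-linearity and the naturality of the isomorphism), after which the statement is declared immediate. Your write-up simply spells out these verifications in more detail.
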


When $K^H$ is reductive, one can take the left h-derived functor $\Lder \left(\mathrm{coInv}^{U(\mathfrak{h}), K^H}_{\CC, K^H}\right)$ on the bounded-above h-derived categories.

\begin{proposition}\label{prop:Loc-coInv}
	Assume that $X$ is affine. Let $K$ (resp.\ $K^H$) be a reductive subgroup of $G$ (resp.\ $H$), and denote $\mathrm{oblv}^K_{K^H}$ the oblivion ${}^{\mathrm{h}} \cate{D}(D_X, K) \to {}^{\mathrm{h}} \cate{D}(D_X, K^H)$. There are canonical isomorphisms
	\begin{align*}
		i_x^\bullet \mathbf{Loc}_{X, K^H} & \simeq \Lder \left(\mathrm{coInv}^{U(\mathfrak{h}), K^H}_{\CC, K^H}\right), \\
		i_x^\bullet \left( \mathrm{oblv}^K_{K^H} \mathbf{Loc}_{X, K}(M)\right) & \simeq \Lder \left(\mathrm{coInv}^{U(\mathfrak{h}), K^H}_{\CC, K^H}\right)(M|_H),
	\end{align*}
	where $M$ stands for an object of ${}^{\mathrm{h}} \cate{D}^-(\mathfrak{g}, K)$ and $M|_H$ denotes its image in ${}^{\mathrm{h}} \cate{D}^-(\mathfrak{h}, K^H)$.
\end{proposition}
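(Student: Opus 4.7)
The plan is to derive both isomorphisms directly from Lemma \ref{prop:Loc-coInv-prep} by picking a single K-projective resolution that simultaneously computes every left derived functor in sight.

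Given $M \in {}^{\mathrm{h}} \cate{D}^-(\mathfrak{g}, K^H)$, pick a K-projective resolution $P \to M$ in ${}^{\mathrm{h}} \cate{C}^-(\mathfrak{g}, K^H)$; its existence is guaranteed by Theorem \ref{prop:enough-K-general} together with the explicit construction of Theorem \ref{prop:N-resolution-gen}, since $K^H$ is reductive. By definition $\mathbf{Loc}_{X, K^H}(M) \simeq D_X \dotimes{U(\mathfrak{g})} P$. The crucial point is that the dg-functor $D_X \dotimes{U(\mathfrak{g})} (\cdot)$ preserves K-projectives: this is Corollary \ref{prop:oblv-adjoint-K}, applied to the change-of-algebra morphism $(\mathfrak{g}, K^H) \to (D_X, K^H)$ from Proposition \ref{prop:j-pair}, whose right adjoint (the oblivion of scalars) is exact. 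Hence $D_X \dotimes{U(\mathfrak{g})} P$ is K-projective, in particular K-flat, as an h-complex over $(D_X, K^H)$, so that the left h-derived inverse image $i_x^\bullet$ is computed on it without further resolution.

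In parallel, Lemma \ref{prop:restriction-lemma} implies that the restriction $P|_H$ along $(\mathfrak{h}, K^H) \hookrightarrow (\mathfrak{g}, K^H)$ is still K-projective and still quasi-isomorphic to $M|_H$. Therefore $\Lder\!\left(\mathrm{coInv}^{U(\mathfrak{h}), K^H}_{\CC, K^H}\right)\!(M|_H) \simeq \mathrm{coInv}^{U(\mathfrak{h}), K^H}_{\CC, K^H}(P|_H)$. Applying Lemma \ref{prop:Loc-coInv-prep} to $P$ now produces a canonical isomorphism
\[
i_x^\bullet \!\left( D_X \dotimes{U(\mathfrak{g})} P \right) \simeq \mathrm{coInv}^{U(\mathfrak{h}), K^H}_{\CC, K^H}(P|_H),
\]
which, rewritten using the previous paragraph, becomes the first asserted canonical isomorphism. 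Independence of the chosen resolution and functoriality in $M$ both follow from the universal property of left derived functors (Corollary \ref{prop:derived-functor-composite}) applied to the square of non-derived functors in Lemma \ref{prop:Loc-coInv-prep}, since every functor in that square has a left derived functor and two horizontal edges preserve the relevant K-projectives.

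For the second isomorphism, let $M \in {}^{\mathrm{h}} \cate{D}^-(\mathfrak{g}, K)$ and apply Proposition \ref{prop:Loc-oblv} with $T = K^H$, which yields $\mathrm{oblv}^K_{K^H}\mathbf{Loc}_{X, K}(M) \simeq \mathbf{Loc}_{X, K^H}(\mathrm{oblv}^K_{K^H} M)$ in ${}^{\mathrm{h}} \cate{D}^-(D_X, K^H)$. Applying $i_x^\bullet$ and feeding $\mathrm{oblv}^K_{K^H} M$ into the first isomorphism — whose further restriction to $(\mathfrak{h}, K^H)$ is precisely $M|_H$ in the sense of the statement — concludes the argument. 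The only non-formal inputs are the two preservation-of-K-projectives assertions (Corollary \ref{prop:oblv-adjoint-K} and Lemma \ref{prop:restriction-lemma}); everything else is a formal consequence of Corollary \ref{prop:derived-functor-composite}, so I do not anticipate a serious technical obstacle.
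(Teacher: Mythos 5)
Your proposal is correct and takes essentially the same route as the paper: the paper likewise reduces the second isomorphism to the first via Proposition \ref{prop:Loc-oblv}, and obtains the first by noting that both functors leaving ${}^{\mathrm{h}}\cate{C}(\mathfrak{g}, K^H)$ in the square of Lemma \ref{prop:Loc-coInv-prep} preserve K-projectives (localization as a left adjoint to exact oblivion, restriction by Lemma \ref{prop:restriction-lemma}) and then deriving the two composites via Corollary \ref{prop:derived-functor-composite}. Your version merely makes the K-projective resolution and the K-flatness used for $i_x^\bullet$ explicit.
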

\begin{proof}
	It suffices to prove the first isomorphism, since the second follows by commuting $\mathbf{Loc}_X$ and oblivion using Proposition \ref{prop:Loc-oblv}.
	
	Note that the functors emitting from ${}^{\mathrm{h}} \cate{C}(\mathfrak{g}, K^H)$ in the diagram of Lemma \ref{prop:Loc-coInv-prep} preserve K-projectives, by Lemma \ref{prop:restriction-lemma}. It remains to take left h-derived functors in two ways, which give isomorphic results.
\end{proof}

Finally, $D_X^G$ acts on the right of $i_x^\bullet \mathbf{Loc}_{X, K^H}$ through its action \eqref{eqn:Z-action} on $\mathbf{Loc}_{X, K^H}$. Let us relate it to co-invariants.

\begin{proposition}
	Let $z \in D_X^G$. Via Proposition \ref{prop:Loc-coInv}, the $z$-action on $i_x^\bullet \mathbf{Loc}_{X, K^H}$ arises from the following endomorphism of the functor $\mathrm{coInv}^{U(\mathfrak{k}), K^H}_{\CC, K^H}$. Near $x$, we can express $z$ as the image of some $f \otimes u$ under the homomorphism
	\[ \mathscr{O}_X \otimes U(\mathfrak{g}) \to \mathscr{D}_X, \quad f \otimes u \mapsto f j(u) \]
	where $j: U(\mathfrak{g}) \to D_X$ is the natural map. For every h-complex $P$ over $(\mathfrak{h}, K^H)$, we let $z$ act in each degree $n$ by
	\begin{align*}
		P^n / \mathfrak{h} P^n & \to P^n / \mathfrak{h}P^n \\
		p + \mathfrak{h}P^n & \mapsto f(x) \cdot up + \mathfrak{h} P^n.
	\end{align*}
\end{proposition}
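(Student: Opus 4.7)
The plan is to unwind Proposition \ref{prop:Loc-coinv} explicitly and track the right $D_X^G$-action through the isomorphism, reducing everything to module-theoretic identities in $D_X \dotimes{U(\mathfrak{g})} V$.

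First I would handle the statement on the level of modules, ignoring the $K^H$-action and h-structure. Let $V$ be an $\mathfrak{h}$-module (or $\mathfrak{g}$-module, obtained by restriction in our case). The isomorphism of Proposition \ref{prop:Loc-coinv} reads $v + \mathfrak{h}V \mapsto 1 \otimes (1 \otimes v)$, where the outer tensor is over $\mathscr{O}_x$ via the evaluation map. The $z$-action from \eqref{eqn:Z-action} sends $1 \otimes v$ to $z \otimes v$ in $(\mathscr{D}_X \dotimes{U(\mathfrak{g})} V)_x$. Writing $z = fj(u)$ near $x$ (more generally, a finite sum of such, which exists because $\mathscr{D}_X$ is generated over $\mathscr{O}_X$ by vector fields from $\mathfrak{g}$ on the homogeneous space $X$), we have
\[
    z \otimes v \;=\; fj(u) \otimes v \;=\; f \otimes uv
\]
in $\mathscr{D}_X \dotimes{U(\mathfrak{g})} V$. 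Applying $\CC \dotimes{\mathscr{O}_x} (\cdot)_x$ evaluates $f$ at $x$, producing $1 \otimes (f(x) \cdot uv) = f(x) \cdot 1 \otimes (1 \otimes uv)$. Reading this back through Proposition \ref{prop:Loc-coinv}, we recover the class $f(x) \cdot uv + \mathfrak{h}V$. This is precisely the claimed formula.

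Next I would lift this computation to the level of h-complexes. The isomorphism in Proposition \ref{prop:Loc-coinv} is functorial in $V$ and $\mathfrak{g}$-linear at each step, so it upgrades automatically to the natural isomorphism of dg-functors ${}^{\mathrm{h}}\cate{C}(\mathfrak{g}, K^H) \to {}^{\mathrm{h}}\cate{C}(\CC, K^H)$ underlying Lemma \ref{prop:Loc-coInv-prep}; in particular it commutes with $K^H$-actions and the contracting operators $i_\xi$. The right action of $z$ on the functor $D_X \dotimes{U(\mathfrak{g})} (\cdot)$ is defined degree by degree in a way that is manifestly $K^H$-equivariant and commutes with the $i_\xi$, since $\mathcal{R}_z$ only alters the $D_X$-slot. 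Hence the identification of $z$-actions above is an identification of endomorphisms of functors on ${}^{\mathrm{h}}\cate{C}(\mathfrak{g}, K^H)$; passing to the left h-derived functors then yields the identification on $i_x^\bullet \mathbf{Loc}_{X, K^H} \simeq \Lder\left(\mathrm{coInv}^{U(\mathfrak{h}), K^H}_{\CC, K^H}\right)$ of Proposition \ref{prop:Loc-coInv}.

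The only point requiring a small argument is the existence of the local expression $z = fj(u)$ (or a finite sum thereof) near $x$. Since $X$ is a homogeneous $G$-space, the fundamental vector fields $j(\xi)$ for $\xi \in \mathfrak{g}$ span the tangent space at every point, so by a filtration-by-order argument one sees that $\mathscr{D}_X = \mathscr{O}_X \cdot j(U(\mathfrak{g}))$ as a left $\mathscr{O}_X$-module, whence the desired local expression. Once this is in hand, the verification is entirely formal; I do not foresee any serious obstacle, since the right $D_X^G$-action is visibly compatible with all the structures involved and the computation above is a one-line substitution.
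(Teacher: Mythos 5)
Your proposal is correct and follows essentially the same route as the paper's proof: both unwind the isomorphism of Proposition \ref{prop:Loc-coinv}, use the local expression $z = fj(u)$ to move $j(u)$ across the tensor product over $U(\mathfrak{g})$, and evaluate $f$ at $x$ to obtain $f(x)\cdot up$. Your added remarks on why the local expression exists and why everything is compatible with the $K^H$-action and the operators $i_\xi$ are sound elaborations of points the paper leaves implicit.
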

\begin{proof}
	The action $\identity \otimes \mathcal{R}_z^n$ on $\CC \dotimes{\mathscr{O}_x} D_X \dotimes{U(\mathfrak{g})} P^n$ depends only on the behavior of $z$ near $x$. Therefore it maps
	\[ 1 \otimes 1 \otimes p \mapsto 1 \otimes f j(u) \otimes p = f(x) \otimes 1 \otimes up. \]
	It remains to compare this with Proposition \ref{prop:Loc-coinv}.
\end{proof}

\subsection{Statement of higher regularity}
The assumptions on $(\mathfrak{g}, K)$ and $X$ from \S\ref{sec:Loc-functor} remain in force.

\begin{definition}
	An object of ${}^{\mathrm{h}} \cate{D}(D_X, K)$ is said to be \emph{holonomic} (resp.\ \emph{regular holonomic}) if all its cohomologies are holonomic (resp.\ regular holonomic) as $D_X$-modules. This gives rise to a full triangulated subcategory ${}^{\mathrm{h}} \cate{D}_{\mathrm{h}}(D_X, K)$ (resp.\ ${}^{\mathrm{h}} \cate{D}_{\mathrm{rh}}(D_X, K)$) of ${}^{\mathrm{h}} \cate{D}(D_X, K)$, and one may impose boundedness conditions as well.
\end{definition}

Inside the equivariant derived category $\cate{D}^{\mathrm{b}}_K(X)$, we also have $\cate{D}^{\mathrm{b}}_{K, \mathrm{h}}(X)$ (resp.\ $\cate{D}^{\mathrm{b}}_{K, \mathrm{rh}}(X)$), the full triangulated subcategory of objects with holonomic (resp.\ regular holonomic) cohomologies. It matches ${}^{\mathrm{h}} \cate{D}^{\mathrm{b}}_{\mathrm{h}}(D_X, K)$ (resp.\ ${}^{\mathrm{h}} \cate{D}^{\mathrm{b}}_{\mathrm{h}}(D_X, K)$) under Beilinson's equivalence (Theorem \ref{prop:Beilinson-equiv}).

For every $D_X$-module $L$, denote by $\mathrm{Ch}(L) \subset T^* X$ its characteristic variety. We have the \emph{moment map}
\[ \bm{\mu}: T^* X \to \mathfrak{g}^*. \]

By choosing a base point, every homogeneous $G$-spaces take the form $H \backslash G$, and $T^*(H \backslash G) \simeq \mathfrak{h}^\perp \utimes{H} G$. Then $\bm{\mu}$ maps $[\lambda, g]$ to $\Ad^*(g^{-1}) \lambda$ (co-adjoint action) for all $\lambda \in \mathfrak{h}^\perp$ and $g \in G$.

Let $\mathcal{N} \subset \mathfrak{g}^*$ denote the nilpotent cone.

\begin{definition}\label{def:rh-plus}
	Let ${}^{\mathrm{h}} \cate{D}^{\mathrm{b}}_{\mathrm{rh}+}(D_X, K)$ be the full subcategory of ${}^{\mathrm{h}} \cate{D}^{\mathrm{b}}_{\mathrm{rh}}(D_X, K)$ consisting of objects $L$ such that
	\[ \mathrm{Ch}\left( \Hm^n(L) \right) \subset \bm{\mu}^{-1}\left( \mathcal{N} \cap \mathfrak{k}^\perp \right) \]
	for all $n \in \Z$, where $\Hm^n(L)$ is viewed merely as a $D_X$-module.
\end{definition}

This is a full triangulated subcategory by standard properties of characteristic varieties \cite[\S 2.2]{HTT08}. Again, it has the counterpart $\cate{D}^{\mathrm{b}}_{K, \mathrm{rh}+}(X)$ inside $\cate{D}^{\mathrm{b}}_{K, \mathrm{rh}}(X)$.

\begin{definition}
	A normal $G$-variety is said to be \emph{spherical} if there is an open $B$-orbit in $X$ for some (equivalently, any) Borel subgroup $B \subset G$. A subgroup $H \subset G$ is said to be \emph{spherical} if the homogeneous $G$-space $H \backslash G$ is spherical.
\end{definition}

We are ready to state the higher regularity of localizations.

\begin{theorem}\label{prop:regularity}
	Let $G$ be a connected reductive group. Suppose that
	\begin{itemize}
		\item $X$ is an affine spherical homogeneous $G$-space,
		\item $K$ is a reductive spherical subgroup of $G$.
	\end{itemize}

	Let $V$ be a Harish-Chandra module over $(\mathfrak{g}, K)$ (Definition \ref{def:HC-module}), then $\mathbf{Loc}_X(V)$ lies in ${}^{\mathrm{h}} \cate{D}^{\mathrm{b}}_{\mathrm{rh}+}(D_X, K)$.
\end{theorem}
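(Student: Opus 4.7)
The plan is to apply the regularity criterion of~\cite{Li22} (Theorem~\ref{prop:reg-criterion}) to each cohomology sheaf $\Hm^n \mathbf{Loc}_X(V)$ individually. That criterion asks for three properties: (R1) finite generation over $D_X$, (R2) a $K$-equivariant structure compatible with the $D_X$-action, and (R3) local $\mathcal{Z}(\mathfrak{g})$-finiteness via $j: U(\mathfrak{g}) \to D_X$. Condition (R2) is automatic from the formalism, since $\mathbf{Loc}_X(V)$ by construction lies in ${}^{\mathrm{h}}\cate{D}^{\mathrm{b}}(D_X, K)$, whose $t$-structure has heart $(D_X, K)\dcate{Mod}$. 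For (R1) and (R3), Proposition~\ref{prop:Loc-oblv} lets me forget $K$-equivariance and reduce to the non-equivariant computation of $\Hm^n\bigl(D_X \otimesL[U(\mathfrak{g})] V\bigr)$.

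For (R1), I will use that $U(\mathfrak{g})$ is Noetherian of global dimension at most $\dim\mathfrak{g}$, so the finitely generated $\mathfrak{g}$-module $V$ admits a finite resolution $P^\bullet \to V$ by finitely generated projective $U(\mathfrak{g})$-modules (alternatively, truncate the standard resolution $V \otimes N\mathfrak{g}$ of Theorem~\ref{prop:N-resolution-gen}). Each $D_X \otimes_{U(\mathfrak{g})} P^i$ is then finitely generated over $D_X$, and since $X$ is smooth affine so that $D_X$ is Noetherian, all cohomologies $\Hm^n(D_X \otimesL[U(\mathfrak{g})] V)$ inherit finite generation over $D_X$.

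The main obstacle will be (R3). On the degree-zero term $D_X \otimes_{U(\mathfrak{g})} V$, an element $\zeta \in \mathcal{Z}(\mathfrak{g})$ acts via $j(\zeta)$ on $1 \otimes v$ as $1 \otimes \zeta v$ (by the tensor product relation), so local finiteness on $1 \otimes V$ is immediate from the Harish-Chandra hypothesis; the trouble is that $j(\zeta)$ is not central in $D_X$ in general, so this local finiteness must be transported across the ambient $D_X$-action and, worse, across passage to higher cohomology. This is precisely where both sphericity assumptions will enter, via Knop's structural results~\cite{Kn94} on $D_X^G$: in the spherical case $D_X^G$ is a finitely generated commutative algebra controlled by the image $j(\mathcal{Z}(\mathfrak{g}))$. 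Combined with the right $D_X^G$-action on $\mathbf{Loc}_X$ from~\eqref{eqn:Z-action}, which is functorial on the whole complex and is therefore well-defined on cohomology, this will allow one to propagate local $\mathcal{Z}(\mathfrak{g})$-finiteness from $V$ to every $\Hm^n \mathbf{Loc}_X(V)$. This transfer is the technical heart of the theorem and is the content of the forthcoming Proposition~\ref{prop:local-Zg-finiteness} (with its byproduct Proposition~\ref{prop:ZX-locally-finite}).

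Once (R1)--(R3) are in place, Theorem~\ref{prop:reg-criterion} delivers regular holonomicity for each $\Hm^n \mathbf{Loc}_X(V)$ as well as the characteristic variety bound $\mathrm{Ch}\bigl(\Hm^n\bigr) \subset \bm{\mu}^{-1}(\mathcal{N} \cap \mathfrak{k}^\perp)$ appearing in Definition~\ref{def:rh-plus}, so that $\mathbf{Loc}_X(V) \in {}^{\mathrm{h}}\cate{D}^{\mathrm{b}}_{\mathrm{rh}+}(D_X, K)$ as claimed. Boundedness of the complex is already built into the fact that $\mathbf{Loc}_X$ has finite cohomological amplitude, bounded by $\dim G$.
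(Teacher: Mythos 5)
Your proposal is correct and follows essentially the same route as the paper: reduce to the non-equivariant complex via Proposition \ref{prop:Loc-oblv}, verify (R1) by a free resolution of $V$ over the Noetherian ring $U(\mathfrak{g})$ together with Noetherianity of $D_X$, note that (R2) is built into the h-formalism, and defer (R3) to Proposition \ref{prop:local-Zg-finiteness}, which is exactly how the paper's proof is organized. The only slight inaccuracy is in your heuristic for (R3): the actual mechanism is change-of-rings through $\mathcal{Z}(\mathfrak{g}) \to \mathcal{Z}(X) \to D_X$ (using Kostant's and Knop's freeness results) plus Kostant's theorem on tensoring with locally $\mathcal{Z}(\mathfrak{g})$-finite modules, rather than the right $D_X^G$-action of \eqref{eqn:Z-action}, which only enters the by-product Proposition \ref{prop:ZX-locally-finite}.
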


The proof will be split into three chunks in \S\ref{sec:regularity-criterion}, the hardest one among which will be settled in \S\ref{sec:end-of-regularity}.

\begin{corollary}\label{prop:regularity-gen}
	Under the assumptions of Theorem \ref{prop:regularity}, let ${}^{\mathrm{h}}\cate{D}^{\mathrm{b}}_{\mathrm{HC}}(\mathfrak{g}, K)$ be the full triangulated subcategory of ${}^{\mathrm{h}}\cate{D}^{\mathrm{b}}(\mathfrak{g}, K)$ consisting of objects whose cohomologies are Harish-Chandra $(\mathfrak{g}, K)$-modules. Then $\mathbf{Loc}_X$ restricts to
	\[ {}^{\mathrm{h}}\cate{D}^{\mathrm{b}}_{\mathrm{HC}}(\mathfrak{g}, K) \to {}^{\mathrm{h}}\cate{D}^{\mathrm{b}}_{\mathrm{rh}+}(D_X, K). \]
\end{corollary}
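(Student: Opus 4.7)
The plan is a standard dévissage argument: reduce from arbitrary bounded h-complexes with Harish-Chandra cohomology to single Harish-Chandra modules, where Theorem \ref{prop:regularity} applies directly.

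First I would observe that ${}^{\mathrm{h}}\cate{D}^{\mathrm{b}}_{\mathrm{rh}+}(D_X, K)$ is a full triangulated subcategory of ${}^{\mathrm{h}}\cate{D}^{\mathrm{b}}(D_X, K)$: it is manifestly closed under the shift functor, and closure under cones follows from the long exact cohomology sequence together with the standard fact that both regular holonomicity and the characteristic variety bound $\mathrm{Ch}(\cdot) \subset \bm{\mu}^{-1}(\mathcal{N} \cap \mathfrak{k}^\perp)$ are stable under subquotients and extensions of $D_X$-modules (see \cite[\S 2.2, \S 6.1]{HTT08}). Since $\mathbf{Loc}_X$ is triangulated and has bounded amplitude (it is concentrated in degrees $[-\dim G, 0]$), it suffices to verify the inclusion objectwise on bounded h-complexes.

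Next I would argue by induction on the number $N(M) := \#\{n : \Hm^n(M) \neq 0\}$ of nonzero cohomology objects of $M \in {}^{\mathrm{h}}\cate{D}^{\mathrm{b}}_{\mathrm{HC}}(\mathfrak{g}, K)$. The base case $N(M) = 1$ means $M \simeq V[m]$ in ${}^{\mathrm{h}}\cate{D}^{\mathrm{b}}(\mathfrak{g}, K)$ for some Harish-Chandra $(\mathfrak{g}, K)$-module $V$ and some $m \in \Z$; here Theorem \ref{prop:regularity} yields $\mathbf{Loc}_X(V) \in {}^{\mathrm{h}}\cate{D}^{\mathrm{b}}_{\mathrm{rh}+}(D_X, K)$, and the same holds for $\mathbf{Loc}_X(V[m]) \simeq \mathbf{Loc}_X(V)[m]$ by shift-stability. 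For the inductive step, pick the largest integer $n$ with $\Hm^n(M) \neq 0$ and consider the truncation distinguished triangle
\[ \tau^{\leq n-1} M \to M \to \Hm^n(M)[-n] \xrightarrow{+1} \]
in ${}^{\mathrm{h}}\cate{D}^{\mathrm{b}}(\mathfrak{g}, K)$, which exists by the $t$-structure of \S\ref{sec:h-cplx}. Since Harish-Chandra modules form a Serre subcategory of $(\mathfrak{g}, K)\dcate{Mod}$ (see Definition \ref{def:HC-module} and the comment thereafter), $\tau^{\leq n-1} M$ lies in ${}^{\mathrm{h}}\cate{D}^{\mathrm{b}}_{\mathrm{HC}}(\mathfrak{g}, K)$ with $N(\tau^{\leq n-1} M) = N(M) - 1$; by induction, its localization lies in ${}^{\mathrm{h}}\cate{D}^{\mathrm{b}}_{\mathrm{rh}+}(D_X, K)$. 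Applying $\mathbf{Loc}_X$ to the triangle and using the base case plus the two-out-of-three property of triangulated subcategories concludes the argument.

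There is essentially no obstacle beyond invoking Theorem \ref{prop:regularity}, which the corollary explicitly leans on; the only mild point to check is the Serre subcategory property used to show that $\tau^{\leq n-1} M$ remains in ${}^{\mathrm{h}}\cate{D}^{\mathrm{b}}_{\mathrm{HC}}(\mathfrak{g}, K)$, but this is immediate from the closure of Harish-Chandra modules under subobjects, quotients, and extensions (finite generation over $\mathfrak{g}$ and local $\mathcal{Z}(\mathfrak{g})$-finiteness both being manifestly stable under these operations).
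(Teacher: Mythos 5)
Your proposal is correct and follows exactly the paper's (very brief) argument: the paper's proof consists of the single sentence that since $\mathbf{Loc}_X$ is triangulated, one truncates and shifts to reduce to the case of a single Harish-Chandra module, which is Theorem \ref{prop:regularity}. You have simply written out the dévissage in full detail; all the steps (triangulated-subcategory closure of ${}^{\mathrm{h}}\cate{D}^{\mathrm{b}}_{\mathrm{rh}+}(D_X, K)$, induction on the cohomological length, the truncation triangle) are sound.
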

\begin{proof}
	Since $\mathbf{Loc}_X$ is triangulated, one can truncate and shift to reduce to the case of Harish-Chandra $(\mathfrak{g}, K)$-modules.
\end{proof}

We remark that in view of Theorems \ref{prop:BL-equiv} and \ref{prop:Beilinson-equiv}, the result above can be rephrased in terms of ``usual'' derived categories as
\[ \mathbf{Loc}_X: \cate{D}^{\mathrm{b}}_{\mathrm{HC}}(\mathfrak{g}, K) \to \cate{D}^{\mathrm{b}}_{K, \mathrm{rh}+}(X). \]
However, the construction of $\mathbf{Loc}_X$ passes through h-derived categories.

\begin{remark}
	Take $X = H \backslash G$ for some subgroup $H \subset G$. By Matsushima's criterion \cite[Theorem 3.8]{Ti11}, $H \backslash G$ is affine if and only if $H$ is reductive. The conditions in Theorem \ref{prop:regularity} are thus symmetric in $H$ and $K$: both are required to be spherical and reductive. One can also view $\mathbf{Loc}_{H \backslash G}(V)$ as objects of the derived category of the stack $H \backslash G / K$.
\end{remark}

\begin{example}
	In order to appreciate the property of characteristic varieties in Definition \ref{def:rh-plus}, let us take
	\begin{itemize}
		\item $H$: a connected reductive group, embedded diagonally in $G := H \times H$,
		\item $K := H$ as a subgroup of $G$,
		\item $X := H$ with $G$ acting by $x(h_1, h_2) = h_2^{-1} x h_1$.
	\end{itemize}

	Note that $X \simeq H \backslash G$ by choosing $1$ as the base point, and $H \backslash G / K$ becomes the adjoint quotient stack $\frac{H}{H}$.
	
	The $(\mathfrak{g}, K)$-modules in this case give rise to \textit{Harish-Chandra bimodules}. They are closely related to harmonic analysis on $H(\CC)$.
	
	All conditions in Theorem \ref{prop:regularity} are met in this case. Given a Harish-Chandra $(\mathfrak{g}, K)$-module $V$ and $n \in \Z$, the irreducible constituents of the $\Ad$-equivariant $D_H$-module $\Hm^n \mathbf{Loc}_H(V)$ are actually \emph{character $D_H$-modules}, i.e.\ regular holonomic $D_H$-modules corresponding to \emph{character sheaves} via Riemann--Hilbert (see \cite[\S 2.1]{MV88}, and also \cite{Gin89}). Indeed, this follows from the condition on $\mathrm{Ch}\left(\Hm^n \mathbf{Loc}_H(V)\right)$ and \cite[Theorem 4.4]{MV88}.
\end{example}

\subsection{Criterion of regularity}\label{sec:regularity-criterion}
Let $K \subset G$ and $X$ be as in Theorem \ref{prop:regularity}. Our proof of Theorem \ref{prop:regularity} is based on the result below from \cite{Li22}, which is a variant of Ginzburg's \cite[Corollary 8.9.1]{Gin89}.

\begin{theorem}\label{prop:reg-criterion}
	Suppose that $M$ is a $D_X$-module with the following properties:
	\begin{enumerate}[(R1)]
		\item $M$ is finitely generated over $D_X$;
		\item $M$ carries a structure of $K$-equivariant $D_X$-module;
		\item $M$ is locally $\mathcal{Z}(\mathfrak{g})$-finite, where $\mathcal{Z}(\mathfrak{g})$ acts through the homomorphism $j: U(\mathfrak{g}) \to D_X$ of algebras.
	\end{enumerate}
	Then $M$ is regular holonomic, and $\mathrm{Ch}(M) \subset \bm{\mu}^{-1}(\mathcal{N} \cap \mathfrak{k}^\perp)$.
\end{theorem}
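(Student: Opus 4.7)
The strategy is to choose a good filtration on $M$ adapted simultaneously to the $K$-equivariance (R2) and to a cofinite ideal of $\mathcal{Z}(\mathfrak{g})$ coming from (R3), extract two independent constraints on $\mathrm{Ch}(M)$ via symbol calculations, and then exploit the sphericality of $X$ to upgrade this bound first to holonomicity and finally to regularity.

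By (R1) and the reductivity of $K$, I pick a finite-dimensional $K$-stable subspace $M_0 \subset M$ generating $M$ over $D_X$ and set $M^{\leq n} := D_X^{\leq n} M_0$. By (R3) the finite-dimensional subspace $\mathcal{Z}(\mathfrak{g}) \cdot M_0$ is annihilated by a cofinite ideal $I \subset \mathcal{Z}(\mathfrak{g})$, so in particular $z \cdot M_0 = 0$ for every $z \in I$. Two constraints then arise. First, strong $K$-equivariance gives $\alpha(j(\xi)) = \dd\rho(\xi)$ for $\xi \in \mathfrak{k}$; since the right-hand side preserves $M^{\leq n}$ while $j(\xi) \in D_X^{\leq 1}$ ostensibly raises filtration by one, its principal symbol $\sigma_1(j(\xi)) = \bm{\mu}^*(\xi) \in \mathscr{O}(T^*X)$ must annihilate $\mathrm{gr}\, M$, giving $\mathrm{Ch}(M) \subset \bm{\mu}^{-1}(\mathfrak{k}^\perp)$. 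Second, for $z \in I$ of order $d$ and $m = D m_0 \in M^{\leq n}$, the identity $z D m_0 = [z, D] m_0 + D z m_0 = [z, D] m_0$ together with $[z, D] \in D_X^{\leq n+d-1}$ yields $z \cdot M^{\leq n} \subset M^{\leq n+d-1}$, so $\sigma_d(z)$ vanishes on $\mathrm{Ch}(M)$. The symbols $\{\sigma(z) : z \in I\}$ generate the graded ideal $\mathrm{gr}\, I \subset \mathrm{gr}\, \mathcal{Z}(\mathfrak{g}) = \mathrm{Sym}(\mathfrak{g})^G$, and since $\dim_{\CC} \mathrm{Sym}(\mathfrak{g})^G/\mathrm{gr}\, I = \dim_{\CC} \mathcal{Z}(\mathfrak{g})/I < \infty$, the homogeneous variety $V(\mathrm{gr}\, I) \subset \mathfrak{g}^*//G$ is necessarily the origin, giving $\mathrm{Ch}(M) \subset \bm{\mu}^{-1}(\mathcal{N})$. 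Intersecting the two inclusions yields $\mathrm{Ch}(M) \subset \bm{\mu}^{-1}(\mathcal{N} \cap \mathfrak{k}^\perp)$.

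Holonomicity follows immediately from sphericality: the Knop--Panyushev description identifies $\bm{\mu}^{-1}(\mathcal{N})$ with the union of conormals to $B$-orbits on $X$, which is lagrangian of dimension $\dim X$, so $\dim \mathrm{Ch}(M) \leq \dim X$.

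The main obstacle is the upgrade from holonomic to regular holonomic, which is not formal. The plan is to leverage Knop's structure theorem \cite{Kn94} for spherical $X$, by which $D_X^G$ is a polynomial algebra and the image of $\mathcal{Z}(\mathfrak{g}) \to D_X^G$ has controllable finite codimension with respect to a natural filtration. Local $\mathcal{Z}(\mathfrak{g})$-finiteness then propagates to a quantitative finiteness statement over $D_X^G$, furnishing a $b$-function type input that verifies regularity at the generic point of each irreducible component of $\mathrm{Ch}(M)$. This step adapts the regularity half of Ginzburg's argument in \cite[Corollary~8.9.1]{Gin89} --- originally formulated for $X = H$ viewed as an $H \times H$-space --- to the general affine spherical $X$, and it is here that sphericality of both $X$ and $K$ enter essentially rather than merely cosmetically.
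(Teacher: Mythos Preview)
The paper's own proof is a bare citation to \cite[Proposition 3.4, Corollary 5.7]{Li22}, so there is no in-paper argument to compare against at the level of details. Your treatment of the characteristic-variety bound and holonomicity is correct and is essentially what the first cited result records: the good-filtration argument extracting $\mathrm{Ch}(M)\subset\bm\mu^{-1}(\mathfrak{k}^\perp)$ from (R2) and $\mathrm{Ch}(M)\subset\bm\mu^{-1}(\mathcal{N})$ from (R3) is standard, and the identification of $\bm\mu^{-1}(\mathcal{N})$ with the union of conormals to Borel orbits on a spherical $X$ gives holonomicity immediately.

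The regularity step, however, is only a plan in your write-up, and this is precisely the part the paper outsources to \cite{Li22}. You correctly name the relevant inputs (Ginzburg's method, Knop's structure theory for $D_X^G$), but ``a $b$-function type input that verifies regularity at the generic point'' is not an argument, and two points are imprecise. First, Knop's theorem says $\mathcal{Z}(X)=D_X^G$ is a polynomial ring, $D_X$ is free over it, and $\mathcal{Z}(X)$ is a \emph{finite module} over $\mathcal{Z}(\mathfrak{g})$ --- not that the image of $\mathcal{Z}(\mathfrak{g})\to D_X^G$ has ``controllable finite codimension''. Second, your claim that sphericality of $K$ enters essentially only at the regularity stage is not substantiated; in the paper's setup one ultimately passes to $D_G$ with an $(H^{\mathrm{op}}\times K)$-equivariant structure and invokes \cite{Li22} there, and the precise way sphericality of both $H$ and $K$ is used is not the mechanism you sketch. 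In short: your proposal correctly locates the hard step and the right circle of ideas, but does not close the gap that the paper itself leaves to the cited reference.
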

\begin{proof}
	This is an instance of \cite[Proposition 3.4, Corollary 5.7]{Li22}.
\end{proof}

We can now depict the strategy for proving Theorem \ref{prop:regularity}.

\begin{proof}[Proof of Theorem \ref{prop:regularity}]
	Let $V$ be a Harish-Chandra $(\mathfrak{g}, K)$-module. Set $\mathcal{L} := \mathbf{Loc}_{X, \{1\}}(V)$, which is also the image of $\mathbf{Loc}_{X, K}(V)$ under ${}^{\mathrm{h}} \cate{D}(D_X, K) \to \cate{D}(X)$ by Proposition \ref{prop:Loc-oblv}. We shall verify (R1) --- (R3) for the cohomologies of $\mathcal{L}$.
	
	First, we identify $\mathcal{L}$ with $D_X \otimesL[U(\mathfrak{g})] V$. Since $V$ is finitely generated over $\mathfrak{g}$ and $U(\mathfrak{g})$ is left (and right) Noetherian, there is a free resolution over $\mathfrak{g}$:
	\[ \cdots \to U(\mathfrak{g})^{\oplus n_1} \to U(\mathfrak{g})^{\oplus n_0} \to V \to 0 , \quad n_i \in \Z_{\geq 0}. \]
	Hence $D_X \otimesL[U(\mathfrak{g})] V$ is represented by the complex
	\[ \cdots \to D_X^{\oplus n_{i+1}} \to D_X^{\oplus n_i} \to \cdots . \]
	
	Since $D_X$ is left (and right) Noetherian by \cite[Proposition 1.4.6]{HTT08}, the cohomologies of the complex above are finitely generated $D_X$-modules. This verifies (R1).
	
	As for (R2), from \eqref{eqn:oblv-comm} we see that $\Hm^n(\mathcal{L})$ is the image of $\Hm^n(\mathbf{Loc}_{X, K}(V))$, for all $n$.
	
	What remains is (R3); this will be settled by Proposition \ref{prop:local-Zg-finiteness}.
\end{proof}

Note that the case of zeroth cohomology, i.e.\ the non-derived $D_X \dotimes{U(\mathfrak{g})} V$, has been addressed in \cite[Example 5.5 (iii)]{Li22}.

\subsection{End of the proof}\label{sec:end-of-regularity}
Hereafter, $G$ is a connected reductive group, and $X$ is a smooth affine $G$-variety.

We say a left module $M$ over a commutative algebra $A$ is \emph{locally $A$-finite} if it it is the union of finite-dimensional $A$-submodules.

\begin{lemma}\label{prop:Zg-finite-ses}
	Suppose $A$ is a finitely generated commutative algebra. For every short exact sequence $0 \to N' \to N \to N'' \to 0$ of $A$-modules, we have: $N$ is locally finite if and only if $N'$ and $N''$ are both locally finite.
\end{lemma}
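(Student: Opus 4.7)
The ``only if'' direction is the easy one: if $N$ is locally $A$-finite, then any $n' \in N'$ generates a finite-dimensional $A$-submodule $An'$ in $N$, which of course lies in $N'$; and any $\bar n \in N''$ lifts to some $n \in N$, so that $A\bar n$ is the image of the finite-dimensional $An$ in $N''$.

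For the ``if'' direction, the plan is to pick $n \in N$ and show that $An$ is finite-dimensional over $\CC$. Let $\bar n \in N''$ be the image of $n$ and set $I := \Ker(A \to A\bar n) = \mathrm{Ann}_A(\bar n)$. Since $A\bar n \subseteq N''$ is finite-dimensional by hypothesis, $I$ has finite codimension in $A$. The short exact sequence
\[ 0 \to An \cap N' \to An \to A\bar n \to 0 \]
and the identification $An \cap N' = I \cdot n$ then reduce matters to showing that $I \cdot n$ is contained in a finite-dimensional $A$-submodule of $N'$.

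The main (indeed, the only substantive) step is to invoke finite generation of $A$: by Hilbert's basis theorem, $A$ is Noetherian, so the ideal $I$ is finitely generated, say $I = (f_1, \ldots, f_m)$. Each $f_j n$ belongs to $N'$, and by local $A$-finiteness of $N'$ the submodule $A f_j n \subset N'$ is finite-dimensional. Therefore
\[ I \cdot n = \sum_{j=1}^m A f_j n \subset N' \]
is finite-dimensional, whence so is $An$. Taking the union over $n$ shows $N$ is locally $A$-finite. The expected obstacle, that $An \cap N'$ could a priori be large, is resolved precisely by this Noetherian reduction --- this is the sole place where the finite-generation hypothesis on $A$ is used.
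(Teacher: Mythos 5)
Your proof is correct and follows essentially the same route as the paper: fix $n \in N$, use Noetherianity of the finitely generated algebra $A$ to write the finite-codimension annihilator $I$ of $\bar n$ as $(f_1,\dots,f_m)$, and apply local finiteness of $N'$ to the finitely many elements $f_j n$. Your concluding step (that $In = \sum_j A f_j n$ is a finite sum of finite-dimensional submodules, so $\dim An = \dim In + \dim A\bar n < \infty$) is in fact slightly more direct than the paper's, which instead chooses finite-codimension ideals $I_j$ annihilating each $f_j n$ and appeals to the fact that the product $I_1\cdots I_m I$ still has finite codimension.
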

\begin{proof}
	It suffices to explain the ``if'' part. Let $x \in N$. Its image in $N''$ is annihilated by an ideal $I$ of finite codimension (as vector subspace), hence $Ix \subset N'$. Since $A$ is Noetherian, $I$ is finitely generated, hence there exist ideals $I_1, \ldots, I_k$ of finite codimension such that $I_1 \cdots I_k I x = 0$. However $I_1 \cdots I_k I$ is also of finite codimension, by the structure of $A$.
\end{proof}

\begin{lemma}\label{prop:alpha-surj}
	Let $M$ be a $D_X$-module, thus also a $U(\mathfrak{g})$-submodule via the homomorphism $j: U(\mathfrak{g}) \to D_X$. Let $M^\natural \subset M$ be a $\mathfrak{g}$-submodule. Then the map
	\[\begin{tikzcd}[row sep=tiny]
		\alpha: D_X \otimes M^\natural \arrow[r] & M \\
		P \otimes m \arrow[r, mapsto] & Pm
	\end{tikzcd}\]
	is a homomorphism of $\mathfrak{g}$-modules, if we let $\theta \in \mathfrak{g}$ act on $P \in D_X$ by $\theta \odot P := [j(\theta), P]$.
\end{lemma}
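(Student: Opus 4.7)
The proof should be a direct one-line computation, so my plan is to first make the \(\mathfrak{g}\)-module structures on both sides completely explicit, and then verify equivariance by expanding the commutator.

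On the target, \(M\) is a \(\mathfrak{g}\)-module via \(\theta \cdot m = j(\theta)m\). On the source \(D_X \otimes M^\natural\), the tensor product of two \(\mathfrak{g}\)-modules carries the diagonal action
\[ \theta \cdot (P \otimes m) = (\theta \odot P) \otimes m + P \otimes (\theta m) = [j(\theta), P] \otimes m + P \otimes (j(\theta) m), \]
where the action \(\odot\) on \(D_X\) makes sense because \(j(\theta) \in D_X\) and the commutator bracket endows \(D_X\) with the structure of a Lie algebra on which \(\mathfrak{g}\) acts by inner derivations through \(j\). (One should at least note in passing that \(\theta \mapsto [j(\theta), \cdot]\) is a Lie algebra homomorphism \(\mathfrak{g} \to \End_{\CC}(D_X)\) — this is immediate from the Jacobi identity for the commutator bracket on \(D_X\) together with the fact that \(j\) is a homomorphism of Lie algebras.) The hypothesis that \(M^\natural\) is a \(\mathfrak{g}\)-submodule of \(M\) is precisely what makes the second slot of the diagonal action well-defined on \(D_X \otimes M^\natural\).

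Now the verification: applying \(\alpha\) to the diagonal action gives
\[ \alpha\bigl(\theta \cdot (P \otimes m)\bigr) = [j(\theta), P]\, m + P\, j(\theta)\, m = j(\theta) P m - P j(\theta) m + P j(\theta) m = j(\theta) P m = \theta \cdot \alpha(P \otimes m). \]
Extending by linearity proves the claim.

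There is no real obstacle here; the only thing to be careful about is the bookkeeping of which action is used where, and the (trivial) point that the formula \(\theta \odot P := [j(\theta), P]\) does define a \(\mathfrak{g}\)-module structure on \(D_X\). The lemma is then just the telescoping identity above.
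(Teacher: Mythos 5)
Your proof is correct and is essentially the paper's own argument: both reduce to the telescoping identity $[j(\theta),P]m + Pj(\theta)m = j(\theta)Pm$, with the source carrying the diagonal action via $\odot$ on the first factor. The extra remarks about $\odot$ being a Lie-algebra action are fine but not needed beyond what the paper takes for granted.
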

\begin{proof}
	Omit $j$ to simplify notation. Given $\theta \in \mathfrak{g}$, we have
	\begin{align*}
		\theta \left(\alpha(P \otimes m)\right) & = \theta(Pm) = (\theta P) m - (P\theta) m + P(\theta m) \\
		& = [\theta, P] m + P(\theta m) \\
		& = \alpha\left( (\theta \odot P) \otimes m \right) + \alpha(P \otimes \theta m) \\
		& = \alpha\left( \theta (P \otimes m) \right)   
	\end{align*}
	by the standard definition of the $\mathfrak{g}$-module structure on $D_X \otimes M$.
\end{proof}

\begin{lemma}\label{prop:tensor-Kostant}
	Let $M$ be a $D_X$-module as before, and let $M^\flat \subset M$ be a $\mathcal{Z}(\mathfrak{g})$-submodule that is locally finite, and generates $M$ over $D_X$. Then $M$ is locally $\mathcal{Z}(\mathfrak{g})$-finite.
\end{lemma}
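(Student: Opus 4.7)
The plan is to apply Lemma \ref{prop:alpha-surj} and reduce the assertion to the local $\mathcal{Z}(\mathfrak{g})$-finiteness of a concrete tensor product. Set $M^\natural := U(\mathfrak{g}) M^\flat \subset M$, the $\mathfrak{g}$-submodule of $M$ generated by $M^\flat$ via $j: U(\mathfrak{g}) \to D_X$; since $M^\natural \supset M^\flat$, it still generates $M$ over $D_X$. Lemma \ref{prop:alpha-surj} produces the $\mathfrak{g}$-module homomorphism $\alpha: D_X \otimes M^\natural \to M$, $P \otimes m \mapsto Pm$, and this map is surjective because $M = D_X M^\flat \subset D_X M^\natural$. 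By Lemma \ref{prop:Zg-finite-ses}, it suffices to prove that the source $D_X \otimes M^\natural$ is locally $\mathcal{Z}(\mathfrak{g})$-finite as a $\mathfrak{g}$-module.

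First I would establish that $M^\natural$ itself is locally $\mathcal{Z}(\mathfrak{g})$-finite, and this is where Kostant's theorem intervenes. Using the decomposition $U(\mathfrak{g}) = \mathcal{H} \cdot \mathcal{Z}(\mathfrak{g})$, with $\mathcal{H}$ the space of harmonic elements (a sum of finite-dimensional $\mathrm{ad}(\mathfrak{g})$-submodules), any element of $M^\natural$ can be written as $\sum_i h_i z_i v_i$ with $h_i \in \mathcal{H}$, $z_i \in \mathcal{Z}(\mathfrak{g})$ and $v_i \in M^\flat$. Since $\mathcal{Z}(\mathfrak{g})$ commutes with $\mathcal{H}$ inside $U(\mathfrak{g})$, any $z \in \mathcal{Z}(\mathfrak{g})$ satisfies $z(\sum_i h_i z_i v_i) = \sum_i h_i (z z_i v_i) \in \sum_i h_i \cdot \mathcal{Z}(\mathfrak{g}) v_i$, a finite-dimensional subspace by the hypothesis on $M^\flat$. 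Iterating this observation, for any finite-dimensional $\mathcal{Z}(\mathfrak{g})$-stable $V_0 \subset M^\natural$ with $I_0 := \mathrm{Ann}_{\mathcal{Z}(\mathfrak{g})}(V_0)$ of finite codimension, one has $I_0 \cdot U(\mathfrak{g}) V_0 = U(\mathfrak{g}) \cdot I_0 V_0 = 0$, so $\mathcal{Z}(\mathfrak{g})$ acts on the $\mathfrak{g}$-submodule $U(\mathfrak{g}) V_0 \subset M^\natural$ through the finite-dimensional quotient $\mathcal{Z}(\mathfrak{g})/I_0$.

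To analyze $D_X \otimes M^\natural$, fix $P \otimes m$ and choose a finite-dimensional $\mathrm{ad}(\mathfrak{g})$-stable subspace $W \subset D_X$ containing $P$, which exists because $D_X$ is locally $G$-finite under the conjugation action $\sigma$. Set $V_0 := \mathcal{Z}(\mathfrak{g}) m$ and $V := U(\mathfrak{g}) V_0 \subset M^\natural$; by the previous paragraph, $V$ is a $\mathfrak{g}$-submodule containing $m$ on which $\mathcal{Z}(\mathfrak{g})$ acts through a finite-dimensional quotient. The subspace $W \otimes V \subset D_X \otimes M^\natural$ is a $\mathfrak{g}$-submodule containing $P \otimes m$, and the main obstacle is to show that $W \otimes V$ is locally $\mathcal{Z}(\mathfrak{g})$-finite. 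I would invoke the classical Kostant--Harish-Chandra tensor product principle: decomposing $W$ into its finitely many $\mathfrak{g}$-irreducible constituents (using reductivity of $\mathfrak{g}$) and $V$ into generalized $\mathcal{Z}(\mathfrak{g})$-eigenspaces attached to its finitely many generalized infinitesimal characters, the Harish-Chandra isomorphism exhibits the possible generalized infinitesimal characters on each tensor summand as a finite set of shifts of those of $V$ by the weights of the relevant irreducible factor of $W$. Hence $W \otimes V$ carries only finitely many generalized infinitesimal characters and is locally $\mathcal{Z}(\mathfrak{g})$-finite; combining this with $\alpha$ and Lemma \ref{prop:Zg-finite-ses} yields the claim for $M$.
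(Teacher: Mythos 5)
Your proposal is correct and follows essentially the same route as the paper: form $M^\natural = U(\mathfrak{g})M^\flat$, use the $\mathfrak{g}$-linear surjection $\alpha$ of Lemma \ref{prop:alpha-surj} together with the local $\mathrm{ad}$-finiteness of $D_X$, and conclude by Kostant's tensor-product theorem \cite[Theorem 7.133]{KV95} plus Lemma \ref{prop:Zg-finite-ses}. The only difference is that you spell out the details (the harmonic decomposition for the local finiteness of $M^\natural$ and the weight-shift argument behind Kostant's theorem) that the paper leaves to the citation.
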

\begin{proof}
	Take the $\mathfrak{g}$-submodule $M^\natural := U(\mathfrak{g}) M^\flat$ of $M$. Note that $M^\natural$ is locally $\mathcal{Z}(\mathfrak{g})$-finite, and generates $M$ over $D_X$. Note that $G$ acts algebraically on $D_X$ by $P \xmapsto{g} gPg^{-1}$; this is indeed clear, and is included in Proposition \ref{prop:j-pair}. Its derivative is the action $\odot$ in Lemma \ref{prop:alpha-surj}, hence $D_X$ is a union of finite-dimensional $\mathfrak{g}$-submodules.
	
	Consider now the $\mathfrak{g}$-linear surjection $\alpha: D_X \otimes M^\natural \twoheadrightarrow M$ from Lemma \ref{prop:alpha-surj}. By a theorem of Kostant \cite[Theorem 7.133]{KV95} (and taking $\varinjlim$), the $\mathfrak{g}$-module $D_X \otimes M^\natural$ is seen to be locally $\mathcal{Z}(\mathfrak{g})$-finite. Hence so is $M$.
\end{proof}

Note that in Lemma \ref{prop:tensor-Kostant}, if $M^\flat$ has infinitesimal character $\chi$, then $M$ as a $\mathcal{Z}(\mathfrak{g})$-module is supported on $\chi + \{\text{weights}\}$. Here we describe $\Spec\mathcal{Z}(\mathfrak{g})$ via Harish-Chandra's isomorphism.

For every homomorphism of algebras $\chi: \mathcal{Z}(\mathfrak{g}) \to \CC$, we have the ideal
\[ \mathfrak{m}_\chi := \Ker(\chi) \subset \mathcal{Z}(\mathfrak{g}), \]
and we say a $\mathfrak{g}$-module has \emph{infinitesimal character} $\chi$ if $\mathcal{Z}(\mathfrak{g})$ acts through $\chi$. For example, the $\mathfrak{g}$-module below has infinitesimal character $\chi$:
\begin{equation}\label{eqn:Mchi}
	M_\chi := U(\mathfrak{g}) / \mathfrak{m}_\chi U(\mathfrak{g}).
\end{equation}

For all $\mathfrak{g}$-module $V$, define the $D_X$-modules
\[ \Tor^{U(\mathfrak{g})}_n\left( D_X, V \right) := \Hm^{-n}\left( D_X \otimesL[U(\mathfrak{g})] V \right), \quad n \in \Z. \]

\begin{definition}\label{def:ZX}
	Following \cite[pp.254--255]{Kn94}, we define $\mathcal{Z}(X)$ to be the center of $D_X^G$. It is known to be the whole $D_X^G$ when $X$ is spherical, see \textit{loc.\ cit.}
\end{definition}

Note that $j: U(\mathfrak{g}) \to D_X$ restricts to $\mathcal{Z}(\mathfrak{g}) \to \mathcal{Z}(X)$.

\begin{lemma}\label{prop:Mchi-Zg-finiteness}
	Given $\chi$, the $D_X$-modules $\Tor^{U(\mathfrak{g})}_n\left( D_X, M_\chi \right)$ are locally $\mathcal{Z}(\mathfrak{g})$-finite for all $n \in \Z$, where $M_\chi$ is as in \eqref{eqn:Mchi}.
\end{lemma}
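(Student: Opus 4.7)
The plan is to compute $\Tor^{U(\mathfrak{g})}_n(D_X, M_\chi)$ via a Koszul-type resolution of $M_\chi$, identify the right action of $\mathcal{Z}(\mathfrak{g})$ on the cohomology with the scalar $\chi$, and then invoke Lemma \ref{prop:tensor-Kostant} to transfer this to local finiteness of the left action.

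First, by Chevalley's isomorphism the centre $\mathcal{Z}(\mathfrak{g})$ is a polynomial ring on $r := \mathrm{rank}(\mathfrak{g})$ generators, so $\mathfrak{m}_\chi$ is generated by a regular sequence in $\mathcal{Z}(\mathfrak{g})$; by Kostant's theorem $U(\mathfrak{g})$ is free as a $\mathcal{Z}(\mathfrak{g})$-module, hence this sequence remains regular in $U(\mathfrak{g})$. Setting $V := \mathfrak{m}_\chi/\mathfrak{m}_\chi^2$, one obtains a Koszul free resolution $U(\mathfrak{g}) \otimes \bigwedge^\bullet V \rightiso M_\chi$ of length $r$. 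Tensoring over $U(\mathfrak{g})$ with $D_X$ produces the complex
\[ K^\bullet := D_X \otimes \bigwedge^\bullet V \]
of left $D_X$-modules, whose $(-n)$-th cohomology is $\Tor^{U(\mathfrak{g})}_n(D_X, M_\chi)$ and whose differentials are given by right multiplication by $j$ of the chosen generators. The classical contraction $h_\xi(P \otimes \omega) := P \otimes (\xi \wedge \omega)$ is left-$D_X$-linear and, since $j(\mathcal{Z}(\mathfrak{g})) \subset D_X$ is commutative, furnishes a chain null-homotopy for right multiplication by every element of $j(\mathfrak{m}_\chi)$. Hence the right action $\mathcal{R}_{j(z)}$ of each $z \in \mathcal{Z}(\mathfrak{g})$ acts on every $\Tor^{U(\mathfrak{g})}_n(D_X, M_\chi)$ as the scalar $\chi(z) \cdot \identity$.

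To apply Lemma \ref{prop:tensor-Kostant} we must exhibit, in each $\Tor^{U(\mathfrak{g})}_n(D_X, M_\chi)$, a locally $\mathcal{Z}(\mathfrak{g})$-finite submodule $M^\flat$ generating it over $D_X$. For $n = 0$ this is immediate: $\Tor_0 = D_X/D_X j(\mathfrak{m}_\chi)$ is cyclic with generator $\overline{1}$, on which $j(z) \overline{1} = \chi(z)\overline{1}$ by the previous paragraph, so $M^\flat := \CC\overline{1}$ works. For $n \geq 1$, I would use the syzygy short exact sequences $0 \to B_k \to K_k \to B_{k-1} \to 0$ arising from the Koszul resolution (with $K_k := U(\mathfrak{g}) \otimes \bigwedge^k V$ and $B_{-1} := M_\chi$); the flatness of each $K_k$ then gives dimension-shifting long exact sequences reducing the question to the local $\mathcal{Z}(\mathfrak{g})$-finiteness of the image of $\Tor^{U(\mathfrak{g})}_n(D_X, M_\chi) \hookrightarrow D_X \otimes_{U(\mathfrak{g})} B_{n-1}$. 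The key ingredient here is the identification, implicit in the proof of Lemma \ref{prop:tensor-Kostant}, of the diagonal $U(\mathfrak{g})$-action on $D_X \otimes_{U(\mathfrak{g})} W$ (with $\mathfrak{g}$ acting on $D_X$ via $\odot$) with left multiplication $\mathcal{L}_j$; combined with Kostant's theorem applied to a finitely generated $\mathfrak{g}$-submodule of $B_{n-1}$ whose image in the Tor has $\chi$-isotypic $\mathcal{Z}(\mathfrak{g})$-orbit, this provides the required $M^\flat$. The main obstacle lies precisely in verifying this generation-over-$D_X$ step in higher degrees: one must carefully track, through the iterated syzygy descent, a finite-dimensional $\mathcal{Z}(\mathfrak{g})$-stable subspace inherited from the $\chi$-isotypic structure of $\Tor_0$ that survives to span the cohomology as a $D_X$-module.
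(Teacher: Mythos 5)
Your first half is sound: the Koszul resolution of $M_\chi$ over $U(\mathfrak{g})$ exists for the reasons you give (Harish-Chandra/Chevalley plus Kostant's freeness), and the contraction null-homotopy correctly shows that the \emph{right} action \eqref{eqn:Z-action} of $\mathcal{Z}(\mathfrak{g})$ on every $\Tor^{U(\mathfrak{g})}_n(D_X, M_\chi)$ is the scalar $\chi$; this is a slightly slicker route to the paper's claim \eqref{eqn:Mchi-Zg-finiteness-aux1}. But the proof is not complete, and you have correctly located where it breaks: to invoke Lemma \ref{prop:tensor-Kostant} you must produce, for each $n \geq 1$, a locally $\mathcal{Z}(\mathfrak{g})$-finite subspace (under the \emph{left} action through $j$) that generates the Tor over $D_X$, and knowing that the commuting right action is $\chi$ does not supply one. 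The higher Tor's are subquotients (ultimately kernels) of maps of free $D_X$-modules, and a kernel of a map between $D_X$-modules generated by $\mathcal{Z}(\mathfrak{g})$-nice subspaces need not itself be generated by such a subspace; your syzygy descent also cannot restart the induction, because the syzygy modules $B_{n-1}$ are not of the form $M_{\chi'}$ and the paper's own dimension shifting (Proposition \ref{prop:local-Zg-finiteness}) only runs in the direction that \emph{reduces to} the present Lemma, not the other way.

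The missing ingredient is Knop's theorem \cite[Theorem 9.5 (c)]{Kn94} that $D_X$ is free over $\mathcal{Z}(X)$ (Definition \ref{def:ZX}), which is where sphericity-adjacent structure theory enters. With it, the paper factors the base change as $D_X \otimesL_{U(\mathfrak{g})} M_\chi \simeq D_X \otimesL_{\mathcal{Z}(\mathfrak{g})} N_\chi \simeq D_X \dotimes{\mathcal{Z}(X)} \bigl( \mathcal{Z}(X) \otimesL_{\mathcal{Z}(\mathfrak{g})} N_\chi \bigr)$, yielding
\[ \Tor^{U(\mathfrak{g})}_n\left( D_X, M_\chi \right) \simeq D_X \dotimes{\mathcal{Z}(X)} \Tor^{\mathcal{Z}(\mathfrak{g})}_n\left( \mathcal{Z}(X), N_\chi \right), \]
and then $M^\flat := 1 \otimes \Tor^{\mathcal{Z}(\mathfrak{g})}_n(\mathcal{Z}(X), N_\chi)$ is visibly a generating $\mathcal{Z}(\mathfrak{g})$-submodule on which $\mathcal{Z}(\mathfrak{g})$ acts through $\chi$ (since $j(\mathcal{Z}(\mathfrak{g})) \subset \mathcal{Z}(X)$, left multiplication on this subspace is the $\mathcal{Z}(X)$-module action), so Lemma \ref{prop:tensor-Kostant} applies. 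Your Koszul complex computes the same object, but without routing through the flatness of $D_X$ over $\mathcal{Z}(X)$ you have no handle on generators of its cohomology, which is exactly the obstacle you flag.
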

\begin{proof}
	This relies critically on some results of B.\ Kostant and F.\ Knop. We will perform change-of-rings through the commutative diagram
	\[\begin{tikzcd}[column sep=small, row sep=small]
		& D_X & \\
		\mathcal{Z}(X) \arrow[hookrightarrow, ru, "\text{free}"] & & U(\mathfrak{g}) \arrow[lu, "j"'] \\
		& \mathcal{Z}(\mathfrak{g}) \arrow[lu, "j"] \arrow[hookrightarrow, ru, "\text{free}"'] &
	\end{tikzcd}\]
	of algebras; here ``free'' means free as left and right modules. Indeed,
	\begin{itemize}
		\item the freeness of $U(\mathfrak{g})$ over $\mathcal{Z}(\mathfrak{g})$ is due to Kostant --- see \cite[Theorem 7.114]{KV95},
		\item the freeness of $D_X$ over $\mathcal{Z}(X)$ is due to Knop \cite[Theorem 9.5 (c)]{Kn94}.
	\end{itemize}
	
	Using the $\mathcal{Z}(\mathfrak{g})$-flatness of $U(\mathfrak{g})$, inside $\cate{D}(\mathfrak{g}\dcate{Mod})$ we have
	\begin{equation*}
		M_\chi \simeq U(\mathfrak{g}) \dotimes{\mathcal{Z}(\mathfrak{g})} \frac{\mathcal{Z}(\mathfrak{g})}{\mathfrak{m}_\chi} \simeq U(\mathfrak{g}) \otimesL[\mathcal{Z}(\mathfrak{g})] \frac{\mathcal{Z}(\mathfrak{g})}{\mathfrak{m}_\chi}.
	\end{equation*}

	Set $N_\chi := \mathcal{Z}(\mathfrak{g}) / \mathfrak{m}_\chi$. Since change-of-ring preserves K-projectives, performing $\otimesL$ in stages leads to
	\begin{align*}
		D_X \otimesL[U(\mathfrak{g})] M_\chi & \simeq D_X \otimesL[U(\mathfrak{g})] \left( U(\mathfrak{g}) \otimesL[\mathcal{Z}(\mathfrak{g})] N_\chi \right) \\
		& \simeq D_X \otimesL[\mathcal{Z}(\mathfrak{g})] N_\chi \\
		& \simeq D_X \otimesL[\mathcal{Z}(X)] \left( \mathcal{Z}(X) \otimesL[\mathcal{Z}(\mathfrak{g})] N_\chi \right) \;\quad \text{in}\; \cate{D}(D_X\dcate{Mod}).
	\end{align*}
	Hence by the $\mathcal{Z}(X)$-flatness of $D_X$, for all $n \geq 0$ we have in $D_X\dcate{Mod}$
	\begin{equation}\label{eqn:Mchi-Zg-finiteness-aux0}
		\Tor^{U(\mathfrak{g})}_n\left( D_X, M_\chi \right) \simeq D_X \dotimes{\mathcal{Z}(X)} \Tor^{\mathcal{Z}(\mathfrak{g})}_n\left( \mathcal{Z}(X), N_\chi \right).
	\end{equation}

	We contend that for all $n \geq 0$,
	\begin{equation}\label{eqn:Mchi-Zg-finiteness-aux1}
		\mathcal{Z}(\mathfrak{g}) \;\text{acts on}\; \Tor^{\mathcal{Z}(\mathfrak{g})}_n \left( \mathcal{Z}(X), N_\chi\right) \;\text{through}\; \chi.
	\end{equation}
	Here $\mathcal{Z}(\mathfrak{g})$ acts through $j: \mathcal{Z}(\mathfrak{g}) \to \mathcal{Z}(X)$. The action of $z \in \mathcal{Z}(\mathfrak{g})$ is given as follows. Take a projective resolution $\cdots \to Q_1 \to Q_0 \to N_\chi \to 0$, then there exist $f_i \in \End_{\mathcal{Z}(\mathfrak{g})}(Q_i)$ making
	\[\begin{tikzcd}
		\cdots \arrow[r] & Q_1 \arrow[r] \arrow[d, "f_1"] & Q_0 \arrow[r] \arrow[d, "f_0"] & N_\chi \arrow[d, "z"] \arrow[r] & 0 \\
		\cdots \arrow[r] & Q_1 \arrow[r] & Q_0 \arrow[r] & N_\chi \arrow[r] & 0
	\end{tikzcd}\]
	commutative; different choices of $(f_i)_{i \geq 0}$ are related by $\mathcal{Z}(\mathfrak{g})$-linear homotopies. Take $\mathcal{Z}(X) \dotimes{\mathcal{Z}(\mathfrak{g})} (\cdot)$ on the whole diagram, then the induced action on $\Hm_n$ is the desired one; the commutativity of $\mathcal{Z}(X)$ is crucial here.
	\begin{itemize}
		\item Naturally, one can take $f_i = z$. 
		\item On the other hand, one can also take $f_i = \chi(z) \identity$. This proves \eqref{eqn:Mchi-Zg-finiteness-aux1}.
	\end{itemize}
	
	By \eqref{eqn:Mchi-Zg-finiteness-aux0}, $\Tor^{U(\mathfrak{g})}_n\left( D_X, M_\chi \right)$ is generated by $M^\flat := 1 \otimes \Tor^{\mathcal{Z}(\mathfrak{g})}_n\left( \mathcal{Z}(X), N_\chi \right)$ as a $D_X$-module. By combining \eqref{eqn:Mchi-Zg-finiteness-aux1} and Lemma \ref{prop:tensor-Kostant}, the local $\mathcal{Z}(\mathfrak{g})$-finiteness follows.
\end{proof}

\begin{proposition}\label{prop:local-Zg-finiteness}
	Let $V$ be a $\mathfrak{g}$-module that is finitely generated and locally $\mathcal{Z}(\mathfrak{g})$-finite. Then $\Tor^{U(\mathfrak{g})}_n( D_X, V)$ is locally $\mathcal{Z}(\mathfrak{g})$-finite for all $n \in \Z$.
\end{proposition}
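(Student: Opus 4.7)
The plan is to reduce the statement to Lemma \ref{prop:Mchi-Zg-finiteness} via two structural reductions on $V$ followed by an ascending induction on $n$. First, I would decompose $V$ by generalized $\mathcal{Z}(\mathfrak{g})$-characters. Finite generation over $\mathfrak{g}$ combined with local $\mathcal{Z}(\mathfrak{g})$-finiteness exhibits an ideal $I \subset \mathcal{Z}(\mathfrak{g})$ of finite codimension annihilating $V$; since $\mathcal{Z}(\mathfrak{g})$ is a polynomial algebra, $\mathcal{Z}(\mathfrak{g})/I$ splits as a finite product of local Artinian rings, yielding a finite $\mathfrak{g}$-module decomposition $V = \bigoplus_\chi V_\chi$ with $\mathfrak{m}_\chi^{N_\chi} V_\chi = 0$. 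By additivity of $\Tor$, I may assume $V = V_\chi$. Secondly, I would filter by the central $\mathfrak{g}$-submodules $\mathfrak{m}_\chi^k V$; each graded piece has infinitesimal character $\chi$, and iterated application of the long exact $\Tor$ sequence together with Lemma \ref{prop:Zg-finite-ses} reduces the claim to the case $\mathfrak{m}_\chi V = 0$.

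Having reduced to this case, I would establish by ascending induction on $n \geq 0$ the assertion $P_n$: for every finitely generated $\mathfrak{g}$-module $V$ with $\mathfrak{m}_\chi V = 0$, the space $\Tor^{U(\mathfrak{g})}_n(D_X, V)$ is locally $\mathcal{Z}(\mathfrak{g})$-finite. The base case $n = 0$ follows from Lemma \ref{prop:tensor-Kostant} applied to the submodule $1 \otimes V \subset D_X \otimes_{U(\mathfrak{g})} V$, which is $\mathcal{Z}(\mathfrak{g})$-stable (acted on via $\chi$), is locally $\mathcal{Z}(\mathfrak{g})$-finite, and generates the tensor product over $D_X$. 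For the step from $P_n$ to $P_{n+1}$, take such a $V$; since $M_\chi := U(\mathfrak{g})/\mathfrak{m}_\chi U(\mathfrak{g})$ is left Noetherian (as a quotient of $U(\mathfrak{g})$ by a central, hence two-sided, ideal), $V$ is the quotient of some $M_\chi^{\oplus r}$ by a kernel $K$ that is again finitely generated over $U(\mathfrak{g})$ and annihilated by $\mathfrak{m}_\chi$. The long exact $\Tor$ sequence yields
\[ \Tor^{U(\mathfrak{g})}_{n+1}(D_X, M_\chi^{\oplus r}) \longrightarrow \Tor^{U(\mathfrak{g})}_{n+1}(D_X, V) \longrightarrow \Tor^{U(\mathfrak{g})}_n(D_X, K), \]
whose outer terms are locally $\mathcal{Z}(\mathfrak{g})$-finite by Lemma \ref{prop:Mchi-Zg-finiteness} and by $P_n$ applied to $K$, respectively; Lemma \ref{prop:Zg-finite-ses} then delivers the middle term.

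The main bookkeeping obstacle is verifying that the inductive hypothesis can be applied to $K$, which rests on $K$ being finitely generated over $U(\mathfrak{g})$ (by Noetherianity) and inheriting annihilation by $\mathfrak{m}_\chi$ from the ambient $M_\chi^{\oplus r}$. Beyond this, the argument assembles Lemmas \ref{prop:Zg-finite-ses}, \ref{prop:tensor-Kostant}, and \ref{prop:Mchi-Zg-finiteness} in a direct manner, and requires no further geometric input beyond what already entered the proof of Lemma \ref{prop:Mchi-Zg-finiteness}, notably Knop's freeness of $D_X$ over $\mathcal{Z}(X)$ in the spherical case.
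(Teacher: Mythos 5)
Your proof is correct and takes essentially the same route as the paper: reduce to the case of a single infinitesimal character $\chi$ by a finite filtration, then dimension-shift along $0 \to W \to M_\chi^{\oplus r} \to V \to 0$ using Lemma \ref{prop:Mchi-Zg-finiteness} for the free term and Lemma \ref{prop:Zg-finite-ses} to assemble the long exact sequence. The only cosmetic differences are your two-step reduction (generalized central characters, then powers of $\mathfrak{m}_\chi$) versus the paper's single filtration by subquotients with infinitesimal characters, and your explicit $n=0$ base case via Lemma \ref{prop:tensor-Kostant}, where the paper simply starts the recursion at $n<0$.
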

\begin{proof}
	Using the assumptions on $V$, one produces a filtration $0 = V_0 \subset \cdots \subset V_n = V$ such that each subquotient has an infinitesimal character. By the long exact sequence (of $D_X$-modules) for $\Tor^{U(\mathfrak{g})}_n(D_X, \cdot)$ and Lemma \ref{prop:Zg-finite-ses}, we are thus reduced to the case that $V$ has infinitesimal character $\chi$.
	
	We argue by dimension shifting. The case $n < 0$ is trivial. By taking generators of $V$ and using \eqref{eqn:Mchi}, there is a short exact sequence of $\mathfrak{g}$-modules
	\[ 0 \to W \to M_\chi^{\oplus I} \to V \to 0 \]
	where $I$ is some (small) set; all modules have infinitesimal character $\chi$. In the long exact sequence we have the piece	
	\[ \Tor^{U(\mathfrak{g})}_n( D_X, M_\chi^{\oplus I}) \to \Tor^{U(\mathfrak{g})}_n( D_X, V) \to \Tor^{U(\mathfrak{g})}_{n-1}( D_X, W). \]
	The third term is locally $\mathcal{Z}(\mathfrak{g})$-finite by recursion; so is the first term by Lemma \ref{prop:Mchi-Zg-finiteness} and the fact that $\Tor^{U(\mathfrak{g})}_n(D_X, \cdot)$ commutes with direct sums. This completes the proof.
\end{proof}

\begin{remark}
	When $V$ has infinitesimal character $\chi$, the support of $\Tor^{U(\mathfrak{g})}_n(D_X, V)$ as a $\mathcal{Z}(\mathfrak{g})$-module can be loosely controlled as discussed after Lemma \ref{prop:tensor-Kostant}.
\end{remark}

We record a by-product of the proofs above, which implies that $D_X \dotimes{U(\mathfrak{g})} (\cdot)$ is usually non-exact.

\begin{proposition}\label{prop:non-exactness}
	Define $M_\chi$ by \eqref{eqn:Mchi} for all $\chi$. We have
	\[ \left[ \forall \chi, \; \Tor^{U(\mathfrak{g})}_1(D_X, M_\chi) = 0 \right] \iff \mathcal{Z}(X) \;\text{is flat over}\; \mathcal{Z}(\mathfrak{g}). \]
\end{proposition}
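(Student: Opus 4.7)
The plan is to unpack the canonical isomorphism
\[ \Tor^{U(\mathfrak{g})}_n(D_X, M_\chi) \simeq D_X \dotimes{\mathcal{Z}(X)} \Tor^{\mathcal{Z}(\mathfrak{g})}_n(\mathcal{Z}(X), N_\chi) \]
from \eqref{eqn:Mchi-Zg-finiteness-aux0}, which was established in the proof of Lemma \ref{prop:Mchi-Zg-finiteness} by performing $\otimesL$ in stages and invoking the flatness of $U(\mathfrak{g})$ over $\mathcal{Z}(\mathfrak{g})$ (Kostant) and of $D_X$ over $\mathcal{Z}(X)$ (Knop). Specializing to $n = 1$ and exploiting the fact that $D_X$ is free, hence faithfully flat, over $\mathcal{Z}(X)$, tensoring with $D_X$ over $\mathcal{Z}(X)$ reflects vanishing, so that for every $\chi$,
\[ \Tor^{U(\mathfrak{g})}_1(D_X, M_\chi) = 0 \iff \Tor^{\mathcal{Z}(\mathfrak{g})}_1(\mathcal{Z}(X), N_\chi) = 0. \]

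This reduces the proposition to the purely commutative-algebra assertion: $\mathcal{Z}(X)$ is flat over $\mathcal{Z}(\mathfrak{g})$ if and only if $\Tor^{\mathcal{Z}(\mathfrak{g})}_1(\mathcal{Z}(X), N_\chi) = 0$ for every $\chi$. The implication $\Leftarrow$ is immediate from the definition of flatness. For $\Rightarrow$, the relevant structural inputs are: (i) by Harish-Chandra's isomorphism, $\mathcal{Z}(\mathfrak{g})$ is a polynomial algebra over $\CC$, in particular Noetherian and Jacobson, whose maximal ideals are exactly the $\mathfrak{m}_\chi$; (ii) by Knop's results, $\mathcal{Z}(X)$ is a finitely generated commutative $\CC$-algebra, hence in particular of finite type over $\mathcal{Z}(\mathfrak{g})$. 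Since flatness of a finite-type algebra may be tested at maximal ideals of the target, it suffices to show that $\mathcal{Z}(X)_\mathfrak{q}$ is flat over $\mathcal{Z}(\mathfrak{g})_{\mathfrak{q} \cap \mathcal{Z}(\mathfrak{g})}$ for every maximal $\mathfrak{q} \subset \mathcal{Z}(X)$. Setting $\mathfrak{m} := \mathfrak{q} \cap \mathcal{Z}(\mathfrak{g})$, the Jacobson property forces $\mathfrak{m}$ to be maximal, so $\mathfrak{m} = \mathfrak{m}_\chi$ for some $\chi$; then the local criterion of flatness for essentially-of-finite-type local Noetherian homomorphisms reduces the task to $\Tor^{\mathcal{Z}(\mathfrak{g})_\mathfrak{m}}_1(\mathcal{Z}(X)_\mathfrak{q}, \CC) = 0$, which is a localization of the assumed vanishing $\Tor^{\mathcal{Z}(\mathfrak{g})}_1(\mathcal{Z}(X), N_\chi) = 0$.

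The main obstacle is the forward direction: bridging pointwise $\Tor_1$-vanishing at all residue fields of maximal ideals into genuine flatness. This step relies on both the finite-type hypothesis on $\mathcal{Z}(X)/\mathcal{Z}(\mathfrak{g})$ (so that the local flatness criterion is applicable) and the Jacobson property of $\mathcal{Z}(\mathfrak{g})$ (so that maximal ideals of $\mathcal{Z}(X)$ contract to maximal ideals of $\mathcal{Z}(\mathfrak{g})$), both of which are available in the spherical reductive framework adopted throughout this section.
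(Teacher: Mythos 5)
Your proposal is correct and follows essentially the same route as the paper: reduce via the isomorphism \eqref{eqn:Mchi-Zg-finiteness-aux0} and the faithful flatness of $D_X$ over $\mathcal{Z}(X)$ to the vanishing of $\Tor_1^{\mathcal{Z}(\mathfrak{g})}(\mathcal{Z}(X), N_\chi)$, then conclude by the local criterion of flatness. The paper states this last step in one line; your elaboration (maximal ideals of $\mathcal{Z}(X)$ contract to maximal ideals $\mathfrak{m}_\chi$, then apply the local criterion) is exactly what is implicitly used, and is in fact slightly simplified by noting that $\mathcal{Z}(X)$ is a \emph{finite} $\mathcal{Z}(\mathfrak{g})$-module by Knop, so flatness is equivalent to local freeness at each $\mathfrak{m}_\chi$.
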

\begin{proof}
	Let $N_\chi := \mathcal{Z}(\mathfrak{g})/\mathfrak{m}_\chi$.
	By the local criterion of flatness, $\mathcal{Z}(X)$ is flat over $\mathcal{Z}(\mathfrak{g})$ if and only if $\Tor_1^{\mathcal{Z}(\mathfrak{g})}(\mathcal{Z}(X), N_\chi) = 0$ for all $\chi$. In turn, this is equivalent to $\Tor^{U(\mathfrak{g})}_1(D_X, M_\chi) = 0$ by \eqref{eqn:Mchi-Zg-finiteness-aux0} together with the freeness of $D_X$ over $\mathcal{Z}(X)$.
\end{proof}

\begin{remark}\label{rem:non-exactness}
	In view of the structure theorem in \cite{Kn94} and the notation therein, $\mathcal{Z}(X)$ is flat over $\mathcal{Z}(\mathfrak{g})$ if and only if the natural map between categorical quotients
	\[ (\rho + \mathfrak{a}_X^*) \sslash W_X \to \mathfrak{t}^* \sslash W \]
	is flat, which rarely holds; a necessary condition is that $\mathrm{rank}(X) = \mathrm{rank}(G)$. Therefore $D_X$ is rarely flat over $U(\mathfrak{g})$, by Proposition \ref{prop:non-exactness}.
	
	However, flatness of $\mathcal{Z}(X)$ does hold when $X$ is the affine closure of $U \backslash G$ where $U$ is a maximal unipotent subgroup: this is a combination of \cite[Lemma 6.4]{Kn94} and Pittie--Steinberg theorem. Note that $\mathcal{Z}(Y)$ is defined for all normal $G$-varieties $Y$ in \textit{loc.\ cit.}, and it is an equivariant-birational invariant of $Y$.
\end{remark}

The same arguments also lead to the following fact about the right $\mathcal{Z}(X)$-action on $\mathbf{Loc}_X$, which might be of independent interest.

\begin{proposition}\label{prop:ZX-locally-finite}
	Under the assumptions of Proposition \ref{prop:local-Zg-finiteness}, $\Tor_n^{U(\mathfrak{g})}(D_X, V)$ is locally $\mathcal{Z}(X)$-finite under the action \eqref{eqn:Z-action}, for all $n \in \Z$.
	
	Furthermore, if $V$ has infinitesimal character $\chi$, then the support of $\Tor_n^{U(\mathfrak{g})}(D_X, V)$ as a $\mathcal{Z}(X)$-module is a subset of the fiber of $\chi$ under $\Spec(\mathcal{Z}(X)) \to \Spec(\mathcal{Z}(\mathfrak{g}))$, which is finite.
\end{proposition}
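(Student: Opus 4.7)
The plan is to parallel the arguments of Proposition \ref{prop:local-Zg-finiteness} and Lemma \ref{prop:Mchi-Zg-finiteness}, now tracking the right $\mathcal{Z}(X)$-action throughout. First I would verify that right multiplication by $z \in \mathcal{Z}(X) \subset D_X^G$ commutes with the left action of $j(U(\mathfrak{g}))$ on $D_X$: since $z$ is $G$-invariant, $[j(\theta), z] = 0$ for every $\theta \in \mathfrak{g}$, so the right $z$-action is $U(\mathfrak{g})$-linear and descends to $D_X \otimesL[U(\mathfrak{g})] V$ (computed via a K-projective resolution of $V$ over $U(\mathfrak{g})$). By Knop's structure theorem, $\mathcal{Z}(X)$ is a finitely generated commutative algebra, so Lemma \ref{prop:Zg-finite-ses} applies to it. Using the finite generation of $V$ and local $\mathcal{Z}(\mathfrak{g})$-finiteness, I would filter $V$ by $\mathfrak{g}$-submodules with generalized infinitesimal character, and then (by dimension shifting through a short exact sequence $0 \to W \to M_\chi^{\oplus I} \to V \to 0$) reduce to $V = M_\chi$ for an arbitrary character $\chi$.

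For $V = M_\chi$, the chain of isomorphisms in the proof of Lemma \ref{prop:Mchi-Zg-finiteness} gives
\[ \Tor^{U(\mathfrak{g})}_n(D_X, M_\chi) \simeq D_X \dotimes{\mathcal{Z}(X)} \Tor^{\mathcal{Z}(\mathfrak{g})}_n(\mathcal{Z}(X), N_\chi), \]
which I expect to be $\mathcal{Z}(X)$-equivariant for the right action: right multiplication by $z$ on the first factor passes across $D_X \dotimes{\mathcal{Z}(X)}$ to the natural $\mathcal{Z}(X)$-module structure of the $\Tor$, via $Dz \otimes t = D \otimes z t$. Exactly as in the derivation of \eqref{eqn:Mchi-Zg-finiteness-aux1}, the fact that $\mathfrak{m}_\chi$ annihilates $N_\chi$ forces $j(\mathfrak{m}_\chi) \cdot \mathcal{Z}(X)$ to annihilate $\Tor^{\mathcal{Z}(\mathfrak{g})}_n(\mathcal{Z}(X), N_\chi)$, so its $\mathcal{Z}(X)$-module structure factors through the coordinate ring $\mathcal{Z}(X) / j(\mathfrak{m}_\chi) \mathcal{Z}(X)$ of the scheme-theoretic fiber of $\chi$.

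It then remains to show that this fiber algebra is finite-dimensional and that the $\Tor$ is finitely generated over $\mathcal{Z}(X)$. Using Knop's identification $\Spec \mathcal{Z}(X) = (\rho + \mathfrak{a}_X^*) \sslash W_X$ and $\Spec \mathcal{Z}(\mathfrak{g}) = \mathfrak{t}^* \sslash W$ with the map induced from the closed embedding $\rho + \mathfrak{a}_X^* \hookrightarrow \mathfrak{t}^*$ (cf.\ Remark \ref{rem:non-exactness}), the fiber over $\chi$ is the image in $(\rho + \mathfrak{a}_X^*) \sslash W_X$ of the finite set $(\rho + \mathfrak{a}_X^*) \cap W \chi$, hence finite, and $\mathcal{Z}(X) / j(\mathfrak{m}_\chi) \mathcal{Z}(X)$ is finite-dimensional. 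The finite generation of the $\Tor$ over $\mathcal{Z}(X)$ follows because $\mathcal{Z}(\mathfrak{g})$ is a polynomial ring (Chevalley), so $N_\chi$ admits a finite Koszul resolution by finite free $\mathcal{Z}(\mathfrak{g})$-modules; tensoring with $\mathcal{Z}(X)$ yields a bounded complex of finitely generated modules over the Noetherian $\mathcal{Z}(X)$. A finitely generated module over the finite-dimensional algebra $\mathcal{Z}(X) / j(\mathfrak{m}_\chi) \mathcal{Z}(X)$ is finite-dimensional with the claimed support, and tensoring on the left by $D_X$ over $\mathcal{Z}(X)$ preserves the $\mathcal{Z}(X)$-annihilator on the right, producing both the local $\mathcal{Z}(X)$-finiteness of $\Tor^{U(\mathfrak{g})}_n(D_X, M_\chi)$ and the stated support condition. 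The main obstacle I anticipate is the careful bookkeeping of the $\mathcal{Z}(X)$-equivariance across the change-of-rings isomorphisms that underlie \eqref{eqn:Mchi-Zg-finiteness-aux0}, rather than any additional input.
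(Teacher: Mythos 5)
Your proposal is correct and follows essentially the same route as the paper: reduce to $V = M_\chi$ via Lemma \ref{prop:Zg-finite-ses} (applied to the finitely generated commutative algebra $\mathcal{Z}(X)$) and dimension shifting, then use the change-of-rings isomorphism \eqref{eqn:Mchi-Zg-finiteness-aux0} to see that the right $\mathcal{Z}(X)$-action factors through the finite-dimensional fiber algebra $\mathcal{Z}(X)/j(\mathfrak{m}_\chi)\mathcal{Z}(X)$, finiteness of the fiber coming from Knop's theorem that $\mathcal{Z}(X)$ is finite over $\mathcal{Z}(\mathfrak{g})$. The only difference is cosmetic: your Koszul-resolution argument for finite generation of the $\Tor$ is unnecessary, since any module over a finite-dimensional algebra is automatically locally finite.
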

\begin{proof}
	First, Lemma \ref{prop:Zg-finite-ses} applies to $\mathcal{Z}(X)$-modules, by describing the structure of $\mathcal{Z}(X)$ using Knop's theorem.
	
	As before, one reduces to the case when $V$ has infinitesimal character $\chi$, and then to the case $V = M_\chi$. Let $\mathcal{Z}(\mathfrak{g})$ act on $\Tor^{U(\mathfrak{g})}_n(D_X, M_\chi)$ via
	\[ \mathcal{Z}(\mathfrak{g}) \to \mathcal{Z}(X) \xrightarrow{\eqref{eqn:Z-action}} \End_{D_X}\left(\Tor^{U(\mathfrak{g})}_n(D_X, M_\chi)\right). \]
	
	By inspecting the proof of Lemma \ref{prop:Mchi-Zg-finiteness}, we see that the action above is simply $\chi$. We conclude by the fact that $\mathcal{Z}(X)$ is a finite $\mathcal{Z}(\mathfrak{g})$-module; see \cite[p.254]{Kn94}.
\end{proof}

\section{Application to \texorpdfstring{$\Ext$}{Ext}-branching}\label{sec:Ext-application}
\subsection{General setting of branching laws}\label{sec:general-branching}
Suppose we are given a pair $(\mathfrak{i}, K^I)$ (see \S\ref{sec:gK-basic}) and its subpair $(\mathfrak{h}, K^H)$, where $\mathfrak{h} \subset \mathfrak{i}$ and $K^H \subset K^I$; see \S\ref{sec:restriction-lemma} for the precise conditions on subpairs. This induces the diagonal embedding of pairs
\[ (\mathfrak{h}, K^H) \to (\mathfrak{i} \times \mathfrak{h}, K^I \times K^H). \]
On the first component it is $U(\mathfrak{h}) \to U(\mathfrak{i}) \otimes U(\mathfrak{h})$, given by $\theta \mapsto \theta \otimes 1 + 1 \otimes \theta$ for all $\theta \in \mathfrak{h}$. On the second component it is $k \mapsto (k, k)$ for all $k \in K^H$.

Consider h-complexes $M$ over $(\mathfrak{i}, K^I)$ and $N$ over $(\mathfrak{h}, K^H)$. Taking tensor product yields an h-complex over $(\mathfrak{i} \times \mathfrak{h}, K^I \times K^H)$, which we denote as $M \boxtimes N$.

\begin{definition}\label{def:M-H}
	Denote by $M|_H$ the restriction of $M$ to an h-complex over $(\mathfrak{h}, K^H)$. Define $(M \boxtimes N)|_H$ similarly via the diagonal embedding. Ditto for the functors induced on derived categories.
\end{definition}

By the general formalism of \S\ref{sec:derived-categories}, we can define $\RHom$ between h-complexes over these pairs; they will be denoted as $\RHom_{\mathfrak{h}, K^H}$ and so on.

\begin{proposition}
	Assume $K^H$ and $K^I$ are reductive. Let $M$ (resp.\ $N$) be an h-complex over $(\mathfrak{i}, K^I)$ (resp.\ $(\mathfrak{h}, K^H)$). Denote by $N^\vee$ the contragredient of $N$, as in Definition \ref{def:internal-Hom}. There is a canonical isomorphism in $\cate{D}(\CC)$
	\[ \RHom_{\mathfrak{h}, K^H}\left( M|_H, N^\vee \right) \simeq \RHom_{\mathfrak{h}, K^H}\left((M \boxtimes N)|_H, \CC \right). \]
	The same isomorphism also holds for complexes of $(\mathfrak{i}, K^I)$-modules and $(\mathfrak{h}, K^H)$-modules, with the corresponding $\RHom$.
\end{proposition}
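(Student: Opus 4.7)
The plan is to fix a K-injective resolution of the trivial module $\CC$ and apply the standard tensor-Hom adjunction at the level of $\Hom$-complexes; this sidesteps the question of whether the external tensor product $M \boxtimes N$ becomes K-projective after restriction to $(\mathfrak{h}, K^H)$. Write $\underline{\Hom}(N, B)$ for the internal Hom of h-complexes over $(\mathfrak{h}, K^H)$ as in Definition \ref{def:internal-Hom}, so that $N^\vee = \underline{\Hom}(N, \CC)$. For any h-complex $A$ over $(\mathfrak{h}, K^H)$, restricting the linear-algebraic adjunction $\Hom^\bullet_{\CC}(A \otimes N, B) \simeq \Hom^\bullet_{\CC}(A, \Hom^\bullet_{\CC}(N, B))$ to $(\mathfrak{h}, K^H)$-equivariant morphisms commuting with the $i_\xi$-data yields the non-derived adjunction
\[ \Hom^\bullet_{\mathfrak{h}, K^H}(A \otimes N, B) \simeq \Hom^\bullet_{\mathfrak{h}, K^H}(A, \underline{\Hom}(N, B)); \]
the image on the right automatically lands in the $K^H$-algebraic part (the image of an equivariant map from an algebraic $K^H$-representation is algebraic), and the Koszul-sign compatibility for $i_\xi$ is routine. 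Setting $A = M|_H$ and using $(M|_H) \otimes N = (M \boxtimes N)|_H$ as h-complexes over $(\mathfrak{h}, K^H)$ gives the non-derived version of the statement.

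To derive, take a K-injective resolution $\CC \to I$ in ${}^{\mathrm{h}} \cate{C}^+(\mathfrak{h}, K^H)$, which exists by Theorem \ref{prop:enough-K-general} because $K^H$ is reductive. Two facts about $\underline{\Hom}(N, \cdot)$ suffice. First, it is right adjoint to the exact dg-functor $(\cdot) \otimes N$, so Proposition \ref{prop:K-injectives-adjunction} shows that $\underline{\Hom}(N, I)$ is K-injective. Second, $\underline{\Hom}(N, \cdot)$ is itself exact, so the quasi-isomorphism $\CC \to I$ induces a quasi-isomorphism $N^\vee = \underline{\Hom}(N, \CC) \to \underline{\Hom}(N, I)$; exactness holds because $\Hom^\bullet_{\CC}(N, \cdot)$ preserves acyclicity (every complex of $\CC$-vector spaces is contractible when acyclic), while extraction of $K^H$-algebraic vectors is exact for reductive $K^H$, by the same complete-reducibility argument invoked at the end of the proof of Lemma \ref{prop:restriction-lemma}. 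Consequently $\underline{\Hom}(N, I)$ is a K-injective resolution of $N^\vee$, and the chain
\[ \RHom_{\mathfrak{h}, K^H}((M \boxtimes N)|_H, \CC) \simeq \Hom^\bullet_{\mathfrak{h}, K^H}((M \boxtimes N)|_H, I) \simeq \Hom^\bullet_{\mathfrak{h}, K^H}(M|_H, \underline{\Hom}(N, I)) \simeq \RHom_{\mathfrak{h}, K^H}(M|_H, N^\vee) \]
finishes the h-version.

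The non-h variant follows by the identical argument carried out in $\cate{C}((\mathfrak{h}, K^H)\dcate{Mod})$ using its enough K-injectives in the bounded-below range, or, more conceptually, by transporting along the Bernstein--Lunts--Pandžić equivalence (Theorem \ref{prop:BL-equiv}). The main obstacle is the exactness of $(\cdot)^{K^H\text{-alg}}$ on the possibly non-algebraic $K^H$-representations $\Hom^\bullet_{\CC}(N, Q)$; everything else is adjunction formalism and Koszul sign bookkeeping, and this is precisely the point at which the reductivity of $K^H$ enters essentially.
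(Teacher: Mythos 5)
Your proof is correct, but it runs the adjunction in the opposite direction from the paper. The paper takes a K-projective resolution $P \to M$ over $(\mathfrak{i}, K^I)$, restricts it (Lemma \ref{prop:restriction-lemma}), and shows $(\cdot)|_H \otimes N$ preserves K-projectives because its right adjoint $\underline{\Hom}(N,\cdot)$ is exact; then ${}^{\mathrm{h}}\Hom^\bullet(P|_H, N^\vee) \simeq {}^{\mathrm{h}}\Hom^\bullet(P|_H \otimes N, \CC)$ computes both sides. You instead resolve $\CC$ K-injectively and push the resolution through $\underline{\Hom}(N,\cdot)$, using exactly the dual pair of facts (the left adjoint $(\cdot)\otimes N$ is exact, so K-injectives are preserved; $\underline{\Hom}(N,\cdot)$ is exact, so $\underline{\Hom}(N,I)$ resolves $N^\vee$). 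Both arguments hinge on the same dg tensor--Hom adjunction and on the same essential input, namely that complete reducibility for the reductive group $K^H$ makes $\underline{\Hom}(N,\cdot)$ exact. A side benefit of your route is that it never resolves $M$ at all, so the reductivity of $K^I$ and Lemma \ref{prop:restriction-lemma} are not actually needed.

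Two small corrections. First, the existence of K-injective resolutions in the bounded-below range (Theorem \ref{prop:enough-K-general}) requires only that the Harish-Chandra dg-algebra be non-positively graded, not that $K^H$ be reductive; reductivity is what you need for the exactness of $\underline{\Hom}(N,\cdot)$, not for the resolution of $\CC$. Second, your factorization of that exactness claim into ``$\Hom^\bullet_{\CC}(N,\cdot)$ preserves acyclicity'' followed by ``$(\cdot)^{K^H\text{-alg}}$ is exact'' is slightly delicate, because $\Hom^n_{\CC}(N,B)$ is an infinite product and taking algebraic vectors need not commute with products or with an arbitrary (non-equivariant) contraction. The clean statement is: since algebraic representations of the reductive group $K^H$ form a semisimple category, an acyclic $B$ admits a $K^H$-equivariant contracting homotopy $s$; then $f \mapsto \pm s\circ f$ contracts $\Hom^\bullet_{\CC}(N,B)$ equivariantly and hence restricts to the algebraic part. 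This is the same complete-reducibility mechanism as in Lemma \ref{prop:restriction-lemma}, so your citation is apt, but the equivariance of the contraction is the point to make explicit.
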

\begin{proof}
	We will only sketch the case of h-complexes; the other case is similar and well-known.
	
	Take a K-projective resolution $P \to M$ in ${}^{\mathrm{h}} \cate{C}(\mathfrak{i}, K^I)$. Then so is $P|_H \to M|_H$  in ${}^{\mathrm{h}} \cate{C}(\mathfrak{h}, K^H)$ by Lemma \ref{prop:restriction-lemma}. We claim that $(P \boxtimes N)|_H \to (M \boxtimes N)|_H$ is a K-projective resolution as well.
	
	Since $((\cdot) \boxtimes N)|_H \simeq (\cdot)|_H \otimes N$ is exact, the above is indeed a quasi-isomorphism. By \cite[Lemma 3.2]{Pan05}, $(\cdot)|_H \otimes N$ has a right adjoint dg-functor given by internal $\Hom$ (see \S\ref{sec:gK-basic})
	\[ \Hom^\bullet_{\CC}(N, (\cdot)|_H)^{K_H\text{-alg}}. \]
	As seen in the proof of Lemma \ref{prop:restriction-lemma}, this right adjoint is exact, thus the claim follows.
	
	All in all, $\RHom_{\mathfrak{h}, K^H}\left( M|_H, N^\vee \right)$ is represented by the complex
	\[ {}^{\mathrm{h}} \Hom^\bullet_{\mathfrak{h}, K^H}\left(P|_H , N^\vee \right) \simeq {}^{\mathrm{h}} \Hom^\bullet_{\mathfrak{h}, K^H}\left(P|_H \otimes N, \CC \right) \]
	where the aforementioned dg-adjunction is applied once again. By the claim, the right-hand side represents $\RHom_{\mathfrak{h}, K^H}\left((M \boxtimes N)|_H, \CC \right)$.
\end{proof}

Cf.\ \cite[Proposition 2.6]{Pra18} for the case of $p$-adic groups.

Under the assumptions of reductivity, the study of general $\Ext$-branching
\[ {}^{\mathrm{h}} \Ext^n_{\mathfrak{h}, K^H}(M|_H, N^\vee) \]
for various $M$, $N$ thus reduces to the special case ${}^{\mathrm{h}} \Ext^n_{\mathfrak{h}, K^H}(M|_H, \CC)$, once we replace $\mathfrak{i}$ (resp.\ $K^I$) by $\mathfrak{g} := \mathfrak{i} \times \mathfrak{h}$ (resp.\ $K := K^I \times K^H$). This recipe is certainly well-known, at least in degree $n=0$ and for complexes of $(\mathfrak{g}, K)$-modules instead of h-complexes.

We record another easy fact about $\Ext^n_{\mathfrak{h}, K^H}(\cdot, \CC)$.

\begin{proposition}[{\cite[Corollary 3.2]{KV95}}]\label{prop:Ext-H}
	Consider a subpair $(\mathfrak{h}, K^H)$ of $(\mathfrak{g}, K)$ and assume $K^H$ and $K$ are reductive. Let $V$ be a $(\mathfrak{g}, K)$-module. There are canonical isomorphisms
	\[ \Ext^n_{\mathfrak{h}, K^H}(V|_H, \CC) \simeq \Hm_n\left(\mathfrak{h}, K^H; V|_H \right)^* \]
	for each $n \in \Z$, where $\Hm_n\left(\mathfrak{h}, K^H; \cdot \right)$ is the relative Lie algebra homology.
\end{proposition}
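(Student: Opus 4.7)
My plan is to derive the isomorphism from the basic adjunction between coinvariants and $\Hom$-to-the-trivial-module, combined with the exactness of vector-space dualization. The whole argument is entirely formal and works uniformly in the h-derived and the classical settings; these agree on $\Ext$ by Corollary \ref{prop:BL-equiv-Ext}, and on relative Lie algebra homologies by the analogous equivalence for the left derived functor of coinvariants. I will work in the h-setting throughout.

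First I would establish the canonical isomorphism of cochain complexes
\[ \Hom^\bullet_{\mathfrak{h}, K^H}(M, \CC) \simeq \bigl(\mathrm{coInv}(M)\bigr)^* \]
for every h-complex $M$ over $(\mathfrak{h}, K^H)$, where $\mathrm{coInv} := \mathrm{coInv}^{U(\mathfrak{h}), K^H}_{\CC, \{1\}}$ and the right-hand side is the degreewise $\CC$-linear dual (with degrees negated in the usual sense). This is simply the dg-form of the adjunction of Proposition \ref{prop:Inv-coInv}, combined with the observation that any morphism of h-complexes into $\CC$ factors through the coinvariants; the $i_\xi$-compatibility imposes nothing extra, since $i_\xi$ acts as zero on the trivial h-complex $\CC$.

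Next I would pick a K-projective resolution $P \to V|_H$ in ${}^{\mathrm{h}}\cate{C}^-(\mathfrak{h}, K^H)$. Such a resolution exists by Theorem \ref{prop:enough-K-general}; concretely, by Lemma \ref{prop:restriction-lemma} one may restrict the standard resolution $V \otimes N\mathfrak{g} \to V$ of Theorem \ref{prop:N-resolution-gen}. This single resolution computes both sides. On the one hand,
\[ \Ext^n_{\mathfrak{h}, K^H}(V|_H, \CC) \simeq \Hm^n\bigl(\Hom^\bullet_{\mathfrak{h}, K^H}(P, \CC)\bigr) \simeq \Hm^n\bigl((\mathrm{coInv}(P))^*\bigr) \]
via the identification above. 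On the other hand, Proposition \ref{prop:Inv-coInv-K} shows that $\mathrm{coInv}(P)$ represents $\Lder\,\mathrm{coInv}(V|_H)$, so
\[ \Hm_n(\mathfrak{h}, K^H; V|_H) \simeq \Hm^{-n}\bigl(\mathrm{coInv}(P)\bigr). \]

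The final step is to invoke the exactness of the $\CC$-linear dual, which yields $\Hm^n(C^*) \simeq \Hm^{-n}(C)^*$ for any cochain complex $C$ of $\CC$-vector spaces. Combining the two displays above gives the claimed canonical isomorphism $\Ext^n_{\mathfrak{h}, K^H}(V|_H, \CC) \simeq \Hm_n(\mathfrak{h}, K^H; V|_H)^*$, functorially in $V$. I do not foresee any genuine obstacle: the proof rests only on the adjunction of Proposition \ref{prop:Inv-coInv} and the exactness of dualization, both entirely formal. The only point requiring a moment's care is the $\Hom^\bullet$-versus-coinvariants identification in the h-setting, and this reduces to the triviality of $i_\xi$ on $\CC$ noted above.
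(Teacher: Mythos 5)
Your argument is correct. The paper itself offers no proof here --- it simply cites \cite[Corollary 3.2]{KV95} --- and your route is the standard one, which moreover is exactly what the paper's own machinery yields: the dg-adjunction between co-invariants and inflation (Proposition \ref{prop:Inv-coInv}, made explicit in Lemma \ref{prop:Hom12}), a single K-projective resolution computing both $\RHom(\cdot,\CC)$ and $\Lder\,\mathrm{coInv}$, the identification of $\Hm^{-n}\Lder\,\mathrm{coInv}$ with relative Lie algebra homology (as in the proof of Proposition \ref{prop:H-coInv}), and exactness of $(\cdot)^*$ over $\CC$. One small imprecision: the $i_\xi$-compatibility in ${}^{\mathrm{h}}\Hom^\bullet(M,\CC)$ is not literally vacuous --- it forces $f$ to kill $\sum_\xi \Image(i_\xi)$ --- but this is exactly matched on the other side, since the h-coinvariants also quotients by $\sum_\xi(\Image(i_\xi)+\Image(w(\xi)))$ (Example \ref{eg:h-inflation}), so the claimed isomorphism of complexes stands.
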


Note that ${}^{\mathrm{h}} \Ext_{\mathfrak{h}, K^H} \simeq \Ext_{\mathfrak{h}, K^H}$ by Corollary \ref{prop:BL-equiv-Ext}. The upcoming results about $\Ext^n$ can all be viewed as assertions about $\Hm^n\left(\mathfrak{h}, K^H; V \right)^*$.

\subsection{Branching and localization}
Consider a connected reductive group $G$ and its subgroups
\begin{equation}\label{eqn:four-groups}\begin{tikzcd}
	H \arrow[phantom, r, "\subset" description] & G \\
	K^H \arrow[phantom, u, "\subset" description, sloped] \arrow[phantom, r, "\subset" description] & K \arrow[phantom, u, "\subset" description, sloped].
\end{tikzcd}\end{equation}
We assume that $H$, $K$ and $K^H$ are all reductive.

\begin{lemma}\label{prop:Hom12}
	The following diagram commutes up to canonical isomorphism
	\[\begin{tikzcd}[column sep=large]
		{}^{\mathrm{h}} \cate{C}(\mathfrak{h}, K^H) \arrow[r, "{\mathrm{coInv}^{\mathfrak{h}, K^H}_{\CC, K^H}}"] \arrow[rd, "{\Hom_1}"'] & {}^{\mathrm{h}} \cate{C}(\CC, K^H) \arrow[d, "{\Hom_2}"] \\
		& \cate{C}(\CC)^{\mathrm{op}}
	\end{tikzcd}\]
	where
	\begin{align*}
		\Hom_1 & := {}^{\mathrm{h}} \Hom^\bullet_{\mathfrak{h}, K^H}(\cdot, \CC), \\
		\Hom_2 & := {}^{\mathrm{h}} \Hom^\bullet_{\CC, K^H}(\cdot, \CC),
	\end{align*}
	and all actions upon $\CC$ are defined to be trivial.
\end{lemma}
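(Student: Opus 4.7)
The plan is to recognize the desired isomorphism as the specialization at $N = \CC$ of a dg-enriched adjunction between the left adjoint $\mathrm{coInv}^{\mathfrak{h}, K^H}_{\CC, K^H}$ and the right adjoint $\mathrm{Infl}^{\mathfrak{h}, K^H}_{\CC, K^H}$ provided by Proposition \ref{prop:Inv-coInv}, after noting that the trivial h-complex $\CC$ over $(\mathfrak{h}, K^H)$ is the inflation of the trivial h-complex $\CC$ over $(\CC, K^H)$.

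First I would make $\mathrm{coInv}$ explicit in the h-setting: it takes the maximal quotient on which $\mathfrak{h}$ acts trivially, so on an h-complex $M$ over $(\mathfrak{h}, K^H)$ the output is $M/\mathfrak{h}M$. The $K^H$-action descends, and the endomorphisms $i_\xi^M$ descend thanks to the $\mathfrak{h}$-linearity in condition (ii) of Definition \ref{def:h-cplx}, which forces $i_\xi^M(\mathfrak{h}M) \subseteq \mathfrak{h}M$. This agrees with viewing $\mathrm{coInv}$ as the change-of-algebra dg-functor $\CC \dotimes{U(\mathfrak{h})} (\cdot)$ of Proposition \ref{prop:adjoint-h-oblv} (i) along the augmentation $U(\mathfrak{h}) \to \CC$, whose left adjointness to oblivion/inflation is the case $A = U(\mathfrak{h})$, $A' = \CC$ of Proposition \ref{prop:oblv-adjunction}.

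Next, I would upgrade the bijection of $\Hom$-sets to an isomorphism of $\Hom$-complexes. For h-complexes $M$ over $(\mathfrak{h}, K^H)$ and $N$ over $(\CC, K^H)$, a tuple $(f^k)_k \in {}^{\mathrm{h}}\Hom^n_{\mathfrak{h}, K^H}(M, \mathrm{Infl}(N))$ is $\mathfrak{h}$-linear into a module on which $\mathfrak{h}$ acts trivially, so each $f^k$ annihilates $\mathfrak{h}M^k$ and induces $\bar f^k: (M/\mathfrak{h}M)^k \to N^{k+n}$; conversely, any tuple in ${}^{\mathrm{h}}\Hom^n_{\CC, K^H}(M/\mathfrak{h}M, N)$ lifts back uniquely to such an $(f^k)_k$. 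The conditions of $K^H$-equivariance, differential compatibility, and the h-relation $f^{k-1} i_\xi^M = (-1)^n i_\xi^N f^k$ all translate bijectively across the quotient, because $i_\xi^{M/\mathfrak{h}M}$ is by construction induced from $i_\xi^M$. This yields a canonical isomorphism of complexes
\[
{}^{\mathrm{h}}\Hom^\bullet_{\mathfrak{h}, K^H}(M, \mathrm{Infl}(N)) \rightiso {}^{\mathrm{h}}\Hom^\bullet_{\CC, K^H}(\mathrm{coInv}(M), N),
\]
natural in $M$ and $N$.

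Specializing to the trivial $(\CC, K^H)$-module $N = \CC$, whose inflation to $(\mathfrak{h}, K^H)$ is again $\CC$, gives precisely the desired identification $\Hom_1 \simeq \Hom_2 \circ \mathrm{coInv}$. There is no real obstacle here: the argument is a direct unwinding of definitions, and the only thing to check is that the bijection between tuples respects grading, differentials, and h-structure, which is immediate from the $\mathfrak{h}$-linearity built into condition (ii) of Definition \ref{def:h-cplx}.
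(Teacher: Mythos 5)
Your proposal is correct and follows essentially the same route as the paper: the paper's proof likewise reduces the statement to the dg-enriched adjunction between $\mathrm{coInv}^{\mathfrak{h}, K^H}_{\CC, K^H}$ and inflation (Proposition \ref{prop:Inv-coInv}, upgraded to $\Hom$-complexes as in the proof of Proposition \ref{prop:K-injectives-adjunction}), specialized at the trivial module $\CC$. Your explicit unwinding of the tuple-wise correspondence and the descent of the $i_\xi$ is just a more detailed version of the same argument.
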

\begin{proof}
	All functors in view are dg-functors, and it amounts to establishing a canonical isomorphism in $\cate{C}(\CC)$
	\[ {}^{\mathrm{h}} \Hom^\bullet_{\CC, K^H}\left( \mathrm{coInv}^{\mathfrak{h}, K^H}_{\CC, K^H}(N), \CC\right) \simeq {}^{\mathrm{h}} \Hom^\bullet_{\mathfrak{h}, K^H}(N, \CC) \]
	where $N$ is any h-complex over $(\mathfrak{h}, K^H)$. Indeed, this follows from the adjunction between co-invariants and inflation (Proposition \ref{prop:Inv-coInv}); the adjunction extends to the dg-level, cf.\ the proof of Proposition \ref{prop:K-injectives-adjunction}.
\end{proof}

Consider the affine homogeneous $G$-space
\[ X := H \backslash G, \quad x := H \cdot 1. \]
Thus $x$ is a $K^H$-fixed point in $X$, and we have the corresponding $K^H$-equivariant morphism
\[ i_x: \mathrm{pt} \to X. \]
Note that $D_{\mathrm{pt}} = \CC$, with trivial $K^H$-action.

To save space, we will omit the oblivion ${}^{\mathrm{h}}\cate{D}(D_X, K) \to {}^{\mathrm{h}}\cate{D}(D_X, K^H)$ in the statements below, and adopt the notation $M \mapsto M|_H$ of Definition \ref{def:M-H} on the level of h-derived categories.

\begin{proposition}\label{prop:RHom-RHom}
	For all objects $M$ of ${}^{\mathrm{h}} \cate{D}^-(\mathfrak{g}, K)$, there are canonical isomorphisms in $\cate{D}^+(\CC)$:
	\[ \RHom_{\mathfrak{h}, K^H}(M|_H, \CC) \simeq \RHom_{D_{\mathrm{pt}}, K^H}\left( i_x^\bullet( \mathbf{Loc}_X(M) ), \CC \right). \]
	Here $i_x^\bullet: {}^{\mathrm{h}} \cate{D}^-(D_X, K^H) \to {}^{\mathrm{h}} \cate{D}^-(D_{\mathrm{pt}}, K^H)$ is the inverse image functor introduced in \S\ref{sec:inverse-image}. 
\end{proposition}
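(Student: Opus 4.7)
The plan is to chain two already-established inputs: the derived version of Lemma \ref{prop:Hom12} and the localization/coinvariants comparison of Proposition \ref{prop:Loc-coInv}. Set $N := M|_H \in {}^{\mathrm{h}}\cate{D}^-(\mathfrak{h}, K^H)$; by Lemma \ref{prop:restriction-lemma} (using that $K^H$ is reductive) the restriction functor preserves K-projectivity, so we may choose a K-projective resolution $P \to N$ in ${}^{\mathrm{h}}\cate{C}^-(\mathfrak{h}, K^H)$ by applying restriction to a K-projective resolution of $M$ (afforded by standard resolutions, Theorem \ref{prop:N-resolution-gen}).

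Next I would derive Lemma \ref{prop:Hom12}. Because $\mathrm{coInv}^{\mathfrak{h}, K^H}_{\CC, K^H}$ preserves K-projectives (Proposition \ref{prop:Inv-coInv-K}), $\mathrm{coInv}(P)$ both computes $\Lder \mathrm{coInv}^{\mathfrak{h}, K^H}_{\CC, K^H}(N)$ and is K-projective over $(\CC, K^H)$. The dg-adjunction at the level of $\Hom$-complexes stated in Lemma \ref{prop:Hom12} then yields a canonical isomorphism in $\cate{D}^+(\CC)$
\[
\RHom_{\mathfrak{h}, K^H}(N, \CC) \;\simeq\; {}^{\mathrm{h}}\Hom^\bullet_{\mathfrak{h}, K^H}(P, \CC) \;\simeq\; {}^{\mathrm{h}}\Hom^\bullet_{\CC, K^H}(\mathrm{coInv}(P), \CC) \;\simeq\; \RHom_{\CC, K^H}\bigl(\Lder \mathrm{coInv}^{\mathfrak{h}, K^H}_{\CC, K^H}(N), \CC\bigr).
\]
Alternatively, this can be packaged as the standard derived adjunction between $\Lder\mathrm{coInv}$ and inflation, followed by the observation that the inflation of $\CC$ is again $\CC$.

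Finally, I would apply Proposition \ref{prop:Loc-coInv} (second isomorphism) to identify
\[
\Lder\mathrm{coInv}^{U(\mathfrak{h}), K^H}_{\CC, K^H}(M|_H) \;\simeq\; i_x^\bullet\bigl(\mathbf{Loc}_X(M)\bigr),
\]
where the oblivion from $K$ to $K^H$ is tacit on the right. Since $D_{\mathrm{pt}} = \CC$ and the $K^H$-actions match, combining the two isomorphisms produces the claimed canonical isomorphism in $\cate{D}^+(\CC)$.

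No step is truly hard once the formalism is in place; the only care required is bookkeeping the canonicity of the comparisons. Specifically, one should verify that the K-projective resolution used to realize the first isomorphism is compatible with the one tacitly used in Proposition \ref{prop:Loc-coInv}, so that the resulting composite isomorphism does not depend on the choice of resolution. This follows from the universal property of left derived functors and the composition formula of Corollary \ref{prop:derived-functor-composite}, applied to the chain $N \mapsto \mathrm{coInv}(N) \mapsto \Hom^\bullet_{\CC, K^H}(\mathrm{coInv}(N), \CC)$.
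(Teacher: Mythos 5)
Your proposal is correct and follows essentially the same route as the paper: decompose the left-hand side as restriction followed by $\mathrm{coInv}^{\mathfrak{h},K^H}_{\CC,K^H}$ followed by $\Hom_2$, use preservation of K-projectivity (Lemma \ref{prop:restriction-lemma}, Proposition \ref{prop:Inv-coInv-K}) to derive in stages via Corollary \ref{prop:derived-functor-composite}, and then invoke Proposition \ref{prop:Loc-coInv} to replace the derived coinvariants by $i_x^\bullet\mathbf{Loc}_X$. Your extra remarks on the choice of resolution and canonicity are sound and merely make explicit what the paper leaves implicit.
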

\begin{proof}
	The functor ${}^{\mathrm{h}} \cate{C}(\mathfrak{g}, K) \to {}^{\mathrm{h}} \cate{C}(\mathfrak{h}, K^H)$ preserves K-projectives by Lemma \ref{prop:restriction-lemma}, hence 
	the left-hand side is the left h-derived functor of the composition of
	\[ {}^{\mathrm{h}} \cate{C}(\mathfrak{g}, K) \to {}^{\mathrm{h}} \cate{C}(\mathfrak{h}, K^H) \xrightarrow{\mathrm{coInv}^{\mathfrak{h}, K^H}_{\CC, K^H}} {}^{\mathrm{h}} \cate{C}(\CC, K^H) \xrightarrow{\Hom_2} \cate{C}(\CC)^{\mathrm{op}}, \]
	in the notation of Lemma \ref{prop:Hom12}.
	
	Since $\mathrm{coInv}^{\mathfrak{h}, K^H}_{\CC, K^H}$ also preserves K-projectives by Proposition \ref{prop:Inv-coInv-K}, one can take left h-derived functors in stages. Using Proposition \ref{prop:Loc-coInv}, the result is isomorphic to the composition of
	\[ i_x^\bullet \left( \mathbf{Loc}_X \right): {}^{\mathrm{h}} \cate{D}^-(\mathfrak{g}, K) \to {}^{\mathrm{h}} \cate{D}^-(\CC, K^H) \]
	with $\Lder(\Hom_2): {}^{\mathrm{h}} \cate{D}^-(\CC, K^H) \to \cate{D}^+(\CC)^{\mathrm{op}}$. However, $\Lder(\Hom_2)$ is just $\RHom_{D_{\mathrm{pt}}, K^H}(\cdot, \CC)$.
\end{proof}

\begin{corollary}
	For all objects $M$ of $\cate{D}^{\mathrm{b}}(\mathfrak{g}, K)$ and all $n \in \Z$, there are canonical isomorphisms:
	\[ \Ext^n_{\mathfrak{h}, K^H}(M|_H, \CC) \simeq \Ext^n_{\cate{D}^{\mathrm{b}}_{K^H}(\mathrm{pt})}\left( i_x^\bullet( \varepsilon \mathbf{Loc}_X(M) ), \CC \right). \]
	Here $\varepsilon$ is the equivalence in Theorem \ref{prop:Beilinson-equiv}.
\end{corollary}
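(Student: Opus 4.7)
The plan is to extract the stated $\Ext$-level isomorphism as a formal consequence of the $\RHom$-level isomorphism just proved in Proposition \ref{prop:RHom-RHom}, by taking cohomology and transporting the right-hand side across Beilinson's equivalence $\varepsilon$. I would start by applying $\Hm^n$ to both sides of
\[ \RHom_{\mathfrak{h}, K^H}(M|_H, \CC) \simeq \RHom_{D_{\mathrm{pt}}, K^H}\left( i_x^\bullet \mathbf{Loc}_X(M), \CC \right). \]
The left-hand side is then, by definition, ${}^{\mathrm{h}}\Ext^n_{\mathfrak{h}, K^H}(M|_H, \CC)$, which by Corollary \ref{prop:BL-equiv-Ext} coincides canonically with the classical $\Ext^n_{\mathfrak{h}, K^H}(M|_H, \CC)$. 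This yields the left-hand side of the corollary.

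Next I would handle the right-hand side. Since $M$ lies in $\cate{D}^{\mathrm{b}}(\mathfrak{g}, K)$, $\mathbf{Loc}_X(M)$ lies in ${}^{\mathrm{h}}\cate{D}^{\mathrm{b}}(D_X, K)$, and Lemma \ref{prop:pullback-amplitude} ensures that $i_x^\bullet \mathbf{Loc}_X(M)$ still lies in ${}^{\mathrm{h}}\cate{D}^{\mathrm{b}}(D_{\mathrm{pt}}, K^H)$, so Corollary \ref{prop:Beilinson-equiv-Ext} applies and gives
\[ {}^{\mathrm{h}}\Ext^n_{D_{\mathrm{pt}}, K^H}\!\left( i_x^\bullet \mathbf{Loc}_X(M), \CC \right) \simeq \Ext^n_{\cate{D}^{\mathrm{b}}_{K^H}(\mathrm{pt})}\!\left( \varepsilon(i_x^\bullet \mathbf{Loc}_X(M)),\, \varepsilon\CC \right). \]
Proposition \ref{prop:inverse-image-compatibility} supplies a canonical isomorphism $\varepsilon \circ i_x^\bullet \simeq i_x^\bullet \circ \varepsilon$, and Theorem \ref{prop:Beilinson-equiv}(iii) identifies $\varepsilon\CC$ with $\CC$ on the hearts. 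Composing all these isomorphisms produces the claimed canonical identification.

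The main point of care, rather than an obstacle, is simply to verify that each comparison isomorphism (Bernstein--Lunts, Beilinson, inverse-image compatibility) is functorial in $M$, so that the composite is indeed canonical in $M \in \cate{D}^{\mathrm{b}}(\mathfrak{g}, K)$ and compatible with the $\delta$-functor structure in $n$; this is routine since every input assertion is already formulated functorially in the cited results.
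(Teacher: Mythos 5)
Your proposal is correct and follows essentially the same route as the paper: the paper's proof is exactly "combine Corollary \ref{prop:BL-equiv-Ext} with Corollary \ref{prop:Beilinson-equiv-Ext}, using Proposition \ref{prop:inverse-image-compatibility} for compatibility of $\varepsilon$ with $i_x^\bullet$," and you have simply spelled out the intermediate steps (including the useful observation that Lemma \ref{prop:pullback-amplitude} keeps $i_x^\bullet\mathbf{Loc}_X(M)$ bounded so the $\Ext$-comparisons apply).
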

\begin{proof}
	Combine the Bernstein--Lunts equivalence (Corollary \ref{prop:BL-equiv-Ext}) with Beilinson's equivalence $\varepsilon$ (Corollary \ref{prop:Beilinson-equiv-Ext}), noting that $\varepsilon$ is compatible with inverse images (Proposition \ref{prop:inverse-image-compatibility}).
\end{proof}

One can also interpret the relative Lie algebra homologies in terms of localization.

\begin{proposition}\label{prop:H-coInv}
	Let $V$ be a $(\mathfrak{g}, K)$-module. There are canonical isomorphisms
	\[ \Hm_n(\mathfrak{h}, K^H; V|_H) \simeq \Hm^{-n}\Lder\left( \mathrm{coInv}^{\CC, K^H}_{\CC, \{1\}} \right) \left(i_x^\bullet \mathbf{Loc}_X(V)\right), \quad n \in \Z. \].
\end{proposition}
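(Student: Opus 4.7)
The plan is to mirror the strategy used for Proposition \ref{prop:RHom-RHom}, but replacing the internal $\Hom$ into $\CC$ by the left h-derived coinvariants.

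First, I would interpret the classical relative Lie algebra homology in h-terms. By \cite[II.7]{KV95}, $\Hm_n(\mathfrak{h}, K^H; W)$ for a $(\mathfrak{h}, K^H)$-module $W$ is the $(-n)$-th cohomology of the left-derived functor of the coinvariants functor
\[ W \mapsto W \big/ \left( \mathfrak{h} W + \sum_{k \in K^H} (k - 1) W \right) \]
from $(\mathfrak{h}, K^H)\dcate{Mod}$ to $\cate{C}(\CC)$. Via the Bernstein--Lunts equivalence (Theorem \ref{prop:BL-equiv}) and the fact that $\alpha$ is $t$-exact, one identifies this classical derived functor with the h-derived functor $\Lder\left(\mathrm{coInv}^{\mathfrak{h}, K^H}_{\CC, \{1\}}\right)$ applied to $V|_H$ viewed as an object of ${}^{\mathrm{h}}\cate{D}^-(\mathfrak{h}, K^H)$; the compatibility can be checked concretely via the standard resolution $V|_H \otimes N\mathfrak{h} \to V|_H$ (Theorem \ref{prop:N-resolution-gen}), whose underlying complex is exactly the relative Koszul--Chevalley complex after taking $\mathrm{coInv}^{\mathfrak{h}, K^H}_{\CC, \{1\}}$.

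Next, I would apply transitivity of coinvariants \eqref{eqn:Inv-coInv-transitive}:
\[ \mathrm{coInv}^{\mathfrak{h}, K^H}_{\CC, \{1\}} \simeq \mathrm{coInv}^{\CC, K^H}_{\CC, \{1\}} \circ \mathrm{coInv}^{\mathfrak{h}, K^H}_{\CC, K^H}. \]
By Proposition \ref{prop:Inv-coInv-K}, $\mathrm{coInv}^{\mathfrak{h}, K^H}_{\CC, K^H}$ preserves K-projectives. Hence Corollary \ref{prop:derived-functor-composite} gives a canonical isomorphism
\[ \Lder\left(\mathrm{coInv}^{\mathfrak{h}, K^H}_{\CC, \{1\}}\right) \simeq \Lder\left(\mathrm{coInv}^{\CC, K^H}_{\CC, \{1\}}\right) \circ \Lder\left(\mathrm{coInv}^{\mathfrak{h}, K^H}_{\CC, K^H}\right) \]
of functors ${}^{\mathrm{h}}\cate{D}^-(\mathfrak{h}, K^H) \to \cate{D}^-(\CC)$.

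Finally, I would invoke Proposition \ref{prop:Loc-coInv} to rewrite the inner functor as an inverse image of localization:
\[ \Lder\left(\mathrm{coInv}^{\mathfrak{h}, K^H}_{\CC, K^H}\right)(V|_H) \simeq i_x^\bullet \mathbf{Loc}_X(V), \]
where $\mathbf{Loc}_X(V)$ is understood via oblivion from $K$ to $K^H$. Concatenating the three identifications and taking $\Hm^{-n}$ yields the claimed isomorphism. The main (and essentially only) obstacle is Step 1, namely confirming that the classical definition of $\Hm_n(\mathfrak{h}, K^H; \cdot)$ agrees with the h-derived version of $\mathrm{coInv}^{\mathfrak{h}, K^H}_{\CC, \{1\}}$; once that is in place, Steps 2 and 3 are formal consequences of the framework already developed in \S\ref{sec:inv-coinv} and \S\ref{sec:Loc-coinv}.
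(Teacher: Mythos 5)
Your proposal is correct and follows essentially the same route as the paper's proof, merely assembled in the opposite direction: the paper starts from $i_x^\bullet\mathbf{Loc}_X(V)$, applies Proposition \ref{prop:Loc-coInv}, folds the two co-invariant functors together via transitivity and preservation of K-projectives, and then identifies $\Lder(\mathrm{coInv}^{\mathfrak{h},K^H}_{\CC,\{1\}})$ with relative Lie algebra homology using the standard resolution $W\otimes N\mathfrak{h}$ (citing Pandžić for the fact that the co-invariants of $N\mathfrak{h}$ give the relative standard complex), which is exactly your Step 1.
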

\begin{proof}
	The arguments from Proposition \ref{prop:RHom-RHom} shows that the right-hand side is the $\Hm^{-n}$ of the composition
	\begin{equation*}
		{}^{\mathrm{h}} \cate{D}^-(\mathfrak{g}, K) \to {}^{\mathrm{h}} \cate{D}^-(\mathfrak{h}, K^H) \xrightarrow{\Lder\left(\mathrm{coInv}^{\mathfrak{h}, K^H}_{\CC, K^H}\right)}
		{}^{\mathrm{h}} \cate{D}^-(\CC, K^H) \xrightarrow{\Lder\left(\mathrm{coInv}^{\CC, K^H}_{\CC, \{1\}}\right)} \cate{D}^-(\CC).
	\end{equation*}

	Taking co-invariants preserves K-projectives. By the transitivity \eqref{eqn:Inv-coInv-transitive}, the composition above folds into
	\[ {}^{\mathrm{h}} \cate{D}^-(\mathfrak{g}, K) \to {}^{\mathrm{h}} \cate{D}^-(\mathfrak{h}, K^H) \xrightarrow{\Lder\left(\mathrm{coInv}^{\mathfrak{h}, K^H}_{\CC, \{1\}}\right)}
	\cate{D}^-(\CC). \]
	It remains to show that after taking $\Hm^{-n}$, the second arrow gives $\Hm_n(\mathfrak{h}, K^H; W)$ when applied to any $(\mathfrak{h}, K^H)$-module $W$.
	
	Use transitivity to break $\mathrm{coInv}^{\mathfrak{h}, K^H}_{\CC, \{1\}}$ into two stages
	\[ {}^{\mathrm{h}} \cate{C}(\mathfrak{h}, K^H) \xrightarrow{\mathrm{coInv}_1} \cate{C}(\mathfrak{h}, K^H) \xrightarrow{\mathrm{coInv}_2} \cate{C}(\CC), \]
	cf.\ Example \ref{eg:h-inflation} for $\mathrm{coInv}_1$. The left derived functors are decomposed accordingly. Now take the standard resolution $W \otimes N\mathfrak{h} \to W$ in ${}^{\mathrm{h}} \cate{C}(\mathfrak{h}, K^H)$ (see \S\ref{sec:std-resolution}). Unwinding the construction of $\mathrm{coInv}_1$, one obtains
	\[ \mathrm{coInv}_1(W \otimes N\mathfrak{g}) \simeq W \otimes \mathrm{coInv}_1(N\mathfrak{g}) \]
	in $\cate{C}(\mathfrak{h}, K^H)$. It is shown in \cite[Proposition 3.2.7]{Pan07} that $\mathrm{coInv}_1(N\mathfrak{g})$ is the standard resolution of $\CC$ in $\cate{C}(\mathfrak{h}, K^H)$, i.e.\ the relative standard complex. Hence $\mathrm{coInv}_2 (W \otimes \mathrm{coInv}_1(N\mathfrak{g}))$ represents the relative Lie algebra homologies of $W$, as desired.
\end{proof}

\subsection{Some consequences of regularity}\label{sec:consequence-regularity}
Throughout this section, we let $G$ be a connected reductive group, $X$ be a spherical affine homogeneous $G$-space, and $K$ be a spherical reductive subgroup of $G$. We consider a point $x$ of $X$ and a reductive subgroup $K^H$ of $K$ that stabilizes $x$. In particular, $i_x: \mathrm{pt} \hookrightarrow X$ is $K^H$-equivariant.

By taking $H := \Stab_G(x)$ to identify $X$ with $H \backslash G$, the situation is like \eqref{eqn:four-groups}; the extra assumption here is that $H$ and $K$ are both spherical.

For every smooth $K^H$-variety $Y$, let $\cate{D}^{\mathrm{b}}_{K^H, \mathrm{cons}}(Y)$ denote the bounded $K^H$-equivariant derived category of constructible sheaves defined by Bernstein--Lunts \cite{BL94}, equipped with perverse $t$-structure. Assuming $Y$ is affine, we have equivalences
\[ {}^{\mathrm{h}} \cate{D}^{\mathrm{b}}_{\mathrm{rh}}(D_Y, K^H) \simeq \cate{D}^{\mathrm{b}}_{K^H, \mathrm{rh}}(Y) \simeq \cate{D}^{\mathrm{b}}_{K^H, \mathrm{cons}}(Y). \]
\begin{itemize}
	\item The first one is Beilinson's equivalence (Theorem \ref{prop:Beilinson-equiv}).
	\item The second one is the equivariant Riemann--Hilbert correspondence; see \cite[4.2]{BL94} or \cite[Theorem 4.6.2]{Ka08}.
\end{itemize}

\begin{definition}
	Let
	$\begin{tikzcd}
		{}^{\mathrm{h}} \cate{D}^{\mathrm{b}}_{\mathrm{rh}}(D_X, K^H) \arrow[r, shift left, "{i_x^*, i_x^!}"] &
		{}^{\mathrm{h}} \cate{D}^{\mathrm{b}}_{\mathrm{rh}}(D_{\mathrm{pt}}, K^H) \arrow[shift left, l, "{i_{x, *}}"]
	\end{tikzcd}$
	be the functors that correspond to the synonymous functors between $\cate{D}^{\mathrm{b}}_{K^H, \mathrm{cons}}(X)$ and $\cate{D}^{\mathrm{b}}_{K^H, \mathrm{cons}}(\mathrm{pt})$.
\end{definition}

Therefore we have
\begin{align*}
	i_x^! & = i_x^\bullet[-\dim X], \\
	i_x^* & = i_x^![2\dim X] = i_x^\bullet [\dim X] \quad \text{when}\; K^H = \{1\},
\end{align*}
and $i_x^*$ is left adjoint to $i_{x, *}$. For the relation between $i_x^!$ and $i_x^\bullet$, see eg.\ \cite[Theorem 7.1.1]{HTT08}. For the relation between the $i_x^*$ and $i_x^!$ in the non-equivariant case, see eg.\ \cite[p.8]{BL94}.

\begin{theorem}\label{prop:RHom-Loc}
	Let $M$ be an object of the category ${}^{\mathrm{h}}\cate{D}^{\mathrm{b}}_{\mathrm{HC}}(\mathfrak{g}, K)$ introduced in Corollary \ref{prop:regularity-gen}. There is a canonical isomorphism
	\begin{equation*}
		\RHom_{\mathfrak{h}, K^H}(M|_H, \CC) \simeq \RHom_{D_{\mathrm{pt}}, K^H}\left( i_x^! \mathbf{Loc}_X(M)[\dim X], \CC \right).
	\end{equation*}
	When $K^H = \{1\}$, this is also isomorphic to
	\begin{equation*}
		\RHom_{D_{\mathrm{pt}}}\left( i_x^* \mathbf{Loc}_X(M)[-\dim X], \CC \right)
		\simeq \RHom_{D_X}\left( \mathbf{Loc}_X(M), i_{x, *}(\CC)[\dim X] \right).
	\end{equation*}
	All these complexes of $D$-modules are bounded with regular holonomic cohomologies.
\end{theorem}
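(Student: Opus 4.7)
The plan is to derive everything from Proposition \ref{prop:RHom-RHom} by repackaging $i_x^\bullet$ in terms of $i_x^!$ and $i_x^*$ via their defining shifts, and to harvest boundedness and regular holonomicity from Corollary \ref{prop:regularity-gen} together with standard properties of the six-functor formalism transported through the equivariant Riemann--Hilbert correspondence.

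For the first isomorphism, apply Proposition \ref{prop:RHom-RHom} to $M$, viewed as an object of ${}^{\mathrm{h}}\cate{D}^-(\mathfrak{g}, K)$; this yields
\[ \RHom_{\mathfrak{h}, K^H}(M|_H, \CC) \simeq \RHom_{D_{\mathrm{pt}}, K^H}\left( i_x^\bullet \mathbf{Loc}_X(M), \CC \right). \]
The convention $i_x^! = i_x^\bullet[-\dim X]$ recorded immediately before the theorem lets us rewrite the right-hand side as $\RHom_{D_{\mathrm{pt}}, K^H}( i_x^! \mathbf{Loc}_X(M)[\dim X], \CC )$, proving the first claim. When $K^H = \{1\}$, the identity $i_x^* = i_x^![2\dim X] = i_x^\bullet[\dim X]$ turns the same formula into the stated isomorphism with $i_x^*$. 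The final isomorphism in the $K^H = \{1\}$ case follows from the adjunction $(i_x^*, i_{x,*})$ between $\cate{D}^{\mathrm{b}}(\mathrm{pt})$ and $\cate{D}^{\mathrm{b}}(X)$; via Riemann--Hilbert and Beilinson's equivalence (Theorem \ref{prop:Beilinson-equiv}) this transports to an adjunction of $D$-module functors whose compatibility with $i_x^\bullet$ was secured in Proposition \ref{prop:inverse-image-compatibility}, so that
\[ \RHom_{D_{\mathrm{pt}}}\bigl( i_x^*\mathbf{Loc}_X(M)[-\dim X], \CC \bigr) \simeq \RHom_{D_X}\bigl( \mathbf{Loc}_X(M), i_{x,*}(\CC)[\dim X] \bigr). \]

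For the final assertion on boundedness and regular holonomicity: Corollary \ref{prop:regularity-gen} places $\mathbf{Loc}_X(M)$ in ${}^{\mathrm{h}}\cate{D}^{\mathrm{b}}_{\mathrm{rh}+}(D_X, K)$, hence \emph{a fortiori} in ${}^{\mathrm{h}}\cate{D}^{\mathrm{b}}_{\mathrm{rh}}(D_X, K^H)$ after oblivion. The functors $i_x^!$, $i_x^*$, $i_{x, *}$ correspond, by construction, to the synonymous functors on equivariant constructible derived categories, and the latter preserve boundedness and constructibility; transporting back via Riemann--Hilbert gives preservation of bounded regular holonomic complexes. Moreover $i_{x, *}(\CC)$ is the (regular holonomic) skyscraper $D$-module at $x$, so $i_{x,*}(\CC)[\dim X]$ already lies in $\cate{D}^{\mathrm{b}}_{K^H, \mathrm{rh}}(X)$. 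Combining these, all the complexes of $D$-modules named in the statement have bounded regular holonomic cohomologies.

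No substantial obstacle is anticipated: the theorem is essentially a translation of Proposition \ref{prop:RHom-RHom} into the $i_x^!$/$i_x^*$/$i_{x,*}$ conventions, coupled with the regularity result of Corollary \ref{prop:regularity-gen}. The only point requiring genuine care is that the $(i_x^*, i_{x,*})$-adjunction on the $D$-module side is defined by transport through $\varepsilon$ and Riemann--Hilbert; this compatibility, and the commutativity with oblivion implicit in the $K^H = \{1\}$ reduction, are exactly what Propositions \ref{prop:inverse-image-compatibility} and \ref{prop:Infl-epsilon} guarantee.
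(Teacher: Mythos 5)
Your proposal is correct and follows essentially the same route as the paper: the paper's own proof simply invokes Corollary \ref{prop:regularity-gen} for boundedness and regular holonomicity and then applies Proposition \ref{prop:RHom-RHom} together with the shift conventions and the $(i_x^*, i_{x,*})$-adjunction recalled just before the theorem. Your write-up only makes these steps more explicit.
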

\begin{proof}
	By Corollary \ref{prop:regularity-gen}, $\mathbf{Loc}_X(M)$ is in ${}^{\mathrm{h}} \cate{D}^{\mathrm{b}}_{\mathrm{rh}}(D_X, K)$. It remains to apply Proposition \ref{prop:RHom-RHom} and the relations recalled above.
\end{proof}

Note that by \cite[Examples 1.5.23 and 1.6.4]{HTT08}, $i_{x, *}(\CC)$ is the $D_X$-module generated by the Dirac measure at $x$.

In order to apply Theorem \ref{prop:RHom-Loc} to concrete problems, one needs a deeper, quantitative understanding of $\mathbf{Loc}_X(M)$. We only give some crude applications below, which bypasses this issue.

\begin{corollary}\label{prop:Ext-consequence-1}
	Let $M$ be as in Theorem \ref{prop:RHom-Loc}. Then ${}^{\mathrm{h}} \Ext^n_{\mathfrak{h}, K^H}(M|_H, \CC)$ is finite-dimensional for all $n \in \Z$. It vanishes for $|n| \gg 0$.
\end{corollary}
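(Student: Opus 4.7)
The plan is to reduce the statement to a calculation on a point via Theorem \ref{prop:RHom-Loc}, then exploit regularity together with the semisimplicity of algebraic representations of the reductive group $K^H$. Concretely, Theorem \ref{prop:RHom-Loc} provides the canonical isomorphism
\[ \RHom_{\mathfrak{h}, K^H}(M|_H, \CC) \simeq \RHom_{D_{\mathrm{pt}}, K^H}\!\left( i_x^! \mathbf{Loc}_X(M)[\dim X], \CC \right), \]
so it suffices to establish finiteness and bounded amplitude of the cohomologies of the right-hand side.

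First I would trace the object $N := i_x^! \mathbf{Loc}_X(M)[\dim X]$ through the relevant triangulated categories. By Corollary \ref{prop:regularity-gen}, $\mathbf{Loc}_X(M)$ lies in ${}^{\mathrm{h}}\cate{D}^{\mathrm{b}}_{\mathrm{rh}+}(D_X, K)$; applying the oblivion to $K^H$-equivariance gives an object of ${}^{\mathrm{h}}\cate{D}^{\mathrm{b}}_{\mathrm{rh}}(D_X, K^H)$. Via the composite of Beilinson's equivalence (Theorem \ref{prop:Beilinson-equiv}) and the equivariant Riemann--Hilbert correspondence recalled in \S\ref{sec:consequence-regularity}, this transports to a bounded constructible object on $X$; applying $i_x^!$ in the constructible setting yields an object of $\cate{D}^{\mathrm{b}}_{K^H, \mathrm{cons}}(\mathrm{pt})$, and transporting back we find that $N \in {}^{\mathrm{h}}\cate{D}^{\mathrm{b}}_{\mathrm{rh}}(D_{\mathrm{pt}}, K^H)$. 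Since $D_{\mathrm{pt}} = \CC$, each cohomology $\Hm^k(N)$ is simply a finite-dimensional algebraic representation of $K^H$, and only finitely many of them are nonzero.

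Next I would compute the Hom-complex. Because $K^H$ is reductive, the heart of ${}^{\mathrm{h}}\cate{D}(D_{\mathrm{pt}}, K^H)$, namely the category of algebraic $K^H$-representations, is semisimple; consequently every bounded complex in this category is isomorphic in the derived category to the direct sum of its shifted cohomologies, and all higher $\Ext$'s into $\CC$ vanish. Hence
\[ {}^{\mathrm{h}}\Ext^n_{D_{\mathrm{pt}}, K^H}(N, \CC) \simeq \Hom_{K^H}\!\left( \Hm^n(N), \CC \right), \]
which is finite-dimensional for every $n$ and vanishes for $|n| \gg 0$ by boundedness of $N$. Combined with the isomorphism from Theorem \ref{prop:RHom-Loc}, this proves the corollary.

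There is no serious obstacle in this argument: the regularity theorem and its consequences (Corollary \ref{prop:regularity-gen}, Theorem \ref{prop:RHom-Loc}) do all the heavy lifting, and the remaining ingredients are the standard preservation of bounded regular holonomicity by $i_x^!$ (which is formal via Riemann--Hilbert, since $i_x$ is a closed immersion of a point) and the semisimplicity of $\cate{Rep}(K^H)$. The only point requiring mild care is the compatibility of the h-derived functor $i_x^!$ defined in \S\ref{sec:inverse-image} with its avatar on the equivariant constructible derived category, but this is supplied by Proposition \ref{prop:inverse-image-compatibility} together with the shift conventions recalled at the start of \S\ref{sec:consequence-regularity}.
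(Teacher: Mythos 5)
Your reduction to the point via Theorem \ref{prop:RHom-Loc} and the observation that $N := i_x^!\mathbf{Loc}_X(M)[\dim X]$ lies in ${}^{\mathrm{h}}\cate{D}^{\mathrm{b}}_{\mathrm{rh}}(D_{\mathrm{pt}}, K^H)$ are fine, but the final step contains a genuine error. The category ${}^{\mathrm{h}}\cate{D}^{\mathrm{b}}(D_{\mathrm{pt}}, K^H) \simeq \cate{D}^{\mathrm{b}}_{K^H}(\mathrm{pt})$ is \emph{not} the derived category of its heart, so semisimplicity of the heart does not make objects split into shifted cohomologies, nor does it kill higher $\Ext$'s. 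Concretely, for $V, W$ in the heart one has $\Ext^n_{\cate{D}^{\mathrm{b}}_{K^H}(\mathrm{pt})}(V, W) \simeq \Hm^n_{K^H}(\mathrm{pt}; \Hom(V,W))$, and $\Hm^\bullet_{K^H}(\mathrm{pt};\CC) = \Hm^\bullet(BK^H;\CC)$ is a polynomial algebra on even-degree generators, nonzero in infinitely many degrees whenever $\dim K^H > 0$ (e.g.\ $K^H = \Gm$). So your formula ${}^{\mathrm{h}}\Ext^n_{D_{\mathrm{pt}}, K^H}(N,\CC) \simeq \Hom_{K^H}(\Hm^n(N),\CC)$ is false, and the vanishing for $|n|\gg 0$ cannot be deduced from boundedness of $N$: it simply is not true for an arbitrary bounded $N$ on the point (take $N = \CC$). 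The vanishing holds for the specific objects in the corollary only because of their origin on the Lie-algebra side.

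The correct argument separates the two claims, as the paper does. For the vanishing at $|n|\gg 0$, work directly with ${}^{\mathrm{h}}\Ext^n_{\mathfrak{h},K^H}(M|_H,\CC)$: the standard resolution $M|_H \otimes N\mathfrak{h} \to M|_H$ is a \emph{bounded} K-projective resolution (Theorem \ref{prop:N-resolution-gen}), so the complex ${}^{\mathrm{h}}\Hom^\bullet_{\mathfrak{h},K^H}(M|_H\otimes N\mathfrak{h}, \CC)$ is bounded and its cohomology vanishes outside a finite range. For the finiteness in each fixed degree, your reduction to showing $\dim {}^{\mathrm{h}}\Ext^n_{D_{\mathrm{pt}},K^H}(N,\CC)<\infty$ for $N$ bounded regular holonomic is correct, but it must be justified by equivariant constructibility rather than semisimplicity: either via the constructibility of $\mathrm{R}\mathcal{H}{om}$ in $\cate{D}^{\mathrm{b}}_{K^H,\mathrm{cons}}(\mathrm{pt})$ together with the truncated-invariants functors, or via $m$-acyclic resolutions as in \cite[Proposition 2.13.1]{BL95}. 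As written, your proof establishes neither claim.
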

\begin{proof}
	To prove the vanishing for $|n| \gg 0$, simply take the standard resolution of $M$.
	
	To prove the finiteness of ${}^{\mathrm{h}}\Ext^n_{\mathfrak{h}, K^H}(M|_H, \CC)$, it suffices to show that for every object $N$ of ${}^{\mathrm{h}}\cate{D}^{\mathrm{b}}_{\mathrm{rh}}(D_{\mathrm{pt}}, K^H)$, we have
	\[ \dim {}^{\mathrm{h}}\Ext^n_{D_{\mathrm{pt}}, K^H}\left(N, \CC\right) < +\infty. \]

	There are at least two ways to see this. (i) Working in $\cate{D}^{\mathrm{b}}_{K^H, \mathrm{cons}}(\mathrm{pt})$, use the constructibility of $\mathrm{R}\mathscr{H}\mathit{om}$ (see \cite[Desideratum 6.4.1]{Ac21}) together with the functor $\mathrm{Inv}^{\leq m}_{K^H, *}$ of truncated invariants introduced in \cite[Proposition 6.6.7]{Ac21} ($m \gg 0$) to reach $\Ext^n$. (ii) Use $m$-acyclic resolutions and holonomicity to access $\Ext^n$: see \cite[Proposition 2.13.1]{BL95} together with the proof of \cite[Theorem 2.13]{BL95}.
\end{proof}

In particular, the Euler--Poincaré characteristic for branching laws
\begin{equation}\label{eqn:EP-branching}
	\mathrm{EP}_{\mathfrak{h}, K^H}(M|_H, \CC) := \sum_n (-1)^n \dim {}^{\mathrm{h}}\Ext^n_{\mathfrak{h}, K^H}(M|_H, \CC)
\end{equation}
is well-defined, for all object $M$ of ${}^{\mathrm{h}}\cate{D}^{\mathrm{b}}_{\mathrm{HC}}(\mathfrak{g}, K)$.

\begin{theorem}\label{prop:local-index}
	Take $K^H = \{1\}$ in the formalism above. Let $M$ be in ${}^{\mathrm{h}}\cate{D}^{\mathrm{b}}_{\mathrm{HC}}(\mathfrak{g}, K)$ and set $\mathcal{L} := \mathbf{Loc}_{X, \{1\}}(M)$. Define the solution complex $\mathrm{Sol}_X(\mathcal{L}) \in \cate{D}^{\mathrm{b}}_{\mathrm{cons}}(X)$ of $\mathcal{L}$ by \cite[p.118]{HTT08}. Then
	\[ \RHom_{\mathfrak{h}}(M|_H, \CC) \simeq i_x^* \mathrm{Sol}_X(\mathcal{L}). \]
	Consequently, the $\mathrm{EP}_{\mathfrak{h}, \{1\}}(M|_H, \CC)$ in \eqref{eqn:EP-branching} equals the local Euler--Poincaré characteristic
	\[ \chi_x\left( \mathrm{Sol}_X(\mathcal{L}) \right) \]
	of $\mathrm{Sol}_X(\mathcal{L})$ at $x$, which can be expressed in terms of characteristic cycles and Euler obstructions by Kashiwara's local index theorem \cite[Theorem 4.6.7]{HTT08}.
\end{theorem}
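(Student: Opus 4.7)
The plan is to combine the $K^H = \{1\}$ case of Theorem~\ref{prop:RHom-Loc} with the standard Riemann--Hilbert compatibility between the solution functor and the derived $D$-module inverse image. First, I would invoke Theorem~\ref{prop:RHom-Loc} to obtain
\[
	\RHom_{\mathfrak{h}}(M|_H, \CC) \simeq \RHom_{D_{\mathrm{pt}}}\bigl( i_x^* \mathcal{L}[-\dim X],\, \CC \bigr),
\]
and then use the shift identity $i_x^* = i_x^\bullet[\dim X]$ (valid for $K^H = \{1\}$, where $i_x^*$ is the perverse pullback and $i_x^\bullet$ the plain $D$-module pullback) to rewrite the right-hand side as $\RHom_{\CC}(i_x^\bullet \mathcal{L}, \CC) = \mathrm{Sol}_{\mathrm{pt}}(i_x^\bullet \mathcal{L})$, since $D_{\mathrm{pt}} = \CC$.

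Next, since $\mathcal{L}$ is regular holonomic by Corollary~\ref{prop:regularity-gen}, the classical compatibility \cite[Theorem 7.1.1]{HTT08} of $\mathrm{Sol}$ with derived $D$-module inverse image gives
\[
	\mathrm{Sol}_{\mathrm{pt}}(i_x^\bullet \mathcal{L}) \simeq i_x^{-1} \mathrm{Sol}_X(\mathcal{L}),
\]
and $i_x^{-1}$ on constructible sheaves at a point coincides with the $i_x^*$ on constructible complexes defined in \S\ref{sec:consequence-regularity} (no additional shift arises since $\mathrm{pt}$ is a point). Chaining these isomorphisms together yields the first assertion.

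For the second assertion, I would observe that both sides of the isomorphism are bounded complexes of finite-dimensional $\CC$-vector spaces: the left side by Corollary~\ref{prop:Ext-consequence-1}, the right side by boundedness and constructibility of $\mathrm{Sol}_X(\mathcal{L})$. Taking alternating sums of dimensions of cohomologies on both sides then gives
\[
	\mathrm{EP}_{\mathfrak{h}}(M|_H, \CC) = \chi_x\bigl(\mathrm{Sol}_X(\mathcal{L})\bigr),
\]
and the expression of $\chi_x$ in terms of characteristic cycles of $\mathcal{L}$ and local Euler obstructions at $x$ is then a direct quotation of Kashiwara's local index theorem \cite[Theorem 4.6.7]{HTT08}.

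The only delicate point I foresee is the bookkeeping of shifts and conventions: keeping $i_x^*$, $i_x^!$, $i_x^\bullet$ and the dimensional shift $[\dim X]$ at a closed point consistently aligned, and verifying that the Sol-pullback compatibility of \cite[Theorem 7.1.1]{HTT08} is invoked with precisely our normalization. Once these conventions are pinned down, the argument is essentially formal, since all the substantive inputs --- regularity of $\mathcal{L}$, finiteness of the relevant cohomologies, and the algebraic-to-geometric translation of $\RHom_{\mathfrak{h}}$ --- have been assembled in the preceding results.
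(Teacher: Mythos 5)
Your proposal is correct and follows essentially the same route as the paper: both rest on the $K^H=\{1\}$ case of Theorem~\ref{prop:RHom-Loc} together with the Riemann--Hilbert dictionary; the paper phrases the geometric step as $\mathrm{Sol}_X(\mathcal{L})[\dim X]\simeq \mathrm{DR}_X(\mathbb{D}_X\mathcal{L})$ (HTT Prop.\ 4.7.4) combined with the commutation of $\mathrm{DR}$ with $i_x^!$ and duality, whereas you phrase it as the (equivalent) commutation of $\mathrm{Sol}$ with $\Lder i_x^*$. The only nitpick is citational: HTT Theorem 7.1.1 concerns $\mathrm{DR}$ and $f^\dagger$, so your Sol-pullback compatibility should be derived from it together with Prop.\ 4.7.4 rather than quoted directly; the shift bookkeeping you carry out is otherwise exactly right.
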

\begin{proof}
	By \cite[Proposition 4.7.4]{HTT08} we have
	\[ \mathrm{Sol}_X(\mathcal{L})[\dim X] \simeq \mathrm{DR}_X\left( \mathbb{D}_X \mathcal{L} \right) \]
	canonically, where $\mathrm{DR}_X$ (resp.\ $\mathbb{D}_X$) denotes the non-equivariant de Rham functor (resp.\ duality endo-functor) for $X$. Since $\mathrm{DR}_{\mathrm{pt}} = \identity$, the Riemann--Hilbert correspondence leads to
	\[ i_x^* \mathrm{Sol}_X(\mathcal{L}) \simeq \left( \mathbb{D}_{\mathrm{pt}} i_x^! \mathcal{L} \right)[-\dim X] \simeq \mathbb{D}_{\mathrm{pt}} \left( i_x^! \mathcal{L}[\dim X] \right). \]
	
	Now put $K^H = \{1\}$ in Theorem \ref{prop:RHom-Loc} to infer that $i_x^* \mathrm{Sol}_X(\mathcal{L}) \simeq \RHom_{\mathfrak{h}}(M|_H, \CC)$. The remaining assertions follow at once.
\end{proof}

The isomorphism in Theorem \ref{prop:local-index} generalizes \cite[Proposition 10.2]{Li22}.

We now specialize to the case when $M$ is concentrated in degree zero.

\begin{corollary}\label{prop:Ext-consequence-2}
	For every Harish-Chandra $(\mathfrak{g}, K)$-module $V$ and $n \geq 0$, the dimension of
	\[ {}^{\mathrm{h}} \Ext^n_{\mathfrak{h}, K^H}(V|_H, \CC) \simeq \Ext^n_{\mathfrak{h}, K^H}(V|_H, \CC) \]
	is finite. It also equals $\dim \Hm_n(\mathfrak{h}, K^H; V|_H)$.
\end{corollary}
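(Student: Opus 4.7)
The statement is a direct corollary combining three results that have already been established in the excerpt, so the proof proposal is short. The plan is to unwind the three ingredients in order.

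First, I would invoke the Bernstein--Lunts equivalence as recorded in Corollary \ref{prop:BL-equiv-Ext}: since $K^H$ is reductive, the comparison functor $\alpha : \cate{D}(\mathfrak{h}, K^H) \to {}^{\mathrm{h}}\cate{D}(\mathfrak{h}, K^H)$ induces canonical isomorphisms ${}^{\mathrm{h}}\Ext^n_{\mathfrak{h}, K^H}(V|_H, \CC) \simeq \Ext^n_{\mathfrak{h}, K^H}(V|_H, \CC)$ for all $n$. This settles the first displayed isomorphism without any work.

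Next, view $V$ as an object of ${}^{\mathrm{h}}\cate{D}^{\mathrm{b}}_{\mathrm{HC}}(\mathfrak{g}, K)$ concentrated in degree zero. Then Corollary \ref{prop:Ext-consequence-1}, applied to this $M = V$, immediately yields the finite-dimensionality of ${}^{\mathrm{h}}\Ext^n_{\mathfrak{h}, K^H}(V|_H, \CC)$ for every $n$ (as well as the vanishing for $|n|$ large, which is not explicitly requested here but comes for free).

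Finally, for the equality of dimensions, I would cite Proposition \ref{prop:Ext-H}, which provides the canonical isomorphism $\Ext^n_{\mathfrak{h}, K^H}(V|_H, \CC) \simeq \Hm_n(\mathfrak{h}, K^H; V|_H)^*$ under the reductivity hypothesis on $K^H$. Since the left-hand side is already known to be finite-dimensional by the previous step, the right-hand side is finite-dimensional as well, and both have the same dimension. There is no real obstacle: all the nontrivial content---regularity of $\mathbf{Loc}_X(V)$, constructibility estimates on $\Ext$, and the duality between relative Lie algebra homology and $\Ext$ into $\CC$---has been absorbed into the cited results, and this corollary is just their conjunction.
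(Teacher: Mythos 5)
Your proposal is correct and matches the paper's own proof: the first isomorphism is Corollary \ref{prop:BL-equiv-Ext}, the finiteness is Corollary \ref{prop:Ext-consequence-1} applied to $V$ placed in degree zero, and the dimension equality is Proposition \ref{prop:Ext-H}. You are only slightly more explicit than the paper about the finiteness step, which the paper leaves implicit.
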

\begin{proof}
	The isomorphism in the first part is Corollary \ref{prop:BL-equiv-Ext}. The second part follows from Corollary \ref{prop:Ext-H}.
\end{proof}

The finiteness in Corollary \ref{prop:Ext-consequence-2} is also a consequence of a more general result of M.\ Kitagawa \cite[Fact 4.7]{Ki21}; a uniform bound is given in \cite[Corollary 7.17]{Ki21}. In the case $K^H = \{1\}$, the finiteness of $\dim \Hm_n(\mathfrak{h}; V|_H)$ is shown in \cite[Proposition 4.2.2]{AGKL16} when $K$ is a symmetric subgroup in good position relative to $H$.

Likewise, Proposition \ref{prop:H-coInv} and Corollary \ref{prop:regularity-gen} lead to the following statement for relative Lie algebra homologies.

\begin{proposition}\label{prop:H-Loc}
	Let $V$ be a Harish-Chandra $(\mathfrak{g}, K)$-module. There are canonical isomorphisms
	\begin{equation*}
		\Hm_n(\mathfrak{h}, K^H; V|_H) \simeq \Hm^{- n + \dim X}\Lder\left( \mathrm{coInv}^{\CC, K^H}_{\CC, \{1\}} \right) \left(i_x^! \mathbf{Loc}_X(V)\right)
	\end{equation*}
	for all $n \in \Z$; when $K^H = \{1\}$, it is also isomorphic to
	\begin{equation*}
		\Hm^{-n - \dim X} \left(i_x^* \mathbf{Loc}_X(V)\right).
	\end{equation*}
\end{proposition}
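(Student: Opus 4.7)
The plan is to deduce Proposition \ref{prop:H-Loc} from Proposition \ref{prop:H-coInv} by a bookkeeping of shifts, once we know $\mathbf{Loc}_X(V)$ is regular holonomic so that $i_x^!$ and $i_x^*$ (defined via Riemann--Hilbert) make sense on it. The substantive inputs are already in place; what remains is just to track the degree shifts.

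First I would observe that by Corollary \ref{prop:regularity-gen}, $\mathbf{Loc}_X(V)$ lies in ${}^{\mathrm{h}}\cate{D}^{\mathrm{b}}_{\mathrm{rh}+}(D_X,K)$, hence restricts (via oblivion to $K^H$) to a regular holonomic object on which both $i_x^!$ and $i_x^*$ are defined as functors between the equivariant regular holonomic derived categories matching the constructible ones. By the standard relations
\[ i_x^! = i_x^\bullet[-\dim X], \qquad i_x^* = i_x^\bullet[\dim X] \;\text{(when $K^H = \{1\}$)}, \]
recorded in \S\ref{sec:consequence-regularity}, we can freely swap $i_x^\bullet$ for shifted versions of $i_x^!$ or $i_x^*$.

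Next I would invoke Proposition \ref{prop:H-coInv}, which gives the canonical isomorphism
\[ \Hm_n(\mathfrak{h}, K^H; V|_H) \simeq \Hm^{-n} \Lder\!\left( \mathrm{coInv}^{\CC, K^H}_{\CC, \{1\}} \right)\!\left( i_x^\bullet \mathbf{Loc}_X(V) \right). \]
Substituting $i_x^\bullet \mathbf{Loc}_X(V) \simeq \bigl( i_x^! \mathbf{Loc}_X(V) \bigr)[\dim X]$ and using that $\Lder\!\left( \mathrm{coInv}^{\CC, K^H}_{\CC, \{1\}} \right)$, being triangulated, commutes with the translation functor, the right-hand side becomes
\[ \Hm^{-n}\!\left( \Lder\!\left( \mathrm{coInv}^{\CC, K^H}_{\CC, \{1\}} \right)\!\left( i_x^! \mathbf{Loc}_X(V) \right)[\dim X] \right) \simeq \Hm^{-n + \dim X} \Lder\!\left( \mathrm{coInv}^{\CC, K^H}_{\CC, \{1\}} \right)\!\left( i_x^! \mathbf{Loc}_X(V) \right), \]
which is the first claimed isomorphism.

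For the second, under the assumption $K^H = \{1\}$ the co-invariants functor $\mathrm{coInv}^{\CC, \{1\}}_{\CC, \{1\}}$ is the identity, so $\Lder\!\left( \mathrm{coInv}^{\CC, \{1\}}_{\CC, \{1\}} \right)$ is trivial, and Proposition \ref{prop:H-coInv} collapses to $\Hm_n(\mathfrak{h}; V|_H) \simeq \Hm^{-n}\bigl( i_x^\bullet \mathbf{Loc}_X(V) \bigr)$. Now replacing $i_x^\bullet$ by $i_x^*[-\dim X]$ yields $\Hm^{-n - \dim X}\bigl( i_x^* \mathbf{Loc}_X(V) \bigr)$, as required. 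The only non-trivial point is ensuring the $i_x^!$ and $i_x^*$ defined by Riemann--Hilbert actually agree with the shifts of $i_x^\bullet$ on regular holonomic objects in the equivariant setting, which is exactly the content of the conventions set up in \S\ref{sec:consequence-regularity}; no further obstacle is expected.
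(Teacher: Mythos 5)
Your proposal is correct and follows exactly the route the paper intends: the paper derives Proposition \ref{prop:H-Loc} directly from Proposition \ref{prop:H-coInv} together with Corollary \ref{prop:regularity-gen} (so that $i_x^!$ and $i_x^*$ are defined on the regular holonomic object $\mathbf{Loc}_X(V)$), and the rest is the shift bookkeeping $i_x^! = i_x^\bullet[-\dim X]$, $i_x^* = i_x^\bullet[\dim X]$ that you carry out. Your write-up just makes explicit what the paper leaves as a one-line remark.
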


To better understand the effect of $\Hm^{-n + \dim X} \Lder\left( \mathrm{coInv}^{\CC, K^H}_{\CC, \{1\}} \right)$, we remark that on the constructible side, it matches the composition
\[ \cate{D}^{\mathrm{b}}_{K^H, \mathrm{cons}}(\mathrm{pt}) \xrightarrow{\mathrm{Inv}^{\geq m}_{K^H, !}} {}^{\mathrm{p}} \cate{D}^-_{\mathrm{cons}}(\mathrm{pt})^{\geq m} \xrightarrow{{}^{\mathrm{p}} \Hm^{-n \pm \dim X}} \left\{\text{f.d.}\; \CC \text{-vector spaces} \right\} \]
for $m \leq -n + \dim X$, where $\mathrm{Inv}^{\geq m}_{K^H, !}$ stands for the functor of truncated co-invariants in \cite[Proposition 6.6.7]{Ac21}. This can be deduced from Proposition \ref{prop:Infl-epsilon} and adjunction. For a detailed discussion, see \cite[\S 6.6]{Ac21}.

From this, the finiteness of $\dim \Hm_n(\mathfrak{h}, K^H; V|_H)$ can be deduced directly from Proposition \ref{prop:H-Loc}.

\subsection{The monodromic setting}\label{sec:monodromic}
Let $G$ and $X = H \backslash G$ be as in \S\ref{sec:consequence-regularity}, so $H$ is reductive. In addition, we fix a character of Lie algebra
\[ \chi: \mathfrak{h} \to \CC. \]
It must factor through the Lie algebra of a torus quotient, say $S = H/\underline{H}$ where $\underline{H} \lhd H$. Accordingly, there is a morphism of $G$-varieties
\[ \pi: \tilde{X} := \underline{H} \backslash G \to H \backslash G = X. \]
Note that $S$ acts on the left of $\tilde{X}$, commuting with $G$ and makes $\pi$ into an $S$-torsor. Since $S$ is a torus, $\pi$ is even Zariski-locally trivial. Also note that $\tilde{X}$ is affine since $X$ is. In \cite[2.2]{BL95}, $\pi$ is said to make $X$ into an \emph{$S$-monodromic $G$-variety}.

The character $\chi$ corresponds to an element of $\mathfrak{s}^*$. Let $\mathfrak{m}_\chi \subset \Sym(\mathfrak{s})$ be the corresponding maximal ideal; its image in $\widetilde{\mathscr{D}}_X := (\pi_* \mathscr{D}_{\tilde{X}})^S$ is central. Recall the following notion from \cite[p.24]{BB93} (see also \cite{Li22}).

\begin{definition}
	Set $\mathscr{D}_{X, \chi} :=  \widetilde{\mathscr{D}}_X / \mathfrak{m}_\chi \widetilde{\mathscr{D}}_X$ be the sheaf of algebras of twisted differential operators (TDO's) attached to $\chi$. Put $D_{X, \chi} := \Gamma(X, \mathscr{D}_{X, \chi})$.
\end{definition}

By trivializing $\pi$ locally over $X$, we see that $\mathscr{D}_{X, \chi}$ is a locally trivial sheaf of TDO's. Taking $\chi=0$ reverts to the untwisted version $\mathscr{D}_X$.

\begin{remark}\label{rem:monodromic-inverse-image}
	For later use, we remark that taking inverse images realizes an equivalence from $\mathscr{D}_{X, \chi}\dcate{Mod}$ to the category of $(S, \chi)$-monodromic $\mathscr{D}_{\tilde{X}}$-modules, the latter being a twisted version of $S$-equivariance; see \cite[(2.3) and Proposition 2.8]{Li22} or \cite[1.8.10 Lemma]{BB93} for details. A further inverse image via $G \to \underline{H} \backslash G = \tilde{X}$ brings us to $(H^{\mathrm{op}}, \chi)$-monodromic $\mathscr{D}_G$-modules.
\end{remark}

The actions of $G$ and $S$ commute, hence we obtain $G$-equivariant homomorphisms $U(\mathfrak{g}) \to \widetilde{\mathscr{D}}_X \to \mathscr{D}_{X, \chi}$. Since $X$ is affine, we may and will work with $D_{X, \chi}$-modules instead. The previous constructions now become
\[ D_{X, \chi} \simeq D_{\tilde{X}}^S / \mathfrak{m}_\chi D_{\tilde{X}}^S. \]
We obtain a natural $G$-equivariant homomorphism $j: U(\mathfrak{g}) \to D_{X, \chi}$.

Given a reductive subgroup $K \subset G$, there is also a monodromic version of Beilinson's equivalence (Theorem \ref{prop:BL-equiv}), identifying ${}^{\mathrm{h}} \cate{D}^{\mathrm{b}}(D_{X, \chi}, K)$ with the monodromic equivariant derived category $\cate{D}^{\mathrm{b}}_{K, \chi}(X)$ of $D$-modules; see \cite[2.14]{BL95}.

\begin{remark}
	Since we fixed $\chi$, there is no need to consider $(\tilde{\mathscr{D}}_X, F|K)$-modules as in \cite[2.5]{BL95} where $F := K \times S$. This corresponds to taking $I = \mathfrak{m}_\chi$ in the formalism of \cite[2.3]{BL95}.
\end{remark}

In the context of TDO's arising from monodromic structures, the notions of holonomicity and regularity can be found in \cite[\S 7.14]{Ka08}\footnote{The author is indebted to Masatoshi Kitagawa for clarifications on this point.}. They reduce to the untwisted theory by taking inverse images via $\pi$. We define ${}^{\mathrm{h}} \cate{D}^{\mathrm{b}}_{\mathrm{rh}+}(D_{X, \chi}, K)$ accordingly, cf.\ Definition \ref{def:rh-plus}.

The derived monodromic localization functor
\begin{equation}
	\mathbf{Loc}_{X, \chi} = \mathbf{Loc}_{X, K, \chi}: {}^{\mathrm{h}} \cate{D}(\mathfrak{g}, K) \to {}^{\mathrm{h}} \cate{D}(D_{X, \chi}, K)
\end{equation}
can still be defined, with amplitude in $[-\dim G, 0]$.

The following construction is needed for the monodromic counterpart of Theorem \ref{prop:regularity}. Define the commutative algebra $\mathcal{Z}(\tilde{X})$ for the $S \times G$-variety $\tilde{X}$ by applying Definition \ref{def:ZX}. There is a homomorphism
\[ \Sym(\mathfrak{s}) \otimes \mathcal{Z}(\mathfrak{g}) \simeq \mathcal{Z}(\mathfrak{s} \times \mathfrak{g}) \to \mathcal{Z}(\tilde{X}) \subset D_{\tilde{X}}^{S \times G} \subset D_{\tilde{X}}^S \]
of algebras. Define
\begin{equation*}
	\mathcal{Z}_\chi(X) := \mathcal{Z}(\tilde{X}) / \mathfrak{m}_\chi \mathcal{Z}(\tilde{X}).
\end{equation*}
We obtain homomorphisms
\[ \mathcal{Z}(\mathfrak{g}) \to \mathcal{Z}_\chi(X) \to D_{X, \chi}^G \subset D_{X, \chi}. \]

Thus $\mathcal{Z}_\chi(X)$ acts on the right of $\mathbf{Loc}_{X, \chi}$ through $\mathcal{Z}_\chi(X) \to D_{X, \chi}^G$, as in the non-twisted case \eqref{eqn:Z-action}.

\begin{lemma}\label{prop:Knop-monodromic}
	The algebra $\mathcal{Z}_\chi(X)$ is commutative. Moreover, $D_{X, \chi}$ is projective over $\mathcal{Z}_\chi(X)$ and $\mathcal{Z}_\chi(X)$ is finitely generated over $\mathcal{Z}(\mathfrak{g})$, both as left and right modules.
\end{lemma}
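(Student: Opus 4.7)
The plan is to reduce the three assertions to Knop's structure theorem \cite{Kn94} applied to $\tilde{X}$ regarded as an $S \times G$-variety, and then descend by taking $S$-invariants and quotienting by $\mathfrak{m}_\chi$. The essential point is that the sphericity of $X = H \backslash G$ as a $G$-variety upgrades to sphericity of $\tilde{X}$ as an $S \times G$-variety.

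The first step is to verify that $\tilde{X} = \underline{H} \backslash G$ is an affine spherical homogeneous $S \times G$-variety. Affineness follows from Matsushima's criterion once we know $\underline{H}$ is reductive: since $H$ is reductive and $S = H/\underline{H}$ is a torus, we have $[H,H] \subset \underline{H}$, and $\underline{H}/[H,H]$ is a subgroup of the torus $H/[H,H]$, whence $\underline{H}$ is reductive. Homogeneity under $S \times G$ is immediate. For sphericity, pick a Borel $B_G \subset G$; as $S$ is its own Borel, $S \times B_G$ is a Borel of $S \times G$, and its orbits on $\tilde{X}$ are $\underline{H} \cdot h g B_G = H g B_G$ for $h \in H$ and $g \in G$, so they correspond bijectively to $B_G$-orbits on $X$. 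Sphericity of $X$ thus yields an open $(S \times B_G)$-orbit on $\tilde{X}$.

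Having established sphericity, Knop's results give that $\mathcal{Z}(\tilde{X}) = D_{\tilde{X}}^{S \times G}$ is commutative, finitely generated as a module over $\mathcal{Z}(\mathfrak{s} \times \mathfrak{g}) \simeq \Sym(\mathfrak{s}) \otimes \mathcal{Z}(\mathfrak{g})$, and moreover $D_{\tilde{X}}$ is free as both a left and a right $\mathcal{Z}(\tilde{X})$-module. Since $S$ is reductive and acts on $D_{\tilde{X}}$ commuting with $\mathcal{Z}(\tilde{X})$ on both sides, the $S$-isotypic decomposition $D_{\tilde{X}} = \bigoplus_{\lambda} D_{\tilde{X}}[\lambda]$ is a decomposition of $\mathcal{Z}(\tilde{X})$-bimodules. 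Each summand is a direct summand of a free $\mathcal{Z}(\tilde{X})$-module, hence projective on either side; in particular $D_{\tilde{X}}^S = D_{\tilde{X}}[0]$ is projective over $\mathcal{Z}(\tilde{X})$ from the left and from the right.

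To conclude I would descend along $\mathfrak{m}_\chi \subset \Sym(\mathfrak{s}) \subset \mathcal{Z}(\tilde{X})$. Commutativity of $\mathcal{Z}_\chi(X) = \mathcal{Z}(\tilde{X})/\mathfrak{m}_\chi\mathcal{Z}(\tilde{X})$ is inherited from $\mathcal{Z}(\tilde{X})$. Since $\mathfrak{m}_\chi D_{\tilde{X}}^S = \mathfrak{m}_\chi \mathcal{Z}(\tilde{X}) \cdot D_{\tilde{X}}^S$, we obtain
\[
D_{X,\chi} \;=\; D_{\tilde{X}}^S \big/ \mathfrak{m}_\chi D_{\tilde{X}}^S \;\simeq\; D_{\tilde{X}}^S \dotimes{\mathcal{Z}(\tilde{X})} \mathcal{Z}_\chi(X),
\]
and projectivity of $D_{X,\chi}$ over $\mathcal{Z}_\chi(X)$ on either side follows by base change from the previous step. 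Finally, $\mathcal{Z}_\chi(X) \simeq \mathcal{Z}(\tilde{X}) \dotimes{\Sym(\mathfrak{s})} \CC_\chi$ is finitely generated over $\mathcal{Z}(\mathfrak{g}) \simeq (\Sym(\mathfrak{s}) \otimes \mathcal{Z}(\mathfrak{g})) \dotimes{\Sym(\mathfrak{s})} \CC_\chi$ by base change from Knop's finite generation statement. The only nontrivial ingredient is the sphericity upgrade for $\tilde{X}$, which I expect to be the crux; everything else is formal descent through Knop's theorem.
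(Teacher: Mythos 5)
Your proposal is correct and follows essentially the same route as the paper: invoke Knop's theorem for the spherical affine homogeneous $S \times G$-space $\tilde{X}$, pass to $D_{\tilde{X}}^S$ as a direct $\mathcal{Z}(\tilde{X})$-summand via the $S$-isotypic (cocharacter-lattice) decomposition, and then base-change along $\mathfrak{m}_\chi$. Your explicit check that sphericity and affineness of $X$ lift to $\tilde{X}$ (via the bijection between $(S \times B_G)$-orbits on $\tilde{X}$ and $B_G$-orbits on $X$, and the reductivity of $\underline{H}$) is a point the paper leaves implicit, and is a worthwhile addition.
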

\begin{proof}
	We know that $\mathcal{Z}(\tilde{X})$ is commutative, hence so is $\mathcal{Z}_\chi(X)$.
	
	Next, the projectivity and finite generation as left and right modules hold on the level of
	\[ \mathcal{Z}(\mathfrak{s} \times \mathfrak{g}) \to \mathcal{Z}(\tilde{X}) \hookrightarrow D_{\tilde{X}} \]
	by \cite{Kn94}. Note that $\mathcal{Z}(\tilde{X})$ is actually included in $D_{\tilde{X}}^S$. We claim that $D_{\tilde{X}}^S$ is also projective as left and right $\mathcal{Z}(\tilde{X})$-modules.
	
	Since $S$ is reductive, acting algebraically on $D_{\tilde{X}}$ and trivially on $\mathcal{Z}(\tilde{X})$, one can realize $D_{\tilde{X}}^S$ canonically as a direct $\mathcal{Z}(\tilde{X})$-summand of $D_{\tilde{X}}$, thus the claim follows at once. This can also be seen by identifying $S$-action on $D_{\tilde{X}}$ with a grading by the cocharacter lattice of $S$.
	
	By the claim, we may tensor the homomorphisms
	\[ \mathcal{Z}(\mathfrak{s} \times \mathfrak{g}) \to \mathcal{Z}(\tilde{X}) \hookrightarrow D_{\tilde{X}}^S \]
	with $\Sym(\mathfrak{s}) / \mathfrak{m}_\chi$ over $\Sym(\mathfrak{s})$ on the left or right. The required projectivity and finite generation persist.
\end{proof}

\begin{theorem}\label{prop:regularity-monodromic}
	Suppose that $X$ is affine and spherical, $K$ is a reductive spherical subgroup of $G$, and $V$ is a Harish-Chandra $(\mathfrak{g}, K)$-module. Then $\mathbf{Loc}_{X, \chi}(V)$ lies in ${}^{\mathrm{h}} \cate{D}^{\mathrm{b}}_{\mathrm{rh}+}(D_{X, \chi}, K)$.
\end{theorem}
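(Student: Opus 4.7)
The plan is to mirror the three-step strategy used for Theorem~\ref{prop:regularity}. First I would upgrade the regularity criterion Theorem~\ref{prop:reg-criterion} to the monodromic setting: any $D_{X,\chi}$-module that is (R1) finitely generated over $D_{X,\chi}$, (R2) carries a $K$-equivariant structure compatible with $j: U(\mathfrak{k}) \to D_{X,\chi}$, and (R3) is locally $\mathcal{Z}(\mathfrak{g})$-finite via $\mathcal{Z}(\mathfrak{g}) \to D_{X,\chi}$, ought to be regular holonomic with $\mathrm{Ch} \subset \bm{\mu}^{-1}(\mathcal{N} \cap \mathfrak{k}^\perp)$ (where characteristic varieties for a $\mathscr{D}_{X,\chi}$-module are defined via the inverse image to $\tilde{X}$ as in Remark~\ref{rem:monodromic-inverse-image}). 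Such a criterion is already available in \cite{Li22}, or else can be obtained by pulling back to the affine $S$-torsor $\pi: \tilde{X} \to X$: an $(S,\chi)$-monodromic $D_{\tilde{X}}$-module satisfying (R1)--(R3) becomes, after pullback, an $(S \times K)$-equivariant coherent $D_{\tilde{X}}$-module with locally $\mathcal{Z}(\mathfrak{s} \times \mathfrak{g})$-finite action (since $\Sym(\mathfrak{s})$ acts via the character $\chi$), whence Theorem~\ref{prop:reg-criterion} applies on $\tilde{X}$; regularity and the nilpotent bound then descend.

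Granted the criterion, I reduce to $K = \{1\}$ by commuting $\mathbf{Loc}_{X,\chi}$ with oblivion. This commutativity is the monodromic analogue of Proposition~\ref{prop:Loc-oblv} and follows by the same argument (standard resolutions via Theorem~\ref{prop:N-resolution-gen} plus Corollary~\ref{prop:change-group-K-proj}). Condition (R2) is then immediate from the construction of the h-derived functor together with the commutative squares \eqref{eqn:oblv-comm}. For (R1), I pick a free resolution $\cdots \to U(\mathfrak{g})^{\oplus n_1} \to U(\mathfrak{g})^{\oplus n_0} \to V \to 0$ so that $D_{X,\chi} \otimesL[U(\mathfrak{g})] V$ is represented by a complex of free $D_{X,\chi}$-modules, and invoke left Noetherianity of $D_{X,\chi}$, which follows by trivializing $\pi$ Zariski-locally, or equivalently by passing to $D_{\tilde{X}}^S$ (taking $S$-invariants of the Noetherian ring $D_{\tilde{X}}$ preserves the property since $S$ is reductive).

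The main work is (R3), and I would follow \S\ref{sec:end-of-regularity} essentially verbatim, with $\mathcal{Z}(X)$ replaced by $\mathcal{Z}_\chi(X)$ and $D_X$ by $D_{X,\chi}$. By Lemma~\ref{prop:Zg-finite-ses} (applied to $\mathcal{Z}(\mathfrak{g})$) and a filtration of $V$, reduce to the case that $V$ has infinitesimal character $\chi'$, and then, by dimension shifting along the long exact $\Tor$ sequence, to $V = M_{\chi'}$ as in \eqref{eqn:Mchi}. For this case, the change-of-rings computation of Lemma~\ref{prop:Mchi-Zg-finiteness} goes through after the triangle
\[
\begin{tikzcd}[column sep=small, row sep=small]
 & D_{X,\chi} & \\
\mathcal{Z}_\chi(X) \arrow[hookrightarrow, ru, "\text{proj.}"] & & U(\mathfrak{g}) \arrow[lu, "j"'] \\
 & \mathcal{Z}(\mathfrak{g}) \arrow[lu] \arrow[hookrightarrow, ru, "\text{free}"'] &
\end{tikzcd}
\]
is set up: the top inclusion is projective on both sides and the lower-right is free by Kostant, both by Lemma~\ref{prop:Knop-monodromic}; and since $\mathcal{Z}_\chi(X)$ is commutative (Lemma~\ref{prop:Knop-monodromic}), $\mathcal{Z}(\mathfrak{g})$ acts on each $\Tor_n^{\mathcal{Z}(\mathfrak{g})}(\mathcal{Z}_\chi(X), N_{\chi'})$ through $\chi'$ by the same double computation of the Tor action (via $z$ vs.\ $\chi'(z)\cdot\identity$). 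Lemma~\ref{prop:tensor-Kostant} then delivers local $\mathcal{Z}(\mathfrak{g})$-finiteness of $\Tor^{U(\mathfrak{g})}_n(D_{X,\chi}, M_{\chi'})$, closing the argument.

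The hard part is not so much (R3) itself, given Lemma~\ref{prop:Knop-monodromic}, but pinning down the monodromic version of the regularity criterion with the correct characteristic variety bound. The most efficient route is to reduce to the untwisted Theorem~\ref{prop:reg-criterion} on $\tilde{X}$; one has to check that the moment map for $\tilde{X}$ (into $(\mathfrak{s} \times \mathfrak{g})^*$) together with the fact that $\Sym(\mathfrak{s})$ acts through the scalar $\chi$ forces the $\mathfrak{s}^*$-component of the characteristic variety to lie in $\{0\}$, so that the pullback to $\tilde{X}$ of $\mathrm{Ch}(\Hm^n L)$ is contained in $\bm{\mu}_{\tilde{X}}^{-1}(\mathcal{N}_{\mathfrak{s} \times \mathfrak{g}} \cap (\mathfrak{s} \times \mathfrak{k})^\perp)$, which descends to the claimed bound on $X$.
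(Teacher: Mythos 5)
Your overall strategy --- verify (R1)--(R3) after reducing to $K=\{1\}$ by commuting $\mathbf{Loc}_{X,\chi}$ with oblivion, then run the change-of-rings argument of \S\ref{sec:end-of-regularity} with $\mathcal{Z}_\chi(X)$ and the projectivity/finiteness statements of Lemma \ref{prop:Knop-monodromic} in place of Knop's freeness --- is exactly the paper's. For the key criterion the paper also goes through \cite{Li22}, but it ascends past $\tilde{X}$ all the way to $G$ via Remark \ref{rem:monodromic-inverse-image}, where the cohomologies become finitely generated, $(H^{\mathrm{op}}\times K, \chi\otimes\mathrm{triv})$-monodromic, locally $\mathcal{Z}(\mathfrak{g}\times\mathfrak{g})$-finite $D_G$-modules covered by the monodromic criterion [Li22, Theorem 5.6] (reductivity of $H$ guaranteeing that $\chi$ is trivial on the nilradical, as required there); the characteristic-variety bound is obtained as in [Li22, Proposition 3.4]. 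So your primary route (``such a criterion is already available in \cite{Li22}'') is the one the paper takes.

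The step I would not accept as written is your fallback derivation of the criterion from the untwisted Theorem \ref{prop:reg-criterion} applied on $\tilde{X}$ with the group $S\times K$. You assert that the pullback of a $D_{X,\chi}$-module becomes an $(S\times K)$-\emph{equivariant} $D_{\tilde{X}}$-module, but for $\chi\neq 0$ an $(S,\chi)$-monodromic module is only \emph{weakly} $S$-equivariant: the discrepancy between the differentiated $S$-action and the action through $j$ is the scalar $\chi$, not zero, so hypothesis (R2) of Theorem \ref{prop:reg-criterion} fails for the $S$-factor. Nor can you discard $S$ and apply the criterion with $K$ alone, because $\tilde{X}=\underline{H}\backslash G$ is in general not spherical as a $G$-variety (e.g.\ $\tilde{X}=G$ when $H$ is a maximal torus and $\chi$ is nontrivial); sphericity holds only for the $S\times G$-action. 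Hence one genuinely needs a twisted form of the criterion, which is precisely what [Li22, Theorem 5.6] supplies. Your final observation --- that the monodromic condition forces the symbols of the $\mathfrak{s}$-vector fields to vanish on the characteristic variety, so its $\mathfrak{s}^*$-component is $\{0\}$ and the bound $\mathrm{Ch}\subset\bm{\mu}^{-1}(\mathcal{N}\cap\mathfrak{k}^\perp)$ descends --- is correct, but it addresses only the characteristic-variety estimate and does not repair the equivariance hypothesis needed to launch the untwisted criterion.
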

\begin{proof}	
	Recall the strategy in the non-monodromic case in \S\S\ref{sec:regularity-criterion}--\ref{sec:end-of-regularity}. The key criterion from \cite{Li22} applies in the monodromic setting. In fact, after ascending to $G$ by Remark \ref{rem:monodromic-inverse-image}, it boils down to showing that a finitely generated, $(H^{\mathrm{op}} \times K, \chi \otimes \mathrm{triv})$-monodromic and locally $\mathcal{Z}(\mathfrak{g} \times \mathfrak{g})$-finite $D_G$-module is regular holonomic, and this is indeed covered by \cite[Theorem 5.6]{Li22}. Note that $\chi$ is required to be trivial on the nilpotent radical of $\mathfrak{h}$ in \textit{loc.\ cit.}, but $H$ is reductive here.

	The other ingredients in the proof carry over, by using Lemma \ref{prop:Knop-monodromic}. The bound on the characteristic varieties are obtained in the same way as \cite[Proposition 3.4]{Li22}.
\end{proof}

Likewise, Corollary \ref{prop:regularity-gen} and Proposition \ref{prop:ZX-locally-finite} also admit monodromic versions.

Given the base-point $x$ of $X$, the inclusion map $i_x$ gives rise to an inclusion of $S$-monodromic $K^H$-varieties, given by the Cartesian square
\[\begin{tikzcd}
	\pi^{-1}(x) \arrow[r] \arrow[d] & \tilde{X} \arrow[d, "\pi"] \\
	\mathrm{pt} \arrow[r, "{i_x}"'] & X.
\end{tikzcd}\]

Next, suppose that
\begin{itemize}
	\item $K^H$ is a reductive subgroup of $K$ that fixes $x$,
	\item a character $K^H \to \Gm$ is given, whose derivative coincides with $\chi|_{\mathfrak{k}^H}$.
\end{itemize}
Therefore $\chi$ can be viewed as a $(\mathfrak{h}, K^H)$-module.

Note that $D_{\mathrm{pt}, \chi} \simeq \Sym(\mathfrak{s}) / \mathfrak{m}_\chi$. Define the $K^H$-equivariant $D_{\mathrm{pt}, \chi}$-module $\CC_\chi$ by letting $\mathfrak{s}$ (resp.\ $K^H$) act on $\CC$ through $\chi$ (resp.\ the given $K^H \to \Gm$). Below is the monodromic counterpart of Proposition \ref{prop:RHom-RHom}.

\begin{proposition}
	For all objects $M$ of ${}^{\mathrm{h}} \cate{D}^-(\mathfrak{g}, K)$, there are canonical isomorphisms in $\cate{D}^+(\CC)$:
	\[ \RHom_{\mathfrak{h}, K^H}(M|_H, \chi) \simeq \RHom_{D_{\mathrm{pt}, \chi}, K^H}\left( i_x^\bullet( \mathbf{Loc}_{X, \chi}(M) ), \CC_\chi \right) \]
	where $i_x^\bullet$ is the h-derived inverse image for $K^H$-equivariant monodromic $D$-modules.
\end{proposition}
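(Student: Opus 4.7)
The plan is to run the argument of Proposition \ref{prop:RHom-RHom} in parallel, replacing the untwisted co-invariants $\mathrm{coInv}^{U(\mathfrak{h}), K^H}_{\CC, K^H}$ by its $\chi$-twisted analogue. Concretely, define the dg-functor
\[
\mathrm{coInv}_\chi : {}^{\mathrm{h}}\cate{C}(\mathfrak{h}, K^H) \to {}^{\mathrm{h}}\cate{C}(D_{\mathrm{pt},\chi}, K^H)
\]
sending a weak $(\mathfrak{h}, K^H)$-module $V$ to its quotient by the subspace spanned by $\eta v - \chi(\eta)v$ for $\eta\in\mathfrak{h}$, $v\in V$; the residual action of $\mathfrak{s}$ is through $\chi$, and $K^H$ acts diagonally through its given character and its action on $V$. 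This $\mathrm{coInv}_\chi$ is left adjoint to the evident inflation ${}^{\mathrm{h}}\cate{C}(D_{\mathrm{pt},\chi}, K^H) \to {}^{\mathrm{h}}\cate{C}(\mathfrak{h}, K^H)$ (which lets $\mathfrak{h}$ act through $\mathfrak{h}\to\mathfrak{s}\xrightarrow{\chi}\CC$), and both adjoints upgrade to dg-functors exactly as in Proposition \ref{prop:Inv-coInv}. In particular $\mathrm{coInv}_\chi$ preserves K-projectives (Proposition \ref{prop:Inv-coInv-K}).

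Step one: establish the monodromic analogue of Lemma \ref{prop:Loc-coInv-prep}, namely that
\[
\begin{tikzcd}[column sep=large]
{}^{\mathrm{h}} \cate{C}(\mathfrak{g}, K^H) \arrow[d] \arrow[r, "{D_{X,\chi} \dotimes{U(\mathfrak{g})} (\cdot)}"] & {}^{\mathrm{h}} \cate{C}(D_{X,\chi}, K^H) \arrow[d, "{\CC \dotimes{\mathscr{O}_x} (\cdot)_x}"] \\
{}^{\mathrm{h}} \cate{C}(\mathfrak{h}, K^H) \arrow[r, "{\mathrm{coInv}_\chi}"'] & {}^{\mathrm{h}} \cate{C}(D_{\mathrm{pt},\chi}, K^H)
\end{tikzcd}
\]
commutes canonically. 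On the underlying vector spaces this is the monodromic version of Proposition \ref{prop:Loc-coinv} (alluded to in the paper as Proposition \ref{prop:coinv-Loc-monodromic}), which identifies the geometric fibre of $D_{X,\chi}\dotimes{U(\mathfrak{g})}V$ at $x$ with $V / \langle \eta v - \chi(\eta)v\rangle$. One checks this identification is $\mathfrak{h}$-linear (acting through $\chi$ on the right), $K^H$-equivariant, and compatible with the contractions $i_\xi$ because the latter are $\mathfrak{g}$-linear and the isomorphism is natural in $V$.

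Step two: take left h-derived functors. By Lemma \ref{prop:restriction-lemma} the restriction ${}^{\mathrm{h}}\cate{C}(\mathfrak{g},K)\to {}^{\mathrm{h}}\cate{C}(\mathfrak{h}, K^H)$ preserves K-projectives; by Corollary \ref{prop:change-group-K-proj} the oblivion ${}^{\mathrm{h}}\cate{C}(\mathfrak{g},K)\to{}^{\mathrm{h}}\cate{C}(\mathfrak{g}, K^H)$ does too. Applying Corollary \ref{prop:derived-functor-composite} to both ways around the diagram yields a canonical isomorphism in ${}^{\mathrm{h}}\cate{D}^-(D_{\mathrm{pt},\chi}, K^H)$
\[
i_x^\bullet\bigl(\mathbf{Loc}_{X,\chi}(M)\bigr) \simeq \Lder(\mathrm{coInv}_\chi)(M|_H).
\]

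Step three: apply the adjunction $\mathrm{coInv}_\chi \dashv \mathrm{Infl}$ together with the fact that $\mathrm{Infl}(\CC_\chi) = \chi$ to obtain, at the dg-level, a functorial isomorphism of complexes
\[
{}^{\mathrm{h}}\Hom^\bullet_{D_{\mathrm{pt},\chi}, K^H}\bigl(\mathrm{coInv}_\chi(N),\, \CC_\chi\bigr) \simeq {}^{\mathrm{h}}\Hom^\bullet_{\mathfrak{h}, K^H}(N, \chi).
\]
Substituting a K-projective resolution of $M|_H$ for $N$ and combining with Step two gives the asserted isomorphism in $\cate{D}^+(\CC)$, exactly as in the proof of Proposition \ref{prop:RHom-RHom} via Lemma \ref{prop:Hom12}.

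The only genuinely new point—and therefore the main technical obstacle—is Step one: one must check that the twisted fibre isomorphism is simultaneously $\mathfrak{h}$-equivariant (with $\mathfrak{h}$ acting through $\chi$ on the target), $K^H$-equivariant with the correct $\chi$-twist, and compatible with the homotopy data $i_\xi$ of an h-complex. Once this commutativity is in hand, the rest of the proof is formal and proceeds in complete parallel with the untwisted case.
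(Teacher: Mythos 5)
Your proposal is correct and follows essentially the same route as the paper: the paper's own proof simply says to repeat the argument of Proposition \ref{prop:RHom-RHom} with $\CC$ replaced by $\CC_\chi$ and with Proposition \ref{prop:Loc-coinv} replaced by its twisted analogue (Proposition \ref{prop:coinv-Loc-monodromic}), which is precisely your Steps one through three. You also correctly isolate the only genuinely new ingredient, namely the $(\mathfrak{h},\chi)$-co-invariants identification of the fibre of $D_{X,\chi}\dotimes{U(\mathfrak{g})}V$ at $x$ and its compatibility with the $K^H$-action and the homotopy data $i_\xi$.
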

\begin{proof}
	Same as the non-monodromic case. It suffices to replace $\CC$ by $\CC_\chi$, then apply the following analog of Proposition \ref{prop:Loc-coinv}.
\end{proof}

\begin{proposition}\label{prop:coinv-Loc-monodromic}
	For every $\mathfrak{g}$-module $V$, its space of $(\mathfrak{h}, \chi)$-co-invariants is isomorphic to $\CC \dotimes{\mathscr{O}_{X, x}} \left( \mathscr{D}_{X, \chi} \dotimes{U(\mathfrak{g})} V\right)$, by sending the image of $v \in V$ to $1 \otimes (1 \otimes v)$.
\end{proposition}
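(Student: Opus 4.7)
The plan is to reduce to the non-monodromic Proposition~\ref{prop:Loc-coinv}, applied to the $S$-torsor $\pi: \tilde X = \underline{H} \backslash G \to X$ at the base point $\tilde x = \underline{H} \cdot 1$, whose $G$-stabilizer is $\underline{H}$, and then pass to the $\chi$-quotient along the $S$-direction.

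First I would check that $v \mapsto 1 \otimes 1 \otimes v$ descends to the $(\mathfrak{h}, \chi)$-coinvariants. This reduces to the claim that for every $\eta \in \mathfrak{h}$, the element $j(\eta) - \chi(\eta)$ of $\mathscr{D}_{X, \chi}$ lies in $\mathfrak{m}_x \mathscr{D}_{X, \chi}$. To see this, work on $\tilde X$ and write $\eta = \eta_0 + \eta_1$ with $\eta_0 \in \underline{\mathfrak{h}}$ and $\eta_1$ lifting $\bar\eta \in \mathfrak{s}$. The vector field $j(\eta_0)$ vanishes at $\tilde x$ since $\underline{H}$ fixes $\tilde x$, while $j(\eta_1)$ has the same value at $\tilde x$ as the vector field $\bar\eta^\sharp$ generated by the left $S$-action on $\tilde X$ (both move along the $\pi$-fiber through $\tilde x$). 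Hence $j(\eta) - \bar\eta^\sharp \in \mathfrak{m}_{\tilde x} \mathscr{D}_{\tilde X}$ locally. Now $\bar\eta^\sharp$ corresponds to $\bar\eta \in \Sym(\mathfrak{s}) \subset \widetilde{\mathscr{D}}_X$ (the central subalgebra coming from the infinitesimal $S$-action), which maps to $\chi(\bar\eta) = \chi(\eta)$ modulo $\mathfrak{m}_\chi$. Descending to $\mathscr{D}_{X, \chi}$ gives the claim.

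Next, for bijectivity I would apply Proposition~\ref{prop:Loc-coinv} to $\tilde X$ at $\tilde x$ to obtain
\[ V / \underline{\mathfrak{h}} V \rightiso \CC \dotimes{\mathscr{O}_{\tilde X, \tilde x}} \bigl( \mathscr{D}_{\tilde X} \dotimes{U(\mathfrak{g})} V \bigr)_{\tilde x}. \]
Both sides carry a natural $\Sym(\mathfrak{s})$-action: on the left via the residual $\mathfrak{s} = \mathfrak{h}/\underline{\mathfrak{h}}$-action on $V/\underline{\mathfrak{h}} V$; on the right via the central inclusion $\Sym(\mathfrak{s}) \subset \widetilde{\mathscr{D}}_X$ (acting by left multiplication on $\mathscr{D}_{\tilde X}$ viewed as sections of $\pi_* \mathscr{D}_{\tilde X}$). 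The analysis above shows these two actions agree through the isomorphism. Applying $\otimes_{\Sym(\mathfrak{s}), \chi} \CC$ turns the left-hand side into the $(\mathfrak{h}, \chi)$-coinvariants of $V$, so what remains is to identify the resulting quotient of the right-hand side with $\CC \dotimes{\mathscr{O}_{X, x}} (\mathscr{D}_{X, \chi} \dotimes{U(\mathfrak{g})} V)$.

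For this last identification, I would exploit the Zariski-local triviality of $\pi$ (available since $S$ is a torus). In local coordinates splitting the $X$- and $S$-directions one has $\mathscr{D}_{\tilde X} \simeq \mathscr{D}_X \otimes \mathscr{D}_S$ and $\widetilde{\mathscr{D}}_X \simeq \mathscr{D}_X \otimes \Sym(\mathfrak{s})$, so that $\mathscr{D}_{X, \chi} = \widetilde{\mathscr{D}}_X / \mathfrak{m}_\chi \widetilde{\mathscr{D}}_X \simeq \mathscr{D}_X$ and the fiber functor $\CC \otimes_{\mathscr{O}_{\tilde X, \tilde x}} (\cdot)$ factors as $\CC \otimes_{\mathscr{O}_{X, x}} (\CC \otimes_{\mathscr{O}_{S, 1}} (\cdot))$; the $S$-direction is killed precisely by the further $\Sym(\mathfrak{s})/\mathfrak{m}_\chi$-quotient, leaving $\CC \otimes_{\mathscr{O}_{X, x}} (\mathscr{D}_{X, \chi} \otimes_{U(\mathfrak{g})} V)$ as desired. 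The main obstacle is this final identification: commuting the two tensor products while tracking the $U(\mathfrak{g})$-linearity and the twist by $\chi$ through the $S$-direction requires care, but is ultimately a base change for the $S$-torsor~$\pi$ combined with a routine coordinate computation.
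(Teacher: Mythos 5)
Your route is genuinely different from the paper's. The paper simply reruns the computation of \cite[Lemma 2.2]{BZG19} with the action map $\mathscr{O}_X \otimes U(\mathfrak{g}) \to \mathscr{D}_{X,\chi}$, $f \otimes u \mapsto f j(u)$, in place of the untwisted one: surjectivity of this map identifies the fiber $\mathscr{D}_{X,\chi,x}/\mathfrak{m}_x\mathscr{D}_{X,\chi,x}$ with $U(\mathfrak{g})$ modulo the right ideal generated by $\eta - \chi(\eta)$, $\eta \in \mathfrak{h}$, and tensoring with $V$ finishes the job. You instead invoke Proposition~\ref{prop:Loc-coinv} upstairs on the torsor $\tilde{X} = \underline{H}\backslash G$ and then pass to the $\mathfrak{m}_\chi$-quotient. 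This is a legitimate and arguably more structural alternative (it explains where the twist comes from), and your pointwise comparison of $j(\eta)$ with the $S$-direction vector field $\bar\eta^\sharp$ at $\tilde{x}$ is exactly the right computation; it is essentially the twisted analogue of the input the paper's proof also needs.

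There is, however, one step that does not work as written: the $\Sym(\mathfrak{s})$-action on $\CC \dotimes{\mathscr{O}_{\tilde{X},\tilde{x}}} \bigl(\mathscr{D}_{\tilde{X}} \dotimes{U(\mathfrak{g})} V\bigr)$ by ``left multiplication'' is not well defined, because $\bar\eta^\sharp$ does not normalize $\mathfrak{m}_{\tilde{x}}$: for $f \in \mathfrak{m}_{\tilde{x}}$ one has $\bar\eta^\sharp f = f \bar\eta^\sharp + \bar\eta^\sharp(f)$, and the directional derivative $\bar\eta^\sharp(f)(\tilde{x})$ along the fiber need not vanish, so left multiplication by $\bar\eta^\sharp$ does not preserve $\mathfrak{m}_{\tilde{x}}\bigl(\mathscr{D}_{\tilde{X}} \dotimes{U(\mathfrak{g})} V\bigr)$. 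The repair is to take the fiber along $X$ rather than along $\tilde{X}$: since $\mathscr{D}_{\tilde{X}} \simeq \mathscr{O}_{\tilde{X}} \dotimes{\mathscr{O}_X} \widetilde{\mathscr{D}}_X$ locally (by the local triviality of $\pi$ you already invoke), one gets $\CC \dotimes{\mathscr{O}_{\tilde{X},\tilde{x}}} \bigl(\mathscr{D}_{\tilde{X}} \dotimes{U(\mathfrak{g})} V\bigr) \simeq \CC \dotimes{\mathscr{O}_{X,x}} \bigl(\widetilde{\mathscr{D}}_X \dotimes{U(\mathfrak{g})} V\bigr)$, and on the latter $\Sym(\mathfrak{s})$ acts honestly because it is central in $\widetilde{\mathscr{D}}_X$ and in particular commutes with $\mathscr{O}_X$. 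With that reformulation your final step becomes the tautology $\bigl(\widetilde{\mathscr{D}}_X/\mathfrak{m}_\chi\widetilde{\mathscr{D}}_X\bigr) \dotimes{U(\mathfrak{g})} V \simeq \bigl(\widetilde{\mathscr{D}}_X \dotimes{U(\mathfrak{g})} V\bigr)/\mathfrak{m}_\chi(\cdots)$, and the argument closes. So: right idea, wrong home for the residual $\Sym(\mathfrak{s})$-action; once relocated, the proof is complete.
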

\begin{proof}
	Same as the one given in \cite[Lemma 2.2]{BZG19}. It suffices to replace the map $\mathscr{O}_X \otimes \mathfrak{g} \to \mathscr{D}_X$ by $\mathscr{O}_X \otimes \mathfrak{g} \to \mathscr{D}_{X, \chi}$. 
\end{proof}

Below is the monodromic counterpart of Theorem \ref{prop:RHom-Loc}. Define the functors $i_x^!$, $i_x^*$ and $i_{x, *}$ in the same manner as in \S\ref{sec:consequence-regularity}.

\begin{theorem}
	Let $M$ be an object of the category ${}^{\mathrm{h}}\cate{D}^{\mathrm{b}}_{\mathrm{HC}}(\mathfrak{g}, K)$. There are canonical isomorphisms
	\begin{equation*}
		\RHom_{\mathfrak{h}, K^H}(M|_H, \chi) \simeq \RHom_{D_{\mathrm{pt}, \chi}, K^H}\left( i_x^! \mathbf{Loc}_{X, \chi}(M)[\dim X], \CC_\chi \right).
	\end{equation*}
	When $K^H = \{1\}$, this is also isomorphic to
	\begin{multline*}
		\RHom_{D_{\mathrm{pt}, \chi}}\left( i_x^* \mathbf{Loc}_{X, \chi}(M)[-\dim X], \CC_\chi \right) \\
		\simeq \RHom_{D_{X, \chi}}\left( \mathbf{Loc}_{X, \chi}(M), i_{x, *}(\CC_\chi)[\dim X] \right).
	\end{multline*}
	All these complexes of $D$-modules are bounded with regular holonomic cohomologies.
\end{theorem}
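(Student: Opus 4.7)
The plan is to imitate the proof of Theorem \ref{prop:RHom-Loc}, replacing each untwisted ingredient by its monodromic analog developed in \S\ref{sec:monodromic}. The starting point is the monodromic RHom--RHom comparison established as the proposition immediately preceding the statement: for any $M \in {}^{\mathrm{h}} \cate{D}^-(\mathfrak{g}, K)$ one has
\[ \RHom_{\mathfrak{h}, K^H}(M|_H, \chi) \simeq \RHom_{D_{\mathrm{pt}, \chi}, K^H}\!\left( i_x^\bullet \mathbf{Loc}_{X, \chi}(M), \CC_\chi \right). \]
This is obtained formally, exactly as Proposition \ref{prop:RHom-RHom} was, by combining Proposition \ref{prop:coinv-Loc-monodromic} (identifying fibers of $D_{X, \chi} \dotimes{U(\mathfrak{g})} (-)$ with $(\mathfrak{h}, \chi)$-coinvariants) with the preservation-of-K-projectives arguments from \S\ref{sec:Loc-coinv}, after replacing $\CC$ by $\CC_\chi$ throughout.

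Next I would invoke the monodromic regularity Theorem \ref{prop:regularity-monodromic}; by the same truncation argument as in Corollary \ref{prop:regularity-gen}, it extends to show that $\mathbf{Loc}_{X, \chi}(M)$ lies in ${}^{\mathrm{h}} \cate{D}^{\mathrm{b}}_{\mathrm{rh}+}(D_{X, \chi}, K)$ for every $M$ in ${}^{\mathrm{h}} \cate{D}^{\mathrm{b}}_{\mathrm{HC}}(\mathfrak{g}, K)$. This immediately justifies the final sentence of the theorem (all relevant complexes of $D$-modules are bounded with regular holonomic cohomologies), and allows us to transport the computation to the monodromic equivariant constructible derived category $\cate{D}^{\mathrm{b}}_{K^H, \chi}(X)$ via the monodromic Riemann--Hilbert correspondence of \cite[\S 7.14]{Ka08} together with the monodromic Beilinson equivalence ${}^{\mathrm{h}} \cate{D}^{\mathrm{b}}(D_{X, \chi}, K^H) \simeq \cate{D}^{\mathrm{b}}_{K^H, \chi}(X)$ from \cite[2.14]{BL95}.

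The remaining step is then formal bookkeeping with shifts and adjunctions. By the definition of $i_x^!$ transported from the constructible side, one has $i_x^! = i_x^\bullet[-\dim X]$ on the regular holonomic part, so $i_x^\bullet = i_x^![\dim X]$, yielding the first displayed isomorphism of the theorem. For the case $K^H = \{1\}$, the non-equivariant relation $i_x^* = i_x^![2 \dim X]$ rewrites $i_x^\bullet \mathbf{Loc}_{X, \chi}(M)$ as $i_x^* \mathbf{Loc}_{X, \chi}(M)[-\dim X]$, giving the second isomorphism; the third one is then the $(i_x^*, i_{x, *})$-adjunction in the monodromic constructible setting, transported back by Riemann--Hilbert.

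The main obstacle I anticipate is verifying that the monodromic avatars of $i_x^!$, $i_x^*$ and $i_{x, *}$ obey the same shift and adjunction identities as in the untwisted case, and that these identities are compatible with the $K^H$-equivariant structure. The verification is mostly mechanical but delicate: one must check the compatibility of the monodromic Riemann--Hilbert correspondence with the definition of $i_x^!$ via smooth ascension $\pi: \tilde{X} \to X$ and ensure that the $S$-monodromy is respected at every step, and similarly track the shift $\mathrm{rd}(i_x) = -\dim X$ through the equivariant derived category framework of \cite{BL94, Ac21}. Once these formal compatibilities are in place, the argument is structurally identical to that of Theorem \ref{prop:RHom-Loc}, so no further substantive input is needed.
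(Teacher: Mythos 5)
Your proposal is correct and follows essentially the same route as the paper: the paper's own proof simply says to repeat the argument of Theorem \ref{prop:RHom-Loc} (monodromic regularity, the monodromic analogue of Proposition \ref{prop:RHom-RHom}, then the shift/adjunction relations for $i_x^\bullet$, $i_x^!$, $i_x^*$, $i_{x,*}$) using the monodromic Riemann--Hilbert correspondence, with the same caveat you raise about landing in twisted sheaves. Your more explicit tracking of the shifts and equivariant compatibilities is a reasonable fleshing-out of what the paper leaves implicit.
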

\begin{proof}
	Since the Riemann--Hilbert correspondence has a monodromic version \cite[Theorem 3.16.2]{Ka89}, and similarly in the equivariant case, one can repeat the proof of Theorem \ref{prop:RHom-Loc}. Caution: the monodromic Riemann---Hilbert correspondence lands in the derived category of ``twisted sheaves''.
\end{proof}

Accordingly, we deduce the consequences about:
\begin{itemize}
	\item ${}^{\mathrm{h}} \Ext^n_{\mathfrak{h}, K^H}(M|_H, \chi)$ (cf.\ Corollary \ref{prop:Ext-consequence-1}),
	\item $\Ext^n_{\mathfrak{h}, K^H}(V|_H, \chi)$, $\Hm_n(\mathfrak{h}, K^H; V|_H \otimes \chi^\vee)$ (cf.\ Corollary \ref{prop:Ext-consequence-2}), where $V$ is a Harish-Chandra $(\mathfrak{g}, K)$-module and $\chi^\vee$ means the contragredient of $\chi$.
\end{itemize}
The finiteness of $\dim \Hm_n(\mathfrak{h}, K^H; V|_H \otimes \chi^\vee)$ is still covered by the work of M.\ Kitagawa \cite[Fact 4.7]{Ki21}.

Finally, the Proposition \ref{prop:H-Loc} also has a monodromic analogue:
\begin{equation*}
	\Hm_n(\mathfrak{h}, K^H; V|_H \otimes \chi^\vee) \simeq \Hm^{- n + \dim X}\Lder\left( \mathrm{coInv}^{\CC, K^H}_{\CC, \{1\}} \right) \left(i_x^! \mathbf{Loc}_{X, \chi}(V)\right).
\end{equation*}
When $K^H = \{1\}$, it is isomorphic to $\Hm^{-n - \dim X}\left(i_x^* \mathbf{Loc}_{X, \chi}(V)\right)$.

\section{Comparison with the analytic picture}\label{sec:analytic}
\subsection{Schwartz homologies}
In this subsection, we work on the analytic side of representation theory. Therefore, we take
\begin{itemize}
	\item $G$: an almost linear Nash group,
	\item $K$: a maximal compact subgroup of $G$.
\end{itemize}
They are both Lie groups, and we will consider continuous representations of them.

Let $E$ be a smooth Fréchet representation of $G$ of moderate growth. In \cite{CS21}, Y.\ Chen and B.\ Sun defined the \emph{Schwartz homologies} $\Hm^{\mathcal{S}}_n(G; E)$ for $n \in \Z_{\geq 0}$. These are locally convex topological vector spaces which are possibly non-Hausdorff, functorial in $E$, and satisfy various desirable properties such as long exact sequences and Shapiro's lemma. In particular,
\[ \Hm^{\mathcal{S}}_0(G; E) = E_G := E \big/ \sum_{g \in G} (g - \identity)(E) \; + \;\text{quotient topology}. \]
We refer to \cite{CS21} for all the details and terminologies.

Let $\mathfrak{g}$ be the complexified Lie algebra of $G$. Every smooth representation $E$ of $G$ gives rise to an $(\mathfrak{g}, K)$-module in a generalized sense: it is a vector space equipped with compatible structures of  $\mathfrak{g}$-module and $K$-module; unlike \S\ref{sec:gK-basic}, the $K$-action here is not assumed to be algebraic or locally finite, but only smooth.

Let $\mathfrak{p} := \mathfrak{g}/\mathfrak{k}$, on which $K$ acts; we may and do choose a decomposition $\mathfrak{g} = \mathfrak{k} \oplus \mathfrak{p}$ as $K$-modules, and denote by $\mathcal{P}: \mathfrak{g} \to \mathfrak{p}$ the corresponding projection.

\begin{definition}
	For an $(\mathfrak{g}, K)$-module $E$, the relative Lie algebra homologies of $E$ are
	\[ \Hm_n(\mathfrak{g}, K; E) := \Hm_n\left( C(\mathfrak{g}, K; E) \right) \]
	where the \emph{standard complex} $C(\mathfrak{g}, K; E)$ for $E$ is defined by
	\[
		C_n(\mathfrak{g}, K; E) := (\bigwedge^n \mathfrak{p}) \dotimes{K} E, \quad n \in \Z_{\geq 0}.
	\]
	Here we make the left $K$-module $\bigwedge^n \mathfrak{p}$ into a right one by $u k := k^{-1} u$, so $\otimes_{K}$ is simply the tensor product over the abstract group algebra $\CC[K]$. Alternatively, $\bigwedge^n \mathfrak{p} \dotimes{K} E$ is also the space of co-invariants (algebraically) of $\bigwedge^n \mathfrak{p} \otimes E$ under the diagonal $K$-action.
	
	The boundary maps in the complex
	\[ \partial_n: C_n(\mathfrak{g}, K; E) \to C_{n-1}(\mathfrak{g}, K; E) \]
	for $n \geq 1$ are the usual ones (cf.\ \cite[(2.126b)]{KV95}):
	\begin{multline*}
		\partial_n((\xi_1 \wedge \cdots \wedge \xi_n) \otimes v) = \sum_{i=1}^n (-1)^i (\cdots \wedge \widehat{\xi_i} \wedge \cdots) \otimes \xi_i v \\
		+ \sum_{p<q} (-1)^{p+q} (\mathcal{P}[\xi_p, \xi_q] \wedge \cdots \widehat{\xi_p} \cdots \widehat{\xi_q} \cdots ) \otimes v.
	\end{multline*}
\end{definition}

Now let $E$ be a smooth Fréchet representation of $G$ of moderate growth. Then each $C_n(\mathfrak{g}, K; E)$ gets topologized, each $\partial_n$ is continuous, and the homologies are endowed with quotient topologies. In \cite[Theorem 7.7]{CS21} it is proved that
\begin{equation}\label{eqn:HS-Lie}
	\Hm^{\mathcal{S}}_n(G; E) \simeq \Hm_n(\mathfrak{g}, K; E), \quad n \in \Z_{\geq 0}
\end{equation}
canonically and topologically.

\begin{definition}
	Let $E$ be a smooth Fréchet representation of $G$ of moderate growth. Denote by $E^{K\text{-fini}}$ the subspace of $K$-finite vectors in $E$.
\end{definition}

Therefore $E^{K\text{-fini}}$ is an $(\mathfrak{g}, K)$-module in the usual algebraic sense. By functoriality and \eqref{eqn:HS-Lie}, we obtain canonical linear maps
\[ \Hm_n\left(\mathfrak{g}, K; E^{K\text{-fini}}\right) \to \Hm^{\mathcal{S}}_n(G; E), \quad n \in \Z_{\geq 0}. \]

\begin{proposition}\label{prop:HS-int}
	Let $E$ be a smooth Fréchet representation of $G$ of moderate growth. For every $n$, we have
	\[ \Hm_n\left(\mathfrak{g}, K; E^{K\text{-fini}}\right) \rightiso \Hm^{\mathcal{S}}_n(G; E) \]
	as vector spaces.
\end{proposition}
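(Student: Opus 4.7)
By the Chen--Sun comparison \eqref{eqn:HS-Lie}, the right-hand side is canonically identified with $\Hm_n(\mathfrak{g}, K; E)$, so it suffices to show that the inclusion $E^{K\text{-fini}} \hookrightarrow E$ of generalized $(\mathfrak{g}, K)$-modules induces a bijection on $\Hm_n(\mathfrak{g}, K; \cdot)$ for every $n$. Since $C_\bullet(\mathfrak{g}, K; \cdot)$ is functorial and the boundaries $\partial_n$ are natural, I will establish the stronger \emph{termwise} statement: the canonical map
\[
  \iota_n : \Bigl(\bigwedge^n \mathfrak{p}\Bigr) \dotimes{K} E^{K\text{-fini}} \longrightarrow \Bigl(\bigwedge^n \mathfrak{p}\Bigr) \dotimes{K} E
\]
is a vector-space isomorphism for every $n \geq 0$.

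Write $W := \bigwedge^n \mathfrak{p}$, a finite-dimensional continuous $K$-module, and decompose $W = \bigoplus_\tau W(\tau)$ into $K$-isotypic components (a finite sum). For each irreducible unitary $K$-type $\tau$, let $P_{\tau^*}: E \twoheadrightarrow E(\tau^*) \subset E^{K\text{-fini}}$ denote the continuous projection onto the $\tau^*$-isotype, defined by averaging against $\overline{\chi_{\tau^*}}$. The heart of the argument is the identity
\[
  w \otimes v \;=\; w \otimes P_{\tau^*}(v) \qquad \text{in } W \dotimes{K} E, \quad w \in W(\tau),\; v \in E.
\]
I would obtain it by writing $w = \dim(\tau) \int_K \overline{\chi_\tau(k)}\,(kw)\,dk$ (since $P_\tau w = w$), pulling $k$ across the tensor via the defining relation $kw \otimes v = w \otimes k^{-1}v$ of the coinvariants, and simplifying using $\overline{\chi_\tau(k^{-1})} = \overline{\chi_{\tau^*}(k)}$.

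The identity suggests the inverse $\pi_n : W \dotimes{K} E \to W \dotimes{K} E^{K\text{-fini}}$, defined on each isotypic piece by $w \otimes v \mapsto w \otimes P_{\tau^*}(v)$ for $w \in W(\tau)$. Well-definedness on coinvariants follows from the fact that $P_{\tau^*}$ commutes with the $K$-action on $E$, and the identity gives $\iota_n \circ \pi_n = \identity$. The reverse composition $\pi_n \circ \iota_n = \identity$ is checked by reducing, via the algebraic decomposition $E^{K\text{-fini}} = \bigoplus_\sigma E(\sigma)$, to $v \in E(\sigma)$: for $\sigma = \tau^*$ the projection $P_{\tau^*}$ is the identity, while for $\sigma \neq \tau^*$ the class of $w \otimes v$ already vanishes in $W \dotimes{K} E^{K\text{-fini}}$ because the trivial $K$-type is absent from $\tau \otimes \sigma$, and on the finite-dimensional $K$-module $W(\tau) \otimes E(\sigma)$ algebraic $K$-coinvariants coincide with $K$-invariants via compact averaging.

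The point most in need of care, rather than a genuine obstacle, is that $\dotimes{K}$ denotes \emph{algebraic} coinvariants over the abstract group algebra $\CC[K]$, and should not be confused with the Hausdorff topological quotient by the closure of $\mathrm{span}(kx - x)$ --- the latter is genuinely smaller for infinite-dimensional Fréchet $E$. The averaging-against-characters argument sidesteps this distinction entirely: because $W$ is finite-dimensional, only finitely many $K$-types of $E$ can contribute, all of them lying in $E^{K\text{-fini}}$, so an algebraic two-sided inverse to $\iota_n$ exists with no closure operations. Compatibility of $\iota_n$ with $\partial_n$ is automatic from the naturality of $C_\bullet(\mathfrak{g}, K; \cdot)$, which concludes the plan.
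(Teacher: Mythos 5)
Your overall strategy coincides with the paper's: reduce to a termwise isomorphism of standard complexes and invert $\iota_n$ using isotypic projections obtained by averaging against $K$-characters. The injectivity half of your argument is sound: $\pi_n$ is well defined by a purely algebraic check on the relations $kw\otimes kv - w\otimes v$, and $\pi_n\circ\iota_n=\identity$ follows from complete reducibility of locally finite $K$-modules, so no analysis is needed there. (The paper gets injectivity instead by writing $E^{K\text{-fini}}$ as a filtered colimit of isotypic summands and using that $\chi_\Gamma$ splits $E$ as a $\CC[K]$-module; the two routes are interchangeable.)

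The gap is in the key identity $w\otimes v = w\otimes P_{\tau^*}(v)$ in $W\dotimes{K}E$, which is exactly what surjectivity ($\iota_n\circ\pi_n=\identity$) requires. You derive it by writing $w=\dim(\tau)\int_K \overline{\chi_\tau(k)}\,(kw)\,dk$ and ``pulling $k$ across the tensor via the defining relation''. But $\dotimes{K}$ is the quotient of $W\otimes E$ by the \emph{linear span} of the relations, and what the manipulation actually shows is that $w\otimes v - w\otimes P_{\tau^*}(v)$ is a limit of Riemann sums of relations, hence lies in the \emph{closure} of that span, not evidently in the span itself. For infinite-dimensional Fréchet $E$ this is precisely the distinction you claim to sidestep, and finite-dimensionality of $W$ does not remove it: already for $W=\CC$ trivial the assertion is that $v-\int_K kv\,dk$ is a \emph{finite} linear combination of elements $kv'-v'$, which is not formal. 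The paper closes exactly this gap by invoking the Chen--Sun theory (\cite[Theorem 5.9]{CS21} together with the proof of their Theorem 7.7) to show that the relation subspace in $\bigwedge^n\mathfrak{p}\otimes E$ is closed --- the algebraic coinvariants are already a Fréchet, hence topological, quotient --- after which approximating the idempotent $\chi_\Gamma$ by Dirac measures is legitimate. Your proof needs this (or an equivalent) analytic input at precisely this step; the sentence asserting that the averaging argument ``sidesteps this distinction entirely'' is where it goes wrong.
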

\begin{proof}
	Let $\mathrm{Irr}(K)$ be the set of isomorphism classes of finite-dimensional irreducible representations of $K$. Let $\mathcal{M}(K)$ be the unital algebra of bounded complex Borel measures on $K$ under convolution. Let $R(K)$ be the non-unital subalgebra comprised of left $K$-finite (equivalently, right $K$-finite) elements, which is also spanned by the matrix coefficients from $\mathrm{Irr}(K)$ once we fix a Haar measure. See \cite[I.2]{KV95}.
	
	By continuity and completeness, $\mathcal{M}(K)$ acts on $E$, thus so does $R(K)$.
	
	For each $\gamma \in \mathrm{Irr}(K)$, define
	\[ \chi_\gamma := \dim\gamma \cdot \Theta_{\gamma^\vee} \cdot \frac{\dd k}{\mathrm{mes}(K)} \; \in R(K), \]
	where $\Theta_{\gamma^\vee}$ is the character of the contragredient $\gamma^\vee$; see \cite[(1.23)]{KV95}. This is an idempotent in $R(K)$ which is $K$-invariant under conjugation.
	
	For every finite subset $\Gamma$ of $\mathrm{Irr}(K)$, we define $\chi_\Gamma := \sum_{\gamma \in \Gamma} \chi_\gamma$. This is still a $K$-invariant idempotent. When applied to locally $K$-finite representations, its effect is to project to the $\Gamma$-isotypic part.
	
	The evident maps $C_n(\mathfrak{g}, K; E^{K\text{-fini}}) \to C_n(\mathfrak{g}, K; E)$ give a morphism between complexes. It suffices to show it is an isomorphism in each degree $n$.
	
	First, we prove that $C_n(\mathfrak{g}, K; E^{K\text{-fini}}) \to C_n(\mathfrak{g}, K; E)$ is surjective. Take $\Gamma$ that contains all irreducible constituents of $\bigwedge^n \mathfrak{p}$. Let $r \mapsto r'$ be the anti-involution of $R(K)$ induced by $k \mapsto k^{-1}$. For every $\omega \otimes v \in \bigwedge^n \mathfrak{p} \dotimes{K} E$, we claim that
	\[ \omega \otimes v = \chi_\Gamma \omega \otimes v = \omega \otimes \chi'_\Gamma v. \]
	
	Indeed, the first equality follows from the choice of $\Gamma$. As for the second one, in the proof of \cite[Theorem 7.7]{CS21}, a ``strong projective resolution'' (45) of $E$ is shown to be topologically isomorphic to $C(\mathfrak{g}, K; E)$ after taking $(\cdot)_G$. It is part of the theory of Schwartz homologies, especially \cite[Theorem 5.9]{CS21}, that taking $(\cdot)_G$ of such a resolution produces Fréchet spaces in each degree. Hence the algebraic co-invariants in the formation of $\bigwedge^n \mathfrak{p} \dotimes{K} v$ are actually topological, i.e.
	\[ \bigwedge^n \mathfrak{p} \dotimes{K} E = (\bigwedge^n \mathfrak{p} \otimes E) \bigg/ \overline{\lrangle{k\omega \otimes kv = \omega \otimes v : k \in K, \ldots }}. \]
	This implies $\chi_\Gamma \omega \otimes v = \omega \otimes \chi'_\Gamma v$ in $\bigwedge^n \mathfrak{p} \dotimes{K} E$, by approximating $\chi_{\Gamma}$ by Dirac measures. Surjectivity follows since $\chi_{\Gamma} E = E^\Gamma$, the $\Gamma$-isotypic part of $E^{K\text{-fini}}$.
	
	Let us show injectivity. By general properties of $\otimes$ over $\CC[K]$, we have
	\[ \bigwedge^n \mathfrak{p} \dotimes{K} (E^{K\text{-fini}}) = \bigwedge^n \mathfrak{p} \dotimes{K} \varinjlim_{\Gamma} E^\Gamma \leftiso \varinjlim_{\Gamma} \left( \bigwedge^n \mathfrak{p} \dotimes{K} E^\Gamma \right) \]
	where the $\varinjlim$ over finite subsets $\Gamma \subset \mathrm{Irr}(K)$ is filtered. The map from the leftmost term to $\bigwedge^n \mathfrak{p} \dotimes{K} E$ is determined by the compatible family of evident maps
	\[ \bigwedge^n \mathfrak{p} \dotimes{K} E^\Gamma \to \bigwedge^n \mathfrak{p} \dotimes{K} E. \]
	
	Fixing $\Gamma$, the map above is a split injection since $\chi_\Gamma$ is a $K$-invariant idempotent, so
	\[ E = \chi_\Gamma E \oplus (\identity - \chi_\Gamma) E = E_\Gamma \oplus (\identity - \chi_\Gamma) E \]
	as $\CC[K]$-modules. As the filtered $\varinjlim$ of injections is still injective, this shows the injectivity of $\bigwedge^n \mathfrak{p} \dotimes{K} (E^{K\text{-fini}}) \to \bigwedge^n \mathfrak{p} \dotimes{K} E$.

	Note that the same method shows that $\bigwedge^n \mathfrak{p} \dotimes{K} E^\Gamma \to \bigwedge^n \mathfrak{p} \dotimes{K} E^{\Gamma'}$ is split injective whenever $\Gamma \subset \Gamma'$.
\end{proof}

\subsection{Comparison map}\label{sec:comparison}
We now consider almost linear Nash groups
\begin{equation}\label{eqn:four-groups-an}\begin{tikzcd}
	H \arrow[phantom, r, "\subset" description] & G \\
	K^H \arrow[phantom, u, "\subset" description, sloped] \arrow[phantom, r, "\subset" description] & K \arrow[phantom, u, "\subset" description, sloped].
\end{tikzcd}\end{equation}
analogously to the algebraic setting of \eqref{eqn:four-groups}, the assumptions now being
\begin{itemize}
	\item $G$ is reductive,
	\item $K$ (resp.\ $K^H$) is a maximal compact subgroup of $G$ (resp.\ $H$); this implies $K^H = H \cap K$.
\end{itemize}

We consider \emph{Casselman--Wallach representations} of $G$, i.e.\ smooth admissible Fréchet representation of moderate growth; taking $K$-finite vectors establishes an equivalence between this category and the category of Harish-Chandra $(\mathfrak{g}, K)$-modules. See \cite{BK14}.

For a smooth Fréchet representation $E$ of $G$ of moderate growth, its restriction $E|_H$ to $H$ is still smooth Fréchet of moderate growth, by unraveling definitions. However, admissibility is usually lost.

In view of \eqref{eqn:HS-Lie} applied to $H$ and $K^H$, the inclusion $E^{K\text{-fini}} \subset E$ induces natural linear maps
\begin{equation}\label{eqn:H-vs-HS}
	c_n(E): \Hm_n\left(\mathfrak{h}, K^H; E^{K\text{-fini}}\right) \to \Hm^{\mathcal{S}}_n(H; E|_H), \quad n \in \Z_{\geq 0}
\end{equation}
between vector spaces.

\begin{definition}
	We call \eqref{eqn:H-vs-HS} the \emph{comparison maps} for the Casselman--Wallach representation $E$ relative to the data \eqref{eqn:four-groups-an}.
\end{definition}

The following question is thus natural.
\begin{center}\fbox{\begin{minipage}{0.8\textwidth}
	For what data \eqref{eqn:four-groups-an}, representations $E$ and $n$ is $c_n(E)$ an isomorphism?
\end{minipage}}\end{center}

\begin{remark}\label{rem:comparison-K}
	In view of Proposition \ref{prop:HS-int}, the map $c_n(E)$ is the same as the
	\[ \Hm_n\left(\mathfrak{h}, K^H; E^{K\text{-fini}}\right) \to \Hm_n\left(\mathfrak{h}, K^H; E^{K^H\text{-fini}}\right) \]
	induced by $E^{K\text{-fini}} \subset E^{K^H\text{-fini}}$.
\end{remark}

For such questions, it is usual to focus on the case when $H$ is a real spherical subgroup of $G$. An affirmative answer for all $E$ will have strong consequences. For example, in degree $n=0$ it amounts to \emph{automatic continuity} for invariant linear functionals (cf.\ \cite[Theorem A]{AGKL16}), which are hard-core results available only in sporadic cases, for example when (i) $n=0$ and $H \subset G$ is a symmetric subgroup \cite{BD88}, or (ii) when $n$ is arbitrary and $H \subset G$ is a maximal unipotent subgroup \cite{HT98, LLY21}.

In order to apply the earlier results, let us assume further that
\begin{itemize}
	\item all the groups $G$, $K$, $H$, $K^H$ arise from the algebraic setting \eqref{eqn:four-groups}, namely they arise from $\R$-points of affine groups;
	\item $H$ is a reductive spherical subgroup of $G$, when viewed as complex groups.
\end{itemize}

Since $V := E^{K\text{-fini}}$ is a Harish-Chandra $(\mathfrak{g}, K)$-module, by Corollary \ref{prop:Ext-consequence-2} or results of \cite{Ki21}, the source of $c_n(E)$ is finite-dimensional. Under these assumptions, whether $c_n(E)$ is an isomorphism or not is still wide open. In \S\ref{sec:example-comparison}, we will consider some cases in which $c_n(E)$ turn out to be isomorphisms for all $n$.

\begin{remark}
	A smooth Fréchet representation $F$ of $H$ of moderate growth
	is said to be \emph{homologically finite} (resp.\ \emph{homologically separated}) if $\Hm^{\mathcal{S}}_n(H; F)$ is finite-dimensional (resp.\ Hausdorff) for all $n$; homological finiteness implies homological separation. See \cite{BC21} for an overview. With all the previous assumptions (algebraicity, sphericity), it is still unknown whether $E|_H$ is homologically finite or not, except for specific choices of $H$ or $E$, or for $n=0$. If $c_n(E)$ is surjective, then $E|_H$ will be homologically finite. In particular, homological finiteness will hold in all the examples of \S\ref{sec:example-comparison}.
\end{remark}

\begin{remark}
	In parallel with the monodromic setting considered in \S\ref{sec:monodromic}, one should also consider $\Hm^{\mathcal{S}}_n(H; E|_H \otimes \chi^{-1})$ when $\chi$ is a smooth character of $H$ factoring through the unipotent radical of $H$, and modify \eqref{eqn:H-vs-HS} accordingly.
\end{remark}

\subsection{Examples of comparison isomorphisms}\label{sec:example-comparison}
Consider the groups in \eqref{eqn:four-groups-an}; we assume that they are all affine groups in what follows.

\begin{proposition}\label{prop:disc-decomp}
	Let $E$ be a Casselman--Wallach representation of $G$. If $E^{K^H\text{-fini}} = E^{K\text{-fini}}$, then the comparison map $c_n(E)$ is an isomorphisms for all $n$.
\end{proposition}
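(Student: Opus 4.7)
The plan is to unwind the definitions and reduce the statement to Remark \ref{rem:comparison-K}. By Proposition \ref{prop:HS-int} applied to the pair $(H, K^H)$, the Schwartz homology on the right-hand side of $c_n(E)$ can be identified with the relative Lie algebra homology computed on the algebraic $(\mathfrak{h}, K^H)$-module of $K^H$-finite vectors: that is,
\[
\Hm^{\mathcal{S}}_n(H; E|_H) \;\simeq\; \Hm_n\bigl(\mathfrak{h}, K^H;\, E^{K^H\text{-fini}}\bigr).
\]
Under this identification, as noted in Remark \ref{rem:comparison-K}, the map $c_n(E)$ is precisely the map on relative Lie algebra homologies induced by the inclusion of $(\mathfrak{h}, K^H)$-modules $E^{K\text{-fini}} \hookrightarrow E^{K^H\text{-fini}}$.

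Now I would simply invoke the hypothesis $E^{K^H\text{-fini}} = E^{K\text{-fini}}$. Under this assumption, the inclusion above is the identity, so the induced map on the standard complexes $C_n(\mathfrak{h}, K^H; -)$ is already the identity, and consequently so is the induced map on $\Hm_n$. This shows $c_n(E)$ is an isomorphism in every degree $n \in \Z_{\geq 0}$.

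There is essentially no obstacle here; the entire content has been packaged into Proposition \ref{prop:HS-int} and Remark \ref{rem:comparison-K}, and the proposition is really a tautological corollary once those are in place. The only thing worth double-checking during write-up is that the isomorphism of Proposition \ref{prop:HS-int} is functorial in the variable $E$ and compatible with the inclusion $E^{K\text{-fini}} \subset E^{K^H\text{-fini}}$, but this is clear from its proof, which proceeds by identifying the standard complex with a common complex of $K^H$-finite (a fortiori $K^H$-locally-finite) vectors.
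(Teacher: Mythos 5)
Your proposal is correct and is exactly the paper's argument: the paper's proof reads ``Immediate from Remark \ref{rem:comparison-K},'' which is precisely the identification of $c_n(E)$ with the map induced by the inclusion $E^{K\text{-fini}} \subset E^{K^H\text{-fini}}$ via Proposition \ref{prop:HS-int}. Under the hypothesis that inclusion is the identity, so nothing more is needed.
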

\begin{proof}
	Immediate from Remark \ref{rem:comparison-K}.
\end{proof}

\begin{example}\label{eg:admissible-restriction}
	Suppose $E$ is of the form $E = B^\infty$, where $B$ is an irreducible unitary representation of $G$ and $B^\infty$ denotes its smooth part. We say $B$ is \emph{$K^H$-admissible} if $B|_{K^H}$ decomposes discretely into a Hilbert direct sum of irreducibles with finite multiplicities. By \cite[Proposition 1.6]{Ko98}, this implies that $B^{K\text{-fini}}$ is \emph{discretely decomposable} as an $(\mathfrak{h}, K^H)$-module. The notions of admissibility and discrete decomposability under restriction have been extensively studied by T.\ Kobayashi and his collaborators; see \cite{Ko98} or \cite[\S 4.1]{Ko15} for an overview.
	
	We claim that when $B$ is $K^H$-admissible, $c_n(E)$ is an isomorphism for all $n$.
	
	Indeed, by \cite[Proposition 1.6]{Ko98}, the $K^H$-admissibility implies
	\[ B^{K\text{-fini}} = (B|_H)^{\infty, K^H\text{-fini}}; \]
	the left-hand side is $E^{K\text{-fini}}$ whilst the right-hand side contains $B^{\infty, K^{H\text{-fini}}} = E^{K^H\text{-fini}}$. Hence Proposition \ref{prop:disc-decomp} can be applied.
	
	Discrete decomposability is a rare phenomenon, so the scope of the condition above is limited. Nonetheless, it covers some interesting families: suppose $H \subset G$ is a symmetric subgroup, and $H/K^H \to G/K$ is a holomorphic embedding of Hermitian symmetric domains, then every unitary highest weight module (eg.\ holomorphic discrete series) is $K^H$-admissible, hence discretely decomposable over $(\mathfrak{h}, K^H)$. See \cite[Fact 5.4]{Ko98}.
\end{example}

\begin{example}\label{eg:HK-admissibility}
	The $K^H$-admissibility mentioned above always holds if $H = K$.
\end{example}

Before stating the next example, we record a standard reduction due to Hecht--Taylor.

\begin{proposition}\label{prop:HT-reduction}
	If $c_n(E)$ is an isomorphism for all $n$ and all principal series representations $E$, then the same is true for all Casselman--Wallach representations $E$.
\end{proposition}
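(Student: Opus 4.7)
The plan is to reduce the assertion via Proposition~\ref{prop:HS-int} and Remark~\ref{rem:comparison-K} to a purely algebraic statement, and then exploit a hyperhomology spectral sequence associated with a principal-series resolution of $E$.

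Applying Proposition~\ref{prop:HS-int} with $(H, K^H)$ in place of $(G, K)$ identifies $\Hm^{\mathcal{S}}_n(H; E|_H)$ with $\Hm_n(\mathfrak{h}, K^H; E^{K^H\text{-fini}})$, and Remark~\ref{rem:comparison-K} then presents $c_n(E)$ as the purely algebraic comparison
\[ \Hm_n\left(\mathfrak{h}, K^H; E^{K\text{-fini}}\right) \longrightarrow \Hm_n\left(\mathfrak{h}, K^H; E^{K^H\text{-fini}}\right) \]
induced by $E^{K\text{-fini}} \hookrightarrow E^{K^H\text{-fini}}$. Both sides depend $\delta$-functorially on $E$ over the category of Casselman--Wallach representations of $G$: $E \mapsto E^{K\text{-fini}}$ is exact by the Casselman--Wallach equivalence with Harish-Chandra $(\mathfrak{g},K)$-modules, and $E \mapsto E^{K^H\text{-fini}}$ is exact by compactness of $K^H$ (continuous averaging onto each $K^H$-isotypic component splits every strictly exact sequence of CW representations).

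Next, I would invoke the contragredient form of Casselman's subrepresentation theorem — every finitely generated admissible $(\mathfrak{g}, K)$-module embeds in a finite direct sum of principal series, dualizing to a quotient surjection — and transport through the CW equivalence to obtain, by iteration, an exact resolution
\[ \cdots \to P_1 \to P_0 \to E \to 0 \]
in the CW category, with each $P_q$ a finite direct sum of principal series. Set $d := \dim(\mathfrak{h}/\mathfrak{k}^H)$ and form the double complex
\[ D^V_{p, q} := \bigwedge^p(\mathfrak{h}/\mathfrak{k}^H) \otimes_{K^H} P_q^{K\text{-fini}}, \qquad 0 \leq p \leq d, \quad q \geq 0, \]
with horizontal Koszul differential and vertical resolution differential, together with the parallel $D^W_{p,q}$ built from $P_q^{K^H\text{-fini}}$. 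The exactness of $(\cdot)^{K\text{-fini}}$, $(\cdot)^{K^H\text{-fini}}$, and $\bigwedge^p(\mathfrak{h}/\mathfrak{k}^H) \otimes_{K^H}(\cdot)$ (the last on locally $K^H$-finite modules, by compactness of $K^H$) makes the columns of $D^V$ (resp.\ $D^W$) acyclic in positive $q$-degrees, so the column-first spectral sequences degenerate and identify the total cohomologies with $\Hm_\ast(\mathfrak{h}, K^H; E^{K\text{-fini}})$ and $\Hm_\ast(\mathfrak{h}, K^H; E^{K^H\text{-fini}})$ respectively. The row-first spectral sequences have
\[ E^1_{p,q} = \Hm_p(\mathfrak{h}, K^H; P_q^{K\text{-fini}}) \quad \text{and} \quad \Hm_p(\mathfrak{h}, K^H; P_q^{K^H\text{-fini}}); \]
convergence is automatic because each total degree $n$ sees only the finite range $0 \leq p \leq \min(d, n)$, $q = n - p \geq 0$.

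The inclusion $D^V \hookrightarrow D^W$ induces a morphism of row-first spectral sequences whose $E^1$-page is precisely the family of comparison maps $c_p(P_q)$, each an isomorphism by hypothesis together with additivity for finite direct sums. Isomorphism on $E^1$ propagates to every subsequent page and, because the filtration on each total degree has finite length, to the abutments; this yields the desired isomorphism $c_n(E)$. The main technical obstacle is the exactness of $E \mapsto E^{K^H\text{-fini}}$ on CW short exact sequences — equivalently, the lifting of $K^H$-finite vectors across a strict surjection — which rests on compactness of $K^H$ and continuous averaging in the smooth Fréchet setting; a secondary concern is the routine verification that the two spectral sequences of a $p$-bounded double complex both converge to the total cohomology, which is standard here given the finite range of $p$.
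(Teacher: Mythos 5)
Your proof is correct and follows essentially the same route as the paper's: reduce to the algebraic comparison via Proposition \ref{prop:HS-int}, resolve $E$ by principal series using Casselman's Subrepresentation Theorem, and compare the two double complexes, with the key exactness input being that $\bigwedge^p(\mathfrak{h}/\mathfrak{k}^H)\dotimes{K^H}(\cdot)$ is exact on locally $K^H$-finite modules. The only cosmetic difference is that the paper works with the full space $E$ (using $\Hm_n(\mathfrak{h},K^H;E^{K^H\text{-fini}})\simeq\Hm_n(\mathfrak{h},K^H;E)$) rather than with $E^{K^H\text{-fini}}$, which sidesteps the exactness of $(\cdot)^{K^H\text{-fini}}$ that you correctly identify and justify by averaging.
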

\begin{proof}
	In view of the isomorphisms of abstract vector spaces
	\[ \Hm_n\left(\mathfrak{h}, K^H; E^{K^H\text{-fini}} \right) \simeq \Hm_n\left(\mathfrak{h}, K^H; E\right) \simeq \Hm^{\mathcal{S}}_n(H; E|_H) \]
	of Proposition \ref{prop:HS-int}, one can reiterate the proof of \cite[Proposition 3]{HT98}, which proceeds by resolving the $(\mathfrak{g}, K)$-module $E^{K\text{-fini}}$ into a complex of principal series via Casselman's Subrepresentation Theorem, and analyzing the corresponding map between two double complexes, each computing $(\mathfrak{h}, K^H)$-homologies in the vertical direction.
	
	The point here is that $(\bigwedge^p \mathfrak{p}) \dotimes{K^H} (\cdot)$ is an exact functor from locally $K^H$-finite representations to vector spaces, for every $p \in \Z$. Indeed, taking $K^H$-co-invariants is an exact functor on such representations, as follows from \cite[Proposition 1.18]{KV95}. In \cite{HT98} one took $\otimes$ instead of $\otimes_{K^H}$, but the exactness is all one needs to analyze the spectral sequence $E_2^{q, -p}$ therein.
\end{proof}

\begin{example}\label{eg:SL2}
	Take $G = \SL(2)$, $K = \SO(2)$, $H \simeq \Gm$ being the diagonal torus, and $K^H = K \cap H$; note that $H$ is a symmetric subgroup of $G$, and $K^H = \mu_2 := \{\pm 1\}$. We also take $U$ to be the upper-triangular unipotent subgroup, and identify all these groups with their $\R$-points. By identifying $\R^2$ with row vectors and $U \backslash G$ with $\R^2 \smallsetminus \{0\}$, the principal series can be described as follows, on the level of Casselman--Wallach representations.

	Let $\lambda \in \CC$ and $\epsilon \in \{0, 1\}$. Define
	\[ V^\epsilon_\lambda := \left\{\begin{array}{r|l}
		f: \R^2 \smallsetminus \{0\} \to \CC & \forall t \in \R^\times, \; x \in \R^2 \smallsetminus \{0\} \\
		\text{smooth} & f(tx) = \sgn(t)^\epsilon |t|^\lambda f(x) 
	\end{array}\right\}. \]
	It is topologized in the standard way, and $G$ acts by $(gf)(x) = f(xg)$.
	
	In particular, $\bigl(\begin{smallmatrix} a & \\ & a^{-1} \end{smallmatrix}\bigr)$ maps $f$ to $(x_1, x_2) \mapsto f(ax_1, a^{-1}x_2)$, and $-1 \in K^H$ maps $f$ to $(-1)^\epsilon f$. We also remark that $V^\epsilon_\lambda$ is the normalized parabolic induction of
	\[ \begin{pmatrix} a & * \\ & a^{-1} \end{pmatrix} \mapsto \sgn(a)^\epsilon |a|^{-\lambda - 1}, \quad a \in \R^\times. \]
	
	Let $\theta := \bigl(\begin{smallmatrix} 1 & \\ & -1\end{smallmatrix}\bigr)$, which is $K^H$-invariant and generates $\mathfrak{h}$. The $\theta$-invariants in $V^\epsilon_\lambda$ are precisely the functions $f$ which are locally constant on the real curve $x_1 x_2 = c$, for each $c \in \R$.
	
	Clearly, $(V^\epsilon_\lambda)^{K^H\text{-fini}} = V^\epsilon_\lambda$ is strictly larger than $(V^\epsilon_\lambda)^{K\text{-fini}}$.
	
	For $L \in \{K^H, K\}$, the complex computing $\Hm_\bullet(\mathfrak{h}, K^H; (V^\epsilon_\lambda)^{L\text{-fini}})$ is
	\begin{equation}\label{eqn:SL2-ps}\begin{aligned}
		\CC\theta \dotimes{K^H} (V^\epsilon_\lambda)^{L\text{-fini}} & \to \CC \dotimes{K^H} (V^\epsilon_\lambda)^{L\text{-fini}}, \quad \deg = -1, 0, \\
		\theta \otimes v & \mapsto 1 \otimes \theta v.
	\end{aligned}\end{equation}
	
	When $\epsilon = 0$, the $\Hm_1$ is $\{f \in (V^0_\lambda)^{L\text{-fini}} : \theta f = 0 \}$. On each open quadrant, $f$ takes the form $c |x_1 x_2|^{\lambda /2}$ for some $c \in \CC$. In order that they glue to a smooth function on $\R^2 \smallsetminus \{0\}$, the constants $c$ must coincide and we must have $\lambda \in 4\Z_{\geq 0}$, in which case $f(x_1 x_2) = c (x_1 x_2)^{\lambda /2}$ is polynomial, hence $K$-finite. Therefore
	\[ \Hm_1\left(\mathfrak{h}, K^H; (V^0_\lambda)^{L\text{-fini}} \right) = \begin{cases}
		0, & \text{if}\; \lambda \notin 4\Z_{\geq 0} \\
		\CC (x_1 x_2)^{\lambda/2}, & \text{if}\; \lambda \in 4\Z_{\geq 0}.
	\end{cases}\]

	When $\epsilon = 1$, we have $\CC\theta \dotimes{K^H} (V^1_\lambda)^{L\text{-fini}} = \CC \dotimes{K^H} (V^1_\lambda)^{L\text{-fini}} = 0$, thus
	\[ \Hm_n\left(\mathfrak{h}, K^H; (V^1_\lambda)^{L\text{-fini}} \right) = 0, \quad n \in \Z. \]
	
	These computations show that $c_1(V_\lambda^\epsilon)$ is always an isomorphism. The same also holds for $c_0(V_\lambda^\epsilon)$, by either invoking the automatic continuity theorem for $H \subset G$, or a direct computation as before. By Proposition \ref{prop:HT-reduction}, it follows that $c_n(E)$ is an isomorphism for all Casselman--Wallach representations $E$ and all $n$.
\end{example}

The example above shows that, even in the simplest case, it may happen simultaneously that
\begin{itemize}
	\item $E^{K^H\text{-fini}} \supset E^{K\text{-fini}}$ strictly;
	\item $c_n(E)$ is an isomorphism for all $n$;
	\item some higher homologies of $E|_H$ are nonzero.
\end{itemize}

In particular, the condition in Proposition \ref{prop:disc-decomp} is sufficient but not necessary in general.

\begin{example}\label{eg:SL2-more}
	The proper reductive spherical subgroups $H$ of $G = \SL(2)$, identified with their $\R$-points, are
	\[ \begin{pmatrix} * & \\ & * \end{pmatrix}, \quad \left\langle \begin{pmatrix} * & \\ & * \end{pmatrix}, \begin{pmatrix} & 1 \\ -1 & \end{pmatrix}\right\rangle, \quad \SO(2) \]
	up to conjugacy. Take $K = \SO(2)$, so that $K^H := K \cap H$ is maximal compact in $H$ in each case.
	
	In Example \ref{eg:SL2} it is shown that $c_n(E)$ is an isomorphism in the first case, for all $E$ and $n$. We will settle the remaining cases below.
	
	For the second case (the normalizer of the diagonal torus), retain the notations from Example \ref{eg:SL2} and consider the principal series $V_\lambda^\epsilon$, which suffices. One has $K^H \simeq \Z/4\Z$ whose generators act on $\theta \in \mathfrak{h}$ by $-1$. For $L \in \{K^H, K\}$ we may decompose $(V_\lambda^\epsilon)^{L\text{-fini}}$ into $K^H$-isotypic components. The complex \eqref{eqn:SL2-ps} becomes
	\[ (V_\lambda^\epsilon)^{L\text{-fini}}[\rho] \to (V_\lambda^\epsilon)^{L\text{-fini}}[\mathrm{triv}], \quad v \mapsto \theta v, \]
	where $\rho$ (resp.\ $\mathrm{triv}$) denotes the character of $K^H$ mapping the generators to $-1$ (resp.\ $1$), and $[\cdots]$ are the isotypic components. As in Example \ref{eg:SL2}, the homologies turn out to be independent of $L$. Hence $c_n(E)$ is an isomorphism for all $E$ and $n$.
	
	Finally, the third case $H = \SO(2)$ is covered by Example \ref{eg:HK-admissibility}.
\end{example}

\bibliographystyle{abbrv}
\bibliography{Loc}


\vspace{1em}
\begin{flushleft} \small
	Beijing International Center for Mathematical Research / School of Mathematical Sciences, Peking University. No.\ 5 Yiheyuan Road, Beijing 100871, People's Republic of China. \\
	E-mail address: \href{mailto:wwli@bicmr.pku.edu.cn}{\texttt{wwli@bicmr.pku.edu.cn}}
\end{flushleft}

\end{document}